\newtheorem{theorem}{Theorem}[section]
\newtheorem{lemma}[theorem]{Lemma}
\newtheorem{prop}[theorem]{Proposition}
\newtheorem{coro}[theorem]{Corollary}
\theoremstyle{definition}
\newtheorem{definition}[theorem]{Definition}
\newtheorem{example}[theorem]{Example}
\newtheorem{question}[theorem]{Question}
\theoremstyle{remark}
\newtheorem{remark}[theorem]{Remark}
\theoremstyle{plain}
\numberwithin{equation}{section}
\theoremstyle{plain}
\newcommand{\Q}{\ensuremath{\mathbb{Q}}}
\newcommand{\R}{\ensuremath{\mathbb{R}}}
\newcommand{\Z}{\ensuremath{\mathbb{Z}}}
\newcommand{\C}{\ensuremath{\mathbb{C}}}
\newcommand{\nt}{\ensuremath{\mathbb{N}}}
\newcommand{\crd}{\operatorname{CrD}}
\DeclareMathOperator{\Diff}{Diff}
\DeclareMathOperator{\home}{Hom}
\DeclareMathOperator{\modulo}{mod}
\DeclareMathOperator{\Id}{Id}
\DeclareMathOperator{\varia}{var}
\begin{document}

\title[]{Dynamics of multicritical circle maps}

\author{Edson de Faria}
\address{Instituto de Matem\'atica e Estat\'istica, Universidade de S\~ao Paulo}
\curraddr{Rua do Mat\~ao 1010, 05508-090, S\~ao Paulo SP, Brasil}
\email{edson@ime.usp.br}

\author{Pablo Guarino}
\address{Instituto de Matem\'atica e Estat\'istica, Universidade Federal Fluminense}
\curraddr{Rua Prof. Marcos Waldemar de Freitas Reis, S/N, 24.210-201, Bloco H, Campus do Gragoat\'a, Niter\'oi, Rio de Janeiro RJ, Brasil}
\email{pablo\_\,guarino@id.uff.br}

\thanks{The first author has been supported by ``Projeto Tem\'atico Din\^amica em Baixas Dimens\~oes'' FAPESP Grant  2016/25053-8, while the second author has been supported by Coordena\c{c}\~ao de Aperfei\c{c}oamento de Pessoal de N\'ivel Superior - Brasil (CAPES) grant 23038.009189/2013-05.}
\subjclass[2010]{Primary 37E10; Secondary 37E20, 37C40}
\keywords{Rigidity, multicritical circle maps, renormalization}

\begin{abstract} This paper presents a survey of recent and not so recent results concerning the study of smooth homeomorphisms of the circle with a finite number of non-flat critical points, an important topic in the area of One-dimensional Dynamics. We focus on the analysis of the fine geometric structure of orbits of such dynamical systems, as well as on certain ergodic-theoretic and complex-analytic aspects of the subject. Finally, we review some conjectures and open questions in this field.
\end{abstract}

\maketitle

\vspace{-0.5cm}

\section{Introduction}

One of the major general goals of the area of Dynamical Systems is to solve the {\it smooth classification problem\/}: given two smooth dynamical systems which are topologically equivalent, when are they smoothly equivalent? In somewhat vague terms, this problem is tantamount to understanding the fine-scale geometric properties of such systems. 

In such general setting, and particularly in higher dimensions, the above classification problem seems rather daunting (perhaps even hopeless). Hence one should first attempt to understand low-dimensional systems. At least at an intuitive level, the problem should be much simpler for one-dimensional systems; after all, in dimension one the linear order structure and ``lack of ambient space'' should impose severe restrictions on the possible geometries of such systems, thereby facilitating their smooth classification. However, even here the problem turns out to be rather subtle. A basic distinction that must be made in the one-dimensional context is between {\it invertible\/} dynamics -- to wit, homeomorphisms of the circle -- and {\it non-invertible\/} dynamics, such as the dynamics of unimodal or multimodal maps of the interval (or the circle). 

In the present survey we deal with invertible dynamical systems on the circle. In the case of smooth {\it diffeomorphisms\/} of the circle, deep results have been obtained from the mid to late seventies onwards,  starting with M.~Herman's thesis \cite{hermanihes} and culminating with the work of J.-C.~Yoccoz \cite{yoccoz2}, with important contributions by Y.~Katznelson and D.~Ornstein \cite{ko1}, among others. It was in part due to his deep work on diffeomorphisms that Yoccoz was awarded his Fields Medal in 1994. 

We will not say much about the smooth classification of diffeomorphisms of the circle in the present paper (but see Section \ref{secAHY}). We will rather focus on the case of smooth homeomorphisms with {\it critical points\/}, a topic to which both authors have made contributions that are deemed significant by the one-dimensional dynamics community. In this context, the notions of {\it renormalization\/}, {\it rigidity\/} and {\it universality\/} play a decisive role, and have been widely studied in the last thirty years. From a strictly mathematical viewpoint, these studies started with basic topological aspects \cite{hall, yoccoz1}, then evolved to geometric bounds \cite{H, swiatek} and further to geometric rigidity and renormalization aspects \cite{avila, EdsonThesis, edsonETDS, edsonwelington1, edsonwelington2, GY, PabloThesis, GMdM2015, GdM2013, khaninteplinsky, khmelevyampolsky, swiatek, yampolsky1, yampolsky2, yampolsky3, yampolsky4}.

Let us summarize some of those contributions in the following statements: on the one hand, any two $C^3$ circle homeomorphisms with the same irrational rotation number of \emph{bounded type} and with a single critical point (of the same odd power-law type) are conjugate to each other by a $C^{1+\alpha}$ circle diffeomorphism, for some universal $\alpha>0$ \cite{GdM2013}. On the other hand, any two $C^4$ circle homeomorphisms with the same irrational rotation number and with a single critical point (again, of the same odd type), are conjugate to each other by a $C^1$ diffeomorphism \cite{GMdM2015}. This conjugacy is a $C^{1+\alpha}$ diffeomorphism for a certain set of rotation numbers that has full Lebesgue measure, but {\it does not\/} include all irrational rotation numbers (see Section \ref{sec:conc} for its definition and further remarks). These statements can be regarded as the state of the art concerning renormalization and rigidity of critical circle maps with \emph{a single} critical point, and they will be further discussed in Section \ref{smoothrigid} of the present survey. In the case of homeomorphisms with \emph{several} critical points, the theory remains wide open. However, some interesting recent results \cite{EdF, EdFG, EG, ESY, dFG2019, yampolsky5} will be described in this survey (again, see Section \ref{smoothrigid}).

We close this introduction by remarking that the rigidity and renormalization theories to be (partially) described in this survey have been the subject of at least six ICM talks, namely \cite{avilaICM, HermanICM, khaninICM, mcicm, dmicm,sullivanICM}, and it was in part due to their seminal contributions to these theories that C.~McMullen (1998) and A.~Avila (2014) were awarded their Fields Medals. 

\section{Circle Diffeomorphisms}\label{secdifeos}

The \emph{circle} is defined as $S^1 = \R / \Z$\,, and it can be identified with $\partial\mathbb{D}=\big\{z \in\C: |z|=1\big\}$ via the universal covering map $\pi:\R\to\partial\mathbb{D}$ given by $\pi (x) = e^{2 \pi ix}$. We denote by $\home_+(\R)$ the space of orientation preserving homeomorphisms of the real line, and by $\home_+(S^1)$ the space of orientation preserving circle homeomorphisms (both endowed with the $C^0$ topology). For any given $f\in\home_+(S^1)$ there exist infinitely many $F\in\home_+(\R)$ satisfying $f\circ\pi=\pi \circ F$. Such an $F$ is called a \emph{lift} of $f$, and any two lifts differ by an integer translation. In fact, $F\in\home_+(\R)$ is the lift of a circle homeomorphism if, and only if, it commutes with unitary translation: $F(x+1)=F(x)+1$ for all $x\in\R$.

\subsection{The rotation number} The first major result in circle dynamics goes back to the end of the nineteenth century, and is due to Poincar\'e.

\begin{theorem}[Poincar\'e]\label{TeoPoinnumrot} Let $F\in\home_+(\R)$ be a lift of some $f\in\home_+(S^1)$. Then for all $x\in\R$ the following limit exists and it is independent of $x$:$$\tau(F)=\lim_{n \to+\infty}\frac{F^n(x)-x}{n}\,.$$
\end{theorem}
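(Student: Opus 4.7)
The plan is to work with the displacement function $g_n(x):=F^n(x)-x$, show that the sequence $a_n:=F^n(0)$ is nearly additive with a uniformly bounded error, and then extract convergence by a Fekete-type argument. Independence of the limit on $x$ will follow from a uniform bound on the oscillation of $g_n$.

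First I would establish two elementary consequences of the commutation relation $F(x+1)=F(x)+1$, which by induction extends to $F^n(x+k)=F^n(x)+k$ for every $n\geq 1$ and $k\in\Z$. This gives immediately that $g_n$ is $1$-periodic in $x$. Combined with the monotonicity of $F^n$ (recall that $F$ is orientation-preserving), it yields the oscillation bound
$$\sup_{x,y\in\R}\,|g_n(y)-g_n(x)|\;\leq\;1,$$
obtained by sandwiching $F^n(y)$ between $F^n(x)$ and $F^n(x+1)=F^n(x)+1$ whenever $x\leq y\leq x+1$, and then using periodicity to reduce the general case to this one.

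Next, I would derive the quasi-additive estimate. Writing $F^m(0)=\lfloor a_m\rfloor+r_m$ with $r_m\in[0,1)$, the integer-translation invariance gives
$$a_{n+m}\;=\;F^n\bigl(F^m(0)\bigr)\;=\;F^n(r_m)+\lfloor a_m\rfloor\;=\;g_n(r_m)+a_m,$$
so that $|a_{n+m}-a_n-a_m|=|g_n(r_m)-g_n(0)|\leq 1$ for all $n,m\geq 1$. A standard Fekete-style induction then yields $|a_{km}-k\,a_m|\leq k-1$, whence $|a_{km}/(km)-a_m/m|\leq 1/m$. Writing an arbitrary $n$ as $km+r$ with $0\leq r<m$ and absorbing the remainder via quasi-additivity shows that both $\limsup a_n/n$ and $\liminf a_n/n$ lie within $O(1/m)$ of $a_m/m$; letting $m\to\infty$ forces them to coincide, which defines $\tau(F)$. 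Independence of $x$ is then immediate: the oscillation bound gives $|(F^n(x)-x)-a_n|\leq 1$, hence $|(F^n(x)-x)/n - a_n/n|\leq 1/n\to 0$.

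The argument is entirely elementary and presents no serious obstacle; it rests on just two structural features, periodicity and monotonicity, together with a routine Fekete manipulation. The subtlest point is arguably the bookkeeping in the Fekete induction, where one must keep the additive error uniformly bounded rather than let it compound linearly at each step.
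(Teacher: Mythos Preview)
Your argument is correct and follows the classical route to Poincar\'e's theorem: periodicity of the displacement $g_n$, the oscillation bound $\sup_{x,y}|g_n(x)-g_n(y)|\leq 1$, quasi-additivity $|a_{n+m}-a_n-a_m|\leq 1$, and a Fekete-type conclusion. The only place where one might ask for a line more of detail is the remainder step ``writing $n=km+r$ and absorbing the remainder'': you should make explicit that the error terms $(a_n-a_{km})/n$ and $a_{km}\bigl(\tfrac{1}{n}-\tfrac{1}{km}\bigr)$ both tend to $0$ as $n\to\infty$ with $m$ fixed, so that $\limsup_n|a_n/n-a_m/m|\leq 1/m$; but this is routine once stated.

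As for comparison: the paper does \emph{not} prove Theorem~\ref{TeoPoinnumrot}. It is stated as a classical result of Poincar\'e and immediately used (for instance in the Remark that follows, giving the measure-theoretic reformulation of the rotation number). So there is no alternative argument in the paper to set yours against; your proof simply fills in what the survey takes for granted.
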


Since any two lifts $F_1$ and $F_2$ of $f$ differ by an integer translation, it is easy to see that $\tau(F_1)$ and $\tau(F_2)$ differ by the same integer. The \emph{rotation number} of $f$ is then defined as $\rho(f)=\tau(F)\,\,\textrm{(mod 1)}$, for any given lift $F$ of $f$. The function $\rho:\home_+(S^1)\to[0,1)$ is continuous in the $C^0$ topology. The rotation number is preserved under conjugation with orientation preserving homeomorphisms, and therefore it is a \emph{topological invariant} (in the sense that the equivalence classes of $\home_+(S^1)$ under topological conjugacies are contained in the level sets of $\rho$).

\begin{remark} Let us briefly explain a measure-theoretical way to define the rotation number of a circle homeomorphism.  Let $f\in\home_{+}(S^1)$ and let $F\in\home_{+}(\R)$ be one of its lifts. Let $\varphi_F:S^1\to\R$ be the real function whose lift under $\pi$ is $F-\Id$, that is: $F(x)=x+\varphi_F(e^{2\pi ix})$ for all $x\in\R$\,. We claim that for any given $f$-invariant Borel probability measure $\mu$ we have
\begin{equation}\label{defnumrotmedida}
\rho(f)=\int_{S^1}\varphi_{F}\,\,d\mu\,\,\,(\modulo 1)\,.
\end{equation}
Indeed, the point here is that the real function $\widetilde{\varphi_F}$ given by $$\widetilde{\varphi_F}=\lim_{n\to+\infty}\frac{1}{n}\sum_{j=0}^{n-1}\varphi_F \circ f^j$$ is well defined on the whole circle and it is \emph{constant}, equal to $\tau(F)$ (note that this implies at once the identity \eqref{defnumrotmedida}, since $\int_{S^1}\varphi_{F}\,\,d\mu=\int_{S^1}\widetilde{\varphi_F}\,\,d\mu$ by the $f$-invariance of $\mu$). To see that $\widetilde{\varphi_F}$ is well defined and constant, let $x \in S^1$ and let $x_0\in\R$ be such that $\pi(x_0)=x$. Then
\begin{align*}
\sum_{j=0}^{n-1}\varphi_F\big(f^j(x)\big)&=\sum_{j=0}^{n-1}\varphi_F\big(\pi(F^j(x_0))\big)=\sum_{j=0}^{n-1}\big(F-\Id\big)\big(F^j(x_0)\big)\\
&=\sum_{j=0}^{n-1}\big(F^{j+1}(x_0)-F^j(x_0)\big)=F^n(x_0)-x_0\,.
\end{align*}
By Theorem \ref{TeoPoinnumrot} we have $\widetilde{\varphi_F}(x)=\tau(F)$, as desired. The equivalent definition of the rotation number of a circle homeomorphism given by \eqref{defnumrotmedida} will not be further mentioned in this survey, but it is important in its own right.
\end{remark}

\subsection{Circle diffeomorphisms with irrational rotation number}\label{sec:Denjoy} The rotation number of $f\in\home_{+}(S^1)$ is rational if, and only if, $f$ admits at least one periodic orbit. In this case, if $\rho(f)=p/q$, then any periodic orbit has period equal to $q$ and the dynamics of $f$ is somehow trivial: any given initial condition is asymptotic (both for the past and the future) to a periodic orbit.

If $f$ has irrational rotation number $\rho$, its orbit structure is much more interesting. The fact that $f$ has no periodic orbits implies that its non-wandering set is \emph{minimal}, being a Cantor set or the whole circle. Now pick a non-wandering point $x$ in $S^1$ and consider its orbit under $f$, that is: $\mathcal{O}_f(x)=\big\{f^n(x)\big\}_{n\in\Z}$. The function $h_x:\mathcal{O}_f(x) \to S^1$ given by $h_x\big(f^n(x)\big)=\pi(n\rho)=e^{2\pi in\rho}$ identifies $x$ with the point $1$, and conjugates $f$ with the rigid rotation $R_{\rho}$ along the orbit of $x$. A crucial fact here is that $f$ and $R_{\rho}$ are \emph{combinatorially equivalent}, in the sense that for each $n\in\nt$ the first $n$ elements of the orbit of $x$ under $f$ are ordered in the same way as the first $n$ elements of the orbit of the point $1$ under the rotation $R_{\rho}$ (otherwise $f$ must have a periodic orbit, which is impossible in the irrational case). The combinatorial equivalence between $f$ and $R_{\rho}$ implies that the map $h_x$, being monotone, extends continuously to the closure of $\mathcal{O}_f(x)$. This extension is surjective because any orbit of $R_{\rho}$ is dense in $S^1$, and then we can extend it (again) as a constant function in any connected component of the complement of the closure of $\mathcal{O}_f(x)$. This produces a \emph{semi-conjugacy} between $f$ and $R_{\rho}$, that is: a continuous surjective map $h_x:S^1 \to S^1$ satisfying $R_{\rho} \circ h_x=h_x \circ f$.
$$
\begin{CD}
S^1@>{f}>>S^1\\
@V{h_x}VV             @VV{h_x}V\\
{S^1}@>>{R_{\rho}}>{S^1}
\end{CD}
$$
From the previous construction it is clear that $h_x$ is the \emph{unique} semi-conjugacy between $f$ and $R_{\rho}$ which identifies the point $x$ with the point $1$ (this uniqueness will be useful during the proof of Proposition \ref{lemaunerg} below). Note also that for any given point $z \in S^1$ we have $h_z = R_{\beta} \circ h_x$, with $\exp(2\pi i\beta)=1/h_x(z)$. Therefore, any semi-conjugacy between $f$ and $R_{\rho}$ is unique up to post-composition with rotations.

If $f$ is minimal in the whole circle we have $\overline{\mathcal{O}_f(x)}=S^1$. In particular, $h_x$ is a homeomorphism (and then $f$ is topologically conjugate to the rotation $R_{\rho}$). Otherwise, there exists a point $y \in S^1$ such that $I=h_x^{-1}\big(\{y\}\big)$ is a non-degenerate closed interval. We call $I$ a \emph{wandering interval} since $f^n(I) \cap f^m(I) = \emptyset$ if $n \neq m \in \Z$, and since it is not contained in the basin of a periodic attractor. The second major result in circle dynamics was obtained by Denjoy in the early thirties \cite{denjoy}.

\begin{theorem}[Denjoy]\label{denjoy} Let $f$ be a $C^1$ circle diffeomorphism with irrational rotation number $\rho$. If\, $\log Df$ has bounded variation, then $f$ has no wandering intervals and therefore it is topologically conjugate to the rigid rotation of angle $\rho$.
\end{theorem}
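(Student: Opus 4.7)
The plan is to argue by contradiction: show that $f$ admits no wandering intervals, whereupon the semiconjugacy $h_x$ constructed just before the statement automatically becomes injective and hence a topological conjugacy to $R_\rho$. So suppose $I$ is a wandering interval. Its iterates $\{f^n(I)\}_{n\in\Z}$ are pairwise disjoint, giving $\sum_n |f^n(I)| \leq 1$, and in particular $|f^n(I)| \to 0$ as $|n|\to\infty$.

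The analytic engine is the Denjoy distortion lemma, which is precisely where the bounded-variation hypothesis enters: whenever $J \subset S^1$ is an arc whose iterates $J, f(J), \ldots, f^{n-1}(J)$ have pairwise disjoint interiors,
$$\bigl|\log Df^n(x) - \log Df^n(y)\bigr| \;\leq\; V \;:=\; \varia(\log Df), \qquad \forall\, x, y \in J.$$
This is immediate from the chain rule $\log Df^n(x) - \log Df^n(y) = \sum_{k=0}^{n-1}[\log Df(f^k(x)) - \log Df(f^k(y))]$, since the disjoint arcs $[f^k(x), f^k(y)] \subset f^k(J)$ carry oscillations of $\log Df$ summing to at most the total variation $V$. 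Applied to $I$ itself for both positive and negative iterates (whose images are automatically disjoint by the wandering hypothesis), this gives
$$Df^{\pm n}(x) \in [e^{-V}, e^{V}]\cdot \frac{|f^{\pm n}(I)|}{|I|} \qquad \forall\, x\in I,$$
so $Df^{\pm n}(x) \to 0$ on $I$.

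The combinatorial input comes from the semiconjugacy $h_x$ to $R_\rho$: since $h_x$ is order-preserving, the orbits of the endpoints of $I$ under $f$ inherit the cyclic combinatorics of an $R_\rho$-orbit. Let $(q_n)$ denote the denominators of the convergents of $\rho$. By the three-distance theorem for $R_\rho$, transferred through $h_x$, the closest return $f^{q_n}(I)$ is adjacent to $I$ on a side alternating with $n$, and the smallest arc $\widehat{J}_n \supset I \cup f^{q_n}(I)$ has the property that $\widehat{J}_n, f(\widehat{J}_n), \ldots, f^{q_n - 1}(\widehat{J}_n)$ have pairwise disjoint interiors. Applying the distortion lemma to $f^{q_n}$ on $\widehat{J}_n$, and comparing the (essentially constant) derivative on the subarcs $I$ and $f^{q_n}(I)$ of $\widehat{J}_n$ — which are mapped respectively onto $f^{q_n}(I)$ and $f^{2q_n}(I)$ — yields the log-convexity bound
$$\frac{|f^{q_n}(I)|^2}{|I|\cdot |f^{2q_n}(I)|} \in [e^{-2V}, e^{2V}].$$
Iterating this estimate along the $f^{q_n}$-orbit of $I$, and combining with the symmetric estimate coming from the closest return $f^{-q_{n-1}}(I)$ on the opposite side, forces the summability $\sum_k |f^{kq_n}(I)| \leq 1$ to collapse into $|I|=0$, delivering the contradiction.

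The main obstacle, in my view, is the combinatorial step — identifying $\widehat{J}_n$ correctly and verifying that its first $q_n$ iterates remain pairwise disjoint — which requires a delicate transfer of the three-distance theorem for rigid rotations to $f$ through $h_x$, keeping careful track of the fact that $h_x$ collapses each wandering interval to a single point. Once this combinatorial framework is in place, the remainder reduces to routine bookkeeping with the chain rule and the distortion lemma.
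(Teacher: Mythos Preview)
Your approach is the classical direct-distortion route to Denjoy's theorem, and it is genuinely different from the paper's. The paper does not work with a specific wandering interval at all: it first proves, via the Denjoy--Koksma inequality applied to $\psi = \log Df$ together with the vanishing of the Lyapunov exponent (Lemma~\ref{zeroexpdifeo}), that $e^{-V} \leq Df^{q_n}(x) \leq e^V$ \emph{uniformly} in $x$ and $n$ (Proposition~\ref{1+bv}). This gives equicontinuity of $\{f^{q_n}\}$ and of $\{f^{-q_n}\}$, which immediately rules out wandering intervals. Your argument bypasses Denjoy--Koksma and the invariant measure entirely, working instead with bounded-variation distortion on specific orbit segments; this is closer in spirit to the paper's \emph{second} proof in Section~\ref{secyoccozthm} via Lemma~\ref{lemmasymint}, though still not the same.

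That said, your final step has a genuine gap. For a \emph{fixed} $n$, the log-convexity bound $|f^{q_n}(I)|^2 \asymp |I|\cdot |f^{2q_n}(I)|$, even iterated along the full $f^{q_n}$-orbit and combined with summability $\sum_k |f^{kq_n}(I)| < \infty$, does \emph{not} force $|I| = 0$: the sequence $a_k = 2^{-|k|}$ is summable, satisfies $a_k^2 \leq 4\, a_{k-1} a_{k+1}$ for all $k$, and has $a_0 = 1$. The correct endgame is much simpler. Apply your distortion bound not to $\widehat{J}_n \supset I \cup f^{q_n}(I)$ but to its shift $J_n \supset f^{-q_n}(I) \cup I$ (whose first $q_n$ iterates are disjoint by the identical combinatorial argument). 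Since $f^{q_n}$ sends $f^{-q_n}(I)$ to $I$ and $I$ to $f^{q_n}(I)$, the near-constancy of $Df^{q_n}$ on $J_n$ yields the single inequality
\[
|I|^2 \;\leq\; e^{V}\, |f^{-q_n}(I)|\cdot |f^{q_n}(I)|\,.
\]
Now let $n\to\infty$: both factors on the right tend to zero, hence $|I|=0$. No iteration along the orbit, no summability argument, and no appeal to $q_{n-1}$ is needed.
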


\begin{remark}\label{remarkdenjoy} In \cite{denjoy}, Denjoy also proved that \emph{some} assumption on the first derivative of $f$ is necessary: given any irrational number $\rho$ there exists a $C^1$ circle diffeomorphism with rotation number $\rho$ and wandering intervals \cite[Section I.2]{livrounidim} (actually, examples of this type were given before by Bohl in \cite{Bohl}). Moreover, counterexamples can be constructed in such a way that the first derivative is H\"older continuous \cite{hermanihes}. See also \cite{husul,ko1,ko2}.
\end{remark}

\begin{remark} As explained above, any circle homeomorphism with irrational rotation number has minimal dynamics when restricted to its non-wandering set, which may be a Cantor set or the whole circle. For $r \geq 0$, let
$$\mathcal{C}_r=\big\{K \subset S^1:\mbox{$K$ is the minimal Cantor set for some $C^r$ circle diffeomorphism}\big\}.$$
Note the obvious fact that $\mathcal{C}_r\supseteq \mathcal{C}_s$ whenever $r<s$.
Every Cantor set of the unit circle is contained in $\mathcal{C}_0$, since any two Cantor sets in $S^1$ are homeomorphic under some circle homeomorphism. As already mentioned in Remark \ref{remarkdenjoy}, $\mathcal{C}_1$ is non-empty, while Theorem \ref{denjoy} implies that $\mathcal{C}_r$ is empty for any $r \geq 2$. The problem of describing $\mathcal{C}_1$ seems to go back to Michael Herman, and it was first addressed by Dusa McDuff in the late seventies. For instance, she showed in \cite{duff} that the standard middle-thirds Cantor set (more precisely, its isometric copy in $S^1=\mathbb{R}/\mathbb{Z}$) is not in $\mathcal{C}_1$. To the best of our knowledge, a full description of $\mathcal{C}_1$ remains open (see \cite{BIP} and references therein for partial results and recent developments).
\end{remark}

\medskip

The standard proof of Theorem \ref{denjoy} relies on the following simple observation: any given $f\in\home_+(S^1)$ such that some subsequence $\{f^{n_k}\}_{k\in\nt}$ is \emph{equicontinuous}, has no wandering intervals. Indeed, if some interval $I \subset S^1$ wanders under $f$, then the length of $f^{-n}(I)$ goes to zero as $n$ goes to infinity, since the images of $I$ (past and futire) are pairwise disjoint; but this contradicts the fact that some sequence of iterates of $f$ is equicontinuous. Therefore, in order to prove Theorem \ref{denjoy} we need first to choose a suitable subsequence of iterates of $f$. For this purpose, we recall that, being irrational, the rotation number $\rho\in(0,1)$ has an infinite \emph{continued fraction expansion}, say
\begin{equation*}
\rho= [a_{0} , a_{1} , \cdots ]=
\cfrac{1}{a_{0}+\cfrac{1}{a_{1}+\cfrac{1}{ \ddots} }} \ .
\end{equation*}

A classical reference for continued fraction expansion is the monograph \cite{khin}. Truncating the expansion at level $n-1$,
we obtain a sequence of fractions $p_n/q_n$, which are called the \emph{convergents} of the irrational $\rho$\,:
$$
\frac{p_n}{q_n}\;=\;[a_0,a_1, \cdots ,a_{n-1}]\;=\;\dfrac{1}{a_0+\dfrac{1}{a_1+\dfrac{1}{\ddots\dfrac{1}{a_{n-1}}}}}\,.
$$
The sequence of denominators $q_n$, which we call the \emph{return times} of $\rho$, satisfies
\begin{equation*}
 q_{0}=1, \hspace{0.4cm} q_{1}=a_{0}, \hspace{0.4cm} q_{n+1}=a_{n}\,q_{n}+q_{n-1} \hspace{0.3cm} \text{for $n \geq 1$} .
\end{equation*}

As explained above, Theorem \ref{denjoy} follows from the following result.

\begin{prop}\label{1+bv} Let $f$ be a $C^1$ circle diffeomorphism with irrational rotation number. Assume that\, $\log Df$ has bounded variation $V=\varia(\log Df)>0$, and let $K=e^V>1$. Then we have$$\frac{1}{K} \leq Df^{q_n}(x) \leq K\quad\mbox{for all $x \in S^1$ and all $n\in\nt$,}$$where $\{q_n\}_{n\in\nt}$ is the sequence of return times given by the rotation number of $f$.
\end{prop}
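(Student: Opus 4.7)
The plan is to derive the bound from a purely combinatorial disjointness lemma together with the bounded-variation hypothesis, in three steps.

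First, I would establish the \emph{combinatorial disjointness lemma}: for every $z \in S^1$ and every $n \ge 1$, the iterated arcs $f^j(I_{n-1}(z))$ with $I_{n-1}(z) = [z, f^{q_{n-1}}(z)]$ and $j = 0, 1, \ldots, q_n - 1$ are pairwise disjoint in $S^1$. This is a purely combinatorial fact about the order of the $f$-orbit of $z$, inherited from the corresponding property of the $R_\rho$-orbit via the combinatorial equivalence of Subsection~\ref{sec:Denjoy}, together with the best-approximation property of the continued-fraction convergents $p_n/q_n$ and the recurrence $q_{n+1} = a_n q_n + q_{n-1}$.

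Second, I would turn disjointness into an oscillation estimate. Fix $x \in S^1$ and apply the lemma with $z = x$: for any $y \in I_{n-1}(x)$ the $q_n$ arcs $[f^j(x),\,f^j(y)]$ are pairwise disjoint sub-arcs of $\{f^j(I_{n-1}(x))\}_{j=0}^{q_n-1}$. The chain rule $\log Df^{q_n}(u) = \sum_{j=0}^{q_n-1} \log Df(f^j(u))$ combined with subadditivity of total variation over disjoint arcs then yields
\begin{equation*}
\bigl|\log Df^{q_n}(x) - \log Df^{q_n}(y)\bigr|
\;\le\; \sum_{j=0}^{q_n-1}\varia\!\left(\log Df\bigm|_{[f^j(x),\,f^j(y)]}\right)
\;\le\; \varia(\log Df) = V,
\end{equation*}
so $\log Df^{q_n}$ has oscillation at most $V$ on every arc of the form $I_{n-1}(z)$.

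Third, I would promote this cell-wise estimate to the desired global pointwise bound. Since $f^{q_n}$ is a $C^1$ diffeomorphism of $S^1$ we have $\int_{S^1} Df^{q_n}(y)\,dy = 1$, so by the intermediate value theorem there exists $y^{\ast} \in S^1$ with $\log Df^{q_n}(y^{\ast}) = 0$; when $x$ and $y^{\ast}$ lie in the same $I_{n-1}(\cdot)$-cell, Step~2 closes the argument at once. To handle the general case, the cleanest route is via the Denjoy--Koksma inequality applied to $\phi = \log Df$,
\begin{equation*}
\left|\sum_{k=0}^{q_n - 1} \log Df(f^k(x)) - q_n \int_{S^1}\log Df\,d\mu\right| \;\le\; V,
\end{equation*}
together with the vanishing of the Lyapunov exponent $\int_{S^1} \log Df\,d\mu = 0$ for the unique $f$-invariant probability measure $\mu$ (a consequence of the uniform boundedness of the family $\{f^n\}$ in $\home_+(S^1)$, which precludes any net exponential expansion or contraction). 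Together these yield $|\log Df^{q_n}(x)| \le V$, i.e., $1/K \le Df^{q_n}(x) \le K$. The main obstacle lies in this last step: while Steps~1 and~2 are short and direct, upgrading the arc-wise oscillation bound to a global pointwise bound requires a further averaging ingredient (Denjoy--Koksma plus vanishing Lyapunov exponent, or an equivalent combinatorial chaining across the dynamical partition) to prevent cumulative drift from cell to cell.
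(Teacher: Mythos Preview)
Your proposal is correct, and the route you identify in Step~3 as ``cleanest'' --- Denjoy--Koksma applied to $\psi=\log Df$, combined with the vanishing Lyapunov exponent $\int\log Df\,d\mu=0$ --- is exactly the paper's proof (it applies Proposition~\ref{DK} with $\psi=\log Df$, then uses the chain rule~\eqref{chain} and Lemma~\ref{zeroexpdifeo}).

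Steps~1 and~2, while correct, are a detour: the disjointness of the arcs $f^j(I_{n-1}(x))$ is precisely the combinatorial input that underlies the proof of Denjoy--Koksma itself, so you are essentially re-deriving an ingredient of Proposition~\ref{DK} and then invoking the full proposition anyway. You rightly observe that the arc-wise oscillation bound from Step~2 does not upgrade to a global pointwise bound without an averaging input --- chaining the oscillation estimate across adjacent cells $I_{n-1}(\cdot)$ would accumulate $V$ at each step, and the single point $y^\ast$ with $Df^{q_n}(y^\ast)=1$ need not lie in $I_{n-1}(x)$. So Steps~1--2 on their own do not yield an alternative proof; they become redundant once you fall back on Denjoy--Koksma.

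One minor point: your justification of the zero Lyapunov exponent (``uniform boundedness of $\{f^n\}$ precludes net exponential expansion'') is too informal as stated. The paper's argument (Lemma~\ref{zeroexpdifeo}) is that unique ergodicity forces $\tfrac{1}{n}\log Df^n$ to converge \emph{uniformly} to the constant $\int\log Df\,d\mu$, and since each $f^n$ is a diffeomorphism of $S^1$ this constant must vanish. You should make that step explicit.
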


Proposition \ref{1+bv} will be a straightforward consequence of two well-known results (Proposition \ref{DK} and Lemma \ref{zeroexpdifeo} below) which are interesting in its own right. Before that, we establish the \emph{unique ergodicity} of circle homeomorphisms without periodic orbits.

\begin{prop}[Unique ergodicity]\label{lemaunerg} If $f\in\home_+(S^1)$ has irrational rotation number $\rho$, it preserves a unique Borel probability measure.
\end{prop}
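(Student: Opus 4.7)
The plan is to show that any $f$-invariant Borel probability measure $\mu$ gives rise, via its distribution function based at a fixed point, to a continuous semi-conjugacy between $f$ and $R_\rho$, and then to invoke the uniqueness (up to post-composition with rotations) of such semi-conjugacies, already recorded in Section~\ref{sec:Denjoy}, to conclude that $\mu$ itself is uniquely determined by $f$.

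Existence of at least one $f$-invariant probability measure is a standard Krylov--Bogolyubov argument, using the weak-$\ast$ compactness of Borel probability measures on $S^1$ together with the continuity of $f$. For uniqueness, I would fix any such measure $\mu$ and first record two preliminary observations. Since the set of atoms of $\mu$ is $f$-invariant and countable, and $f$ has no periodic orbits, any atom would generate an infinite orbit of points of equal positive $\mu$-mass, contradicting $\mu(S^1)=1$; hence $\mu$ is atomless. Similarly, if $I \subset S^1$ is a wandering interval for $f$, the iterates $\{f^n(I)\}_{n\in\Z}$ are pairwise disjoint and of equal $\mu$-measure, forcing $\mu(I)=0$.

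Next, I would fix $x_0 \in S^1$ with a lift $\tilde x_0 \in \R$, let $F \in \home_+(\R)$ be the corresponding lift of $f$, and let $\tilde\mu$ be the $\Z$-periodic lift of $\mu$ to $\R$. Define $\tilde H_\mu : \R \to \R$ by $\tilde H_\mu(x) = \tilde\mu([\tilde x_0, x])$ for $x \geq \tilde x_0$, extended to all of $\R$ by the rule $\tilde H_\mu(x+1) = \tilde H_\mu(x)+1$. Atomlessness of $\mu$ makes $\tilde H_\mu$ continuous, and by construction it is monotone non-decreasing, so it descends to a continuous, monotone, degree-one map $H_\mu : S^1 \to S^1$ with $H_\mu(x_0) = 1$. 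Using the $F$-invariance of $\tilde\mu$ and additivity on disjoint intervals, a direct calculation yields
$$
\tilde H_\mu(F(x)) - \tilde H_\mu(x) \;=\; \tilde\mu\bigl([F^{-1}(\tilde x_0),\tilde x_0]\bigr) \;=:\; c,
$$
independent of $x$. Iterating gives $\tilde H_\mu(F^n(\tilde x_0)) = \tilde H_\mu(\tilde x_0) + nc$, and since $\tilde H_\mu$ stays within bounded distance of the identity, Theorem~\ref{TeoPoinnumrot} forces $c = \tau(F)$; hence $H_\mu \circ f = R_\rho \circ H_\mu$.

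Thus $H_\mu$ is a continuous semi-conjugacy between $f$ and $R_\rho$ sending $x_0$ to $1$. By the uniqueness recalled in Section~\ref{sec:Denjoy}, $H_\mu$ must coincide with the canonical semi-conjugacy $h_{x_0}$ constructed there. In particular, $\mu([x_0,z])$ equals (a canonical lift of) $h_{x_0}(z)$ for every $z \in S^1$, which determines $\mu$ on every arc of $S^1$, and therefore on the Borel $\sigma$-algebra that the arcs generate. Since $h_{x_0}$ depends only on $f$, the measure $\mu$ is uniquely determined. The main technical step is the semi-conjugacy identity for $H_\mu$; the atom and wandering-interval observations and the identification of $c$ with $\rho$ are routine.
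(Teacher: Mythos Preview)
Your proposal is correct and follows essentially the same route as the paper's proof: build from an arbitrary invariant measure $\mu$ its distribution function, check that it is a continuous degree-one semi-conjugacy to $R_\rho$ sending the chosen basepoint to $1$, and then invoke the uniqueness of such a semi-conjugacy (already established in Section~\ref{sec:Denjoy}) to conclude that $\mu$ is determined. The only cosmetic difference is the order of the steps: the paper first proves directly that $\nu\big(x,f(x)\big)=\rho$ for any invariant $\nu$ and then defines $\tilde h_x(y)=\exp\!\big(2\pi i\int_x^y d\nu\big)$, whereas you define $H_\mu$ first and identify the translation constant $c$ afterwards via Theorem~\ref{TeoPoinnumrot}; these are two phrasings of the same computation.
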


\begin{proof}[Proof of Proposition \ref{lemaunerg}] We claim first that\, $\nu\big(x,f(x)\big)=\rho$\, for any $x \in S^1$ and any $f$-invariant Borel probability measure $\nu$. Indeed, let $F\in\home_+(\R)$ be a lift of $f$ and, for any given $x \in S^1$, let $\tilde{x}\in\R$ be one of its lifts. We will prove that $\tau(F)=\tilde{\nu}\big(\tilde{x},F(\tilde{x})\big)$, where $\tilde{\nu}$ is the lift of $\nu$ to the real line (an infinite but $\sigma$-finite Borel measure). For each $n\in\nt$, let us consider $j_n=\lfloor F^n(\tilde{x})-\tilde{x} \rfloor$, so that $F^n(\tilde{x})\in(\tilde{x}+j_n,\tilde{x}+j_n+1)$. Then we have:
\[
F^n(\tilde{x})-\tilde{x}-1<j_n=\tilde{\nu}(\tilde{x},\tilde{x}+j_n)<\tilde{\nu}\big(\tilde{x},F^n(\tilde{x})\big)<\tilde{\nu}(\tilde{x},\tilde{x}+j_n+1)=j_n+1<F^n(\tilde{x})-\tilde{x}+1\,,
\]
and therefore 
\[
\lim_{n \to+\infty}\frac{F^n(\tilde{x})-\tilde{x}}{n}=\lim_{n \to+\infty}\frac{\tilde{\nu}\big(\tilde{x},F^n(\tilde{x})\big)}{n}\,.
\]
On the other hand, since $F$ is monotone and $\tilde{\nu}$ is $F$-invariant, we see that 
$$\tilde{\nu}\big(\tilde{x},F^n(\tilde{x})\big)=\sum_{j=0}^{n-1}\tilde{\nu}\big(F^{j}(\tilde{x}),F^{j+1}(\tilde{x})\big)=n\,\tilde{\nu}\big(\tilde{x},F(\tilde{x})\big),$$and this tells us that 
$$\lim_{n \to+\infty}\frac{F^n(\tilde{x})-\tilde{x}}{n}=\lim_{n \to+\infty}\frac{\tilde{\nu}\big(\tilde{x},F^n(\tilde{x})\big)}{n}=\tilde{\nu}\big(\tilde{x},F(\tilde{x})\big)\,.$$
Since $\nu\big(x,f(x)\big)=\tilde{\nu}\big(\tilde{x},F(\tilde{x})\big)\,\,\textrm{(mod 1)}$, the claim is proved. Now fix some $x \in S^1$ and let $h_x:S^1 \to S^1$ be the semi-conjugacy between $f$ and $R_{\rho}$ constructed above (before the statement of Theorem \ref{denjoy}). We define a Borel probability measure $\mu$ in $S^1$ as the \emph{pull-back} of the Lebesgue measure under $h_x$, that is: $\mu(A)=\lambda\big(h_x(A)\big)$ for any Borel set $A \subset S^1$, where $\lambda$ denotes the normalized Lebesgue measure in the unit circle. Since $R_{\rho}$ preserves $\lambda$ and since $h_x$ is a semi-conjugacy between $f$ and $R_{\rho}$, we deduce that $f$ preserves $\mu$. We claim that $\mu$ is the \emph{unique} Borel probability measure which is invariant under $f$. Indeed, let $\nu$ be any $f$-invariant probability measure, and consider $\tilde{h}_x:S^1 \to S^1$ defined by$$\tilde{h}_x(y)=\exp\left(2\pi i\int_{x}^{y}\!d\nu\right),$$where, by convention, we measure the arc $(x,y)$ starting from $x$ in the counter-clockwise sense. Since $\nu$ is $f$-invariant and $f$ has no periodic orbits, $\nu$ has no points of positive measure, which implies that $\tilde{h}_x$ is continuous, surjective and that $\tilde{h}_x(x)=1$. Moreover, using that\, $\nu\big(x,f(x)\big)=\rho$\, for any $x \in S^1$, we deduce that $\tilde{h}_x \circ f = R_{\rho} \circ \tilde{h}_x$, and then it follows that $\tilde{h}_x=h_x$. Finally, note that by definition $\nu$ is the pull-back of the Lebesgue measure under $\tilde{h}_x$, that is: $\lambda\big([\tilde{h}_x(x),\tilde{h}_x(y)]\big)=\lambda\big([1,\tilde{h}_x(y)]\big)=\nu\big([x,y]\big)$, which implies that it coincides with the measure $\mu$ defined above.
\end{proof}

\begin{remark} It is not difficult to prove that every circle homeomorphism has zero topological entropy.
\end{remark}

\begin{prop}[Denjoy-Koksma inequality]\label{DK} Let $f\in\home_{+}(S^1)$ with irrational rotation number $\rho$, and let $\mu$ be its unique invariant Borel probability measure. Let $\{q_n\}_{n\in\nt}$ be the sequence of return times given by $\rho$. For any $\psi:S^1\to\R$ (non necessarily continuous) with finite total variation\, $\varia(\psi)$ we have:$$\left|\sum_{j=0}^{q_n-1}\psi\big(f^j(x)\big)-q_n\int_{S^1}\psi\,d\mu\right|\leq\varia(\psi)\quad\mbox{for all $x \in S^1$ and all $n\in\nt$.}$$
\end{prop}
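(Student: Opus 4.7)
The plan is to construct a partition of $S^1$ into $q_n$ arcs $A_0, A_1, \dots, A_{q_n-1}$ (with pairwise disjoint interiors and union $S^1$) satisfying $\mu(A_j) = 1/q_n$ and $f^j(x) \in \overline{A_j}$ for every $j \in \{0, 1, \dots, q_n-1\}$. Granting such a partition, the inequality follows from a short computation: since $\mu(A_j) = 1/q_n$,
\begin{align*}
\sum_{j=0}^{q_n-1}\psi(f^j(x)) - q_n\int_{S^1}\psi\,d\mu = \sum_{j=0}^{q_n-1}\frac{1}{\mu(A_j)}\int_{A_j}\bigl[\psi(f^j(x)) - \psi(y)\bigr]\,d\mu(y),
\end{align*}
and since $f^j(x)$ and $y$ both lie in $\overline{A_j}$, the integrand is bounded pointwise by $\varia(\psi|_{A_j})$. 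Summing over $j$ and using the disjointness of the arcs' interiors yields $\sum_j \varia(\psi|_{A_j}) \leq \varia(\psi)$, as required.

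To construct such a partition, I would reduce to the rigid rotation via the semi-conjugacy $h_x : S^1 \to S^1$ from the discussion preceding Theorem~\ref{denjoy}. Since $\mu = h_x^{*}\lambda$ (established in the proof of Proposition~\ref{lemaunerg}) and $h_x$ is continuous, monotone, and surjective, preimages under $h_x$ of arcs are arcs and pull Lebesgue measure back to $\mu$. It therefore suffices to construct an analogous partition on the rotation side: $q_n$ arcs of Lebesgue measure $1/q_n$ each, each containing in its closure exactly one of the points $\{R_\rho^j(h_x(x))\}_{j=0}^{q_n-1}$. Pulling these back by $h_x$ produces the desired arcs $A_j$ with the required properties.

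The combinatorial heart of the proof is thus the existence of this interlacing, equal-measure partition on the rotation side. By the three-distance theorem applied at the convergent time $q_n$, the $q_n$ rotation-orbit points divide $S^1$ into $q_n - q_{n-1}$ ``short'' arcs of length $\|q_{n-1}\rho\|$ and $q_{n-1}$ ``long'' arcs of length $\|q_{n-1}\rho\| + \|q_n\rho\|$, arranged in a Sturmian pattern dictated by the continued fraction of $\rho$. Using the standard bounds $\|q_{n-1}\rho\| < 1/q_n$ and $\|q_n\rho\| < 1/q_{n+1} \leq 1/q_n$, the maximal arc length is strictly less than $2/q_n$; combined with the Sturmian arrangement of gaps, this allows a uniform grid of step $1/q_n$ on $S^1$ to be translated to some $p_0$ so that each grid interval $[p_0 + k/q_n, p_0 + (k+1)/q_n)$ contains exactly one rotation-orbit point. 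This last combinatorial verification — producing the correct shift from the three-distance structure — is the main obstacle of the argument; everything else reduces to the routine telescoping estimate above.
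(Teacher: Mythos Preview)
Your proof is correct and follows essentially the same route as the paper: reduce via the semi-conjugacy $h_x$ to an equal-length partition on the rotation side, pull it back to arcs of $\mu$-measure $1/q_n$, and telescope. The one place you diverge is the combinatorial step, which you flag as ``the main obstacle.'' In fact no shift $p_0$ is needed and the three-distance theorem is overkill: since $|\rho - p_n/q_n| < 1/(q_n q_{n+1})$ with sign depending only on the parity of $n$, each point $j\rho$ (for $0\leq j\leq q_n-1$) lies within $j/(q_n q_{n+1})<1/q_{n+1} < 1/q_n$ of the grid point $jp_n/q_n$, and all on the \emph{same} side. As $j\mapsto jp_n \pmod{q_n}$ is a bijection, the $q_n$ half-open arcs between consecutive $q_n$-th roots of unity already capture exactly one orbit point each. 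The paper uses precisely this observation, dispatching your ``main obstacle'' in a single parenthetical remark.
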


\begin{proof}[Proof of Proposition \ref{DK}] Fix $x \in S^1$ and $n\in\nt$. By combinatorics there exist $q_n$ pairwise disjoint open intervals $\{I_0,I_1,...,I_{q_n-1}\}$ in the unit circle such that $R_{\rho}^{j}(1)=e^{2\pi ij\rho}\in\overline{I_j}$ and $\lambda(I_j)=1/q_n$ for all $j\in\{0,1,...,q_n-1\}$ (just take the intervals determined by the $q_n$-roots of unity, and label them in order to have $e^{2\pi ij\rho}\in\overline{I_j}$ for all $j\in\{0,1,...,q_n-1\}$). Let $h=h_x$ be the semi-conjugacy between $f$ and $R_{\rho}$ that maps the point $x$ to the point $1$, and for each $j\in\{0,1,...,q_n-1\}$ let $J_j=h^{-1}(I_j)$. Note that $f^j(x) \in J_j$ and $\mu(J_j)=1/q_n$ for all $j\in\{0,1,...,q_n-1\}$. Moreover $\big\{\overline{J_j}\big\}_{j=0}^{q_n-1}$ is a partition of the unit circle (modulo boundary points, whose $\mu$-measure is zero since $\mu$ is $f$-invariant and $f$ has no periodic orbits). Therefore:
\begin{align}
\left|\sum_{j=0}^{q_n-1}\psi\big(f^j(x)\big)-q_n\int_{S^1}\psi\,d\mu\right|&=\left|\sum_{j=0}^{q_n-1}\left(\psi\big(f^j(x)\big)-q_n\int_{J_j}\psi\,d\mu\right)\right|\notag\\
&\leq\sum_{j=0}^{q_n-1}\left|\psi\big(f^j(x)\big)-q_n\int_{J_j}\psi\,d\mu\right|\notag\\
&=q_n\sum_{j=0}^{q_n-1}\left|\int_{J_j}\big(\psi\big(f^j(x)\big)-\psi\big)d\mu\right|\notag\\
&\leq q_n\sum_{j=0}^{q_n-1}\int_{J_j}\big|\psi\big(f^j(x)\big)-\psi\big|d\mu\notag\\
&\leq\sum_{j=0}^{q_n-1}\sup_{y \in J_j}\big|\psi\big(f^j(x)\big)-\psi(y)\big|\leq\varia(\psi)\,.\notag
\end{align}
\end{proof}

\begin{remark} If $f$ is a sufficiently smooth diffeomorphism and the observable $\psi$ is also smooth, then the Denjoy-Koksma inequality can be considerably improved: see \cite[Section 1.2]{AK} and references therein. See also the recent paper \cite{KLM}, where precise estimates on the (uniform) rate of convergence of the Birkhoff averages for H\"older observables under Diophantine rotations are given.
\end{remark}

\begin{lemma}[Zero Lyapunov exponent]\label{zeroexpdifeo} Let $f\in\Diff_{+}^{1}(S^1)$ with irrational rotation number, and let $\mu$ be its unique invariant Borel probability measure. Then:$$\int_{S^1}\log Df\,d\mu=0\,.$$
\end{lemma}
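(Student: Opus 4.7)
My plan is to combine unique ergodicity (Proposition \ref{lemaunerg}) with the chain rule and the fact that $f$ is area-preserving \emph{on average} (since it has degree one).

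First, since $f$ is $C^1$ and orientation preserving, $Df$ is a positive continuous function on $S^1$, so $\log Df$ is continuous. By Proposition \ref{lemaunerg}, $f$ is uniquely ergodic, and a standard consequence of unique ergodicity (essentially Oxtoby's theorem) is that the Birkhoff averages of any continuous observable converge \emph{uniformly} to its space average. Applied to $\log Df$, this yields
\begin{equation*}
\frac{1}{n}\sum_{j=0}^{n-1}\log Df\big(f^{j}(x)\big)\;\underset{n\to\infty}{\longrightarrow}\;c\;:=\;\int_{S^1}\log Df\,d\mu
\quad\text{uniformly in $x\in S^1$.}
\end{equation*}
By the chain rule, the left-hand side is exactly $\tfrac{1}{n}\log Df^{n}(x)$. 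So for every $\varepsilon>0$ and every sufficiently large $n$,
\begin{equation*}
e^{n(c-\varepsilon)}\;\leq\;Df^{n}(x)\;\leq\;e^{n(c+\varepsilon)}\quad\text{for all }x\in S^{1}.
\end{equation*}

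The second step is to use that $f^{n}$ is a degree-one circle diffeomorphism, so if $F$ is a lift of $f$ we have $F^{n}(x+1)=F^{n}(x)+1$, hence
\begin{equation*}
\int_{S^{1}}Df^{n}(x)\,dx\;=\;\int_{0}^{1}DF^{n}(x)\,dx\;=\;F^{n}(1)-F^{n}(0)\;=\;1.
\end{equation*}
Integrating the uniform bound above then gives
\begin{equation*}
e^{n(c-\varepsilon)}\;\leq\;1\;\leq\;e^{n(c+\varepsilon)},
\end{equation*}
that is, $c-\varepsilon\leq 0\leq c+\varepsilon$. Since $\varepsilon>0$ was arbitrary, we conclude $c=0$.

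There is no real obstacle here; the only point that deserves a brief justification is the uniform convergence of Birkhoff sums from unique ergodicity, which I would either quote as a classical fact or sketch via a standard contradiction argument (if the convergence were not uniform, one could extract a subsequence of empirical measures converging to an invariant measure different from $\mu$, contradicting Proposition \ref{lemaunerg}). Everything else reduces to the chain rule and the trivial degree-one identity $\int_{S^{1}}Df^{n}\,dx=1$.
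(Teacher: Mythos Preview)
Your proof is correct and follows essentially the same approach as the paper: both use unique ergodicity to get uniform convergence of $\frac{1}{n}\log Df^n$ to the integral, and then conclude from the fact that each $f^n$ is a circle diffeomorphism that the limit must vanish. The only cosmetic difference is in the last step: the paper simply notes that ``since $f^n$ is a diffeomorphism for all $n$, this constant must be zero'' (implicitly via the mean value theorem, which gives a point with $Df^n=1$), whereas you make this quantitative through the degree-one identity $\int_{S^1} Df^n\,dx=1$.
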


\begin{proof}[Proof of Lemma \ref{zeroexpdifeo}] Since $f\in\Diff_{+}^{1}(S^1)$, the function $\psi:S^1\to\R$ defined by $\psi=\log Df$ is a continuous function and therefore, by the unique ergodicity of $f$ (Proposition \ref{lemaunerg}), the sequence of functions$$\frac{1}{n}\sum_{j=0}^{n-1}\psi\circ f^j$$converges \emph{uniformly} to the constant $\int_{S^1}\log Df\,d\mu$. By the chain rule:
\begin{equation}\label{chain}
\sum_{j=0}^{n-1}\psi\circ f^j=\log(Df^n)\,,
\end{equation}and therefore the sequence of continuous functions $\log(Df^n)/n$ converges to the constant $\int_{S^1}\log Df\,d\mu$ uniformly in $S^1$. Since $f^n$ is a diffeomorphism for all $n\in\nt$, this constant must be zero.
\end{proof}

As we will see in Section \ref{secerg}, Lemma \ref{zeroexpdifeo} still holds true for $C^3$ circle homeomorphisms with critical points (see Theorem \ref{expzeroccm}). With Proposition \ref{DK} and Lemma \ref{zeroexpdifeo} at hand, we can finish Section \ref{sec:Denjoy}. Indeed, to prove Proposition \ref{1+bv} apply the Denjoy-Koksma inequality (Proposition \ref{DK}) with $\psi=\log Df$, and then use \eqref{chain} combined with Lemma \ref{zeroexpdifeo}. As already explained, Proposition \ref{1+bv} immediately implies Denjoy's Theorem \ref{denjoy}. In Section \ref{secyoccozthm} of the present survey we provide another proof of Theorem \ref{denjoy}, this time by means of \emph{cross-ratio} distortion arguments.

\subsection{Some remarks on smooth rigidity for circle diffeomorphisms}\label{secAHY} We finish Section \ref{secdifeos} by mentioning some results concerning \emph{smooth rigidity}, obtained during the second half of the twentieth century by Arnold, Herman and Yoccoz. Let $f$ be a $C^2$ circle diffeomorphism with irrational rotation number $\rho$. By Denjoy's Theorem \ref{denjoy}, there exists a circle homeomorphism $h$ which is a topological conjugacy between $f$ and the rigid rotation of angle $\rho$, that we denote by $R_{\rho}$. The unique invariant Borel probability measure of $R_{\rho}$ (recall Proposition \ref{lemaunerg}) is of course the normalized Lebesgue measure. Therefore, the unique $f$-invariant measure, say $\mu$, is given by the \emph{pull-back} under $h$ of the Lebesgue measure, that is: $\mu(A)=\lambda\big(h(A)\big)$ for any Borel set $A$, where $\lambda$ denotes the Lebesgue measure in the unit circle (recall that the conjugacy $h$ is unique up to post-composition with rotations, so the measure $\mu$ is well-defined).
$$
\begin{CD}
(S^1,\mu)@>{f}>>(S^1,\mu)\\
@V{h}VV             @VV{h}V\\
{(S^1,\lambda)}@>>{R_{\rho}}>{(S^1,\lambda)}
\end{CD}
$$
In particular, $\mu$ has no atoms and gives
positive measure to any open set. A classical motivation to study the measure $\mu$ comes from Birkhoff's Ergodic Theorem: given any point $x \in S^1$ and any interval $I \subset S^1$ we have that:
\begin{equation}\label{eqfreq}
\lim_{n\to+\infty}\,\frac{1}{n}\,\,\#\big\{j:0 \leq j <n\quad\mbox{and}\quad f^j(x) \in I\big\}=\mu(I)\,.
\end{equation}
Note the following dichotomy: either $\mu$ is \emph{absolutely continuous} with respect to Lebesgue, or it is \emph{singular} (otherwise we would have a decomposition $\mu=\nu_1+\nu_2$, where $\nu_1$ is absolutely continuous, $\nu_2$ is singular and both are non-zero. Being smooth, $f$ preserves the class of zero Lebesgue measure sets, and therefore both $\nu_1$ and $\nu_2$ are $f$-invariant, which contradicts the unique ergodicity of $f$).

In \cite{arnold}, Arnold gave examples (see also \cite[Section I.5]{livrounidim}) of real-analytic circle diffeomorphisms that are minimal, but where the invariant probability measure $\mu$ is \emph{not} absolutely continuous with respect to Lebesgue. In those examples the rotation number is \emph{Liouville}, and any conjugacy with the corresponding rotation maps a set of zero Lebesgue measure onto a set of positive Lebesgue measure. However, in the same work, he proved that any real-analytic diffeomorphism with \emph{Diophantine} rotation number, which is a small perturbation of a rigid rotation, is conjugate to the corresponding rotation by a real-analytic diffeomorphism. Arnold also conjectured that no restriction on how close the diffeomorphism is to a rotation should be necessary. This was later proved by Herman for a large class of Diophantine numbers \cite{hermanihes}, and extended by Yoccoz in \cite{yoccoz2} for all Diophantine numbers, with important contributions by Y.~Katznelson and D.~Ornstein \cite{ko1} (see also \cite{Stark}). Roughly speaking, any $C^{2+\varepsilon}$ diffeomorphism with Diophantine rotation number is conjugate to the corresponding rotation by a $C^1$ diffeomorphism (see \cite[Section I.3]{livrounidim} for precise statements; see also \cite{khaninteplinsky2} and references therein). In these cases, by the Radon-Nikodym theorem, the $\mu$-measure of any given interval is comparable to its Euclidean length. In particular, for smooth diffeomorphisms with Diophantine rotation number, the qualitative notion of minimality can be strengthened to a quantitative one, for \eqref{eqfreq} implies that the \emph{asymptotic frequency} with which any given point visits an open interval is comparable to the Euclidean length of the given interval.

Finally, we remark that the Arnold-Herman-Yoccoz theory described in the previous paragraph also yields the results that any two $C^{\infty}$ circle diffeomorphisms with the same Diophantine rotation number are $C^{\infty}$-conjugate to each other, and that any two real-analytic diffeomorphisms with the same Diophantine rotation number are conjugate to each other by a real-analytic diffeomorphism (again, see \cite[Section I.3]{livrounidim} and references therein). We refer the reader to the survey \cite{surveyYoc} for much more on Yoccoz's seminal contributions to the theory of circle diffeomorphisms.

\section{Multicritical circle maps}\label{seccritmaps}

Let us now define the maps which are the main object of study in the present survey. We start with the notion of {\it non-flat critical point\/}. 

\begin{definition}\label{defnaoflat} We say that a critical point $c$ of a $C^r$ one-dimensional map $f$ is \emph{non-flat} of degree $d>1$ if there exists 
a neighborhood $W$ of the critical point such that $f(x)=f(c)+\phi(x)\,\big|\phi(x)\big|^{d-1}$\, for all $x \in W$, where $\phi : W \rightarrow \phi(W)$ is a $C^r$ diffeomorphism such that $\phi(c)=0$. The number $d$ is also called the \textit{criticality}, the \textit{type} or the \textit{order} of $c$. 
\end{definition}

Note that every critical point of a real-analytic map is non-flat, and its criticality must be a positive integer.

\begin{definition}\label{defmulticritic} A \emph{multicritical circle map} is an orientation preserving $C^3$ circle homeomorphism having $N \geq 1$ critical points, all of which are non-flat.
\end{definition}

Being a homeomorphism, a multicritical circle map $f$ has a well defined rotation number (Theorem \ref{TeoPoinnumrot}). We will assume that $f$ has \emph{irrational} rotation number $\rho\in(0,1)$, in which case it follows from Proposition \ref{lemaunerg} that there exists a \emph{unique} $f$-invariant Borel probability measure $\mu$.

\begin{definition}\label{signature} We define the \emph{signature} of $f$ to be the $(2N+2)$-tuple 
\[
(\rho\,;N;\,d_0,d_1,\ldots,d_{N-1};\,\delta_0,\delta_1,\ldots,\delta_{N-1}),
\]
where $d_i$ is the criticality of the critical point $c_i$ for $0\leq i\leq N-1$, and $\delta_i=\mu[c_i,c_{i+1})$ (with the convention that $c_{N}=c_0$).
\end{definition}

In this section we provide some interesting families of real-analytic circle dynamics.

\subsection{Blaschke products}\label{secBlaschke} Consider the two-parameter family $f_{a,\omega}:\widehat{\C}\to\widehat{\C}$ of \emph{Blaschke products} in the Riemann sphere $\widehat{\C}$ given by:
\begin{equation}\label{blaschke-eq}
f_{a,\omega}(z)=e^{2\pi i\omega}\,z^2\left(\frac{z-a}{1-az}\right)\quad\mbox{for $a \geq 3$ and $\omega\in[0,1)$.}
\end{equation}

Just as any Blaschke product, every map in this family commutes with the geometric involution around the unit circle $\Phi(z)=1/\bar{z}$ (note that $\Phi$ is the identity in the unit circle), and then it leaves invariant the unit circle (Blaschke products \emph{are} the rational maps leaving invariant the unit circle). Moreover, its restriction to $S^1$ is a real-analytic homeomorphism (the fact that $f_{a,\omega}$ has topological degree one, when restricted to the unit circle, follows from the Argument Principle since it has two zeros and one pole in the unit disk). When $a>3$, each $f_{a,\omega}$ has four critical points in the Riemann sphere, which are all different and non-degenerate (quadratic), given by $0$, $\infty$,
\begin{equation}\label{primraiz}
w_a=\frac{a^2+3}{4a}+\frac{\sqrt{(a+3)(a+1)(a-1)(a-3)}}{4a}\,>1\quad\mbox{ and}
\end{equation}
\begin{equation}\label{segraiz}
1/w_a=\frac{a^2+3}{4a}-\frac{\sqrt{(a+3)(a+1)(a-1)(a-3)}}{4a}\,\,\,\in(0,1)\,.
\end{equation}
In particular, the restriction of $f_{a,\omega}$ to the unit circle is a real-analytic diffeomorphism for any $a>3$. When $a \to 3$, both critical points $w_a>1$ and $1/w_a\in(0,1)$ collapse to the point $w=1$, as we can see from \eqref{primraiz} and \eqref{segraiz}. In other words: when $a \to 3$, the family $f_{a,\omega}$ converges to the \emph{boundary} of the space of circle diffeomorphisms: for any $\omega\in[0,1)$, the restriction of $f_{3,\omega}$ to $S^1$ is a real-analytic multicritical circle map with a single critical point at $1$, which is of cubic type, and with critical value $e^{2\pi i\omega}$.

Now let $p,q \in \mathbb{C}$ with $|p|>1$, $|q|>1$, let $\omega\in[0,1)$ and consider $g_{p,q,\omega}:\widehat{\C}\to\widehat{\C}$ given by
\begin{equation}\label{Zakeri}
g_{p,q,\omega}(z)=e^{2\pi i\omega}z^{3}\left(\frac{z-p}{1-\overline{p}z}\right)\left(\frac{z-q}{1-\overline{q}z} \right).
\end{equation}Just as before, every map in this family leaves invariant the unit circle. The following fact was proved by Zakeri in \cite[Section 7]{zak}.

\begin{theorem}\label{TeoZak} For any given $\rho\in(0,1)\!\setminus\!\mathbb{Q}$ and $\delta\in(0,1)$ there exists a unique $g_{p,q,\omega}$ of the form \eqref{Zakeri} such that $g_{p,q,\omega}|_{S^{1}}$ is a bi-critical circle map with signature $(\rho\,;2;3,3;\,\delta,1-\delta)$.
\end{theorem}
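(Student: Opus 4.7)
The plan is to understand the moduli space of bi-critical members of the family \eqref{Zakeri} and to check that, modulo the evident rotational normalization, the assignment ``$g_{p,q,\omega}\mapsto$ its signature'' is a bijection onto its target. I begin by observing that each $g_{p,q,\omega}$ preserves $S^1$ (every Möbius factor $(z-p)/(1-\bar pz)$ has unit modulus on $S^1$) and restricts to an orientation-preserving circle map of topological degree $3-1-1=1$: the triple zero at the origin contributes $+3$ to the winding number while the two poles $1/\bar p,1/\bar q$ inside $\mathbb{D}$ each contribute $-1$. As a rational self-map of $\widehat{\C}$ the map $g_{p,q,\omega}$ has degree $5$, so by Riemann--Hurwitz its total critical multiplicity is $8$. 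The superattracting points $0$ and $\infty$ account for $2+2=4$ units of it (each being of local degree $3$), and the remaining four units sit in $\widehat{\C}\setminus\{0,\infty\}$, organized into two pairs by the reflection symmetry $\Phi(z)=1/\bar z$, which each $g_{p,q,\omega}$ obeys. Requiring $g_{p,q,\omega}|_{S^1}$ to be bi-critical with both critical points of cubic type is exactly the demand that each $\Phi$-pair collapses onto $S^1$; each such collapse is one real equation on $(p,q,\omega)$, so the bi-critical locus $\mathcal{B}$ is a real $3$-dimensional submanifold of the $5$-dimensional parameter domain.

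Next I would keep track of the conjugation action of rotations on \eqref{Zakeri}. A direct calculation shows that $z\mapsto e^{i\theta}z$ carries $g_{p,q,\omega}$ into $g_{e^{-i\theta}p,\,e^{-i\theta}q,\,\omega+2\theta/\pi}$, so the family is stable under rotational conjugacy; since the signature is conjugacy-invariant, the desired bijection is really one between rotation orbits in $\mathcal{B}$ and pairs $(\rho,\delta)$. Using this action I would normalize the first critical point to $c_0=1$ and coordinatize the resulting $2$-dimensional slice $\mathcal{B}_0\subset\mathcal{B}$ by the angle $\alpha\in(0,1)$ such that the second critical point is $c_1=e^{2\pi i\alpha}$, together with $\omega\in[0,1)$. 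Solving for $(p,q)$ in terms of $(\alpha,\omega)$ amounts to imposing that $c_0$ and $c_1$ are zeros of $g'$, which yields a smooth parametrization of $\mathcal{B}_0$. The signature map $\Sigma:\mathcal{B}_0\to[0,1)\times(0,1)$, $g\mapsto(\rho(g),\mu_g[c_0,c_1))$, is then continuous, since $\rho$ is continuous in $C^0$ (Poincar\'e) and, at parameters where $\rho(g)$ is irrational, unique ergodicity of $g$ (Proposition \ref{lemaunerg}) makes $\mu_g$ depend continuously on $g$ in the weak-$\ast$ sense.

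The heart of the theorem is that $\Sigma$ restricted to the irrational-rotation-number locus is a bijection onto $\bigl((0,1)\setminus\mathbb{Q}\bigr)\times(0,1)$. For surjectivity, I would first construct a topological model, i.e.\ a bi-critical circle homeomorphism with two cubic cusps at prescribed positions and with invariant measure satisfying $\mu[c_0,c_1)=\delta$ and rotation number $\rho$ (built by conjugating a model with rational rotation number or by gluing cusped pieces). Following Shishikura's quasiconformal surgery paradigm, I would extend this model to a branched covering of $\widehat{\C}$ of degree $5$ that is symmetric under $\Phi$ and has the correct critical portrait at $0$ and $\infty$, equip $\widehat{\C}$ with a $\Phi$-symmetric almost complex structure invariant under pullback by this branched cover, and straighten it via the measurable Riemann mapping theorem. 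The straightened map is rational of degree $5$, commutes with $\Phi$, and by the critical-multiplicity accounting above must lie in the family \eqref{Zakeri}; the remaining Möbius freedom is used to place the first critical point at $1$. For injectivity, two members of $\mathcal{B}_0$ realizing the same pair $(\rho,\delta)$ are topologically conjugate on $S^1$ via a homeomorphism identifying the labeled critical points; promoting this conjugacy to a quasiconformal one on $\widehat{\C}$ by a symmetric pullback construction, and invoking Sullivan-style rigidity (no invariant line fields on the Julia set for this combinatorial class), forces the two Blaschke products to coincide.

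The main obstacle is the surgery step that yields surjectivity: the construction must be carried out $\Phi$-symmetrically, with controlled critical portrait, so that the straightened rational map lies precisely in the family \eqref{Zakeri} and realizes the prescribed $(\rho,\delta)$. A softer alternative would be a degree-theoretic argument on the two-dimensional moduli space $\mathcal{B}_0$, using continuity and properness of $\Sigma$ at the boundary (where critical points collide, or $|p|,|q|\to 1^+$ or $\infty$) to conclude that $\Sigma$ has topological degree one; but verifying properness there demands essentially the same analytic control as the surgery itself.
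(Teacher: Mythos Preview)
The paper does not give its own proof of this theorem; it is quoted from Zakeri \cite[Section 7]{zak}. Zakeri's argument is considerably more direct than the one you sketch. He shows by an explicit algebraic computation that, once one fixes an ordered pair of distinct points $c_0,c_1\in S^1$, there is (up to the swap $p\leftrightarrow q$) a unique choice of $(p,q)$ with $|p|,|q|>1$ making $c_0,c_1$ the two cubic critical points of $g_{p,q,\omega}|_{S^1}$, for every $\omega$. For fixed critical-point positions the rotation number is continuous and monotone of degree one in $\omega$, so each irrational $\rho$ is hit by exactly one $\omega$. The remaining step is to show that, with $\rho$ fixed, the $\mu$-mass of the arc $[c_0,c_1)$ sweeps out $(0,1)$ as the second critical point varies; this is again a continuity/monotonicity argument. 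No quasiconformal surgery or Julia-set rigidity enters.

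Your route via surgery and rigidity is a legitimate alternative strategy, but as written it has genuine gaps. For surjectivity, the surgery presupposes a bi-critical model with the prescribed $(\rho,\delta)$, which is essentially what you are trying to construct; ``gluing cusped pieces'' does not obviously produce a homeomorphism whose unique invariant measure assigns the prescribed mass to the arc between the critical points. For injectivity, invoking ``no invariant line fields on the Julia set'' needs justification here: the maps $g_{p,q,\omega}$ have degree $5$ and do \emph{not} preserve the unit disk (the poles $1/\bar p,1/\bar q$ lie inside $\mathbb{D}$), so one cannot simply identify the Julia set with $S^1$, and its Lebesgue measure is not a priori zero. A pull-back/rigidity argument can presumably be pushed through, but it requires analysing the specific global dynamics of these Blaschke products and is not a consequence of the general Sullivan machinery alone. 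Your observation about the rotational conjugacy action is correct and shows that the uniqueness in the statement is to be read modulo a normalization (e.g.\ $c_0=1$), which is indeed how Zakeri sets things up.
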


\begin{remark}\label{remzak} It would be interesting to extend Zakeri's construction in order to obtain representative families of Blaschke products that restrict to multicritical circle maps with $N \geq 3$ critical points. Such construction should be useful to understand rigidity and renormalization problems for multicritical circle maps with any given number of critical points (see sections \ref{smoothrigid} to \ref{sec:conc}).
\end{remark}

\subsection{The Arnold family}\label{secArnold} Consider the two-parameter family $F_{a,b}:\C\to\C$ of entire maps in the complex plane given by:$$F_{a,b}(z)=z+a-\left(\frac{b}{2\pi}\right)\sin(2\pi z)\quad\mbox{for $a\in[0,1)$ and $b \geq 0$.}$$

\begin{figure}[t]
\begin{center}~
\hbox to \hsize{
\includegraphics[width=5.0in]{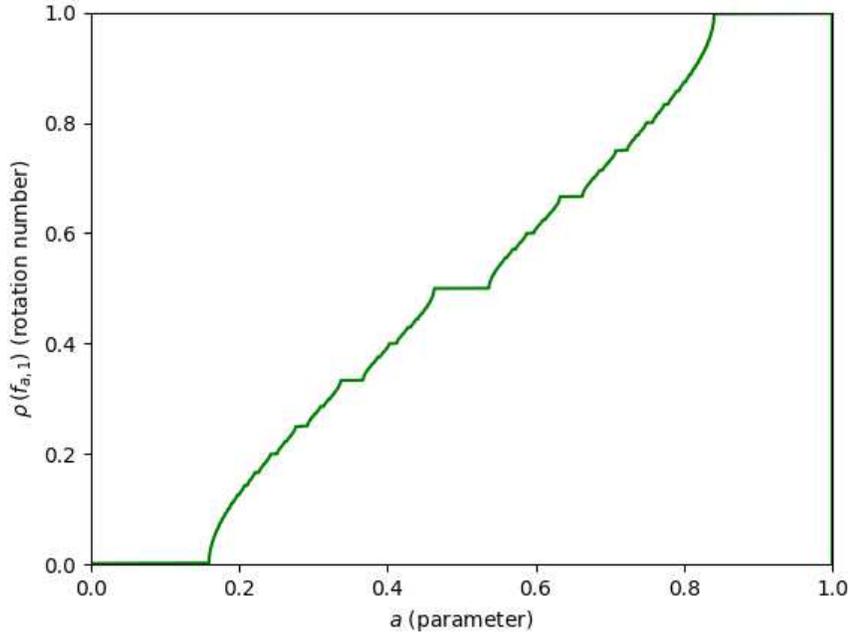}
}
\end{center}
\caption[devilstair]{\label{devilstair}The rotation number, as it varies in the one parameter family $f_{a,1}:\;x\mapsto x+ a - \frac{1}{2\pi}\sin{2\pi x}$, produces a devil staircase.  }
\end{figure}

Since each $F_{a,b}$ commutes with unitary horizontal translation, it is the lift of a holomorphic map of the punctured plane $f_{a,b}:\C\setminus\{0\}\to\C\setminus\{0\}$ under the universal cover $z \mapsto e^{2\pi iz}$. Since $F_{a,b}$ preserves the real axis, $f_{a,b}$ preserves the unit circle. This classical two-parameter family of real-analytic circle maps was introduced by Arnold in \cite{arnold}, and it is known as the \emph{Arnold family}.

For $b=0$, the family $f_{a,b}:S^1 \to S^1$ is just the family of rigid rotations $z \mapsto e^{2\pi ia}z$. As it is easy to check, for $b \in (0,1)$ the Arnold family is still contained in the space of real-analytic circle diffeomorphisms. For $b=1$, however, the Arnold family belongs to the \emph{boundary} of the space of circle diffeomorphisms: each $F_{a,1}$ projects to an orientation preserving real-analytic circle homeomorphism $f_{a,1}$, which presents a critical point (of cubic type) at the point $1$, the projection of the integers. The rotation number of $f_{a,1}$ varies with the parameter $a$ in a continuous, monotone, non-decreasing way, and the resulting graph is an example of what one calls a \emph{devil staircase}; see Figure \ref{devilstair}. Each interval $\big\{a\in[0,1):\rho(f_{a,1})=\theta\big\}$ degenerates to a point whenever $\theta$ is irrational and moreover, the set $\big\{a\in[0,1):\rho(f_{a,1})\in\R\setminus\Q\big\}$ has zero Lebesgue measure \cite{swiatek}. For integers $0 \leq p < q$, the set $\big\{a\in[0,1):\rho(f_{a,1})=p/q\big\}$ is a non-degenerate closed interval. Its interior is made up of critical circle maps with two periodic orbits (both of period $q$), one attracting and one repelling, which collapse to a single parabolic orbit in the boundary of this interval~\cite{epskeentres}. See Item \eqref{itemCvi} in Section \ref{sec:conc} for more on this.

Finally, we remark that for $b>1$ the maps $f_{a,b}:S^1 \to S^1$ are no longer invertible (they present two quadratic critical points). The dynamics of these maps is much richer than the case of homeomorphisms: the rotation number becomes a rotation \emph{interval}, and typical dynamics here have positive topological entropy, infinitely many periodic orbits (coexisting with dense orbits) and, under certain conditions on the combinatorics, they preserve an absolutely continuous probability measure (see \cite{boyland,charles,CGP19,mis} and references therein).

\medskip

The examples presented in both \ref{secBlaschke} and \ref{secArnold} show how multicritical circle maps arise as bifurcations from circle diffeomorphisms to endomorphisms, and in particular, from zero to positive topological entropy (compare with infinitely renormalizable unimodal maps \cite[Chapter VI]{livrounidim}). This is one of the main reasons why multicritical circle maps attracted the attention of physicists and mathematicians interested in the \emph{boundary of chaos} \cite{dgk,feigetal,ksh,lanford1,lanford2,mckay,mackay,ostlundetal,rand1,rand2,rand3, sh}.

\section{No wandering intervals}\label{secyoccozthm} Being a homeomorphism, a multicritical circle map $f$ has a well defined rotation number. Just as before, we will focus on the case when $f$ has no periodic orbits. In the early eighties, J.-C. Yoccoz \cite{yoccoz1} proved that $f$ has no wandering interval. More precisely, we have the following fundamental result. 

\begin{theorem}[Yoccoz]\label{yoccoztheorem} Let $f$ be a multicritical circle map with irrational rotation number $\rho$. 
Then $f$ is topologically conjugate to the rigid rotation $R_{\rho}$, i.e., there exists a homeomorphism $h: S^{1} \rightarrow S^{1}$ such that $h \circ f = R_{\rho} \circ h.$
\end{theorem}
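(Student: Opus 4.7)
The plan is to argue by contradiction. Assume $f$ admits a wandering interval; by the semi-conjugacy construction in Section \ref{sec:Denjoy}, this corresponds to a point where the semi-conjugacy $h_x$ to $R_\rho$ collapses a nontrivial arc. Fix a maximal wandering interval $J$; its forward iterates $\{f^n(J)\}_{n\ge 0}$ are pairwise disjoint, so $\sum_n |f^n(J)|\le 1$ and therefore $|f^n(J)|\to 0$. To contradict this, I will exhibit a sequence $n_k\to\infty$ and intervals $T_k\supset J$ such that $f^{n_k}|_{T_k}$ is injective, $f^{n_k}(T_k)$ is a definite two-sided neighborhood of $f^{n_k}(J)$, and $f^{n_k}|_{T_k}$ has uniformly bounded distortion; this forces $|f^{n_k}(J)|$ to stay bounded below by a positive constant.

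The key technical tool is a \emph{Koebe principle} for multicritical circle maps, formulated via the cross-ratio $\Cr(T,M) = |M|\,|T|/(|L|\,|R|)$, where $M\subset T$ and $L,R$ are the two components of $T\setminus M$. For $C^3$ maps with negative Schwarzian this cross-ratio does not increase under iteration. A multicritical circle map has Schwarzian blowing up at critical points, but the non-flatness hypothesis of Definition \ref{defnaoflat} provides the local normal form $x\mapsto f(c_i)+\phi(x)|\phi(x)|^{d_i-1}$, and the pure power map $u\mapsto u|u|^{d_i-1}$ has negative Schwarzian. Patching these pieces together (a $C^3$ coordinate change localized near each $c_i$), one obtains a cross-ratio inequality of the form
\[
\Cr\bigl(f^n(T),f^n(M)\bigr)\;\le\; K\cdot \Cr(T,M),
\]
valid whenever $f^n|_T$ is injective, with $K=K(f)$ depending only on the orders $d_0,\dots,d_{N-1}$ and the $C^3$ norm of $f$.

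The intervals $T_k$ come from the combinatorics of the rotation number: using the return times $q_n$, one constructs a nested sequence of dynamical partitions of $S^1$ (of combinatorial size $q_n+q_{n+1}$ at level $n$) whose atoms shrink to points and whose forward iterates up to time $q_n$ are pairwise disjoint. Taking $T_n$ to be the atom containing $J$ together with its two neighbors, this disjointness lets me apply the Koebe principle to $f^{q_n}|_{T_n}$, which gives bounded distortion; consequently $|f^{q_n}(J)|/|J|$ is comparable to $|f^{q_n}(T_n)|/|T_n|$, which is itself bounded below since $f^{q_n}(T_n)$ is a neighboring atom of $T_n$ in the same combinatorial partition. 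This contradicts $|f^n(J)|\to 0$, and once wandering intervals are ruled out the semi-conjugacy $h_x$ from Section \ref{sec:Denjoy} is automatically a homeomorphism, yielding the desired conjugacy.

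The main obstacle is establishing the Koebe principle, i.e.\ controlling the cumulative cross-ratio distortion of the long composition $f^{q_n}$ as it passes near the finitely many critical points many times before returning. Each critical passage introduces a multiplicative factor depending on the non-flatness exponent $d_i$, and one must verify that the product of these factors, together with the contributions from the diffeomorphic pieces in between, stays bounded independently of $n$. The insight (due to Yoccoz) is that the combinatorial constraint of disjointness of iterates forces the critical passages of $T_n$ to be "well spread out" along the orbit, so that the total distortion can be estimated by a finite geometric-type sum controlled by the non-flat orders; making this quantitative, in the absence of any a priori geometric bounds, is the technically delicate heart of the proof.
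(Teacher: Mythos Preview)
Your overall architecture matches the paper's: argue by contradiction assuming a wandering interval, and derive the contradiction from a cross-ratio distortion estimate. You also correctly identify that the Cross-Ratio Inequality (Theorem~\ref{CRIswiatek} in the paper) is the technical heart, and that non-flatness is what makes the estimate go through near the critical points.

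However, the way you pass from the cross-ratio estimate to the contradiction has a real gap. A Koebe principle gives bounded distortion of $f^{q_n}|_{M}$ only when you also have \emph{space} on the image side: both components of $f^{q_n}(T_n)\setminus f^{q_n}(J)$ must be comparable to $f^{q_n}(J)$. You never establish this. Your next claim, that $|f^{q_n}(T_n)|/|T_n|$ is bounded below ``since $f^{q_n}(T_n)$ is a neighboring atom of $T_n$'', presupposes that adjacent atoms of the dynamical partition are comparable. But that is precisely the content of the real bounds (Theorem~\ref{realbounds}), which are proved \emph{after} this theorem and, crucially, are stated only for partitions anchored at critical points; Theorem~\ref{teoAmagro} even shows that for generic base points they fail. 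Since $J$ need not be near a critical point, you cannot invoke such comparability, and as written the argument is circular.

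The paper's reduction avoids this entirely. Instead of trying to bound $|f^{q_n}(J)|$ from below via Koebe, it proves directly from the Cross-Ratio Inequality that \emph{dynamically symmetric} intervals are comparable (Lemma~\ref{lemmasymint}): for every $x$ and every $n$, $|f^{q_n}(x)-x|\asymp |x-f^{-q_n}(x)|$. This requires no a~priori geometry of partition atoms and no Koebe space hypothesis. The contradiction is then obtained at an \emph{endpoint} $a$ of a maximal wandering interval $J=(a,b)$: since the orbit of $a$ never enters $J$, the interval $R_n=(a,f^{q_n}(a))$ contains $J$ for all $n$, so $|R_n|\ge |J|$; by Lemma~\ref{lemmasymint} the backward interval $L_n=(f^{-q_n}(a),a)$ is then bounded below; but $a$ lies in the minimal set and is recurrent, so $|L_n|\to 0$. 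This is the step you should replace your Koebe argument with.
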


It is not possible to remove the non-flatness condition on the critical points (recall definitions \ref{defnaoflat} and \ref{defmulticritic}): in \cite{hall}, Hall was able to construct $C^\infty$ homeomorphisms of the circle with no periodic points and no dense orbits (see also \cite{pal}).

In the presence of critical points, we should not expect distortion estimates on the first derivative as the ones developed for diffeomorphisms in Section \ref{sec:Denjoy} (we certainly can \emph{not} apply Proposition \ref{1+bv} to a multicritical circle map $f$ since $\log{Df}$ is unbounded, see Figure \ref{logcocycle}). In this section we introduce a new notion of distortion, more suitable for one-dimensional dynamics with non-flat critical points, and we briefly explain the main ideas in the proof of Yoccoz's Theorem \ref{yoccoztheorem} (further applications of this distortion tool to more general no wandering interval problems, mainly developed in \cite{dMvS89,MdMvS}, can be found in \cite[Chapter IV]{livrounidim}).

\begin{figure}[t]
\begin{center}~
\hbox to \hsize{\psfrag{0}[][][1]{$c_i$} \psfrag{S}[][][1]{$S^1$}
\psfrag{1}[][][1]{$\,c_j$}  \psfrag{2}[][][1]{$\ \ \ \ c_k$}
\psfrag{L}[][][1]{$\ \ \ \ \ \ \log{Df}$} 
\hspace{1.0em} \includegraphics[width=5.0in]{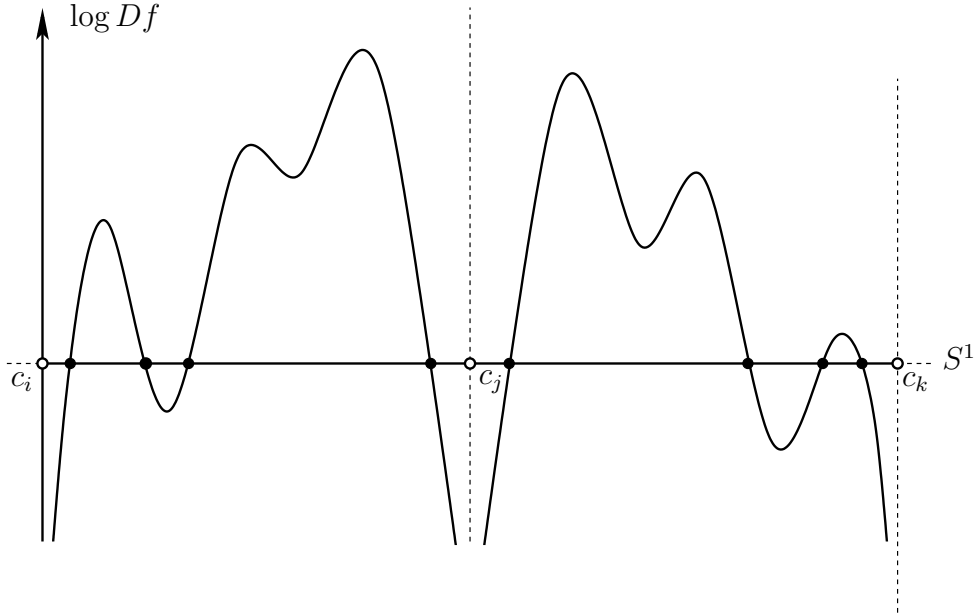}
}
\end{center}
\caption[logcocycle]{\label{logcocycle} The cocycle $\log{Df}$ is unbounded for a multicritical circle map $f$.}
\end{figure}

Given two intervals $M\subset T\subset S^{1}$ with $M$ compactly contained in $T$ (written $M\Subset T$) let us denote by $L$ and $R$ the two connected components of $T\setminus M$. We define the \emph{cross-ratio} of the pair $M,T$ as follows:
\begin{equation*}
[M,T]= \frac{|L|\,|R|}{|L\cup M|\,|M \cup R|} \in (0,1),
\end{equation*}where $|I|$ denotes the Euclidean length of an interval $I$. Write $M=(b,c)$ and $T=(a,d)$, and let $\phi$ be the M\"obius transformation determined by $\phi(a)=0$, $\phi(c)=1$ and $\phi(d)=\infty$. Then$$[M,T]=\phi(b)=\left(\frac{d-c}{c-a}\right)\left(\frac{b-a}{d-b}\right).$$

The \textit{cross-ratio distortion} of any element $f\in\home(S^1)$ on the pair of intervals $(M,T)$ is defined to be the ratio
\begin{equation*}
\crd(f;M,T)= \frac{\big[f(M),f(T)\big]}{[M,T]}\,.
\end{equation*}
By definition, the cross-ratio is preserved by M\"obius transformations.

\medskip

The main tool available to control cross-ratio distortion around non-flat critical points is the \textit{Schwarzian derivative}, which is the differential operator defined for all $x$ regular point of $f$ by:
 \begin{equation*}
  Sf(x)= \dfrac{D^{3}f(x)}{Df(x)} - \dfrac{3}{2} \left( \dfrac{D^{2}f(x)}{Df(x)}\right)^{2}.
 \end{equation*}

The kernel of the Schwarzian derivative is the group of M\"obius transformations\footnote{The fact that the Schwarzian derivative vanishes at M\"obius transformations 
is a straightforward computation. On the other hand, given an increasing $C^3$ diffeomorphism $f$, consider $g=(Df)^{-1/2}$ and note that $Sf=-2\,D^2g/g$. If $f$ has zero Schwarzian derivative then $g$ is affine, which implies at once that $f$ is a M\"obius transformation.}. In particular, the cross-ratio is preserved by maps with zero Schwarzian derivative. As it turns out, it is weakly contracted by maps with negative Schwarzian derivative:

\begin{lemma}\label{contracts} If $f$ is a $C^3$ diffeomorphism with $Sf<0$, then for any two intervals $M\subset T$ contained in the domain
of $f$ we have $\crd(f;M,T)<1$, that is, $\big[f(M),f(T)\big]<[M,T]$.
\end{lemma}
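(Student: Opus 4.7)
The plan is to exploit two defining properties of the Schwarzian derivative: its chain rule $S(f \circ g) = (Sf \circ g)(Dg)^2 + Sg$, and the fact that its kernel consists precisely of the M\"obius transformations (which furthermore preserve the cross-ratio). Writing $T = (a,d)$, $M = (b,c)$ with $a<b<c<d$, I would introduce the unique M\"obius $\phi$ with $\phi(a) = f(a)$, $\phi(b) = f(b)$, $\phi(c) = f(c)$. Since $[\phi(M), \phi(T)] = [M, T]$, a short algebraic manipulation (using that $\phi$ agrees with $f$ at three of the four endpoints) gives
\[
\frac{[f(M), f(T)]}{[M, T]} \;=\; \frac{f(d) - \phi(c)}{\phi(d) - \phi(c)}\cdot\frac{\phi(d) - \phi(b)}{f(d) - \phi(b)}.
\]
Setting $\delta = f(d) - \phi(d)$ and using $\phi(b)<\phi(c)<\phi(d)$, one verifies directly that this ratio is strictly less than $1$ if and only if $\delta<0$, i.e., $f(d)<\phi(d)$. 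The whole problem thus reduces to this one scalar inequality.

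To verify $f(d) < \phi(d)$, consider $h = \phi^{-1}\circ f$. By the chain rule and $S\phi^{-1}\equiv 0$, we get $Sh = Sf < 0$, and by construction $h$ fixes the three points $a<b<c$. Setting $G = (Dh)^{-1/2}>0$, the standard identity $Sh = -2\,D^2 G/G$ shows that $G$ is strictly convex. Applying Rolle's theorem to $\psi = h - \Id$ (which vanishes at $a,b,c$) produces $\alpha \in (a,b)$ and $\beta \in (b,c)$ with $Dh(\alpha) = Dh(\beta) = 1$, that is $G(\alpha)=G(\beta)=1$. Strict convexity then forces $G > 1$ on $(\beta,+\infty)$, hence $Dh < 1$ there, so $\psi$ is strictly decreasing on an interval containing $[c,d]$. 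Since $\psi(c)=0$ and $c<d$, we conclude $\psi(d)<0$, i.e., $h(d)<d$, and therefore $f(d) = \phi(h(d)) < \phi(d)$ by monotonicity of $\phi$.

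The main technical obstacle is ensuring that $\phi$ is well-defined, pole-free and monotone on all of $[a,d]$, so that the geometric comparison $f(d)<\phi(d)$ makes sense. Since the lemma is local in nature, this can be handled either by a continuity argument (shrinking $T$ slightly and extending by connectedness), or by exploiting the symmetry of the four endpoints: among the four possible choices of M\"obius transformation matching $f$ at any three of the points $a,b,c,d$, at least one will place the remaining endpoint inside a single monotone branch of $\phi$. Once this is dispatched, the two-step reduction above delivers the desired strict contraction of the cross-ratio.
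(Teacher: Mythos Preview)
The paper does not prove this lemma; it simply refers the reader to the appendix of \cite{EdFG}. Your approach --- normalizing by a M\"obius transformation $\phi$ agreeing with $f$ at three of the four endpoints and then analyzing $h=\phi^{-1}\circ f$ via the strict convexity of $(Dh)^{-1/2}$ --- is a standard and valid route, and your core argument (Rolle plus convexity yielding $h(d)<d$) is correct and complete.

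The one soft spot is the pole of $\phi$, which you correctly flag but whose resolution you leave somewhat vague. Both of your suggested fixes are more delicate than necessary. A cleaner way is to observe that $h=\phi^{-1}\circ f$ is \emph{always} well-defined on $[a,d]$. Indeed, if the pole $p_0$ of $\phi$ lies in $(c,\infty)$, a cyclic-order check on $\mathbb{RP}^1$ shows the pole $q_0=\phi(\infty)$ of $\phi^{-1}$ satisfies $q_0<f(a)$, so $\phi^{-1}$ is finite on $f([a,d])$; the only remaining bad configuration ($p_0<a$ with $q_0\in(f(c),f(d)]$) is excluded by your own convexity argument, since it would force $h(x)\to+\infty$ as $x\to f^{-1}(q_0)^-$ while simultaneously $h(x)<x<f^{-1}(q_0)$ on that interval. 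Once $h$ is defined on all of $[a,d]$ with $h(d)<d$, one also gets $h(d)<p_0$ automatically (as $h(d)=\phi^{-1}(f(d))$ lands in the branch $(-\infty,p_0)$), so $\phi$ is monotone on $[a,h(d)]$. Then, writing $f=\phi\circ h$ and using $h(a)=a$, $h(b)=b$, $h(c)=c$,
\[
[f(M),f(T)]\;=\;\bigl[\phi(M),\,\phi((a,h(d)))\bigr]\;=\;[M,(a,h(d))]\;<\;[M,(a,d)]\;=\;[M,T],
\]
the last inequality being the elementary fact that the map $d'\mapsto \dfrac{(b-a)(d'-c)}{(c-a)(d'-b)}$ is increasing in $d'>c$. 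This sidesteps the comparison ``$f(d)<\phi(d)$'' entirely and makes the argument uniform in all cases.
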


A straightforward computation shows that the Schwarzian derivative is always negative around a non-flat critical point. More precisely, we have the following fact.

\begin{lemma}\label{negsch} Given $f$ with a non-flat critical point $c$, there exist a neighbourhood $U$ of $c$ and a constant $K=K(f)>0$ such that for all $x \in U \setminus \{ c \}$ we have$$Sf(x)< -\,\frac{K}{(x-c)^2}\,.$$
\end{lemma}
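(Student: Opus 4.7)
\medskip

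\noindent\textbf{Proof proposal for Lemma \ref{negsch}.} The plan is to exploit the canonical form of $f$ near $c$ given by Definition \ref{defnaoflat}. Writing $f(x) = f(c) + \psi(\phi(x))$ on the neighbourhood $W$ of $c$, where $\phi: W \to \phi(W)$ is a $C^{3}$ diffeomorphism with $\phi(c)=0$ and $\psi(y) = y|y|^{d-1}$, we have $f \in C^{3}$ on $W \setminus \{c\}$, and the Schwarzian derivative obeys the standard chain rule
\[
Sf(x) \;=\; (S\psi)(\phi(x))\,(D\phi(x))^{2} + S\phi(x).
\]
So the problem reduces to computing $S\psi$ at nonzero points and then controlling the effect of $\phi$.

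First, I would compute $S\psi$ explicitly. For $y \neq 0$ one has $D\psi(y) = d|y|^{d-1}$, $D^{2}\psi(y) = d(d-1)\,\mathrm{sgn}(y)\,|y|^{d-2}$ and $D^{3}\psi(y) = d(d-1)(d-2)|y|^{d-3}$, so that a direct calculation yields
\[
S\psi(y) \;=\; \frac{(d-1)(d-2)}{y^{2}} - \frac{3(d-1)^{2}}{2\,y^{2}} \;=\; -\,\frac{d^{2}-1}{2\,y^{2}},
\]
which is strictly negative since $d>1$. (Note that the same formula holds for $y<0$; this is immediate from the fact that $\psi$ is odd with respect to $0$.)

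Next, I would pass from $\psi$ to $f$ by controlling the geometry of the local chart $\phi$. Since $\phi$ is a $C^{3}$ diffeomorphism with $\phi(c)=0$, I may shrink $W$ to a neighbourhood $U$ of $c$ on which there exist constants $0 < m \leq M$ with $m \leq |D\phi| \leq M$; the mean value theorem then gives $|\phi(x)| \leq M\,|x-c|$ for every $x \in U$, hence $\phi(x)^{2} \leq M^{2}(x-c)^{2}$. Also $S\phi$ is continuous on the closure of $U$ (since $\phi \in C^{3}$ with $D\phi$ bounded away from $0$), hence bounded by some constant $C \geq 0$ on $U$. Substituting into the chain rule above,
\[
Sf(x) \;\leq\; -\,\frac{(d^{2}-1)\,m^{2}}{2\,M^{2}\,(x-c)^{2}} \,+\, C \qquad \text{for every } x \in U \setminus\{c\}.
\]

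Finally, I would absorb the additive constant $C$ into the singular term by shrinking $U$ once more: if $|x - c|$ is small enough that $C \leq \frac{(d^{2}-1)m^{2}}{4 M^{2} (x-c)^{2}}$, then $Sf(x) < -K/(x-c)^{2}$ with $K = (d^{2}-1)m^{2}/(4M^{2}) > 0$, as desired. The argument is essentially a computation, so no serious obstacle is expected; the only point that deserves a brief remark is the validity of the Schwarzian chain rule despite $\psi$ failing to be $C^{3}$ at $0$. This causes no trouble because the lemma concerns only $x \neq c$, for which $\phi(x) \neq 0$ and $\psi$ is smooth on a neighbourhood of $\phi(x)$.
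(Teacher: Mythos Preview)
Your proof is correct and follows exactly the approach the paper alludes to: the paper calls this ``a straightforward computation'' and defers to the appendix of \cite{EdFG}, and the computation it has in mind is precisely the one you carry out --- use the non-flat canonical form, compute $S\psi(y)=-(d^{2}-1)/(2y^{2})$ for the model map $\psi(y)=y|y|^{d-1}$, then apply the Schwarzian cocycle identity and absorb the bounded term $S\phi$ by shrinking the neighbourhood. Your treatment of the sign cases and the remark about $\psi$ only needing to be $C^3$ away from the origin are both clean and correct.
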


Proofs of these basic facts can be found, for instance, in the appendix of \cite{EdFG}. Finally, given a family of intervals $\mathcal{F}$ on $S^1$ and a positive integer $m$, we say that $\mathcal{F}$ has {\it multiplicity of intersection at most $m$\/} if each $x \in S^1$ belongs to at most $m$ elements of $\mathcal{F}$.

\begin{theorem}\label{CRIswiatek} Given a multicritical critical circle map $f:S^1\to S^1$, there exists a constant $C>1$, depending only on $f$, such that the following holds. If $M_i\Subset T_{i} \subset S^1$, where $i$ runs through some finite set of indices $\mathcal{I}$, are intervals on the circle such that the family $\{T_i: i\in \mathcal{I}\}$ has multiplicity of intersection at most $m$, then
\begin{equation}\label{crossprod}
\prod_{i \in \mathcal{I}} \crd(f;M_{i},T_{i}) \leq C^{m}\ .
\end{equation}
\end{theorem}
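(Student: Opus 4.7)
The plan is to exploit the sign of the Schwarzian derivative combined with a weighted-integral representation of $\log\crd(f;\cdot,\cdot)$. First, by Lemma \ref{negsch} one can choose pairwise disjoint open neighborhoods $U_1,\ldots,U_N$ of the critical points $c_1,\ldots,c_N$ on which $Sf<0$. On the compact complement $V=S^1\setminus\bigcup_j U_j$ the map $f$ is a $C^3$ diffeomorphism onto its image, so there is $B=B(f)>0$ with $Sf\le B$ on $V$. Hence the positive part $(Sf)^+$ is bounded by $B$ and supported inside $V$.

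The key analytic input I would use is the classical distortion formula which expresses $\log\crd(f;M,T)$ as a weighted integral of $Sf$ against a positive Koebe-type kernel on $L\times R$ (where $L$ and $R$ are the two components of $T\setminus M$), whose total mass is of order $|T|$ uniformly in $M\Subset T$. A clean reference for this formula is the appendix of \cite{EdFG} (see also \cite[Chapter IV]{livrounidim}). For each pair $(M_i,T_i)$ away from the critical set, Lemma \ref{contracts} already gives $\crd(f;M_i,T_i)\le 1$ on the portion where $Sf<0$, while the contribution coming from the region where $Sf>0$ is bounded by a constant times $|T_i\cap V|$.

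Combining these ingredients, I would obtain, for some $K_0=K_0(f)>0$,
\[
\log\crd(f;M_i,T_i)\;\le\;K_0\,|T_i|\quad\text{for every }i\in\mathcal{I}.
\]
Summing over $i$ and using that the family $\{T_i\}$ has multiplicity at most $m$ on $S^1$, one has $\sum_i|T_i|\le m\cdot|S^1|=m$, hence $\sum_i\log\crd(f;M_i,T_i)\le K_0\,m$, which gives the desired inequality \eqref{crossprod} with $C=e^{K_0}$.

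The main obstacle I expect is the rigorous handling of intervals $T_i$ whose interior contains a critical point $c_j$, since Lemma \ref{contracts} does not apply directly ($f|_{T_i}$ fails to be a diffeomorphism). To handle this I would use the non-flat normal form $f(x)=f(c_j)+\phi_j(x)\big|\phi_j(x)\big|^{d_j-1}$ of Definition \ref{defnaoflat}, factoring $f$ near $c_j$ as a $C^3$ diffeomorphism $\phi_j$ composed with a pure power map $g_{d_j}(y)=y|y|^{d_j-1}$, and verifying by direct computation (using $Sg_d<0$ on $\R\setminus\{0\}$ together with the obvious symmetry) that $g_{d_j}$ is cross-ratio non-expanding even on intervals straddling $0$; the diffeomorphic factor $\phi_j$ is then controlled by the same bounded-Schwarzian estimate as on $V$. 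A small additional care is needed to check that partitioning each $T_i$ along the finitely many cutting points $\{c_j\}\cup\partial U_j$ does not inflate the overall multiplicity by more than a factor depending only on $N$, so that the final bound remains of the form $C^m$.
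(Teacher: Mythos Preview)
Your overall architecture is close to the paper's own argument, but there is a genuine error in the step that handles intervals straddling a critical point. You claim that the pure power map $g_d(y)=y|y|^{d-1}$ is cross-ratio non-expanding on intervals containing $0$. This is false. Take for instance $d=3$, $T=(-1,1)$, $M=(-\tfrac12,\tfrac12)$. Then $[M,T]=\tfrac19$, while $g_3(M)=(-\tfrac18,\tfrac18)$ and $g_3(T)=T$, so $[g_3(M),g_3(T)]=(7/9)^2\approx 0.605$, giving $\crd(g_3;M,T)\approx 5.45>1$. The intuition ``$Sg_d<0$ off $0$ plus symmetry'' does not survive the singularity at the origin: the power map contracts a centered $M$ much more than its flanking intervals, which \emph{raises} the cross-ratio. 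What is true (and what the paper uses) is a uniform upper bound $\crd(f;M_i,T_i)\le 9d^2$ for each $T_i$ containing a critical point, combined with the counting observation that there are at most $mN$ such indices; this contributes a factor $(9d^2)^{mN}$, which is of the required shape $C^m$.

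A secondary point: your claimed inequality $\log\crd(f;M_i,T_i)\le K_0|T_i|$ via an integral kernel for $Sf$ is not standard in the form you state it, and the references you cite do not contain it. The paper bypasses this entirely: away from the critical neighborhoods it uses the Mean Value Theorem to write $\log\crd(f;M_i,T_i)$ as a signed sum of four values of $\log Df$ on $T_i$, and then bounds the total by $2m\cdot\varia(\log Df|_{\mathcal V})$ directly from the multiplicity hypothesis. This is both simpler and avoids any appeal to a Schwarzian-kernel identity. If you want to keep your Schwarzian-based route for the diffeomorphic pieces, you would need to supply a precise statement and proof of the kernel estimate you invoke; otherwise, the bounded-variation argument is the cleaner substitute.
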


Theorem \ref{CRIswiatek}, usually named \emph{The Cross-Ratio Inequality}, was first obtained by Yoccoz in a slightly different form \cite[Section 4]{yoccoz1}. The specific version stated above can be found in \cite[Section 2]{swiatek}. Roughly speaking, its proof goes as follows: let $\mathcal{U}=\bigcup W_i$, where the $W_i$'s are as in Definition \ref{defnaoflat}, and let $\mathcal{V}$ be an open set with $\mathcal{U}\cup \mathcal{V}=S^1$ whose closure does not contain any critical point of $f$.
We assume without loss of generality that the maximum length of the $T_i$'s is smaller than the Lebesgue number of the covering 
$\{\mathcal{U},\mathcal{V}\}$. 
Write the product on the left-hand side of \eqref{crossprod} as $P_1\cdot P_2$, where 
\[
 P_1\;=\;\prod_{T_i\subseteq \mathcal{V}}  \crd(f;M_{i},T_{i})\ \ \ ,\ \ \ P_2\;=\;\prod_{T_i\subseteq \mathcal{U}}  \crd(f;M_{i},T_{i})\ .
\]
Then on the one hand we claim that $P_1\leq e^{2mV}$, where $V=\varia(\log{Df}|_{\mathcal{V}})$. Indeed:
\begin{align}\label{estCrDBV}
\log P_1&=\sum_{T_i\subseteq \mathcal{V}}\log\crd(f;M_{i},T_{i})\\\notag
&=\sum_{T_i\subseteq \mathcal{V}}\log Df(w_i)-\log Df(x_i)+\log Df(y_i)-\log Df(z_i) \leq 2mV\,,
\end{align}where the points $w_i$, $x_i$, $y_i$ and $z_i$ belong to $T_i$ and are given by the Mean Value Theorem. On the other hand, the factors making up $P_2$ are of two types: those such that $f|_{T_i}$ is a diffeomorphism onto its image, and those such that $T_i$ contains some critical point of $f$. By Lemma \ref{negsch}, all factors of the first type are diffeomorphisms with negative Schwarzian and therefore, by Lemma \ref{contracts}, satisfy $\crd(f;M_i,T_i)<1$. Factors of the second type are easily controlled by the non-flatness condition: $\crd(f;M_{i},T_{i}) \leq 9d^2$, where $d>1$ is the order of the critical point that belongs to $T_i$ (see for instance \cite[Lemma 2.2(4)]{EdFG}). Since there are at most $mN$ such factors (where $N$ is the number of critical points of $f$), the result follows. For more details, see \cite[Section 2]{swiatek}.

Theorem \ref{CRIswiatek} is a great tool to estimate cross-ratio distortion of large iterates of multicritical circle maps. Indeed, note first that given two intervals $M\Subset T$ and given $n\in\nt$ we have the following \emph{chain rule}:$$\crd(f^{n};M,T)=\prod_{i=0}^{n-1}\crd\big(f;f^i(M),f^i(T)\big).$$With this at hand, Theorem \ref{CRIswiatek} can be used to prove that \emph{dynamically symmetric} intervals are \emph{comparable}. More precisely, we have the following fact, which is \cite[Lemma 3.3]{EdF}.

\begin{lemma}\label{lemmasymint} Given a multicritical circle map $f$ there exists a constant $C> 1$ depending only on $f$ such that, for all $n\geq 0$ and all $x \in S^{1}$, we have$$C^{-1}\,\big|x- f^{-q_{n}}(x)\big| \leq \big|f^{q_{n}}(x)-x\big| \leq C\,\big|x- f^{-q_{n}}(x)\big|\,.$$
\end{lemma}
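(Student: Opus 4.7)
The plan is to apply the Cross-Ratio Inequality (Theorem~\ref{CRIswiatek}) to an appropriate nested pair of intervals, obtain bounded cross-ratio distortion for $f^{q_n}$, and then extract from this the desired length comparison.

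Concretely, set $L = [f^{-q_n}(x), x]$ and $M = [x, f^{q_n}(x)]$, so $f^{q_n}(L) = M$ and the task is $|L|\asymp|M|$. Introduce the enlarged interval $T = [f^{-q_n}(x), f^{2q_n}(x)]$, so that $M\Subset T$ with $T\setminus M$ consisting of $L$ on one side and $R:=[f^{q_n}(x), f^{2q_n}(x)]$ on the other. The first key input is that the family $\{f^j(T): 0\leq j<q_n\}$ has multiplicity of intersection bounded by a universal constant $m$. This is a standard consequence of the best-approximation property of continued-fraction convergents, namely $\|k\rho\|\geq\|q_n\rho\|$ for $1\leq k<q_{n+1}$: through the semi-conjugacy of Theorem~\ref{yoccoztheorem} each $f^j(T)$ corresponds to an arc of length roughly $3\|q_n\rho\|$, and two such arcs overlap only if the shift $\|(j-j')\rho\|$ is sufficiently small, which for $|j-j'|<q_n$ restricts overlap to a bounded number of values.

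By the chain rule,
\[
\crd(f^{q_n};M,T)\;=\;\prod_{j=0}^{q_n-1}\crd\bigl(f;f^j(M),f^j(T)\bigr),
\]
and applying Theorem~\ref{CRIswiatek} to this family yields the upper bound $\crd(f^{q_n};M,T)\leq C^m$. Running the same argument with $f^{-1}$ in place of $f$ (which is also a multicritical circle map with identical combinatorics) gives the matching lower bound, so $\crd(f^{q_n};M,T)\asymp 1$ with constants depending only on $f$.

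From this bounded-distortion estimate the comparison $|L|\asymp|M|$ is extracted as follows. Writing $\crd(f^{q_n};M,T)$ explicitly in terms of the four adjacent atoms $|L|, |M|, |R|, |R'|$ (where $R'=f^{q_n}(R)$) gives an algebraic identity among these lengths. Applying the same reasoning at the shifted basepoints $f^{\pm q_n}(x)$ produces analogous identities at shifted indices, and a short algebraic manipulation combining the three relations yields $|L|\asymp|M|$.

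The main obstacle is precisely this last step: a single bounded cross-ratio is invariant under common rescaling of pairs of adjacent lengths and therefore cannot force $|L|\asymp|M|$ by itself, so one genuinely needs either the shifted-basepoint variants or, alternatively, an induction on $n$ invoking the \'Swi\polhk{a}tek--Herman real bounds at level $n-1$ to propagate comparability upward. In fact, Lemma~\ref{lemmasymint} can be viewed as the ``short return-time'' version of those real bounds, and the cleanest write-up goes through an inductive version of the above scheme.
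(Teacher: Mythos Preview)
Your overall strategy---apply the Cross-Ratio Inequality to an orbit segment of length $q_n$ and then extract the length comparison---is the right one, and it is the approach the paper follows (deferring the details to \cite{EdF}). But the execution has two genuine gaps.

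First, the claim that $f^{-1}$ is ``also a multicritical circle map'' is false. At a critical value $v=f(c)$ the inverse has local form $y\mapsto |y-v|^{1/d}$, which is not even $C^1$; moreover a direct computation (or the cocycle relation $S(f^{-1})\circ f\cdot (Df)^2=-Sf$) shows that $S(f^{-1})>0$ near critical values. Positive Schwarzian \emph{expands} cross-ratios, and in fact one can make $\crd(f^{-1};M,T)$ arbitrarily large by pushing $T$ toward a critical value. So the Cross-Ratio Inequality is simply unavailable for $f^{-1}$, and your route to a lower bound on $\crd(f^{q_n};M,T)$ collapses. The actual argument must---and does---produce both inequalities from the one-sided estimate for $f$ alone.

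Second, your proposal is circular in two places. You invoke ``the semi-conjugacy of Theorem~\ref{yoccoztheorem}'' to bound the intersection multiplicity, but in this paper Theorem~\ref{yoccoztheorem} is \emph{deduced} from Lemma~\ref{lemmasymint}. (The fix is easy: the multiplicity bound only needs combinatorial equivalence with the rotation, i.e.\ the general semi-conjugacy of Section~\ref{sec:Denjoy}, which holds for any homeomorphism without periodic points.) More seriously, your suggested fallback---an ``induction on $n$ invoking the \'Swi\k{a}tek--Herman real bounds at level $n-1$''---is also circular: the paper explicitly states that Theorem~\ref{realbounds} is proved \emph{using} Lemma~\ref{lemmasymint} and the Cross-Ratio Inequality. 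So neither of your two proposed ways to close the final ``extraction'' step is available, and the proof as written is incomplete.

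What is actually needed is a more careful choice of the pair $(M,T)$ (and possibly of the iterate) so that the one-sided inequality $\crd(f^{q_n};M,T)\leq C^m$, together with purely combinatorial facts about how the $q_n$-orbit sits on the circle, yields each direction of $|I_{-1}|\asymp|I_0|$ separately; this is the content of \cite[Section 3.1]{EdF}.
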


A proof of Lemma \ref{lemmasymint} can be found in \cite[Section 3.1]{EdF}. Now we claim that Theorem \ref{yoccoztheorem} is a straightforward consequence of Lemma \ref{lemmasymint}. Indeed, assume, by contradiction, that there exists a wandering interval $J=(a,b)$. For each $n\in\nt$, consider the dynamically symmetric intervals $L_n=\big(f^{-q_n}(a),a\big)$ and $R_n=\big(a,f^{q_n}(a)\big)$. Since we are assuming that $J$ is a wandering interval, $R_n$ contains $J$ for all $n\in\nt$, and then we deduce from Lemma \ref{lemmasymint} that the sequence $\big\{|L_n|\big\}_{n\in\nt}$ is bounded away from zero. However, if we assume that $J$ is a \emph{maximal} wandering interval, the point $a$ is recurrent (both for the past and the future) and then $\big\{|L_n|\big\}_{n\in\nt} \to 0$ as $n$ goes to infinity. This contradiction shows that no such wandering interval $J$ exists, finishing the proof of Theorem~\ref{yoccoztheorem}.

\begin{remark} The argument above gives a new proof of Denjoy's Theorem \ref{denjoy}, since the Cross-Ratio Inequality (Theorem \ref{CRIswiatek}) certainly holds true whenever $f$ is a $C^1$ diffeomorphism and $\log Df$ has bounded variation (note that the Schwarzian derivative is not needed in this case: estimate \eqref{estCrDBV} holds on the whole circle).
\end{remark}

\section{Real bounds}\label{secrealbounds}

\begin{figure}[b]
\begin{center}~
\hbox to \hsize{
\psfrag{0}[][][1]{$\scriptstyle{x}$} 
\psfrag{1}[][][1]{$\scriptstyle{f(x)}$}
\psfrag{2}[][][1]{$\!\scriptstyle{f^2(x)}$}
\psfrag{3}[][][1]{$\;\;\scriptstyle{f^3(x)}$}
\psfrag{4}[][][1]{$\scriptstyle{f^4(x)}$}
\psfrag{5}[][][1]{$\scriptstyle{f^5(x)}$}
\psfrag{6}[][][1]{$\scriptstyle{f^6(x)}$}
\psfrag{I0}[][][1]{$I_2$}
\psfrag{I1}[][][1]{$f(I_2)$}
\psfrag{J0}[][][1]{$I_1$}
\psfrag{J1}[][][1]{$f(I_1)$}
\psfrag{J2}[][][1]{$f^2(I_1)$}
\psfrag{J3}[][][1]{$f^3(I_1)$}
\psfrag{J4}[][][1]{$f^4(I_1)$}
% \psfig{figure= picard.eps,width=4.2in}
%\hspace{1.0em} 
\includegraphics[width=5.6in]{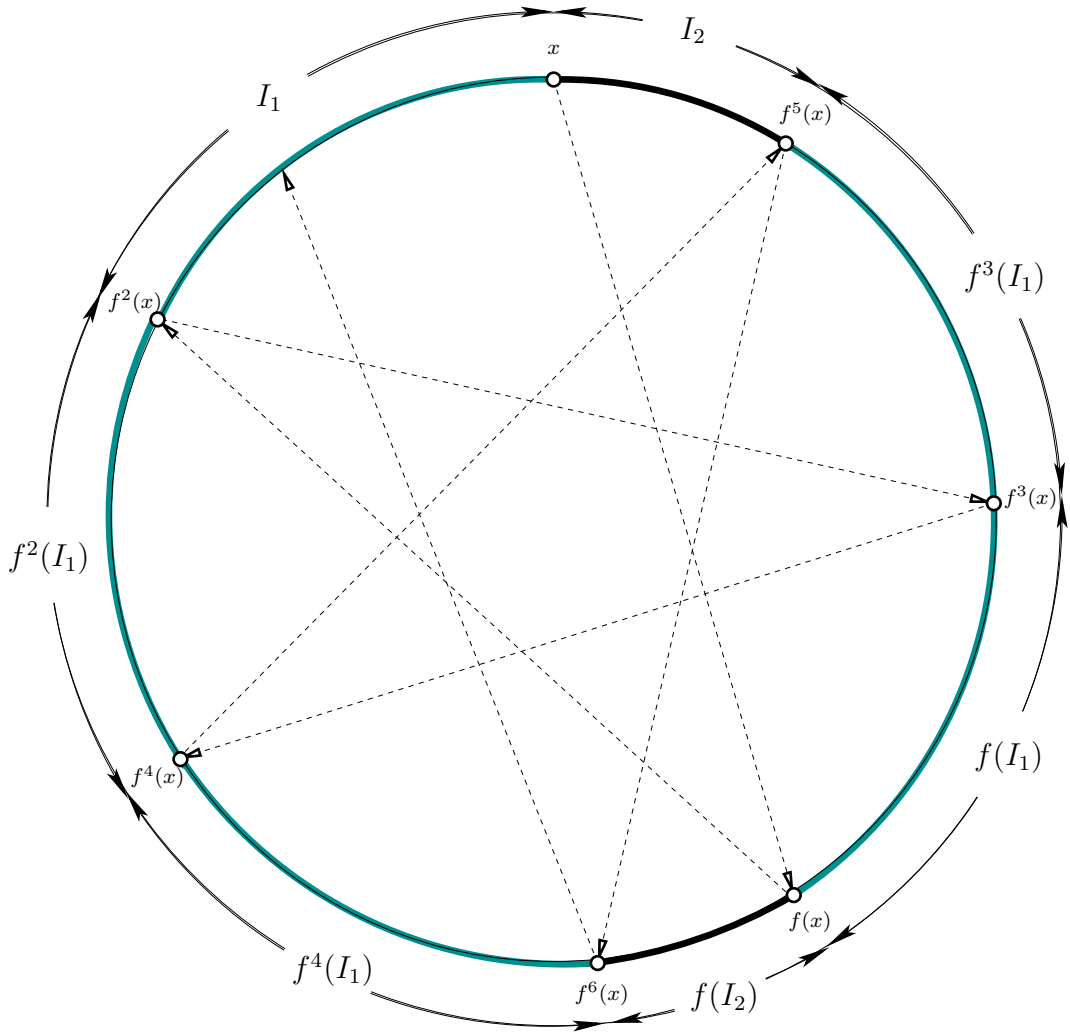}
   }
\end{center}
\caption[dynpar]{\label{dynpar} Dynamical partition $\mathcal{P}_1(x)$
%=\left\{I_1, f(I_1), f^2(I_1),f^3(I_1), f^4(I_1)\right\}\cup \left\{I_2,f(I_2) \right\}$ 
of a circle homeomorphism with rotation number $\rho(f)=\sqrt{2}-1=[2,2,2,\ldots]$.}
\end{figure}

Let $f$ be a $C^3$ multicritical circle map with irrational rotation number $\rho\in(0,1)$, and let us fix some base point $x \in S^1$. For each non-negative integer $n$, let $I_{n}$ be the interval with endpoints $x$ and $f^{q_n}(x)$ containing  $f^{q_{n+2}}(x)$, namely, $I_n=\big[x,f^{q_n}(x)\big]$ and $I_{n+1}=\big[f^{q_{n+1}}(x),x\big]$ (recall that the sequence of return times $\{q_n\}$ has been constructed in Section~\ref{secdifeos}). As it is not difficult to prove, for each $n\geq 0$, the collection of intervals$$\mathcal{P}_n(x)\;=\; \left\{f^i(I_n):\;0\leq i\leq q_{n+1}-1\right\}\cup\left\{f^j(I_{n+1}):\;0\leq j\leq q_{n}-1\right\}$$is a partition (modulo endpoints) of the unit circle (see for instance the appendix in \cite{EdFG}), called the {\it $n$-th dynamical partition\/} associated to $x$. The intervals of the form $f^i(I_n)$ are called \emph{long}, whereas those of the form $f^j(I_{n+1})$ are called \emph{short}. The initial partition $\mathcal{P}_0(x)$ is given by$$\mathcal{P}_0(x)=\left\{\big[f^{i}(x),f^{i+1}(x)\big]:\,i\in\{0,...,a_0-1\}\right\}\cup\big\{\big[f^{a_{0}}(x),x\big]\big\},$$where $a_0$ is the integer part of $1/\rho$ (see Section \ref{secdifeos}).

\begin{example} Figure \ref{dynpar} shows the dynamical partition $\mathcal{P}_1(x)$ associated to a circle homeomorphism with rotation number $\rho(f)=\sqrt{2}-1=[2,2,2,\ldots]$, for which $q_1=2$ and $q_2=5$. Thus, explicitly, $\mathcal{P}_1(x)=\left\{I_1,f(I_1),f^2(I_1),f^3(I_1),f^4(I_1)\right\} \cup \left\{I_2,f(I_2)\right\}$. 
\end{example}

The following fundamental result was obtained by Herman and \'Swi\k{a}tek in the eighties \cite{H,swiatek}.

\begin{theorem}[Real Bounds]\label{realbounds} Given $N\geq 1$ in $\nt$ and $d>1$ there exists a universal constant $C=C(N,d)>1$ with the following property: for any given multicritical circle map $f$ with irrational rotation number, and with at most $N$ critical points whose criticalities are bounded by $d$, there exists $n_0=n_0(f)\in\nt$ such that for each critical point $c$ of $f$, for all $n \geq n_0$, and for every pair $I,J$ of adjacent atoms of $\mathcal{P}_n(c)$ we have:$$C^{-1}\,|I| \leq |J| \leq C\,|I|\,,$$where $|I|$ denotes the Euclidean length of an interval $I$.
\end{theorem}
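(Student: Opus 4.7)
The plan is to prove Theorem~\ref{realbounds} by induction on $n$, using the Cross-Ratio Inequality (Theorem~\ref{CRIswiatek}) as the principal distortion tool and the non-flat structure of the critical points (Definition~\ref{defnaoflat}) to extract geometric control of the relevant ratios. Write $I_n(c)=[c,f^{q_n}(c)]$ and $I_{n+1}(c)=[f^{q_{n+1}}(c),c]$. First I would reduce the statement to a comparability of these two basic intervals at every critical point: since every atom of $\mathcal{P}_n(c)$ is an iterate of $I_n(c')$ or $I_{n+1}(c')$ for some critical point $c'$, any two adjacent atoms $(J_1,J_2)$ of $\mathcal{P}_n(c)$ arise as the $f^k$-image of a pair $(I_n(c'),I_{n+1}(c'))$ for a suitable $c'$ and $k\ge 0$. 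Applying the chain rule for cross-ratios together with Theorem~\ref{CRIswiatek}, with multiplicity bounded in terms of $N$ alone, bounds the cross-ratio distortion of $f^k$ on this configuration; a short geometric argument then transfers the comparability at $c'$ to $(J_1,J_2)$.

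Next I would establish the key comparability $C^{-1}\,|I_n(c)|\le|I_{n+1}(c)|\le C\,|I_n(c)|$ at each critical point. Yoccoz's Theorem~\ref{yoccoztheorem} gives $|I_n(c)|\to 0$, so for $n\ge n_0(f)$ both intervals lie in a neighborhood of $c$ on which the local model $f(x)=f(c)+\phi(x)|\phi(x)|^{d-1}$ applies. The idea is to analyze the first-return map to a suitable neighborhood $T\supset I_n(c)\cup I_{n+1}(c)$ and factor it as a diffeomorphic branch, to which Lemma~\ref{negsch} and Lemma~\ref{contracts} furnish bounded cross-ratio distortion via Theorem~\ref{CRIswiatek}, followed by the critical power map, which rescales lengths by a definite power $d$. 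Comparing lengths through this factorization produces a recursive inequality for $r_n(c)=|I_{n+1}(c)|/|I_n(c)|$ that confines it to a compact subinterval of $(0,\infty)$ for $n\ge n_0$. When $N\ge 2$ the argument has to be run in parallel at every critical point, transporting bounds between them along orbits whose cross-ratio distortion is again controlled by Theorem~\ref{CRIswiatek}.

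The main obstacle is keeping the multiplicity of intersection in the Cross-Ratio Inequality uniformly bounded in terms of $N$ and $d$ only throughout the bootstrap: each invocation of Theorem~\ref{CRIswiatek} contributes a factor $C_0^{\,m}$, so if $m$ were allowed to grow with $n$ or with $f$ the estimates would collapse and one would lose the universality of $C(N,d)$. The saving combinatorial fact is that the long (resp.\ short) atoms of $\mathcal{P}_n(c)$ tile the circle with multiplicity one, and each critical point of $f$ lies in a uniformly bounded number of atoms per level; exploiting this, in the presence of possibly many critical points and arbitrary irrational rotation numbers, is the technical heart of the Herman--\'Swi\k{a}tek argument and is precisely what yields a constant $C$ depending only on $(N,d)$. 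Note finally that $n_0$ genuinely depends on $f$ rather than on $(N,d)$ alone, reflecting the \emph{beau} but not \emph{a priori} character of the estimate.
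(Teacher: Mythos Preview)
Your overall plan---Cross-Ratio Inequality plus the non-flat local model plus combinatorics---is the right framework and matches the paper's sketch. But the reduction step as written contains two concrete errors that would stall the argument.

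First, the atoms of $\mathcal{P}_n(c)$ are \emph{all} iterates of $I_n(c)$ and $I_{n+1}(c)$ for the \emph{fixed} base point $c$; there is no r\^ole for other critical points $c'$ at this stage. More seriously, it is false that every pair of adjacent atoms is an $f^k$-image of $(I_n(c),I_{n+1}(c))$: whenever $a_{n+1}\ge 2$, two \emph{long} atoms $f^i(I_n)$ and $f^j(I_n)$ sit side by side, and your proposed transfer mechanism does not cover this case. The paper's route handles adjacent pairs via Lemma~\ref{lemmasymint} (comparability of dynamically symmetric intervals $[f^{-q_n}(x),x]$ and $[x,f^{q_n}(x)]$), which you never invoke; this lemma, combined with the Cross-Ratio Inequality and the combinatorial structure of the partitions, is what propagates comparability.

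Second, the universality of $C=C(N,d)$ cannot come from bounding multiplicity alone: the constant in Theorem~\ref{CRIswiatek} already depends on the individual map $f$, so $C_0^m$ with bounded $m$ still yields an $f$-dependent bound. The paper is explicit that one must replace Theorem~\ref{CRIswiatek} by the \emph{beau} version, Theorem~\ref{CRI}, where the constant $B=B(N,d)$ is universal for $n\ge n_0(f)$. Your discussion of the ``main obstacle'' correctly identifies that something delicate is needed here, but misidentifies what: the issue is not multiplicity growth but the $f$-dependence of the basic cross-ratio constant, which requires the asymptotic (beau) estimate of \cite{EdFG}.
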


Note that for a rigid rotation we have\, $|I_n|=a_{n+1}\,|I_{n+1}|+|I_{n+2}|$. If $a_{n+1}$ is big, then $I_n$ is much larger than $I_{n+1}$. Thus, even for rigid rotations, real bounds do not hold in general.

The proof of Theorem \ref{realbounds} follows by combining the Cross-Ratio Inequality (Theorem \ref{CRIswiatek}) and Lemma \ref{lemmasymint} with a careful combinatorial analysis that the reader can find in \cite[Sections 3.2-3.3]{EdF}. To obtain bounds independent of $f$ in the statement of Theorem \ref{realbounds}, we must replace Theorem \ref{CRIswiatek} by the following result, obtained in \cite[Theorem B]{EdFG}.

\begin{theorem}\label{CRI} Given $N\geq 1$ in\, $\nt$ and $d>1$ there exists a constant $B=B(N,d)>1$ with
the following property: given a multicritical circle map $f$, with at most $N$ critical points whose criticalities are bounded by $d$, there exists $n_0=n_0(f)$ such that for all $n \geq n_0$, $\Delta\in\mathcal{P}_n(c)$ and $k\in\nt$ such that $f^i(\Delta)$ is contained in an
element of $\mathcal{P}_n(c)$ for all $1\leq i \leq k$, we have that$$\crd(f^k;\Delta,\Delta^*)\leq B\,,$$where $\Delta^*$ denotes the union of $\Delta$ with its left and right neighbours in $\mathcal{P}_n(c)$.
\end{theorem}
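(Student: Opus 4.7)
The plan is to refine the argument sketched above for Theorem~\ref{CRIswiatek}, exploiting the smallness of the atoms of $\mathcal{P}_n(c)$ for $n$ large in order to absorb the $f$-dependent factor $e^{2mV}$ that arose there from the bounded-variation argument on~$\mathcal{V}$. The first step is the chain rule
\[
\crd(f^{k};\Delta,\Delta^*)\;=\;\prod_{i=0}^{k-1}\crd\bigl(f;\,f^{i}(\Delta),\,f^{i}(\Delta^*)\bigr).
\]
As in the proof of Theorem~\ref{CRIswiatek}, I would fix a neighbourhood $\mathcal{U}=\bigcup_{j}W_{j}$ of the critical set on which Lemma~\ref{negsch} gives $Sf<0$, together with an open set $\mathcal{V}$ whose closure contains no critical point and with $\mathcal{U}\cup\mathcal{V}=S^{1}$. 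Using the absence of wandering intervals (Theorem~\ref{yoccoztheorem}), I would choose $n_{0}=n_{0}(f)$ large enough that for every $n\ge n_{0}$ each atom of $\mathcal{P}_{n}(c)$ is much shorter than the Lebesgue number of $\{\mathcal{U},\mathcal{V}\}$; the threshold will be strengthened below.

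Split the indices as $A_{\mathcal{U}}=\{i:f^{i}(\Delta^*)\subset\mathcal{U}\}$ and $A_{\mathcal{V}}=\{i:f^{i}(\Delta^*)\subset\mathcal{V}\}$. The $A_{\mathcal{U}}$-factors are handled exactly as before: those whose $f^{i}(\Delta^*)$ misses every critical point are bounded by $1$ via Lemmas~\ref{negsch} and~\ref{contracts}, while each factor whose $f^{i}(\Delta^*)$ contains a critical point of $f$ is at most $9d^{2}$ by the non-flatness estimate used in the proof of Theorem~\ref{CRIswiatek}. The decisive point is that the number of indices of the second type is bounded by $N\cdot M$, where $M$ is a \emph{universal} bound on the multiplicity of intersection of the family $\{f^{i}(\Delta^*)\}_{0\le i\le k-1}$. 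This multiplicity bound is what powers the improvement: the hypothesis that every $f^{i}(\Delta)$ lies in a single atom of $\mathcal{P}_{n}(c)$, combined with the fact that $\Delta^*$ is obtained from $\Delta$ by adjoining its two neighbours in $\mathcal{P}_{n}(c)$, forces the iterates $f^{i}(\Delta^*)$ to respect the combinatorial near-disjointness of the dynamical partition; this yields a multiplicity bound independent of $f$, $n$ and $k$. Hence the $A_{\mathcal{U}}$-product is at most $(9d^{2})^{NM}$.

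For $A_{\mathcal{V}}$ I would replace the bounded-variation estimate by the pointwise $C^{3}$ bound
\[
\log\crd(f;M,T)\;\le\; K_{f}\,|T|^{2}\quad\text{for any }T\Subset\mathcal{V}\text{ on which }f\text{ is a diffeomorphism,}
\]
with $K_{f}$ depending only on $\sup_{\mathcal{V}}|Sf|$ and $\sup_{\mathcal{V}}|\log Df|$; this follows from a second-order Taylor expansion of $\log Df$ combined with the defining formula of the cross-ratio. Summing and using the universal multiplicity bound,
\[
\sum_{i\in A_{\mathcal{V}}}|f^{i}(\Delta^*)|^{2}\;\le\;\max_{i}|f^{i}(\Delta^*)|\cdot M\cdot|S^{1}|.
\]
Now strengthen $n_{0}(f)$ so that $K_{f}\,M\,|S^{1}|\,\max_{i}|f^{i}(\Delta^*)|\le 1$; this is possible because, by Yoccoz's theorem together with a routine compactness argument, the maximum length of an atom of $\mathcal{P}_{n}(c)$ tends to zero as $n\to\infty$. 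Then the $A_{\mathcal{V}}$-product is at most $e$, and combining with the $A_{\mathcal{U}}$-bound yields $B=e\cdot(9d^{2})^{NM}$, a constant depending only on $N$ and $d$, as desired.

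\textbf{Main obstacle.} The chief technical hurdle is establishing the universal multiplicity bound on $\{f^{i}(\Delta^*)\}$. Distinct atoms of $\mathcal{P}_{n}(c)$ are pairwise disjoint, and the orbit of $\Delta$ visits them without self-overlap by assumption, but one must argue that the enlargement to~$\Delta^*$ does not ruin this near-disjointness: no point of the circle should be covered by more than a uniformly bounded number of the iterates $f^{i}(\Delta^*)$. A careful combinatorial analysis of the dynamical partition in the spirit of \cite[Section~3]{EdF} is needed here, and it is precisely this analysis that allows the $f$-dependent constant of Theorem~\ref{CRIswiatek} to be replaced by the universal constant $B(N,d)$.
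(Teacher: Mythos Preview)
The paper itself does not give a proof of this statement; being a survey, it simply records the result as \cite[Theorem~B]{EdFG} and refers the reader there. So there is no ``paper's own proof'' to compare against line by line. That said, your strategy is exactly the right one and matches the spirit of how such \emph{beau} bounds are obtained in the cited references: one reruns the proof of Theorem~\ref{CRIswiatek} but trades the bounded-variation estimate on $\mathcal{V}$ (which produces the $f$-dependent factor $e^{2mV}$) for the quadratic estimate $\log\crd(f;M,T)\le K_f\,|T|^2$ valid on any interval where $f$ is a $C^3$ diffeomorphism, and then absorbs the $f$-dependent constant $K_f$ into the choice of $n_0(f)$ using the fact that $\max_{\Delta\in\mathcal{P}_n(c)}|\Delta|\to 0$. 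Your treatment of the $A_{\mathcal{U}}$-factors via Lemmas~\ref{negsch} and~\ref{contracts} and the $9d^2$ bound at critical factors is likewise correct.

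Your identification of the universal multiplicity bound as the ``main obstacle'' is accurate, but you should know that it is not really an obstacle: under the hypothesis that each $f^i(\Delta)$ lies in an atom of $\mathcal{P}_n(c)$, the atoms visited by $\{f^i(\Delta)\}_{0\le i\le k-1}$ are pairwise distinct (this is a direct consequence of the combinatorics of the dynamical partition: $f$ essentially shifts atoms of $\mathcal{P}_n(c)$ along the orbit of $c$, and the orbit does not return before time $q_n+q_{n+1}$). Since $\Delta^*$ spans exactly three atoms and $f^i$ preserves adjacency, each $f^i(\Delta^*)$ is contained in the union of at most three atoms of $\mathcal{P}_n(c)$; consequently the family $\{f^i(\Delta^*)\}$ has multiplicity at most $M=3$. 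This is the elementary combinatorial fact used throughout \cite{EdF,EdFG}, and once it is in place your argument yields $B=e\,(9d^2)^{3N}$, depending only on $N$ and $d$ as claimed.
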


See \cite{EdF,EdFG} for more details. For more on the relation between the Schwarzian derivative and cross-ratio distortion we refer the reader to \cite[Chapter IV]{livrounidim} and to \cite[Chapter 6]{dFdMbook}, where a proof of Theorem \ref{realbounds} is also sketched.

\medskip

We finish Section \ref{secrealbounds} by pointing out that it is essential for Theorem \ref{realbounds} to be true to consider dynamical partitions \emph{around critical points}. Indeed, let us agree to say that $f$ has \emph{bounded geometry} at $x \in S^1$ if there exists $K>1$ such that for all $n\in\nt$ and for every pair $I,J$ of adjacent atoms of $\mathcal{P}_n(x)$ we have$$K^{-1}\,|I| \leq |J| \leq K\,|I|\,.$$Following \cite[Section 1.4]{dFG2019}, we consider the set$$\mathcal{A}=\mathcal{A}(f)=\{x \in S^1:\,\mbox{$f$ has bounded geometry at $x$}\}\,.$$As explained in \cite[Section 1.4]{dFG2019}, the set $\mathcal{A}$ is $f$-invariant. Moreover, as stated in Theorem \ref{realbounds}, all critical points of $f$ belong to $\mathcal{A}$. Being $f$-invariant and non-empty, the set $\mathcal{A}$ is dense in the unit circle. However, even in the case of maps with a single critical point, $\mathcal{A}$ can be rather small. Indeed, the following is \cite[Theorem D]{dFG2019}.

\begin{theorem}\label{teoAmagro} There exists a full Lebesgue measure set $\bm{R}\subset(0,1)$ of irrational numbers with the following property: let $f$ be a $C^3$ critical circle map with a single (non-flat) critical point and rotation number $\rho\in\bm{R}$. Then the set $\mathcal{A}(f)$ is meagre (in the sense of Baire) and it has zero $\mu$-measure (where $\mu$ denotes the unique $f$-invariant probability measure).
\end{theorem}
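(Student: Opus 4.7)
The plan is to specify an appropriate full-Lebesgue-measure set $\bm{R}$ of irrationals and then to show, via the rigidity of atom $\mu$-measures under the topological conjugacy with $R_\rho$, that bounded geometry at $x$ forces a stringent relation between $x$ and the critical point $c$ which can hold only on a small exceptional set. A natural candidate is $\bm{R}=\{\rho\in(0,1)\setminus\Q:\limsup_n a_n=\infty\}$, which has full Lebesgue measure by the ergodicity of the Gauss map. Bounded-type irrationals (outside $\bm{R}$) must be excluded: for them, the real bounds at $c$ combined with the quasi-symmetry of the conjugacy between $f$ and $R_\rho$ give $\mathcal{A}(f)=S^1$, so the restriction $\rho\in\bm{R}$ is essential.

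Next I would observe that, since the topological conjugacy $h$ between $f$ and $R_\rho$ pushes $\mu$ forward to Lebesgue, the $\mu$-measure of any atom in $\mathcal{P}_n(x)$ for $f$ equals the Lebesgue measure of the corresponding atom of $\mathcal{P}_n(h(x))$ for $R_\rho$. The latter takes only two values, one for long atoms and one for short, and their ratio is comparable to the partial quotient $a_{n+1}$, independently of the base point. Bounded geometry at $x$ for $f$ then becomes the requirement that the ratio $|I|/\mu(I)$ oscillates by a factor comparable to $a_{n+1}$ across adjacent atoms of $\mathcal{P}_n(x)$, for every sufficiently large $n$.

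The real bounds (Theorem \ref{realbounds}) at $c$ ensure precisely this oscillation at $c$, reflecting the compression of space by the non-flatness of $f$ there. To transfer the required oscillation to a non-orbit point $x$, I would choose the smallest $k_n\geq 0$ for which $f^{-k_n}$ maps a neighbourhood of $c$ onto a neighbourhood of $x$ containing the relevant atoms of $\mathcal{P}_n(x)$, and use the Cross-Ratio Inequality (Theorem \ref{CRIswiatek}) together with negative Schwarzian away from $c$ to control the distortion of $f^{k_n}$ there. The required oscillation at $x$ then reduces to a precise cancellation between the multiplicative cocycle $\prod_{i=0}^{k_n-1} Df(f^i(x))$ and the quantity $a_{n+1}$. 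For $\rho\in\bm{R}$, where $a_{n+1}$ is unbounded along a subsequence, this cancellation singles out a highly constrained set of $x$.

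To conclude, I would write $\mathcal{A}(f)=\bigcup_{K>1}\mathcal{A}_K(f)$ with each $\mathcal{A}_K(f)$ closed, and show these sets have empty interior by perturbing $x$ slightly so that its itinerary, and hence the intermediate derivative products, change drastically while $a_{n+1}$ stays fixed; this yields meagreness. For $\mu$-measure zero, I would exploit that $\mathcal{A}(f)$ is $f$-invariant together with unique ergodicity (Proposition \ref{lemaunerg}) to reduce to the dichotomy $\mu(\mathcal{A}(f))\in\{0,1\}$, and then combine the pointwise ergodic theorem with the zero-Lyapunov-exponent identity of Lemma \ref{zeroexpdifeo} (in the multicritical extension advertised in the excerpt) to show that $\mu$-typical orbits produce derivative products oscillating too wildly to satisfy the cancellation condition, excluding measure one. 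The hardest step will be the third one: making the cancellation quantitative enough to exclude a set of full $\mu$-measure, since it requires sharp simultaneous control of the distortion of $f^{k_n}$ and of the asymptotic behaviour of the partial quotients of $\rho$.
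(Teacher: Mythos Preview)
The paper under review is a survey; it does not prove Theorem~\ref{teoAmagro} at all, but merely states it and refers the reader to \cite{dFG2019} (where it appears as Theorem~D) for the proof. So there is no proof in this paper to compare against, and I can only assess your proposal on its own terms.

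Your choice of $\bm{R}$ as the set of unbounded-type irrationals is natural, and your opening observations are correct: bounded-type rotation numbers must be excluded (there $\mathcal{A}(f)=S^1$ by the real bounds plus quasi-symmetry of the conjugacy), and the $\mu$-measure of each atom of $\mathcal{P}_n(x)$ is determined purely combinatorially via the push-forward to the rotation. Writing $\mathcal{A}(f)=\bigcup_K \mathcal{A}_K(f)$ as a countable union of closed sets and invoking the $0$--$1$ law from unique ergodicity are also sound structural moves.

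Where the proposal becomes a genuine gap is in your step~3, the ``cancellation'' mechanism. You assert that bounded geometry at $x$ reduces to a precise relation between the multiplicative cocycle $\prod_{i=0}^{k_n-1} Df(f^i(x))$ and $a_{n+1}$, but this is not substantiated. The ratio of lengths of two adjacent atoms of $\mathcal{P}_n(x)$ is \emph{not} controlled by a single derivative product along the orbit of $x$; it depends on the distortion of high iterates of $f$ on intervals, which in turn depends delicately on how the orbits of the endpoints of those atoms pass near the critical point. The relevant quantity is not a cocycle over the point $x$ but rather a ratio of distortions over intervals, and the Cross-Ratio Inequality alone does not convert this into the scalar cancellation you describe. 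Moreover, your appeal to the zero Lyapunov exponent (Theorem~\ref{expzeroccm}) tells you only that Birkhoff averages of $\log Df$ vanish for $\mu$-typical $x$; it gives no information about the growth or oscillation of finite products $\prod_{i=0}^{k_n-1} Df(f^i(x))$ along the specific subsequence $(k_n)$ you would need, so the measure-zero half of your argument has no engine.

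In short: the scaffolding (choice of $\bm{R}$, measure push-forward, ergodicity dichotomy) is reasonable, but the core geometric mechanism---why bounded geometry fails at a $\mu$-typical or Baire-typical point when the partial quotients are unbounded---is asserted rather than argued, and the proposed route via a pointwise derivative cocycle does not carry the required information.
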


As it is not difficult to prove, the full Lebesgue measure set $\bm{R}\subset(0,1)$ given by Theorem \ref{teoAmagro} contains no bounded type numbers (see \cite{dFG2019} for much more).

\section{Some ergodic aspects}\label{secerg}

In this section we examine multicritical circle maps from the point of view of measurable dynamics. 

\subsection{No $\sigma$-finite measures} As it was proved in Proposition \ref{lemaunerg}, a $C^3$ multicritical circle map $f$ with irrational rotation number is uniquely ergodic. Moreover, as we saw in Section \ref{secyoccozthm}, a theorem due to J.-C. Yoccoz (Theorem \ref{yoccoztheorem}) asserts that $f$ is minimal and therefore topologically conjugate to the corresponding rigid rotation. Yoccoz's theorem implies that the support of the unique invariant Borel probability measure of a multicritical circle map with irrational rotation number is the whole circle. As it turns out, this measure is \emph{purely singular} with respect to Lebesgue measure. This result was proved by Khanin in the late eighties, by means of a certain thermodynamic formalism \cite[Theorem 4]{khanin91} (see also \cite[Proposition 1]{GS93}). In fact, in \cite{dFG2020} the authors have proved a more general result (namely, Theorem \ref{teomedidadFG2020} below), which we proceed to state. We need a definition and an auxiliary result. By a \emph{nested sequence of partitions} we understand a sequence $\{\mathcal{Q}_n\}_{n\geq 0}$ of finite interval partitions of $S^1$ (modulo endpoints) which is nested in the sense that each atom of $\mathcal{Q}_n$ is a union of atoms of $\mathcal{Q}_{n+1}$, for all $n\geq 0$, and is such that $\mathrm{mesh}(\mathcal{Q}_n)\to 0$ as $n\to \infty${\footnote{Here, as usual, the \emph{mesh} of a partition is the maximum length of its atoms.}}.

\begin{definition}\label{katzdef}
 A $C^1$ circle homeomorphism $f$ has the \emph{Katznelson property} if there exist
 a nested sequence of partitions $\{\mathcal{Q}_n\}_{n\geq 0}$ and constants $\alpha, \beta>1$ for which the following is true for all $n\geq 0$. For every atom $\Delta\in \mathcal{Q}_n$, there exist two disjoint closed subintervals $J_1,J_2\subset \Delta$ and a positive integer $k$ such that (i) $|J_1|\geq \alpha |J_2|$; (ii) we have $f^k(J_1)=J_2$, and (iii) $f^k:J_1\to J_2$ is diffeomorphism whose distortion is bounded by $\beta$.
\end{definition}

\begin{theorem}\label{katzcrit}
 Let $f:S^1\to S^1$ be a $C^1$ minimal homeomorphism, and suppose that $f$ has the Katznelson property. Then $f$ does not admit a $\sigma$-finite invariant measure which is absolutely continuous with respect to Lebesgue measure. 
\end{theorem}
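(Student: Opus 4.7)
The plan is to argue by contradiction: suppose $\nu$ is a non-zero $\sigma$-finite, $f$-invariant Borel measure with $\nu\ll\mathrm{Leb}$, and let $\phi=d\nu/d\mathrm{Leb}\geq 0$ be its Radon--Nikodym density, which is finite a.e.\ and strictly positive on a set of positive Lebesgue measure. The first step is to extract a rigid identity from every Katznelson pair $(J_1,J_2)\subset\Delta\in\mathcal{Q}_n$: because $f$ (hence $f^k$) is a homeomorphism of $S^1$, the relation $f^k(J_1)=J_2$ together with injectivity gives $f^{-k}(J_2)=J_1$, so $f$-invariance of $\nu$ yields
\[
\nu(J_1)\;=\;\nu\bigl(f^{-k}(J_2)\bigr)\;=\;\nu(J_2).
\]
Coupled with $|J_1|\geq\alpha|J_2|$, this forces the Lebesgue-density of $\nu$ to drop by a definite factor between $J_2$ and $J_1$ inside every atom at every scale:
\[
\frac{\nu(J_1)}{|J_1|}\;\leq\;\frac{1}{\alpha}\,\frac{\nu(J_2)}{|J_2|}.
\]

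The second step confronts this forced oscillation with the Lebesgue Differentiation Theorem. Pick a Lebesgue point $x_0$ of $\phi$ with $a:=\phi(x_0)>0$. For any $\epsilon>0$ and all $n$ large enough, the atom $\Delta_n\in\mathcal{Q}_n$ containing $x_0$ lies inside $B_{|\Delta_n|}(x_0)$ and satisfies $\int_{\Delta_n}|\phi-a|\,dy\leq\epsilon\,|\Delta_n|$. Hence for every subinterval $I\subset\Delta_n$, $\bigl|\nu(I)-a|I|\bigr|\leq\epsilon|\Delta_n|$; applying this to the Katznelson pair $(J_1,J_2)\subset\Delta_n$ and using $\nu(J_1)=\nu(J_2)$ gives
\[
a\,(\alpha-1)\,|J_2|\;\leq\;a\bigl(|J_1|-|J_2|\bigr)\;\leq\;2\epsilon\,|\Delta_n|.
\]

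The hard part --- the crux of the proof --- is to convert this into a genuine contradiction via a scale-invariant lower bound $|J_2|\geq c\,|\Delta_n|$ with $c>0$ independent of $n$. This is where conditions (ii)--(iii) of Definition \ref{katzdef} must be used beyond the initial identity: bounded distortion of $f^k:J_1\to J_2$ makes $|Df^k|$ comparable to $|J_2|/|J_1|$, so by iterating the Katznelson property across the nested partitions $\{\mathcal{Q}_n\}$ --- pulling back $J_2$ under the controlled-distortion branch $f^{-k}$ to populate a sub-atom at a prescribed scale --- one may replace the original pair by one satisfying the comparability $|J_2|/|\Delta|\geq c$. In the concrete setting of multicritical circle maps this step is for free, since the real bounds of Theorem \ref{realbounds} supply pairs whose sizes are comparable to the containing atom of the dynamical partition. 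Once such a $c$ is secured, choosing $\epsilon<c\,a\,(\alpha-1)/2$ produces the required contradiction, and so no $\sigma$-finite invariant $\nu\ll\mathrm{Leb}$ can exist.
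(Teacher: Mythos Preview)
The paper does not give its own proof of this theorem; it simply refers to \cite[Th.~3.1]{dFG2020}. So let me assess your argument on its own.

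There is a genuine gap. Your Lebesgue-point computation is fine as far as it goes: from $\nu(J_1)=\nu(J_2)$ and $|J_1|\geq\alpha|J_2|$ you correctly deduce $a(\alpha-1)|J_2|\leq 2\epsilon|\Delta_n|$. But to conclude you need $|J_2|\geq c\,|\Delta_n|$ with $c>0$ independent of $n$, and nothing in Definition~\ref{katzdef} provides this: the pair $J_1,J_2$ could in principle be arbitrarily small relative to its atom $\Delta$. Your proposed fix---``iterating the Katznelson property across the nested partitions'' and ``pulling back $J_2$ under the controlled-distortion branch $f^{-k}$''---is not actually an argument. Pulling back $J_2$ by $f^{-k}$ simply recovers $J_1$; passing to a finer atom $\Delta'\in\mathcal{Q}_m$ inside $J_1$ or $J_2$ replicates the same difficulty at level $m$. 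You yourself concede that in the intended application (multicritical circle maps) the real bounds furnish the needed comparability for free; this is true, but it also means you have not proved the general theorem as stated.

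A tell-tale sign is that condition~(iii) of Definition~\ref{katzdef}---the uniform bound $\beta$ on the distortion of $f^k:J_1\to J_2$---plays no role in your derivation of the contradiction. The integral identity $\nu(J_1)=\nu(J_2)$ uses only (i) and (ii). Whatever the correct argument is, it must exploit~(iii), presumably by using the almost-affine change of variables $f^k|_{J_1}$ to relate the density $\phi$ on $J_1$ to its values on $J_2$ \emph{pointwise} (via $\phi=Df^k\cdot(\phi\circ f^k)$ a.e.), rather than merely through the scalar equality of total masses. There is also a smaller loose end: for merely $\sigma$-finite $\nu$ you have not verified that $\phi$ is locally integrable, which is needed before you can invoke the Lebesgue differentiation theorem at all.
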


For a proof of this result, see \cite[Th.~3.1]{dFG2020}. In that paper, we refer to this theorem  as the \emph{Katznelson criterion}, because it is a slight generalization of a result proved by Katznelson in \cite{katz1}.

\begin{theorem}\label{teomedidadFG2020} Let $f:S^1\to S^1$ be a $C^3$ multicritical circle map with irrational rotation number. Then $f$ does not admit a $\sigma$-finite invariant measure which is absolutely continuous with respect to Lebesgue measure. 
\end{theorem}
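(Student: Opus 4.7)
The plan is to apply the Katznelson criterion (Theorem~\ref{katzcrit}). Since Yoccoz's theorem (Theorem~\ref{yoccoztheorem}) already guarantees that $f$ is a minimal $C^3$ (in particular $C^1$) circle homeomorphism, it will suffice to verify that $f$ satisfies the Katznelson property of Definition~\ref{katzdef}, and the desired conclusion follows at once.

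For the nested sequence of partitions I would take $\mathcal{Q}_n=\mathcal{P}_n(c)$, the dynamical partitions around a fixed critical point $c$ of $f$, for $n\geq n_0$ with $n_0$ as in Theorem~\ref{realbounds}. Nestedness holds by construction, and $\mathrm{mesh}(\mathcal{Q}_n)\to 0$ follows from Yoccoz's theorem (no wandering intervals) combined with the real bounds. Given an atom $\Delta\in\mathcal{Q}_n$, my candidates for $J_1$ and $J_2$ are two subatoms of a slightly finer dynamical partition $\mathcal{P}_{n+m}(c)$ contained in $\Delta$, chosen so that one is mapped to the other under an appropriate iterate $f^k$, with $k$ essentially $q_n$ or $q_{n+1}$ depending on whether $\Delta$ is a short or long atom. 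The size comparability $|J_1|\geq \alpha|J_2|$ in condition (i) of Definition~\ref{katzdef} is immediate from the real bounds (Theorem~\ref{realbounds}), after possibly passing to a deeper refinement to extract a definite gap between two chosen adjacent subatoms.

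The bounded distortion required in condition (iii) of the Katznelson property is where the Cross-Ratio Inequality (Theorem~\ref{CRI}) enters. Applied to $J_1$ together with its union $J_1^*$ with its two neighbours in $\mathcal{P}_{n+m}(c)$, it yields $\crd(f^k;J_1,J_1^*)\leq B$ uniformly, and a standard Koebe-type argument combined with the real bounds upgrades this uniform cross-ratio distortion estimate to a uniform bound on the distortion of $Df^k$ on $J_1$, producing condition (iii).

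The main obstacle, and the reason the argument is more delicate than for maps with a single critical point, is ensuring that $f^k\colon J_1\to J_2$ is genuinely a diffeomorphism, that is, that no intermediate image $f^s(J_1)$ for $0\leq s<k$ contains any critical point of $f$ in its interior. Within the dynamical partition based at $c$, the intermediate iterates of $J_1$ are atoms whose interiors clearly avoid $c$, but the other critical points $c_1,\ldots,c_{N-1}$ are not automatically excluded. To overcome this I would replace $\mathcal{Q}_n$ by the common refinement $\bigvee_{j=0}^{N-1}\mathcal{P}_n(c_j)$ of the dynamical partitions around all $N$ critical points; the real bounds, the cross-ratio inequality and the combinatorial return-map structure all survive this refinement (uniformly in $n$, since $N$ is fixed and Theorem~\ref{realbounds} applies around each $c_j$). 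The atoms of this common refinement enjoy the property that, along some iterate $f^k$ with $k$ controlled by $\max_j q_{n+1}$, the intermediate images remain atoms of the same refinement and therefore avoid every critical point in their interiors; this restores genuine diffeomorphicity of $f^k$ on $J_1$, lets one invoke Theorem~\ref{CRI} to conclude uniform distortion bounds, and thereby completes the verification of the Katznelson property.
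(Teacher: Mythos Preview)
Your overall strategy---verify the Katznelson property for $f$ and invoke Theorem~\ref{katzcrit}---is exactly what the paper does, and your choice of the dynamical partitions $\mathcal{P}_n(c)$ as the nested sequence also matches. However, two related gaps in your execution concern precisely the points the paper singles out as essential.

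First, the real bounds by themselves do \emph{not} give condition~(i) of Definition~\ref{katzdef}. They yield $C^{-1}\leq |J_1|/|J_2|\leq C$ for atoms at the same level related by an iterate, which is perfectly compatible with $|J_1|<|J_2|$; the definition requires a \emph{uniform} $\alpha>1$, that is, definite contraction under $f^k$. Your phrase ``passing to a deeper refinement to extract a definite gap'' does not explain where this contraction originates. In the paper, the contraction (together with the distortion bound) comes from the \emph{negative Schwarzian derivative of the first return maps} $f^{q_{n+1}}|_{I_n}$ combined, in the unbounded-type case, with \emph{Yoccoz's inequality} (Lemma~\ref{lemyoccoz}) governing almost parabolic maps: the quadratic decay $|J_k|\asymp |I|/[\min(k,\ell+1-k)]^2$ of fundamental domains inside a long bridge is exactly what produces a pair $J_1,J_2$ with $|J_1|\geq\alpha|J_2|$ for a uniform $\alpha$, while the negative Schwarzian delivers the bounded distortion via Koebe. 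You do not mention Yoccoz's inequality at all, and without it the argument does not close when the partial quotients $a_{n+1}$ are unbounded.

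Second, your detour through the common refinement $\bigvee_{j}\mathcal{P}_n(c_j)$ to dodge the other critical points is both unnecessary and different from the paper. Once one knows (see \cite[Lemma~4.6]{EdF}, also \cite[Lemma~4.1]{EdFG}) that for $n$ large the return map $f^{q_{n+1}}$ has negative Schwarzian on each bridge, the other critical points are already absorbed into this single composite map: one works directly with $f^{q_{n+1}}$ as an almost parabolic diffeomorphism on the bridge, and never needs $f^s$ to be a diffeomorphism for intermediate $s$. Hence the paper stays with a single $\mathcal{P}_n(c)$ throughout.
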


The strategy to prove this theorem is, not surprisingly, to show that $f$ possesses the Katznelson property, and then invoke Theorem \ref{katzcrit}. In veryfying Katznelson's property, one can restrict attention to the nested sequence of partitions given by the dynamical partitions $\mathcal{P}_n(c)$ of some critical point of $f$ (recall Section \ref{secrealbounds}). The required geometric control comes from the real bounds (Theorem \ref{realbounds}), plus  the negative Schwarzian property of first return maps. When the rotation number of $f$ is of unbounded type, it is also necessary to use Yoccoz's inequality to control almost parabolic returns (see Lemma \ref{lemyoccoz} below). A detailed proof of Theorem \ref{teomedidadFG2020} can be found in \cite[\S\,4]{dFG2020}. 

\subsection{Lyapunov exponents} As mentioned in Section \ref{sec:Denjoy}, the analogous statement to Lemma \ref{zeroexpdifeo} also holds for multicritical circle maps. More precisely, we have the following result.

\begin{theorem}[Zero Lyapunov exponent]\label{expzeroccm} Let $f:S^1 \to S^1$ be a $C^3$ multicritical circle map with irrational rotation 
number, and let $\mu$ be its unique invariant Borel probability measure.
Then $\log Df$ belongs to $L^1(\mu)$ and:$$\int_{S^1}\!\log Df\,d\mu=0\,.$$
\end{theorem}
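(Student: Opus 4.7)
The plan is to mirror the diffeomorphism proof of Lemma~\ref{zeroexpdifeo}: I will show that $\int \log Df^{q_n}\,d\mu$ stays bounded as $n\to\infty$, and then combine this with the identity $\int \log Df^{q_n}\,d\mu = q_n \int \log Df\,d\mu$ (which follows from $f$-invariance of $\mu$ and the chain rule, once integrability is known) to force the integral to vanish. For integrability, I use that near each critical point $c_i$ of criticality $d_i$, Definition~\ref{defnaoflat} gives $\log Df(x)=(d_i-1)\log|x-c_i|+O(1)$, so it suffices to show $\log|x-c_i|\in L^1(\mu)$ near each $c_i$. I stratify a neighbourhood of $c_i$ by the annular regions $I_n(c_i)\setminus I_{n+2}(c_i)$ coming from the dynamical partition $\mathcal{P}_n(c_i)$. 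By the real bounds (Theorem~\ref{realbounds}) $|I_n(c_i)|$ decays geometrically, so $|\log|x-c_i||=O(n)$ on such an annulus, while $\mu(I_n(c_i))=\|q_n\rho\|\leq 1/q_{n+1}$ decays at least Fibonacci-exponentially; the series $\sum_n n\,\mu(I_n(c_i))$ therefore converges, and $\log Df\in L^1(\mu)$.

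Set now $\Lambda=\int \log Df\, d\mu$, and partition $S^1$ using $\mathcal{P}_n(c_0)$, separating the $N$ \emph{critical atoms} $\Delta_0,\ldots,\Delta_{N-1}$ (each containing one critical point of $f$) from the remaining \emph{regular} ones. On a regular atom $\Delta$, the Cross-Ratio Inequality (Theorem~\ref{CRI}) combined with the real bounds should yield that $f^{q_n}|_\Delta$ has bounded distortion, while the mean value theorem together with the comparability of atom lengths gives a point $\eta_\Delta\in\Delta$ at which $Df^{q_n}(\eta_\Delta)=|f^{q_n}(\Delta)|/|\Delta|=O(1)$. Together these produce a uniform bound $|\log Df^{q_n}(x)|\leq C$ on the regular part of $S^1$, with $C$ independent of $n$. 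On each critical atom, the chain rule and $\mu$-invariance give
\[
\int_{\Delta_i}\log Df^{q_n}\,d\mu \;=\; \sum_{j=0}^{q_n-1}\int_{f^j(\Delta_i)}\log Df\,d\mu,
\]
and the iterates $\{f^j(\Delta_i)\}_{j=0}^{q_n-1}$ are pairwise disjoint atoms of $\mathcal{P}_n(c_0)$, so the right-hand side has absolute value at most $\int_{S^1}|\log Df|\,d\mu<\infty$ by the integrability step. Summing over all atoms, $|q_n\Lambda|=\bigl|\int\log Df^{q_n}\,d\mu\bigr|\leq C+N\int|\log Df|\,d\mu=O(1)$, which forces $|\Lambda|=O(1/q_n)\to 0$, hence $\Lambda=0$.

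The hard part is the uniform distortion bound on the regular atoms. To apply Theorem~\ref{CRI} for the $q_n$-th iterate I need to know that the forward orbit $\Delta, f(\Delta),\ldots,f^{q_n-1}(\Delta)$ of each regular atom $\Delta$ is contained in atoms of $\mathcal{P}_n(c_0)$, and that the multiplicity-of-intersection hypothesis holds with a universal constant. Both requirements rest on a careful combinatorial analysis of how the partitions $\mathcal{P}_n(c_0)$ interact with the $N$ critical orbits, and this is the most delicate point of the whole argument.
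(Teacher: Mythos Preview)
Your integrability step is fine, and the overall strategy --- bound $\bigl|\int\log Df^{q_n}\,d\mu\bigr|=|q_n\Lambda|$ uniformly in $n$ --- is the natural one. The gap is in the ``regular atom'' step. An atom $\Delta\in\mathcal{P}_n(c_0)$ that does not contain a critical point of $f$ can still contain a critical point of $f^{q_n}$: the latter are the $Nq_n$ points $f^{-j}(c_\ell)$ with $0\le j<q_n$, and most of them sit in interiors of atoms you call regular. Already for $N=1$ this happens: take $\Delta=f^{q_{n+1}-1}(I_n)$. Then $f(\Delta)=f^{q_{n+1}}(I_n)$ has one endpoint $f^{q_{n+1}}(c_0)\in I_{n+1}$ and the other $f^{q_n+q_{n+1}}(c_0)=f^{q_n}\bigl(f^{q_{n+1}}(c_0)\bigr)\in I_n$, so $f(\Delta)$ straddles $c_0$; hence $f^{-1}(c_0)\in\Delta$ and $Df^{q_n}$ vanishes there. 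Your claimed pointwise bound $|\log Df^{q_n}|\le C$ on the regular part is therefore false. The Cross-Ratio Inequality does not rescue this: Theorem~\ref{CRI} bounds $\crd(f^{q_n};\Delta,\Delta^*)$ regardless of critical points, but promoting a cross-ratio bound to a pointwise bound on $\log Df^{q_n}$ via Koebe requires $f^{q_n}|_\Delta$ to be a diffeomorphism, which it is not on such atoms.

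Nor can you simply enlarge the ``critical'' collection to all atoms meeting $\{f^{-j}(c_\ell):0\le j<q_n\}$: there are of order $q_n$ such atoms, and bounding each contribution by $\int|\log Df|\,d\mu$ yields only $|q_n\Lambda|=O(q_n)$, which is vacuous. The paper does not give a proof here but refers to \cite{dFG2016}; the argument there still rests on the real bounds, but one has to control the $\mu$-integral of $\log Df^{q_n}$ near \emph{each} of its $Nq_n$ critical points and show that these contributions sum to $O(1)$. This requires understanding how $\mu$ weights small neighbourhoods of the preimages $f^{-j}(c_\ell)$, not just of the $c_\ell$ themselves, and the accounting is substantially more delicate than your outline suggests.
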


The proof of Theorem \ref{expzeroccm} is considerably more difficult than the one of Lemma \ref{zeroexpdifeo}, since in this case $\log Df$ is not a continuous function: it is defined only in the complement of the critical set of $f$, and it is unbounded (see Figure \ref{logcocycle}). We refer the reader to \cite{dFG2016}, where Theorem \ref{expzeroccm} is obtained as a consequence of the real bounds (Theorem \ref{realbounds}). 

\subsection{Hausdorff dimension} As we have seen above, the unique invariant probability measure $\mu$ under a multicritical circle map $f$ is purely singular with respect to Lebesgue measure. The \emph{Hausdorff dimension} of $\mu$, denoted $\dim_{H}(\mu)$, is by definition the smallest of the Hausdorff dimensions of all measurable sets having full $\mu$-measure . More precisely, 
\[
 \dim_{H}(\mu) = \inf\,\{\dim_{H}(E)\,:\; E\subset S^1\ \text{is measurable and}\ \mu(E)=1\}\ .
\]
A natural question to ask is: how does the Hausdorff dimension of $\mu$ vary with $f$? Obviously, a priori it should only depend on the bi-Lipschitz conjugacy class of $f$. In a recent paper \cite{trujillo}, Trujillo establishes lower and upper bounds for $\dim_{H}(\mu)$ that depend only on the Diophantine nature of the rotation number of $f$. In order to state his result, we first recall that an irrational number $\alpha$ is said to be \emph{Diophantine of type $\omega\geq 0$} if there exists a positive constant $C=C(\alpha)>0$ such that
\[
 \left|\alpha - \frac{p}{q}\right| \;\geq\; \frac{C}{q^{2+\omega}}\ ,\ \ \text{for all}\ p,q\in \mathbb{Z},\ q\neq 0\ .
\]
We denote by $\mathcal{D}_\omega$ the set of all Diophantine numbers of type $\omega$. It is a well-known fact that $\mathcal{D}_0$ has zero Lebesgue measure{\footnote{The set $\mathcal{D}_0$ is precisely the set of numbers of \emph{bounded type}, as previously defined.}}, whereas for each $\omega>0$ the set $\mathcal{D}_\omega$ has full Lebesgue measure. 

\begin{theorem}\label{hausdorffmeasure}
 If $f$ is a $C^3$ multicritical circle map with irrational rotation number $\rho=\rho(f)$ and $\mu$ is its unique invariant probability measure, then the following holds.
 \begin{enumerate}
  \item[(i)] If $\rho\in \mathcal{D}_\omega$ for some $\omega\geq 0$, then there exists $\nu>0$ such that
  \[
   \dim_{H}(\mu)\;\geq\; \frac{1}{2\omega + \nu}\ .
  \]
 \item[(ii)] If $\rho\notin \mathcal{D}_\omega$ for some $\omega>0$, then 
 \[
  \dim_{H}(\mu)\;\leq\; \frac{1}{\omega + 1}\ .
 \]

 \end{enumerate}

\end{theorem}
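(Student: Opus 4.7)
The plan is to estimate $\dim_H(\mu)$ via the local dimension function $\underline{d}_\mu(x) = \liminf_{r \to 0^+} \log\mu(B(x,r))/\log r$, using the dynamical partitions $\mathcal{P}_n(c)$ at a critical point $c$, the real bounds (Theorem~\ref{realbounds}), and the arithmetic of the continued-fraction expansion of $\rho$. Two standard inputs drive the argument: (a) by the conjugacy with $R_\rho$ (Theorem~\ref{yoccoztheorem}), $\mu(I_n) = \|q_n\rho\| \asymp 1/q_{n+1}$, and the same estimate holds on each iterate $f^i(I_n)$ by $f$-invariance; (b) by the real bounds, the Euclidean lengths of adjacent atoms of $\mathcal{P}_n(c)$ are comparable, and distortion estimates on long branches of $f^{q_n}$ yield a controlled decay of $|I_n|$. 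Translating this decay into a sharp power of $q_n$ in the Diophantine regime is supplied by Yoccoz's inequality for almost-parabolic returns.

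For part (i), assuming $\rho\in \mathcal{D}_\omega$, I will prove the uniform Frostman estimate $\mu(B(x,r))\leq C\,r^{1/(2\omega+\nu)}$ and conclude $\dim_H(\mu)\geq 1/(2\omega+\nu)$ via the mass distribution principle. Given $r$, I choose the largest $n$ with $|I_n|\geq r$; then $B(x,r)$ meets only a bounded number of atoms of $\mathcal{P}_n(c)$, so $\mu(B(x,r))\leq C\max\{\mu(I_n),\mu(I_{n+1})\}\lesssim 1/q_{n+1}$. The Diophantine bound $q_{n+1}\leq Cq_n^{1+\omega}$, iterated across two consecutive levels (which accounts for the factor of $2$ in $2\omega$), combined with the geometric relation between $|I_{n+1}|$ and $r$ coming from the real bounds, converts $1/q_{n+1}$ into the required power of $r$; the parameter $\nu>0$ absorbs the Koebe/Schwarzian distortion losses intrinsic to Theorem~\ref{realbounds} and Theorem~\ref{CRI}.

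For part (ii), assuming $\rho\notin\mathcal{D}_\omega$, I fix a subsequence $n_k\to\infty$ with $q_{n_k+1}\geq q_{n_k}^{1+\omega}$. At each such \emph{resonant} level, the long atoms of $\mathcal{P}_{n_k}(c)$ have Euclidean length $\asymp |I_{n_k}|$ and $\mu$-mass $\asymp 1/q_{n_k+1}$; the super-polynomial growth $q_{n_k+1}\geq q_{n_k}^{1+\omega}$ then implies, via a Borel--Cantelli-type argument exploiting the unique ergodicity of $f$, that for $\mu$-a.e.\ $x$ one has $\mu(B(x,r_k))\geq r_k^{1/(\omega+1)}$ along a sequence $r_k \asymp |I_{n_k}|\to 0$. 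Hence $\overline{d}_\mu(x)\leq 1/(\omega+1)$ $\mu$-a.e., and consequently $\dim_H(\mu)\leq 1/(\omega+1)$ by the standard identification of $\dim_H(\mu)$ with the $\mu$-essential supremum of the upper local dimension.

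The principal obstacle is extracting a polynomial-in-$q_n$ lower bound for $|I_n|$: the real bounds alone deliver only a geometric-in-$n$ lower bound $|I_n|\geq C^{-n}$, which is too loose when $q_n$ grows super-geometrically (the Liouville regime). Yoccoz's almost-parabolic inequality, combined with the Diophantine control on the partial quotients $a_n$, is what supplies the correct polynomial comparison between $|I_n|$ and $q_n^{-1}$; making this rigorous at the level of sharpness required to produce the exponents $1/(2\omega+\nu)$ and $1/(\omega+1)$ is the core technical step behind both parts of the theorem.
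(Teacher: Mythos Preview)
The paper does not contain a proof of this theorem: it is a survey, and the result is simply stated as due to Trujillo \cite{trujillo}, with no argument given. There is therefore no proof in the paper against which to compare your proposal.

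That said, your overall strategy---estimating local dimensions via the dynamical partitions $\mathcal{P}_n(c)$, using $\mu(f^i(I_n))\asymp 1/q_{n+1}$ from the conjugacy with $R_\rho$, and feeding in the arithmetic of $(q_n)$---is the natural one and is essentially what Trujillo does. Two points deserve correction. In part~(ii), exhibiting a sequence $r_k\to 0$ along which $\log\mu(B(x,r_k))/\log r_k \leq 1/(\omega+1)$ bounds the \emph{lower} local dimension $\underline{d}_\mu(x)$, not $\overline{d}_\mu(x)$ as you wrote; fortunately it is precisely $\underline{d}_\mu$ that controls $\dim_H(\mu)$ from above (with the definition used in the paper), so the conclusion survives once you fix the symbol. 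More substantively, in part~(i) your localization step (``choose the largest $n$ with $|I_n|\geq r$; then $B(x,r)$ meets boundedly many atoms of $\mathcal{P}_n(c)$'') is not justified as written: the real bounds only compare \emph{adjacent} atoms of $\mathcal{P}_n(c)$, so atoms far from $c$ need not have length comparable to $|I_n|$, and a ball of radius $r\asymp |I_n|$ centered at an arbitrary $x$ may well meet many of them. The level $n$ must be chosen relative to the atom containing $x$, and one needs a uniform lower bound on atom lengths across the whole partition---which is exactly the obstacle you flag at the end. You correctly identify that obstacle (turning the geometric-in-$n$ decay $|I_n|\geq \lambda^n$ from Theorem~\ref{realbounds} into a polynomial-in-$q_n$ bound) as the heart of the matter, but your proposal does not explain how Yoccoz's inequality and the Diophantine hypothesis combine to deliver it; that is where the actual work in \cite{trujillo} lies.
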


Note that the above theorem does not provide an upper bound for $\dim_{H}(\mu)$ in the case when the rotation number $\rho(f)$ is of bounded type ({i.e.,} lies in $\mathcal{D}_0$). Nevertheless, it had already been known since the work of Graczyk and Swiatek \cite{GS93} that in the bounded type case the Hausdorff dimension of $\mu$ lies strictly between $0$ and $1$. 

\section{Quasi-symmetric rigidity}\label{secQS}

An important first step towards \emph{smooth rigidity} of multicritical circle maps (to be examined in Section \ref{smoothrigid}) is to answer the question: when are two topologically conjugate multicritical circle maps \emph{quasi-symmetrically} conjugate? 

\subsection{Quasi-symmetry} Recall that an orientation-preserving homeomorphism of $S^1=\mathbb{R}/\mathbb{Z}$, say $h: S^1\to S^1$, is said to be \emph{quasi-symmetric} if there exists a constant $K\geq 1$ such that 
\begin{equation}\label{qsdef}
 \frac{1}{K}\;\leq\;\frac{h(x+t)-h(x)}{h(x)-h(x-t)}\;\leq\; K\ ,\ \ \text{for all}\ x\in S^1\,\text{and all}\  t>0\ . 
\end{equation}
If $K$ is such that $h$ satisfies \eqref{qsdef} for this $K$, then we say that $h$ is \emph{$K$-quasi-symmetric}. The smallest $K$ with this property is called the \emph{quasi-symmetric distortion} of $h$.

Quasi-symmetric homeomorphisms arise as boundary values of \emph{quasi-conformal} homeomorphisms of the unit disk (see \cite[ch.~4]{ahlfors}).

Let us describe a criterion for quasi-symmetry that is particularly useful in the study of circle maps. In order to formulate it, we first need a definition, reproduced almost verbatim from \cite[Def.~5.1]{EdF}.

\begin{definition}\label{finegrid}
 A \emph{fine grid} is a nested sequence $\{\mathcal{Q}_n\}_{n\geq 0}$ of finite interval partitions of the circle (modulo endpoints) having the following properties.
 \begin{enumerate}
  \item[(a)] Each $\mathcal{Q}_{n+1}$ is a strict refinement of $\mathcal{Q}_n$.
  \item[(b)] There exists an integer $a\geq 2$ such that each atom $\Delta \in \mathcal{Q}_n$ is the disjoint union of at most $a$ atoms of $\mathcal{Q}_{n+1}$.
  \item[(c)] There exists $\sigma > 1$ such that $\sigma^{-1}|\Delta| \leq  |\Delta'| \leq  \sigma |\Delta|$  for each pair of adjacent atoms $\Delta, \Delta'\in \mathcal{Q}_n$.
 \end{enumerate}
The numbers $a, \sigma$ are called \emph{fine grid constants}.
\end{definition}

This notion of fine grid was first introduced in \cite[\S 4]{edsonwelington1}. Its usefulness lies in the fact that one can sometimes tell how regular a homeomorphism is by looking at the effect it has on a suitable fine grid. As an example, here is the criterion for quasi-symmetry that we promised above.

\begin{prop}\label{qsfinegrid}
 Let $\{\mathcal{Q}_n\}_{n\geq 0}$ be a fine grid in $S^1$ whose fine grid constants are $a, \sigma$,
and let $h : S^1 \to S^1$ be an orientation-preserving homeomorphism such that
\[
 \left| \frac{|h(\Delta')|}{|h(\Delta'')|} - 
 \frac{|\Delta'|}{|\Delta''|}\right|\;\leq\; \lambda\ ,
\]
for each pair of adjacent atoms $\Delta', \Delta''\in \mathcal{Q}_n$, for all $n\geq 0$, where $\lambda$ is a positive constant. Then there exists $K = K(a, \sigma, \lambda) > 1$ such that $h$ is $K$-quasisymmetric.
\end{prop}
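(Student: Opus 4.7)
The plan is to verify the defining ratio \eqref{qsdef} by sandwiching each of the arcs $[x-t, x]$ and $[x, x+t]$ between unions of a bounded number of consecutive atoms of $\mathcal{Q}_n$ at a scale $n = n(x,t)$ where these atoms have length $\Theta(t)$, then transport the same picture to the image. The first step, and the crucial one, is to observe that the hypothesis automatically transports the bounded-geometry condition to the image partitions: for adjacent $\Delta', \Delta'' \in \mathcal{Q}_n$, property (c) gives $|\Delta'|/|\Delta''| \leq \sigma$, hence by hypothesis $|h(\Delta')|/|h(\Delta'')| \leq \sigma + \lambda =: \sigma'$, and swapping the two atoms gives the reverse ratio. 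Thus $\{h(\mathcal{Q}_n)\}_{n\geq 0}$ is itself a fine grid in $S^1$ with constants $(a, \sigma')$.

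Next I choose the scale. Given $x \in S^1$ and a sufficiently small $t > 0$, fix $C = C(\sigma) \geq 2(\sigma-1)+1$ and let $n$ be the largest integer such that the atom $\Delta_n(x) \in \mathcal{Q}_n$ containing $x$ satisfies $|\Delta_n(x)| \geq C t$. Property (b) combined with (c) gives the pointwise shrinking estimate $|\Delta_n(x)| \leq a\sigma^{a-1}|\Delta_{n+1}(x)|$, hence $|\Delta_n(x)| \in [Ct,\, a\sigma^{a-1}Ct]$. Labelling the atoms of $\mathcal{Q}_n$ consecutively about $\Delta^{(0)} := \Delta_n(x)$ as $\ldots, \Delta^{(-1)}, \Delta^{(0)}, \Delta^{(1)}, \ldots$, property (c) gives $|\Delta^{(\pm i)}| \geq \sigma^{-i}|\Delta^{(0)}|$. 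The choice $C \geq 2(\sigma-1)$ is precisely what guarantees, by a direct geometric-series computation, the existence of an integer $k = k(\sigma)$ with $\sum_{i=1}^{k}\sigma^{-i}\cdot C\,t \geq t$. Consequently,
\begin{equation*}
[x,x+t] \subset \Delta^{(0)} \cup \Delta^{(1)} \cup \cdots \cup \Delta^{(k)}, \qquad [x-t,x] \subset \Delta^{(-k)} \cup \cdots \cup \Delta^{(0)},
\end{equation*}
and all $2k+1$ atoms involved have lengths comparable to $|\Delta^{(0)}|$ within a factor $\sigma^k$, hence comparable to $t$ with constants depending only on $a,\sigma$.

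Finally I repeat the same combinatorial analysis on the image side using the fine grid $\{h(\mathcal{Q}_n)\}$ with constants $(a, \sigma')$ established in the first step. Since $h$ is an orientation-preserving homeomorphism, the images $h(\Delta^{(i)})$ sit consecutively about $h(\Delta^{(0)})$, and property (c) for the image grid gives $|h(\Delta^{(\pm i)})| \in [(\sigma')^{-i},\, (\sigma')^{i}]\cdot|h(\Delta^{(0)})|$. The upper bound $|h([x,x+t])| \leq \sum_{i=0}^{k}|h(\Delta^{(i)})| \leq (k+1)(\sigma')^{k}|h(\Delta^{(0)})|$ is immediate from the covering, and the lower bound $|h([x,x+t])| \geq (k-1)(\sigma')^{-k}|h(\Delta^{(0)})|$ follows because the interior atoms $\Delta^{(1)},\ldots,\Delta^{(k-1)}$ lie entirely in $[x,x+t]$. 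The same inequalities hold for $[x-t,x]$, and dividing yields
\begin{equation*}
\frac{1}{K}\;\leq\;\frac{|h([x,x+t])|}{|h([x-t,x])|}\;\leq\; K,
\end{equation*}
with $K = K(a,\sigma,\lambda)$, which is precisely \eqref{qsdef}.

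The main obstacle is the edge case where $[x,x+t]$ fits entirely inside $\Delta^{(0)}$, in which case the inner-atom lower bound degenerates to vacuity; one then descends one further level, where $\Delta^{(0)}$ is subdivided into at most $a$ atoms of length comparable to $t$, and reapplies the same geometric-series argument inside $\Delta^{(0)}$ using the fine grid $\{h(\mathcal{Q}_{n'})\}_{n'\geq n}$ restricted to $\Delta^{(0)}$. This case analysis and the precise choice of the constant $C$ are the only real calculations; the core of the proof is the transfer of fine-grid geometry to the image side carried out in the first step.
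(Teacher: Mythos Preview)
The paper does not actually prove this proposition; it merely cites \cite[pp.~5611--5612]{EdF}. So there is no ``paper's proof'' to compare against, and I will simply evaluate your argument on its own.

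Your first step---observing that the hypothesis forces $\{h(\mathcal{Q}_n)\}$ to be a fine grid with constants $(a,\sigma+\lambda)$---is exactly the right key idea, and the upper bound $|h([x,x+t])|\leq (k+1)(\sigma')^k|h(\Delta^{(0)})|$ is fine. The gap is in the lower bound. You assert that ``the interior atoms $\Delta^{(1)},\ldots,\Delta^{(k-1)}$ lie entirely in $[x,x+t]$'', but with your scale choice this is simply false: you picked $n$ so that $|\Delta^{(0)}|\geq Ct$ with $C>1$, hence $|\Delta^{(0)}|>t=|[x,x+t]|$, and nothing prevents $[x,x+t]$ from sitting entirely inside $\Delta^{(0)}$ (or inside $\Delta^{(0)}\cup\Delta^{(1)}$, etc.). In those cases there are \emph{no} interior atoms contained in $[x,x+t]$, and your lower bound $(k-1)(\sigma')^{-k}|h(\Delta^{(0)})|$ is vacuous. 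Your final paragraph treats this as a single ``edge case'' fixable by descending one level, but at level $n+1$ the sub-atoms of $\Delta^{(0)}$ are still comparable to $|\Delta^{(0)}|$ (within a factor $a\sigma^{a-1}$), hence possibly still larger than $t$; one level of descent is not enough, and the issue is not confined to the sub-case $[x,x+t]\subset\Delta^{(0)}$.

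The cleanest repair is to reverse the scale choice: let $n$ be the \emph{smallest} level at which some atom of $\mathcal{Q}_n$ is contained in $[x-t,x+t]$. Then at level $n-1$ the interval $[x-t,x+t]$ contains no atom, so (being connected) it meets at most two atoms of $\mathcal{Q}_{n-1}$ and is therefore covered by at most $2a$ atoms of $\mathcal{Q}_n$, all pairwise comparable; this gives the upper bounds on both $|h([x-t,x])|$ and $|h([x,x+t])|$. One of the two halves contains the full atom just found, yielding its lower bound. For the other half one descends a \emph{bounded} number $m=m(a,\sigma)$ of levels: since every atom is split into at least two comparable pieces (this is what ``strict refinement'' together with~(c) gives), the mesh contracts by a definite factor $\rho(a,\sigma)<1$ per level, so after $m$ levels the atom of $\mathcal{Q}_{n+m}$ containing $x$ has length at most $t/2$, and then the remaining half must contain a full atom of $\mathcal{Q}_{n+m}$ whose $h$-image is still comparable to $|h(\Delta^{(0)})|$. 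With both lower bounds in hand, the ratio estimate follows as you wrote.
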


A proof of this result can be found in \cite[pp. 5611--5612]{EdF}.

Now, let us agree to say that an orientation-preserving homeomorphism $h:S^1\to S^1$ is a \emph{fine grid isomorphism} if it maps fine grids to fine grids. 
Then the  criterion for quasi-symmetry given by Proposition \ref{qsfinegrid} has the following consequence.  

\begin{coro}\label{corogrid}
Let $h:S^1\to S^1$ be an orientation-preserving homeomorphism. Then the following are equivalent.
\begin{enumerate}
 \item[(i)] $h$ is quasi-symmetric;
 \item[(ii)] $h$ maps \emph{some} fine grid onto another fine grid;
 \item[(iii)] $h$ is a fine grid isomorphism. 
\end{enumerate}
\end{coro}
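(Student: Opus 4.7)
The plan is to prove the cyclic chain (iii) $\Rightarrow$ (ii) $\Rightarrow$ (i) $\Rightarrow$ (iii). The first arrow is essentially a triviality once one exhibits a fine grid: the standard dyadic partition $\mathcal{Q}_n$ of $S^1 = \mathbb{R}/\mathbb{Z}$ into $2^n$ intervals of equal length is a fine grid with constants $a = 2$ and $\sigma = 1$, so (iii) immediately delivers (ii). For (ii) $\Rightarrow$ (i), suppose $h$ carries a fine grid $\{\mathcal{Q}_n\}$ with constants $a,\sigma$ onto a fine grid $\{h(\mathcal{Q}_n)\}$ whose combinatorial constant is again $a$ (refinement combinatorics is preserved) and whose geometric constant is some $\sigma' > 1$. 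For any adjacent $\Delta',\Delta'' \in \mathcal{Q}_n$, the ratios $|\Delta'|/|\Delta''|$ and $|h(\Delta')|/|h(\Delta'')|$ belong to $[\sigma^{-1},\sigma]$ and $[(\sigma')^{-1},\sigma']$ respectively, so their difference is bounded by $\lambda := \sigma + \sigma'$; Proposition \ref{qsfinegrid} then gives quasi-symmetry of $h$.

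The implication (i) $\Rightarrow$ (iii) is where the actual content lies. Given a fine grid $\{\mathcal{Q}_n\}$ with constants $a,\sigma$ and a $K$-quasi-symmetric $h$, one must verify that $\{h(\mathcal{Q}_n)\}$ still satisfies (a)--(c) of Definition \ref{finegrid}. Properties (a) and (b) are combinatorial and preserved by any order-preserving bijection. The mesh condition $\mathrm{mesh}(h(\mathcal{Q}_n)) \to 0$ is immediate from the uniform continuity of $h$ on the compact circle $S^1$. The heart of the matter is (c): one must show that for every adjacent pair $\Delta',\Delta''$ with $\sigma^{-1} \leq |\Delta'|/|\Delta''| \leq \sigma$, the image ratio $|h(\Delta')|/|h(\Delta'')|$ is bounded by a constant $\sigma'$ depending only on $K$ and $\sigma$. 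The standard way to package this is via the Tukia--V\"ais\"al\"a upgrade: any $K$-quasi-symmetric homeomorphism satisfies a stronger control
$$\frac{|h(y)-h(x)|}{|h(z)-h(y)|} \;\leq\; \eta_K\!\left(\frac{|y-x|}{|z-y|}\right)$$
for every ordered triple $x < y < z$ in a lift, where $\eta_K$ depends only on $K$. Applying this to the three endpoints of $\Delta' \cup \Delta''$ yields (c) with $\sigma' = \eta_K(\sigma)$.

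The only real obstacle is therefore the derivation of the $\eta_K$-inequality above from the equidistant definition \eqref{qsdef} of $K$-quasi-symmetry. This is classical (see, for example, Heinonen's \emph{Lectures on Analysis on Metric Spaces}); a direct elementary argument proceeds by dyadically subdividing the longer of $\Delta',\Delta''$ into $O(\log \sigma)$ subintervals comparable in length to the shorter, and then chaining \eqref{qsdef} along successive equidistant triples, producing a bound of the order $K^{O(\log \sigma)}$. Since the argument is independent of the one-dimensional dynamics being surveyed here, one may simply invoke it as a black box and the rest of the corollary is immediate.
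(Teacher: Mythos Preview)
Your argument is correct and is exactly the kind of proof the paper has in mind: the corollary is stated there without proof, merely as a ``consequence'' of Proposition~\ref{qsfinegrid}, and your (ii)$\Rightarrow$(i) step invokes that proposition in the intended way. The remaining implications---exhibiting the dyadic grid for (iii)$\Rightarrow$(ii), and for (i)$\Rightarrow$(iii) the classical passage from the equidistant $K$-quasisymmetry condition to an $\eta_K$-condition via the dyadic doubling trick---are the standard moves and are correctly sketched; your $K^{O(\log\sigma)}$ bound is indeed what the doubling argument gives.
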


As we shall see in the sequence, the characterization of quasi-symmetry provided by Corollary \ref{corogrid} is extremely helpful in the study of critical circle maps. 

\subsection{Quasi-symmetric conjugacies}\label{sec:qsconj}
The facts presented above allow us to give a short proof of the following theorem, originally due to Herman \cite{H}. In this section, since we will consider dynamical partitions associated to different maps, we shall use the notation $\mathcal{P}_n(x,f)$, $I_n(x,f)$, instead of $\mathcal{P}_n(x)$, $I_n(x)$, etc. to emphasize the dependency on $f$.

\begin{theorem}\label{qsherman} A multicritical circle map without periodic points is quasi-symmetrically conjugate to a rigid rotation if and only if its rotation number is of bounded type. 
\end{theorem}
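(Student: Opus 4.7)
The strategy is to exploit the fine-grid characterization of quasi-symmetry provided by Corollary \ref{corogrid} together with the real bounds (Theorem \ref{realbounds}). Let $f$ be a multicritical circle map with irrational rotation number $\rho$, let $c$ be a critical point of $f$, and let $h:S^1\to S^1$ be the topological conjugacy with $R_\rho$ given by Yoccoz's Theorem \ref{yoccoztheorem}, normalized so that $h(c)=p$ for some chosen base point $p\in S^1$. Because $h$ intertwines the dynamics, it sends the $f$-orbit of $c$ to the $R_\rho$-orbit of $p$, and therefore maps the dynamical partition $\mathcal{P}_n(c,f)$ onto $\mathcal{P}_n(p,R_\rho)$ for every $n\ge 0$.

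For the implication $(\Leftarrow)$, assume $\rho=[a_0,a_1,\ldots]$ satisfies $a_n\le A$ for all $n$. I would verify that both sequences $\{\mathcal{P}_n(c,f)\}_{n\ge 0}$ and $\{\mathcal{P}_n(p,R_\rho)\}_{n\ge 0}$ are fine grids in the sense of Definition \ref{finegrid}, with constants depending only on $A$ and on the real-bounds constant of $f$. Condition (a) is immediate, since dynamical partitions strictly refine each other. Condition (b) follows from the combinatorics of refinement: a long atom of $\mathcal{P}_n$ decomposes into exactly $a_{n+1}+1\le A+1$ atoms of $\mathcal{P}_{n+1}$, while each short atom of $\mathcal{P}_n$ is already an atom of $\mathcal{P}_{n+1}$. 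For the $f$-partition, condition (c) is precisely the content of Theorem \ref{realbounds}; for the rotation it follows from the elementary identity $|I_n|=a_{n+1}|I_{n+1}|+|I_{n+2}|$ together with $a_n\le A$. Since $h$ carries one fine grid bijectively onto the other, Corollary \ref{corogrid} yields that $h$ is quasi-symmetric.

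For the converse implication $(\Rightarrow)$, I would invoke the standard fact that a $K$-quasi-symmetric map preserves comparability of adjacent intervals: there is a function $\eta_K:[1,\infty)\to[1,\infty)$, bounded on bounded sets, such that for any $K$-quasi-symmetric $h$ and any pair of adjacent intervals $A,B\subset S^1$,
\[
\frac{|h(A)|}{|h(B)|}\;\le\;\eta_K\!\left(\frac{|A|}{|B|}\right).
\]
Apply this to the adjacent pair $A_n=I_n(c,f)$, $B_n=I_{n+1}(c,f)$, which share the common endpoint $c$. Real bounds give $|A_n|/|B_n|\le C$ uniformly in $n$, hence $|h(A_n)|/|h(B_n)|=|I_n(p,R_\rho)|/|I_{n+1}(p,R_\rho)|$ is uniformly bounded. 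But for the rigid rotation,
\[
\frac{|I_n(p,R_\rho)|}{|I_{n+1}(p,R_\rho)|}\;=\;a_{n+1}\,+\,\frac{|I_{n+2}(p,R_\rho)|}{|I_{n+1}(p,R_\rho)|}\;\ge\; a_{n+1},
\]
so $\{a_n\}$ is bounded and $\rho$ is of bounded type.

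The most delicate point of the argument is the uniform verification of the fine-grid condition (c) on the $f$-side, which is exactly where the real bounds (Theorem \ref{realbounds}) do all the work; the rest consists of elementary combinatorics of the continued-fraction algorithm on the rotation side and a standard application of the quasi-symmetric distortion property for the converse direction.
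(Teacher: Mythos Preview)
Your overall strategy matches the paper's, but there is one genuine error in the $(\Leftarrow)$ direction. You assert that ``Condition (a) is immediate, since dynamical partitions strictly refine each other,'' yet two lines later you (correctly) observe that ``each short atom of $\mathcal{P}_n$ is already an atom of $\mathcal{P}_{n+1}$.'' These two statements contradict each other: the short atoms $f^j(I_{n+1})$ of $\mathcal{P}_n$ are \emph{not} subdivided when passing to $\mathcal{P}_{n+1}$, so $\mathcal{P}_{n+1}$ is a refinement but \emph{not} a strict refinement of $\mathcal{P}_n$ in the sense required by Definition~\ref{finegrid}(a). Hence $\{\mathcal{P}_n(c,f)\}_{n\ge 0}$ is \emph{not} a fine grid, and you cannot invoke Corollary~\ref{corogrid} as written.

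The fix is the one the paper uses: skip every other level and work with $\{\mathcal{P}_{2n}(c,f)\}_{n\ge 0}$. Then every atom of $\mathcal{P}_{2n}$ is subdivided into at least $2$ and at most $(a_{2n+1}+1)(a_{2n+2}+1)\le (A+1)^2$ atoms of $\mathcal{P}_{2n+2}$, so conditions (a) and (b) hold; condition (c) again comes from the real bounds. With this adjustment your argument goes through and is essentially identical to the paper's proof. Your treatment of the converse direction (via the $\eta$-quasi-symmetry bound applied to the adjacent pair $I_n,I_{n+1}$) is the contrapositive of the paper's argument and is correct.
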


\begin{proof} Let us first assume that $f: S^1\to S^1$ is a multicritical circle map whose rotation number $\rho=[a_0,a_1,a_2,\ldots]$ is an irrational of bounded type, say $a_n\leq A$ for all $n$. Let $c\in S^1$ be a critical point of $f$. 
 We claim that   
 the dynamical partitions $\mathcal{P}_{2n}(c,f)$, $n\geq 0$ constitute a fine grid. 
 Indeed, every atom of $\mathcal{P}_{2n}(c,f)$ is partitioned into at least $2$ and at most $(a_{2n+1}+1)(a_{2n+2}+1)\leq (A+1)^2$ atoms of $\mathcal{P}_{2n+2}(c,f)$, and these are all comparable by Theorem \ref{realbounds}. Hence conditions (a), (b) and (c) of Definition \ref{finegrid} are met, and the claim is proved. Let $h:S^1\to S^1$ be a topological conjugacy between $f$ and the rigid rotation $R_\rho$ (say $h\circ f=R_\rho\circ h$), which exists by Yoccoz's theorem. Then one can easily check that the dynamical partitions $\mathcal{P}_{2n}(h(c),R_\rho)$, $n\geq 0$, also constitute a fine grid (for $R_\rho$). But then $h$ satisfies property (ii) of Corollary \ref{corogrid}, and therefore it must be quasi-symmetric.
 
 For the converse, suppose $h:S^1\to S^1$ is a homeomorphism satisfying $h\circ f = R_\rho\circ h$, and suppose the rotation number of $f$ is not of bounded type. Then there exists a subsequence $(n_i)$ with $a_{n_i+1}\to \infty$ as $i\to \infty$. Again we take $c$ to be a critical point of $f$, and let $x=h(c)$. By the real bounds, the scaling ratios $|I_{n_i+1}(c,f)|/|I_{n_i}(c,f)|$ for $f$ remain bounded, whereas for the rigid rotation we have
 \[
  \frac{|h(I_{n_i+1}(c,f))|}{|h(I_{n_i}(c,f))|}\;=\; 
  \frac{|I_{n_i+1}(x,R_\rho)|}{|I_{n_i}(x,R_\rho)|}\;>\; a_{n_i+1}\;\to\; \infty \ \ \text{as}\ i\to \infty \ ,
 \]
and therefore $h$ cannot be quasi-symmetric.
\end{proof}

\begin{remark}
 In the above proof, the only reason we did not use the full collection of dynamical partitions as our fine grid is that $\mathcal{P}_{n+1}(c,f)$ is \emph{not} a strict refinement of $\mathcal{P}_n(c,f)$ (the \emph{short} atoms of $\mathcal{P}_n(c,f)$ are not decomposed at all in the next step; they become \emph{long} atoms of $\mathcal{P}_{n+1}(c,f)$). This is why we skipped every other level. 
\end{remark}

\begin{remark} An interesting application of Theorem \ref{qsherman} to holomorphic dynamics goes as follows. In the complex quadratic family $P_\theta:\,z\mapsto e^{2\pi i\theta}z + z^2$, one knows that for each Diophantine $\theta$ the fixed point at the origin is linearizable, so it belongs to the Fatou set of $P_\theta$. The component of the Fatou set containing $0$ is a \emph{Siegel disk}; call it $\Omega_\theta$. In \cite{Do}, Douady proved that if $\theta$ is a number of bounded type, then $\partial\Omega_\theta$ is a \emph{quasi-circle} that contains the critical point of $P_\theta$. The rough idea is to start with a Blaschke product $B$ from the family introduced in section \ref{secBlaschke} (see eq.~\eqref{blaschke-eq}) whose restriction to $S^1$ is a critical circle map $f$ with rotation number $\theta$. Then, using Theorem \ref{qsherman}, one applies quasi-conformal surgery to $B$, cutting out the unit disk and glueing it back in using as sewing map the quasi-symmetric conjugacy $h$ between $f$ and the rigid rotation with the same rotation number. Redefining the map in the interior of the unit disk to be that same rotation, and applying the measurable Riemann mapping theorem, the unit circle is mapped onto a quasi-circle, and the post-surgery map becomes $P_\theta$, thereby proving Douady's theorem. This result was later generalized by Petersen and Zakeri in \cite{PZ}. Their theorem allows the rotation number $\theta$ to belong to a certain class of unbounded type numbers, and the proof is accomplished through the use of \emph{trans-quasiconformal surgery}. 
\end{remark}

More important for our purposes in the present survey is the following immediate consequence of Theorem \ref{qsherman}. 

\begin{coro}\label{qsbdtype}
Any two multicritical circle maps $f$ and $g$ with the same irrational rotation number \emph{of bounded type} are quasisymmetrically conjugate, and in fact \emph{every} topological conjugacy between $f$ and $g$ is a quasi-symmetric homeomorphism.
\end{coro}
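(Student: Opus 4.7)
The plan is to obtain the corollary by composing quasi-symmetric conjugacies to a common linear model, namely the rigid rotation $R_\rho$. By Yoccoz's theorem (Theorem \ref{yoccoztheorem}), there exist topological conjugacies $h_f, h_g : S^1 \to S^1$ with $h_f \circ f = R_\rho \circ h_f$ and $h_g \circ g = R_\rho \circ h_g$. Theorem \ref{qsherman} (the bounded-type case) applies to both $f$ and $g$, so both $h_f$ and $h_g$ are quasi-symmetric. Setting $\phi_0 = h_g^{-1} \circ h_f$ gives a circle homeomorphism that conjugates $f$ to $g$, and since the quasi-symmetric class is closed under composition and inversion, $\phi_0$ is quasi-symmetric. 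This already settles the existence of a quasi-symmetric conjugacy.

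For the stronger statement that \emph{every} topological conjugacy between $f$ and $g$ is quasi-symmetric, the key observation is that such conjugacies form a single coset of the centralizer of $R_\rho$, which consists only of rigid rotations. Concretely, if $\phi$ is any topological conjugacy with $\phi \circ f = g \circ \phi$, then a direct computation shows that $\psi := h_g \circ \phi \circ h_f^{-1}$ commutes with $R_\rho$. Since $R_\rho$ has a dense orbit and is minimal, any homeomorphism commuting with it is determined by its value at a single point and must be a rigid rotation $R_\beta$. Hence $\phi = h_g^{-1} \circ R_\beta \circ h_f$, which exhibits $\phi$ as a composition of three quasi-symmetric maps (the middle one being an isometry), so $\phi$ itself is quasi-symmetric.

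The only step that requires any care is the identification of the centralizer of $R_\rho$ with the group of rigid rotations; this is classical and follows from the unique ergodicity of $R_\rho$ together with the fact that the orbit of any point is dense. The remainder is a purely formal assembly on top of Theorem \ref{qsherman}, so no new analytic estimates are needed and no genuine obstacle arises beyond what is already contained in the bounded-type statement of that theorem.
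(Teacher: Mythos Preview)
Your argument is correct and is precisely how one spells out the paper's one-line justification (``an immediate consequence of Theorem~\ref{qsherman}''): pick quasi-symmetric conjugacies $h_f,h_g$ to $R_\rho$ and compose, then use that the centralizer of an irrational rotation consists only of rotations to show every conjugacy arises this way up to a rotation factor. Note, incidentally, that the proof of Theorem~\ref{qsherman} given in the paper already takes an \emph{arbitrary} conjugacy $h$ to $R_\rho$ and shows it maps a fine grid to a fine grid (hence is quasi-symmetric), which yields an equivalent route to the ``every'' statement without explicitly invoking the centralizer.
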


Note that in Corollary \ref{qsbdtype} the number of critical points of $f$ and the number of critical points of $g$ need not be the same!
But the bounded type hypothesis on the rotation number is essential. 
In full generality, the above statement is most definitely \emph{false} for unbounded combinatorics; see Section \ref{sec:orbflex}. 

What can be said, then, for arbitrary irrational rotation numbers? If $f$ and $g$ have the same number of critical points and there is a conjugacy between $f$ and $g$ that maps each critical point of $f$ to a critical point of $g$, the first part of the statement of Corollary \ref{qsbdtype} continues to hold. More precisely, we have the following result, which is the main theorem in \cite{EdF}.

\begin{theorem}\label{qsrigidity}
 Let $f, g : S^1 \to S^1$ be two $C^3$ multicritical circle maps with
the same irrational rotation number and the same number of (non-flat) critical
points, and let $h : S^1 \to S^1$ be a homeomorphism conjugating f to g, i.e., such that
$h \circ f = g \circ h$. If $h$ maps each critical point of $f$ to a corresponding critical point of
$g$, then $h$ is quasisymmetric.
\end{theorem}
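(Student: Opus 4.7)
The plan is to invoke the fine-grid characterization of quasi-symmetry (Corollary \ref{corogrid}): it suffices to produce a fine grid for $f$ whose image under $h$ is again a fine grid (for $g$). Pick a critical point $c$ of $f$ and set $c' = h(c)$, which by hypothesis is a critical point of $g$. Since $h\circ f^k = g^k\circ h$ for every $k\in\nt$, the conjugacy $h$ sends the dynamical partition $\mathcal{P}_n(c,f)$ bijectively onto $\mathcal{P}_n(c',g)$ for each $n\geq 0$, sending long atoms to long atoms and short atoms to short atoms. Theorem \ref{realbounds} applied at both $c$ and $c'$ then tells us that adjacent atoms in each of these families are comparable in length, with constants depending only on $f$ and $g$.

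If the rotation number $\rho=[a_0,a_1,\ldots]$ happened to be of bounded type, I would simply take $\mathcal{Q}_n = \mathcal{P}_{2n}(c,f)$ and conclude as in the proof of Theorem \ref{qsherman}. In the general case this fails condition (b) of Definition \ref{finegrid}, since a single long atom of $\mathcal{P}_n(c,f)$ is the union of $a_{n+2}+1$ atoms of $\mathcal{P}_{n+1}(c,f)$, and $a_{n+2}$ may be arbitrarily large. My remedy is to interpolate between $\mathcal{P}_n(c,f)$ and $\mathcal{P}_{n+1}(c,f)$ by a sequence of intermediate partitions $\mathcal{Q}_n^{(0)}=\mathcal{P}_n(c,f) \succ \mathcal{Q}_n^{(1)}\succ \cdots \succ \mathcal{Q}_n^{(a_{n+2})}=\mathcal{P}_{n+1}(c,f)$, constructed by peeling off, in each long atom of $\mathcal{P}_n(c,f)$, one at a time, the copies of the form $f^{jq_{n+1}}(I_{n+1}(c,f))$ that subdivide it. At each step only finitely many atoms are affected, each being cut into exactly two pieces, so conditions (a) and (b) of Definition \ref{finegrid} are satisfied with constant $a=2$ for the full sequence $\{\mathcal{Q}_n\}$ obtained by concatenating these interpolations across all renormalization levels.

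The crux, and the main obstacle, is verifying condition (c): adjacent atoms in each $\mathcal{Q}_n^{(k)}$ must be comparable, uniformly in $n$ and $k$. When $a_{n+2}$ is large the decomposition of a long atom of $\mathcal{P}_n(c,f)$ into iterates of $f^{q_{n+1}}$ constitutes an almost-parabolic cascade, and Theorem \ref{realbounds} alone says nothing about the internal ratios of this cascade. Here I would invoke \emph{Yoccoz's almost-parabolic inequality} (the same Lemma \ref{lemyoccoz} alluded to in Section \ref{secerg}), which provides sharp estimates for the lengths of the successive pieces $f^{jq_{n+1}}(I_{n+1}(c,f))$ and in particular shows that consecutive pieces are comparable with universal constants. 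Combining these estimates at the critical point with the cross-ratio inequality (Theorem \ref{CRI}) and the real bounds propagates uniform comparability of adjacent atoms from the cascade at $c$ to all atoms of $\mathcal{Q}_n^{(k)}$ around the circle.

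Once $\{\mathcal{Q}_n\}_{n\geq 0}$ has been verified to be a fine grid for $f$, the identical construction performed at $c'=h(c)$ for $g$ yields, with the same combinatorial recipe, a fine grid for $g$; by the intertwining relation $h\circ f^k = g^k\circ h$, this latter grid is exactly $\{h(\mathcal{Q}_n)\}_{n\geq 0}$. Therefore $h$ is a fine-grid isomorphism, and Corollary \ref{corogrid} yields the quasi-symmetry of $h$.
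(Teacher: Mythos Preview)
Your overall strategy---build a canonical fine grid for $f$, observe that $h$ carries it to the analogous grid for $g$, and invoke Corollary~\ref{corogrid}---is exactly the right one, and it is the route taken in the paper. The gap is in your construction of the interpolating partitions $\mathcal{Q}_n^{(k)}$.

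Peeling off the fundamental domains $\Delta_j=f^{q_n+jq_{n+1}}(I_{n+1})$ one at a time does \emph{not} produce a fine grid: condition~(c) of Definition~\ref{finegrid} fails. After you peel off $\Delta_0,\ldots,\Delta_{k-1}$, the newly exposed atom $\Delta_{k-1}$ sits adjacent to the \emph{entire remaining block} $R_k=\Delta_k\cup\cdots\cup\Delta_{a_{n+1}-1}\cup I_{n+2}$, not merely to $\Delta_k$. Yoccoz's inequality \eqref{yocineq} gives $|\Delta_{k-1}|\asymp |I_n|/k^2$ for $k\leq a_{n+1}/2$, while $|R_k|\asymp |I_n|$ (it still contains $\Delta_{a_{n+1}-1}$, whose length is comparable to $|I_n|$). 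Hence $|R_k|/|\Delta_{k-1}|\asymp k^2$, which is unbounded. The fact that \emph{consecutive} fundamental domains are comparable is true but irrelevant here; what matters is the adjacency structure inside your partition $\mathcal{Q}_n^{(k)}$.

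The paper resolves this with a \emph{balanced decomposition}: rather than cutting off single fundamental domains, one groups them dyadically, taking lateral blocks $L_i,R_i$ each consisting of $2^i$ consecutive $\Delta_j$'s, flanking a shrinking central interval $M_{i+1}$. Yoccoz's inequality then gives $|L_i|\asymp|M_{i+1}|\asymp|R_i|\asymp |I_n|/2^i$, so comparability holds at every level. A second point you omit is specific to the multicritical setting: some fundamental domains $\Delta_j$ contain critical points of the return map $f^{q_{n+1}}$, and these \emph{critical spots} break the almost-parabolic structure into several \emph{bridges}, each of which must be treated separately (see the discussion surrounding Definition~\ref{almostparabolic} and Proposition~\ref{gridpprop}). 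Your sketch treats the whole long atom as a single almost-parabolic cascade, which is only correct when $N=1$.
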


In the special case of maps with a single critical point, this theorem was first proved by Yoccoz (unpublished, but see \cite[Corollary 4.6]{edsonwelington1}). Here, the presence of \emph{at least one} critical point is absolutely crucial: the statement is \emph{false} for diffeomorphisms. Indeed, there exist diffeomorphisms of the circle, even analytic ones, that are topologically
conjugate to an irrational rotation and yet no conjugacy between them is quasi-symmetric; see \cite[p.~75]{livrounidim}. 

Let us give a brief description of the main steps in the proof of Theorem \ref{qsrigidity}. 

\subsubsection*{The basic idea} The basic idea is to build for each multicritical circle map $f$ an associated fine grid in a \emph{canonical} way, and then apply Corollary \ref{corogrid}. By canonical here we mean that the partitions making up this fine grid must be defined in a dynamically invariant way, {\it i.e.,} in purely combinatorial terms. An obvious first attempt is to use the dynamical partitions $\mathcal{P}_n(c,f)$, where $c$ is a critical point of $f$, all of whose vertices lie in the forward orbit of $c$. But even if we skip levels (to circumvent the fact that $\mathcal{P}_{n+1}(c,f)$ is not a strict refinement of $ \mathcal{P}_n(c,f)$) and look at a subsequence of this sequence of partitions, we are in trouble because, whenever a partial quotient $a_{n+1}$ is very large, there are atoms of $\mathcal{P}_{n+1}(c,f)$ which are much smaller than the atoms of $\mathcal{P}_n(c,f)$ in which they are contained. 
We need to group some of these small atoms together, but to do that we first need to understand their geometry.

\subsubsection*{Bridges and critical spots}

Let $\Delta_0=f^{q_n}(I_{n+1}(c))\subset I_n(c)$, and consider its images under $f^{q_{n+1}}$, namely 
$\Delta_i=f^{iq_{n+1}}(\Delta_0)$, for $0\leq i\leq a_{n+1}-1$. These intervals constitute an exact partition of $I_n(c)\setminus I_{n+2}(c)$, and they are 
\emph{fundamental domains} of the first return map $f^{q_{n+1}}: I_n(c)\to f^{q_{n+1}}(I_n(c))$. Some of these fundamental domains may contain a number of critical points of the return map (at most equal to the total number of critical points of $f$). These fundamental domains are called \emph{critical spots}.
We also think of the interval $I_{n+2}(c)\subset I_n(c)$ as a critical spot.
The intervals between two consecutive critical spots are called \emph{bridges}. 
A fundamental lemma proved in \cite[Lemmas 4.2 and 4.3]{EdF} as a consequence of the real bounds states that the lengths of critical spots and (non-empty) bridges are always comparable with the length of $I_n(c)$. The cross-ratio inequality implies that the same comparability is true for the images of these intervals under $f^j$ with respect to the corresponding long atoms $f^j(I_n(c))$, $0\leq j\leq q_{n+1}-1$, (we also refer to these images as critical spots and bridges). Hence for each $n$ we can define a partition $\mathcal{P}_n^*(c,f)$ whose atoms are the short atoms of $\mathcal{P}_n(c,f)$ together with all bridges and critical spots that make up the long atoms of $\mathcal{P}_n(c,f)$. This new partition still has the property that any two of its atoms that are adjacent are comparable. Moreover, one easily sees that 
$\mathcal{P}_n^*(c,f)$ refines $\mathcal{P}_n(c,f)$ and is refined by $\mathcal{P}_{n+1}(c,f)$. This fact tells us that neither the sequence $\{\mathcal{P}_n^*(c,f)\}_{n\geq 0}$ nor any of its subsequences can be a fine grid. Hence we are not done yet. We need to understand the geometry of bridges, and here is where things get more interesting.

\subsubsection*{Almost parabolic maps} 
Each bridge is made up of a union of a number $\ell$ of fundamental domains for $f^{q_{n+1}}$. If $\ell$ is, say, less than $1,000$, we say that the bridge is \emph{short}; otherwise we say that the bridge is \emph{long}. 
Whenever $a_{n+1}$ is large, some bridges will certainly be long, and the restriction of $f^{q_{n+1}}$ to each one of these bridges will be very nearly a parabolic map at the center of a saddle-node bifurcation (see Figure \ref{newsaddle}). Such \emph{almost parabolic maps} can be described abstractly as follows 
(see \cite[p.~354]{edsonwelington1} or \cite[Def.~4.1]{EdF}). 

\begin{figure}[t]
\begin{center}~
\hbox to \hsize{
\psfrag{f}[][][1]{$\scriptstyle{f^{q_{n+1}}}$} 
\psfrag{F}[][][1]{$\;f^{q_{n+1}}$}
\psfrag{d}[][][1]{$\Delta'$}
\psfrag{D}[][][1]{$\Delta''$}
\psfrag{B}[][][1]{$B$}
% \psfig{figure= picard.eps,width=4.2in}
%\hspace{1.0em} 
\includegraphics[width=5.2in]{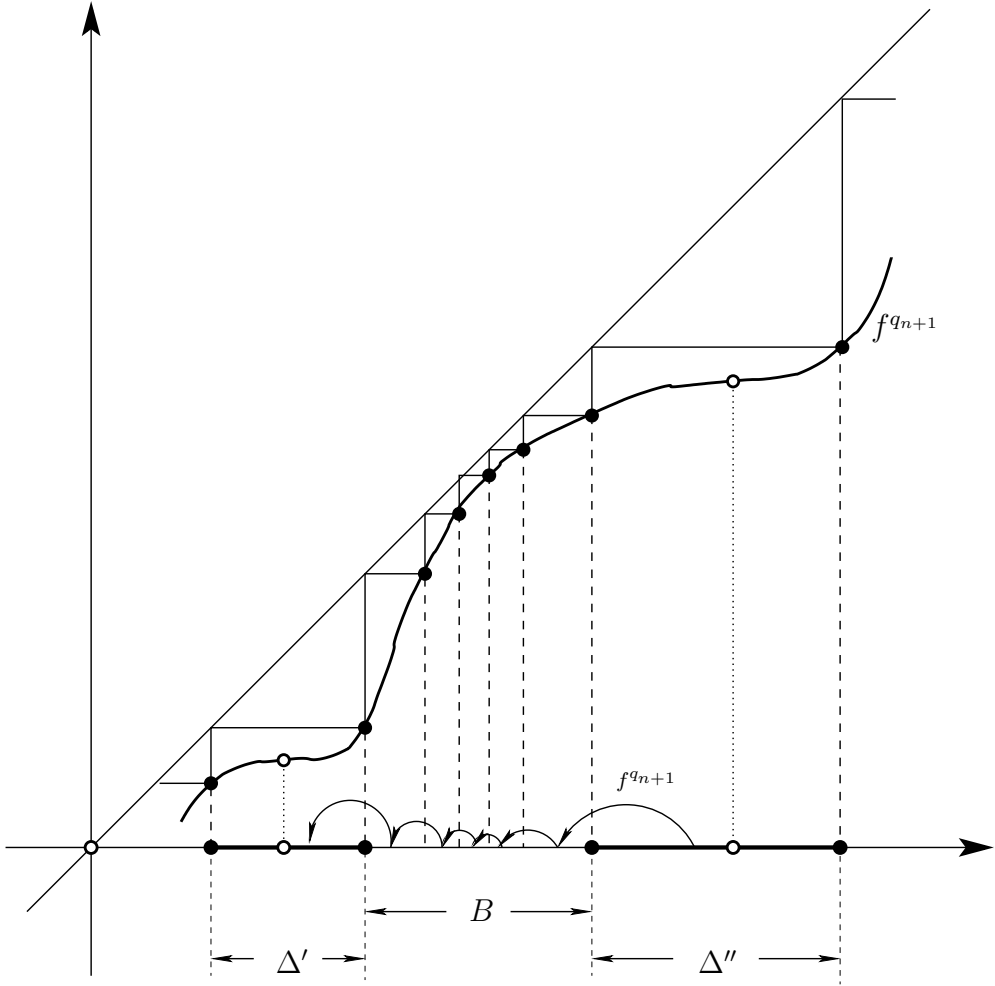}
   }
\end{center}
\caption[newsaddle]{\label{newsaddle} A bridge $B$ between two critical spots, $\Delta'$ and $\Delta''$.}
\end{figure}

\begin{definition}\label{almostparabolic}
 An \emph{almost parabolic map} is a $C^3$ diffeomorphism 
\[
 \phi:\, J_1\cup J_2\cup \cdots \cup J_\ell \;\to\; J_2\cup J_3\cup \cdots \cup J_{\ell+1} \ ,
\]
where $J_1,J_2, \ldots, J_{\ell+1}$ are consecutive intervals on the circle (or on the line), with the 
following properties. 
\begin{enumerate}
 \item[(i)] One has $\phi(J_k)= J_{k+1}$ for all $1\leq k\leq \ell$;
 \item[(ii)] The Schwarzian derivative of $\phi$ is everywhere negative.
\end{enumerate}
The positive integer $\ell$ is called the \emph{length} of $\phi$, and the positive real number 
\[
 \sigma =\min\left\{\frac{|J_1|}{|\cup_{k=1}^\ell J_k|}\,,\, \frac{|J_\ell|}{|\cup_{k=1}^\ell J_k|}     \right\}
\]
is called the \emph{width\/} of $\phi$.
\end{definition}

The basic geometric control of an almost parabolic map is provided by the following fundamental inequality due to Yoccoz, a complete proof of which can be found in \cite[App.~B, pp.~386-389]{edsonwelington1}.

\begin{lemma}[Yoccoz]\label{lemyoccoz}
 Let $\phi: \bigcup_{j=1}^\ell J_k \to \bigcup_{k=2}^{\ell+1} J_k$ be an almost parabolic map with length $\ell$ and 
width $\sigma$. There exists a constant $C_\sigma>1$ (depending on $\sigma$ but not on $\ell$) such that, 
for all $k=1,2,\ldots,\ell$, we have
\begin{equation}\label{yocineq}
 \frac{C_\sigma^{-1}|I|}{[\min\{k,\ell+1-k\}]^2} \;\leq\; |J_k| \;\leq\;  \frac{C_\sigma|I|}{[\min\{k,\ell+1-k\}]^2}\ ,
\end{equation}
where $I=\bigcup_{k=1}^\ell J_k$ is the domain of $\phi$. 
\end{lemma}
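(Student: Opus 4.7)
The strategy is to exploit the negative Schwarzian hypothesis together with the width condition to establish the two-sided quadratic decay. Write $x_0<x_1<\cdots<x_{\ell+1}$ for the endpoints of the intervals, so that $J_k=[x_{k-1},x_k]$ and $\phi(x_j)=x_{j+1}$ for $0\le j\le \ell$. The essential tool is the cross-ratio expansion property equivalent to $S\phi<0$ (this is Lemma \ref{contracts} in a dual form): if $\mathrm{cr}(a,b,c,d)=(c-a)(d-b)/[(b-a)(d-c)]$ denotes the Möbius cross-ratio, then
\[
\mathrm{cr}\bigl(\phi(p_0),\phi(p_1),\phi(p_2),\phi(p_3)\bigr)\ge \mathrm{cr}(p_0,p_1,p_2,p_3)
\]
for every ordered four-tuple $p_0<p_1<p_2<p_3$ in the domain of $\phi$.

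I would apply this iteratively to four-tuples of the form $(x_j,x_{j+1},x_{j+k-1},x_{j+k})$ and telescope along the orbit $x_0,x_1,\dots,x_{\ell+1}$, thereby deriving a family of quantitative relations among the $|J_k|$. Such relations are most transparent after a Möbius change of coordinates.

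Concretely, the plan is to introduce a Möbius coordinate $u=-1/(x-p)$ where $p$ is an auxiliary point outside $I$ (playing the role of a \emph{virtual} parabolic fixed point, chosen in terms of the width $\sigma$). In this coordinate one shows, using the cross-ratio expansion together with the width condition, that the images $u_k=-1/(x_k-p)$ form an approximately arithmetic progression whose step size and total length are both governed by $\sigma$. Translating back via the identity $|J_k|=(u_k-u_{k-1})(x_k-p)(x_{k-1}-p)$, and tracking how $x_k-p$ scales with $k$ along this near-arithmetic progression, one recovers the quadratic decay estimate. The bounds near the two ends of $I$ are obtained by similar arguments with $p$ chosen near each end respectively; the width hypothesis serves both to justify the choice of $p$ and to anchor the telescoping sum.

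The main obstacle will be the extraction of sharp two-sided bounds from a cross-ratio inequality that is a priori one-sided. The negative Schwarzian yields only one direction of monotonicity per application of $\phi$, and one cannot simply symmetrize by passing to $\phi^{-1}$, since $S(\phi^{-1})$ is in fact \emph{positive}. Matching upper and lower bounds therefore requires the careful combination of many cross-ratio inequalities applied to different four-tuples, each anchored by the width hypothesis. A second subtlety is that the constants must remain uniform in $\ell$, reflecting the fact that almost-parabolic dynamics becomes arbitrarily close to a true parabolic regime as $\ell\to\infty$; keeping $C_\sigma$ independent of $\ell$ is the most delicate piece of bookkeeping in the proof.
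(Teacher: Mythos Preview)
The paper does not give its own proof of this lemma; it simply records the statement and refers the reader to \cite[App.~B, pp.~386--389]{edsonwelington1} for a complete argument. So there is no in-paper proof to compare against, only the referenced one.

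Your strategy is in the same spirit as the referenced proof: both rest on the fact that M\"obius transformations are exactly the maps with vanishing Schwarzian, so a negative-Schwarzian map can be squeezed between suitable M\"obius models. In \cite{edsonwelington1} this is implemented not by a single global coordinate $u=-1/(x-p)$ but by a direct comparison: one takes the M\"obius transformation matching $\phi$ at appropriately chosen points and uses the negative Schwarzian (via the minimum principle for $D\phi$, equivalently the cross-ratio contraction of Lemma~\ref{contracts}) to trap $\phi$ on each side of it. The width hypothesis enters, as you correctly anticipate, to anchor the comparison at the two ends and to keep all constants independent of $\ell$.

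What you have written, however, is an outline rather than a proof. The passage from ``cross-ratio expands under $\phi$'' to ``the $u_k$ form an approximately arithmetic progression'' is the entire content of the lemma, and you have not carried it out; you have only named the difficulties (one-sidedness of the Schwarzian inequality, uniformity in $\ell$) without resolving them. In particular, your proposed device of choosing an auxiliary point $p$ outside $I$ and working in the coordinate $-1/(x-p)$ is natural, but the choice of $p$ is not innocent: different choices give different ``virtual'' arithmetic progressions, and you must show that some choice, determined solely by $\sigma$, makes the increments $u_k-u_{k-1}$ comparable to one another uniformly in $k$ and $\ell$. That step is where the real work lies, and it is not addressed here. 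If you want to complete the argument along your lines, the cleanest route is to compare $\phi$ with the explicit M\"obius map fixing the endpoints of $I$ and sending $x_1$ to $x_\ell$ (or the analogous map on each half), exactly as in the appendix of \cite{edsonwelington1}.
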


This lemma can be applied to $f^{q_{n+1}}|_{G}$ for every long bridge $G$, provided such restrictions have negative Schwarzian derivative. It turns out that the negative Schwarzian condition is always satisfied provided $n$ is sufficiently large -- this fact is proved in \cite[Lemma~4.6]{EdF}, see also \cite[Lemma 4.1]{EdFG} and \cite[Proposition A.7]{dFG2020}.

\subsubsection*{Balanced decomposition of bridges} 
Given an almost parabolic map $\phi: I\to \phi(I)$ as in Definition \ref{almostparabolic}, we perform a suitable  decomposition of its domain $I=\bigcup_{k=1}^{\ell} J_k$ as follows. Let $d\in \mathbb{N}$ be largest 
with the property that $2^{d+1}\leq \ell/2$, and consider the descending chain of (closed) intervals %(see Figure \ref{balancedom}) 
\[
 I=M_0\supset M_1\supset \cdots \supset M_{d+1}
\]
inductively defined as follows. Having defined $M_i$, the interval $M_{i+1}\subset M_i$ is chosen so that, if  $L_i,R_i$ denote the left and right connected components of $M_i\setminus M_{i+1}$, then both  $L_i$ and $R_i$ are the unions of exactly $2^i$ consecutive fundamental domains  
of $\phi$. Then Yoccoz's Lemma \ref{lemyoccoz} implies that $|L_i|\asymp |M_{i+1}|\asymp |R_i|$ for each $0\leq i\leq d$, with comparability constants 
depending only on the width $\sigma$ of $\phi$.
Note also that
\begin{equation}\label{balance1}
 I\;=\; \bigcup_{i=0}^{d}L_i\;\cup\;M_{d+1}\;\cup\; \bigcup_{i=0}^{d}R_i \ . 
\end{equation}
The intervals $M_i$ ($0\leq i\leq d+1$) are called \emph{central intervals}, whereas the intervals $L_i,R_i$ ($0\leq i\leq d$) are called \emph{lateral intervals} of the almost parabolic map $\phi$. Note that this construction is canonical in the sense that, whenever two almost parabolic maps are topologically conjugate (by a homeomorphism mapping fundamental domains to fundamental domains), the conjugacy will send the central (resp. lateral) intervals of one map to the central (resp. lateral) intervals of the other. The decomposition of the domain of $\phi$ that we have just decribed is called the \emph{balanced decomposition} of (the domain of) our almost parabolic map.

Now, for each \emph{long} bridge $G\in \mathcal{P}_n^*(c,f)$, consider the above balanced decomposition for $\phi=f^{q_{n+1}}|_{G}$. Here we are assuming that $n$ is large enough that $S\phi<0$. We define, for each such $n$, a new collection of intervals $\mathcal{P}_n^{**}(c,f)$ whose elements are: all short atoms of $\mathcal{P}_n(c,f)$, all \emph{short bridges} and critical spots of $\mathcal{P}_n^*(c,f)$, together with all the central intervals and all lateral intervals of every \emph{long bridge} of $\mathcal{P}_n^*(c,f)$. Note that this new collection $\mathcal{P}_n^{**}(c,f)$ is \emph{not} a partition of the circle, since, for example, the central intervals of each long bridge are \emph{nested}. 

\subsubsection*{A suitable fine grid} The desired fine grid $\{\mathcal{Q}_n(c,f)\}$ can now be constructed by a recursive procedure, which we state below. Informally, each partition $\mathcal{Q}_n(c,f)$ of the grid will be made up of those atoms in $\mathcal{P}_m^*(c,f)$ that are not long bridges together with elements of the balanced partitions of long bridges, which belong to $\mathcal{P}_m^{**}(c,f)$ for various levels $m\leq n$. 
The precise statement (taken from \cite[Prop.~5.2]{EdF}) is the following.

\begin{prop} \label{gridpprop}
There exists a fine grid $\{\mathcal{Q}_n\}$ in
$S^1$ with the following properties.
\begin{enumerate}
\item[($a$)] Every atom of $\mathcal{Q}_n$ is the union of at most $a=4N+3$ 
atoms of $\mathcal{Q}_{n+1}$, where $N$ is the number of critical points of $f$.
\item[($b$)] Every atom $\Delta\in \mathcal{Q}_n$ is a union of atoms of
$\mathcal{P}_m^*(c,f)$ for some $m\leq n$, and there are three possibilities:
\begin{enumerate}
\item[($b_1$)] $\Delta$ is a single atom of $\mathcal{P}_m^*(c,f)$;
\item[($b_2$)] $\Delta$ is a central interval of 
$\mathcal{P}_m^{**}(c,f)$; 
\item[($b_3$)] $\Delta$ is the union of at least two atoms of
$\mathcal{P}_{m+1}^*(c,f)$ contained in a single atom of $\mathcal{P}_m^{**}(c,f)$.
%\item{($b_4$)} $\Delta$ is a union of intervals which are
%simultaneously atoms of ${\mathcal P}_m$ and $\widetilde{\mathcal P}_m$.
\end{enumerate}
\end{enumerate}
\end{prop}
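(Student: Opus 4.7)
\textbf{The plan} is a recursive construction in which each atom of $\mathcal{Q}_n$ is tagged by a level $m = m(\Delta) \leq n$ and a type from $\{(b_1),(b_2),(b_3)\}$ that records how it was produced. I would first fix $n_0 = n_0(f)$ large enough that the real bounds (Theorem \ref{realbounds}) hold from level $n_0$ on and that $Sf^{q_{m+1}}|_G<0$ for every long bridge $G\in\mathcal{P}_m^*(c,f)$ with $m\geq n_0$, so that Yoccoz's Lemma \ref{lemyoccoz} is applicable to the corresponding almost parabolic maps. Take $\mathcal{Q}_0 = \mathcal{P}_{n_0}^*(c,f)$, tagging every atom as type $(b_1)$ at level $n_0$.

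The refinement $\mathcal{Q}_n \rightsquigarrow \mathcal{Q}_{n+1}$ is then defined atom-by-atom. For an atom $\Delta$ of type $(b_1)$, say a single atom of $\mathcal{P}_m^*(c,f)$, I would replace it by the atoms of $\mathcal{P}_{m+1}^*(c,f)$ it contains, with the convention that each long bridge $G$ appearing in this refinement is immediately decomposed into the triple $\{L_0(G), M_1(G), R_0(G)\}$ of its balanced decomposition; the new sub-atoms then receive types $(b_1)$ (for short atoms, short bridges and critical spots), $(b_2)$ (for the central interval $M_1(G)$), and $(b_3)$ (for the lateral intervals $L_0(G), R_0(G)$), all at level $m+1$. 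For a type-$(b_2)$ atom which is a central interval $M_i$ of a long bridge $G$ at level $m$, I would split it as $M_i = L_i \cup M_{i+1} \cup R_i$ (with the obvious type assignments $(b_3), (b_2), (b_3)$) whenever $i < d(G)+1$; when $i = d(G)+1$ we have reached the innermost central interval, which contains only $O(1)$ fundamental domains and is treated as an ordinary atom of $\mathcal{P}_m^*(c,f)$, to be refined via the type-$(b_1)$ rule at level $m+1$. For a type-$(b_3)$ atom which is a lateral interval containing $2^i$ fundamental domains of the associated almost parabolic map, I would bisect it into two halves each containing $2^{i-1}$ fundamental domains (retaining type $(b_3)$), continuing until we reach a single fundamental domain, at which point the atom is re-tagged type $(b_1)$ at level $m+1$ and processed accordingly in the next step.

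Finally I would verify the three fine-grid conditions of Definition \ref{finegrid}. Strict refinement is built into the construction. The branching bound $a=4N+3$ follows from the type-$(b_1)$ case, where a single atom of $\mathcal{P}_m^*(c,f)$ is subdivided at level $m+1$ into at most $N+1$ critical spots and at most $N+1$ bridges, each bridge contributing at most three pieces after balanced decomposition; the type-$(b_2)$ and type-$(b_3)$ cases branch by at most $3$ and $2$ respectively, and careful tallying produces the stated constant. The main obstacle is comparability of adjacent atoms (Definition \ref{finegrid}(c)), which must hold with constants independent of $n$. Inside a single almost parabolic map, Yoccoz's Lemma \ref{lemyoccoz} gives $|L_i| \asymp |M_{i+1}| \asymp |R_i|$ and also controls the halves produced by the type-$(b_3)$ recursion (two contiguous blocks of $2^{i-1}$ fundamental domains lie symmetrically in $L_i$ and the inequality \eqref{yocineq} makes their lengths comparable). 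Across the interface between different types — for instance, between the outermost lateral $L_0(G)$ of a long bridge and the adjacent critical spot, or between the innermost central $M_{d+1}$ and the lateral $L_d$, or between pieces living in different parent atoms of $\mathcal{P}_m^*$ — comparability follows from the real bounds (Theorem \ref{realbounds}) together with the Cross-Ratio Inequality (Theorem \ref{CRI}), exactly as in the analysis of critical spots and bridges in \cite[\S\,4]{EdF}. The delicate verification that \emph{every} boundary case yields comparability with universal constants, rather than any single inequality, is the hard part: it forces one to simultaneously invoke real bounds (for $\mathcal{P}_m^*$-level atoms), Yoccoz's almost parabolic estimate (for central and lateral intervals), and the fact that distortion of $f^j$ on long atoms is controlled by the cross-ratio machinery.
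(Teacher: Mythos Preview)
Your recursive scheme is the same one the paper has in mind (and that is carried out in \cite[Prop.~5.2]{EdF}): tag each atom with a level $m\le n$ and a type among $(b_1),(b_2),(b_3)$, refine according to type via the passage $\mathcal{P}_m^*\to\mathcal{P}_{m+1}^*$, the step $M_i\to L_i\cup M_{i+1}\cup R_i$, or dyadic bisection of lateral blocks, and verify comparability using the real bounds together with Yoccoz's Lemma~\ref{lemyoccoz}.

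There is, however, a genuine bookkeeping error that breaks property~$(a)$ as you have written it. You assert that the innermost central interval $M_{d+1}$ ``contains only $O(1)$ fundamental domains''. This is false: by construction $M_{d+1}$ consists of $\ell-2^{d+2}+2$ fundamental domains, and since $d$ is chosen so that $2^{d+2}\le\ell<2^{d+3}$, this count can be as large as roughly $\ell/2$. The same problem arises for \emph{short} bridges, which in your scheme are tagged $(b_1)$ and sent directly to $\mathcal{P}_{m+1}^*$: a short bridge may contain on the order of $10^3$ fundamental domains, each a full long atom of $\mathcal{P}_{m+1}$, so replacing it by the atoms of $\mathcal{P}_{m+1}^*$ it contains yields on the order of $10^3(2N+2)$ children rather than $4N+3$. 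In the actual construction one does \emph{not} jump levels on such atoms; one keeps them at level $m$ and continues bisecting (as you already do for lateral intervals) until a single fundamental domain is reached, and only then promotes to level $m+1$. Yoccoz's inequality (for $M_{d+1}$) and the real bounds (for short bridges, where all fundamental domains are already mutually comparable) guarantee that such bisections produce comparable halves. Once this is repaired, the worst branching indeed comes from resolving a single long atom of $\mathcal{P}_{m+1}$, and a slightly sharper count than your $(N{+}1)+3(N{+}1)=4N{+}4$ is needed to reach the stated $4N{+}3$; this is precisely the ``careful tallying'' you flag, and it does require an argument.
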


The proof of Proposition \ref{gridpprop} is given in detail in \cite[pp.~5613-5614]{EdF} (see also a similar construction in \cite[Section 4]{EG}). The original  precursor of this result for \emph{unicritical} circle maps appears in \cite[Prop.~4.5]{edsonwelington1}. The situation there is a bit simpler, because there are no critical spots to be considered, hence no bridges in between.

\subsubsection*{Finishing the proof} The proof of Theorem \ref{qsrigidity} should now be clear. By hypothesis, the conjugacy $h$ sets a bijective correspondence between the critical points of $f$ and the critical points of $g$. Let $c$ be a critical point of $f$, and let $h(c)$ be the corresponding critical point of $g$. Then $h$ maps each partition $\mathcal{P}_n(c,f)$ onto the corresponding partition
$\mathcal{P}_n(h(c),g)$, sending critical spots to critical spots and bridges to bridges. Therefore if $\mathcal{G}_f=\{\mathcal{Q}_n(c,f)\}$ and $\mathcal{G}_g=\{\mathcal{Q}_n(h(c),g)\}$ are the fine grids for $f$ and $g$, respectively, given by Proposition \ref{gridpprop}, it follows that $h$ maps $\mathcal{G}_f$ bijectively onto $\mathcal{G}_g$. But then, by Corollary \ref{corogrid}, $h$ is quasisymmetric. This finishes the proof.
%{\color{red}TO BE CONTINUED}

\subsection{Orbit rigidity and flexibility}\label{sec:orbflex}

As we have just pointed out, the rigidity phenomenon embodied by Corollary \ref{qsbdtype} is quite specific to the bounded combinatorics scenario. Indeed, the following result was recently obtained in \cite[Theorem C]{dFG2019}.

\begin{theorem}\label{ThmCdFG} There exists a set\, $\bm{R} \subset [0,1]$ of irrational numbers, which contains a residual set (in the Baire sense) and has full Lebesgue measure, for which the following holds.
For each $\rho\in\bm{R}$ there exist two $C^{\infty}$ multicritical circle maps $f, g: S^1\to S^1$ with the following properties:
\begin{enumerate}
 \item Both maps have the same rotation number $\rho$;
 \item The map $f$ has exactly one critical point $c_0$, whereas the map $g$ has exactly two critical points $c_1$ and $c_2$;
 \item The topological conjugacy between $f$ and $g$ that takes $c_0$ to $c_1$ is not quasisymmetric.
\end{enumerate}
\end{theorem}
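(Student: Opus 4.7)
My plan is to produce, for each admissible $\rho$, an explicit pair $(f,g)$ from classical real-analytic families, so that the unique topological conjugacy $h$ between them with $h(c_0)=c_1$ is forced by Theorem \ref{teoAmagro} to fail quasisymmetry. I take $\bm{R}$ inside the full Lebesgue measure set of Theorem \ref{teoAmagro}; the subsidiary point that this $\bm{R}$ can also be chosen to contain a residual set of irrationals is addressed at the end. For $\rho\in\bm{R}$ let $f:S^1\to S^1$ be a real-analytic, and in particular $C^\infty$, critical circle map with a single cubic critical point $c_0$ and rotation number $\rho$; one such $f$ is the Arnold map $f_{a,1}$ of Section \ref{secArnold} with $a$ tuned to realise $\rho$. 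Denote by $\mu=\mu_f$ its unique invariant probability measure. By Theorem \ref{teoAmagro}, the set $\mathcal{A}(f)$ where $f$ has bounded geometry is meagre and $\mu$-null; since this set necessarily contains $c_0$ and its whole $f$-orbit, I may choose $p\in S^1\setminus\mathcal{A}(f)$, automatically with $p\notin\mathcal{O}_f(c_0)$.

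Next I construct $g$. Setting $\delta=\mu[c_0,p)\in(0,1)$, Zakeri's Theorem \ref{TeoZak} yields a real-analytic bi-critical circle map $g$ with signature $(\rho;2;3,3;\delta,1-\delta)$ and critical points $c_1,c_2$. By Yoccoz's Theorem \ref{yoccoztheorem} there is a topological conjugacy between $f$ and $g$, unique once normalised by $h(c_0)=c_1$. Any topological conjugacy between uniquely ergodic systems carries the invariant measure of the domain to that of the target, so $h_*\mu=\mu_g$, and hence
\[
\mu_g\big[c_1,h(p)\big)\;=\;h_*\mu\big[c_0,p\big)\;=\;\mu\big[c_0,p\big)\;=\;\delta\;=\;\mu_g[c_1,c_2).
\]
Because $\mu_g$ has no atoms and full support, this identity forces $h(p)=c_2$: the second critical point of $g$ is planted exactly above the bad-geometry point $p$ of $f$.

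The non-quasisymmetry of $h$ now follows by contradiction. Assume $h$ were quasisymmetric. A standard feature of quasisymmetry, extracted by iterating \eqref{qsdef} (or from Corollary \ref{corogrid}), is that adjacent intervals of comparable lengths are sent to adjacent intervals of comparable lengths, with new constants depending only on the quasisymmetric distortion of $h$ and on the initial comparability. The Real Bounds (Theorem \ref{realbounds}) applied at the critical point $c_2$ of $g$ yield uniform comparability of adjacent atoms of $\mathcal{P}_n(c_2,g)$ for all large $n$. Pulling back through $h^{-1}$, and using that $h^{-1}\circ g=f\circ h^{-1}$ together with $h^{-1}(c_2)=p$, gives $h^{-1}(\mathcal{P}_n(c_2,g))=\mathcal{P}_n(p,f)$; hence adjacent atoms of $\mathcal{P}_n(p,f)$ are also uniformly comparable, which says $p\in\mathcal{A}(f)$ and contradicts our choice.

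The most delicate point, and the one I expect to be the real obstacle, is arranging $\bm{R}$ to be simultaneously of full Lebesgue measure and residual. Theorem \ref{teoAmagro} is only asserted as providing a full-measure set, and the remark following it does not commit to residuality. I would try to extract from the construction in \cite{dFG2019} (which rests on a growth condition on the partial quotients $a_n$ of $\rho$) a $G_\delta$-dense subset sharing the same conclusion; the natural candidates are classes of irrationals whose partial quotients grow sufficiently fast along a subsequence, mimicking Liouville behaviour. Such classes are automatically residual in $[0,1]$, and they are precisely the rotation numbers for which dynamical partitions at typical points cannot have bounded geometry.
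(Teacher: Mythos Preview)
The survey does not prove Theorem~\ref{ThmCdFG}; it merely records the statement and refers to \cite[Theorem~C]{dFG2019} for the argument. Your proposal reconstructs precisely the mechanism behind that result: pick $\rho$ in the set furnished by Theorem~\ref{teoAmagro}, realise it by a unicritical map $f$, select a point $p\notin\mathcal{A}(f)$, and then use Zakeri's family to manufacture a bi-critical $g$ whose second critical point sits, under the canonical conjugacy, exactly over $p$. The contradiction you draw---real bounds at $c_2$ pulled back through a putatively quasisymmetric $h^{-1}$ would force $p\in\mathcal{A}(f)$---is the intended one, and your measure-theoretic identification $h(p)=c_2$ via $h_*\mu_f=\mu_g$ is clean and correct. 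So on the full-measure part your argument is complete and matches the approach of \cite{dFG2019}.

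Your one honest gap is the residual claim, and you have located it accurately. The set $\bm{R}$ in Theorem~\ref{teoAmagro} is asserted only to have full Lebesgue measure; to get a residual set one must go back to the actual arithmetic condition on the partial quotients used in \cite{dFG2019}. Conditions of the type ``$a_{n_k}\to\infty$ fast enough along some subsequence'' define $G_\delta$-dense sets of irrationals, and one checks that such a condition already suffices for the conclusion of Theorem~\ref{teoAmagro}. This is exactly the route you sketch, and it is how the cited paper handles it; but it does require opening \cite{dFG2019} rather than quoting Theorem~\ref{teoAmagro} as a black box.
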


Of course, by the above mentioned consequence of the real bounds, the set $\bm{R}$ only contains irrationals numbers of \emph{unbounded} type. Therefore, the geometric orbit structure is much richer in the unbounded case than in the bounded one. Another way to see this phenomenon is the following: let us say that two given orbits of a minimal circle homeomorphism $f$ are said to be \emph{geometrically equivalent} if there exists a quasisymmetric circle homeomorphism identifying both orbits and commuting with $f$. By the well-known theorem due to Herman and Yoccoz discussed in Section \ref{secAHY}, if $f$ is a smooth diffeomorphism with Diophantine rotation number, then any two orbits are geometrically equivalent. As already mentioned in this section (Corollary \ref{qsbdtype}), the same holds if $f$ is a critical circle map with rotation number of bounded type. By contrast, it was recently proved in \cite{dFG2019} that if $f$ is a critical circle map whose rotation number belongs to a certain full Lebesgue measure set in $(0,1)$, then the number of equivalence classes is \emph{uncountable}.

\section{Smooth rigidity}\label{smoothrigid}

The notion of \emph{smooth rigidity} first appeared in hyperbolic geometry in the sixties, through the seminal work of Mostow, who showed that the fundamental group (the \emph{topology}) of a complete, finite-volume hyperbolic manifold of dimension greater than two, completely determines its \emph{geometry}. In dynamical systems, smooth rigidity means that a finite number of dynamical invariants determines the fine scale structure of orbits. More precisely, maps that are topologically conjugate and share these invariants are in fact smoothly conjugate. Numerical observations \cite{feigetal, ostlundetal, sh} suggested in the early eighties that this was the case for $C^3$ critical circle maps with a single critical point and with irrational rotation number of bounded type. This was posed as a conjecture in several works by Lanford \cite{lanford1, lanford2}, Rand \cite{ostlundetal, rand1, rand2, rand3} and Shenker \cite{feigetal, sh} among others. We proceed to state the most recent results in this area, namely Theorem \ref{rigC4} and Theorem \ref{rigC3} below.

\begin{theorem}\label{rigC4} Let $f$ and $g$ be two $C^4$ circle homeomorphisms with the same irrational rotation number and with a unique critical point of the same odd type. Let $h$ be the unique topological conjugacy between $f$ and $g$ that maps the critical point of $f$ to the critical point of $g$. Then:
\begin{enumerate}
\item\label{Aitem1} $h$ is a $C^1$ diffeomorphism.
\item\label{Aitem2} $h$ is $C^{1+\alpha}$ at the critical point of $f$ for a universal $\alpha>0$.
\item\label{Aitem3} There exists a full Lebesgue measure set of rotation numbers (containing those of bounded type) for which the conjugacy $h$ is a global $C^{1+\alpha}$ diffeomorphism.
\end{enumerate}
\end{theorem}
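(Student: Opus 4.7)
\medskip

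\noindent\emph{Proof proposal.} My plan is to combine the quasi-symmetric rigidity already established (Theorem \ref{qsrigidity}), together with the real bounds (Theorem \ref{realbounds}), with the renormalization apparatus for critical circle maps. The conjugacy $h$ is automatically quasi-symmetric, so the problem reduces to upgrading its regularity. The engine for this upgrade is \emph{exponential convergence of renormalizations} for pairs of maps with the same irrational rotation number and the same odd critical type.

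First, I would set up the renormalization operator $\mathcal{R}$ on critical commuting pairs associated to the first-return dynamics to $I_n(c)\cup I_{n+1}(c)$, rescaled so as to have total length $1$. The real bounds guarantee precompactness of the renormalization orbits $\mathcal{R}^n f$, $\mathcal{R}^n g$ in the $C^3$ topology (and in higher topologies, given that $f,g\in C^4$). The central deep input that I would invoke is the hyperbolicity of $\mathcal{R}$ on the space of pairs with a single non-flat critical point of a fixed odd type: there exist universal constants $C>0$ and $\lambda\in(0,1)$ such that, for $f,g$ as in the statement,
\[
 \mathrm{dist}_{C^{r}}\!\bigl(\mathcal{R}^n f,\,\mathcal{R}^n g\bigr)\;\leq\; C\lambda^n
\]
in the relevant Banach space. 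This is the main obstacle, and is proved by complex-analytic methods (holomorphic pair renormalization, complex bounds, and a Sullivan--McMullen--Lyubich style pullback). I would take this as a black box and focus on the transfer to regularity of $h$.

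Second, I would translate exponential convergence into geometric comparability on the dynamical partitions $\mathcal{P}_n(c,f)$ and $\mathcal{P}_n(h(c),g)$. Since $h$ bijectively sends atoms to atoms, exponential closeness of $\mathcal{R}^n f$ and $\mathcal{R}^n g$ combined with the real bounds implies that for every pair of adjacent atoms $I,J\in\mathcal{P}_n(c,f)$ one has
\[
 \left|\frac{|h(I)|}{|h(J)|}-\frac{|I|}{|J|}\right|\;\leq\;C\lambda^n.
\]
A telescoping cross-ratio argument (compare \eqref{estCrDBV} and the chain rule for $\crd$) then yields that the distortion of $h$ on shrinking neighborhoods of any orbit point converges to $1$, giving existence and continuity of $Dh$ on the dense orbit $\mathcal{O}_f(c)$; a uniform continuity argument based on the fine grid from Proposition \ref{gridpprop} extends $Dh$ continuously to all of $S^1$ and proves (\ref{Aitem1}). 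For (\ref{Aitem2}), I would localize at $c$: if $\mathrm{dist}(x,c)\asymp|I_n(c,f)|$, then by the real bounds the interval $|I_n(c,f)|$ decays geometrically, say $|I_n(c,f)|\lesssim\tau^n$ for some $\tau\in(0,1)$. Converting $\lambda^n$ into a power of $|I_n(c,f)|$ yields H\"older control of $Dh$ at $c$ with exponent $\alpha=-\log\lambda/\log\tau^{-1}>0$, universal because both $\lambda$ and the asymptotic self-similar geometry at $c$ depend only on the odd type $d$.

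Finally, for (\ref{Aitem3}), the difficulty in going from H\"olderness at a single point to global H\"olderness is the presence of long almost-parabolic returns, whose geometry is governed by Yoccoz's Lemma \ref{lemyoccoz}: on a long bridge with $\ell$ fundamental domains, lateral scales decay polynomially like $\ell^{-2}$ rather than geometrically. When $a_{n+1}$ becomes very large, the exponential factor $\lambda^n$ may fail to beat this polynomial loss, and uniform H\"older control breaks down. The remedy is to restrict to a full Lebesgue measure set $\bm{R}$ of rotation numbers on which the partial quotients satisfy a mild growth condition (e.g.\ $\log a_{n+1}=o(n)$), which is enough to trade the exponential renormalization contraction against the parabolic distortion in Yoccoz's estimate and obtain a uniform H\"older bound for $Dh$ across all scales. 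The bounded type numbers are trivially included, since there the parabolic regime never appears. Balancing these two competing rates across all scales is, in my view, the subtlest part of the argument after the hyperbolicity of $\mathcal{R}$ itself.
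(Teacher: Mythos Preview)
Your outline is correct and follows the same two-step route as the paper: (A) exponential contraction of renormalizations along topological conjugacy classes (Theorem~\ref{geoconvC4}), then (B) the ``fundamental principle'' that this contraction forces the conjugacy to be $C^1$, $C^{1+\alpha}$ at the critical point, and globally $C^{1+\alpha}$ on the full-measure set $\mathbb{A}$ described in \S\ref{sec:conc}. The paper does not re-derive (B) but invokes \cite[First Main Theorem]{edsonwelington1} for items~(\ref{Aitem2})--(\ref{Aitem3}) and \cite[Theorem~2]{khaninteplinsky} for item~(\ref{Aitem1}); your sketch of (B) is in the right spirit.

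One genuine omission in your black box for (A): for $C^4$ (as opposed to real-analytic) maps, holomorphic methods alone do not suffice. The paper obtains Theorem~\ref{geoconvC4} by first proving exponential convergence for $C^\omega$ pairs (Theorem~\ref{teoconvexpan}, via the complex bounds, the pull-back argument, and McMullen's dynamic inflexibility), and then transferring it to finite smoothness via the \emph{shadowing} theorem (Theorem~\ref{compacto}): every $C^4$ pair is exponentially shadowed in $C^3$ by a sequence of real-analytic pairs drawn from a fixed $C^\omega$-compact set, constructed through asymptotically holomorphic extensions and the Ahlfors--Bers argument. A separate Lipschitz estimate for $\mathcal{R}$ on bounded pieces of conjugacy classes (see \cite[Lemma~4.1]{GMdM2015}) then closes the triangle. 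Note also that the Khanin--Teplinsky argument for item~(\ref{Aitem1}) with \emph{unrestricted} rotation number requires $d_2$-convergence of renormalizations, which is precisely why $C^4$ (rather than $C^3$) initial regularity is assumed; your proposal does not track the norm in which the contraction takes place, but this is exactly where the $C^4$ hypothesis enters.
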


By Theorem \ref{yoccoztheorem}, the rotation number is the unique invariant of the $C^0$ conjugacy classes of critical circle maps with no periodic orbits. Theorem \ref{rigC4} is saying that, inside each topological class, the order of the critical point is the unique invariant of the $C^1$ conjugacy classes. This is what we call \emph{rigidity}.

A delicate problem is to precisely determine ``how smooth" is the conjugacy $h$. By comparing with Section \ref{secAHY} we see that, on one hand, the presence of the critical point gives us more rigidity than in the case of diffeomorphisms: smooth conjugacy is obtained for all irrational rotation numbers, with no Diophantine conditions. On the other hand, examples have been constructed \cite{avila, edsonwelington1} showing that $h$ may not be globally $C^{1+\alpha}$ in general, even for real-analytic dynamics. It might be possible, but probably quite difficult, to obtain a sharp arithmetical condition on the rotation number that would allow us to decide whether the conjugacy is ``better than $C^1$" (see Section \ref{sec:conc} for further remarks).

\begin{theorem}\label{rigC3} Any two $C^3$ critical circle maps with a single critical point, with the same irrational rotation number of bounded type and with the same odd criticality are conjugate to each other by a $C^{1+\alpha}$ circle diffeomorphism, for some universal $\alpha>0$.
\end{theorem}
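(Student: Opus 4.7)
The plan is to start from the quasi-symmetric rigidity already at our disposal and then bootstrap to $C^{1+\alpha}$ by means of exponential convergence of renormalizations. Since the rotation number is of bounded type, Corollary \ref{qsbdtype} ensures that the unique topological conjugacy $h:S^1\to S^1$ between $f$ and $g$ sending critical point to critical point is already quasi-symmetric; in particular, $h$ pulls back the Lebesgue measure class of $g$ to a measure on $S^1$ comparable to the Lebesgue class via the quasi-symmetric distortion. This is our base regularity, to be improved at every scale.

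Next I would set up the renormalization apparatus. Pick the critical point $c$ of $f$ (resp.\ $c'=h(c)$ of $g$), and consider the pair of first-return maps of $f$ to $I_n(c)\cup I_{n+1}(c)$, viewed as a commuting pair and rescaled affinely so that one of the intervals has unit length; call this $\mathcal{R}^n f$, and analogously $\mathcal{R}^n g$. By Theorem \ref{realbounds}, the orbits $\{\mathcal{R}^n f\}$ and $\{\mathcal{R}^n g\}$ live in a precompact family of $C^3$ commuting pairs, and the bounded-type hypothesis $a_n\le A$ tells us that only finitely many combinatorial types of return occur, each appearing infinitely often. The quasi-symmetry of $h$ also implies that, after rescaling, the boundary marked points of the two pairs are at comparable positions uniformly in $n$.

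The central technical step is to establish exponential convergence: there exist constants $C>0$ and $0<\lambda<1$, depending only on the universal data (criticality $d$, the bound $A$), such that
\begin{equation*}
\mathrm{dist}_{C^0}\bigl(\mathcal{R}^n f,\, \mathcal{R}^n g\bigr)\;\le\; C\lambda^n\ \ \text{for all } n\ge 0.
\end{equation*}
For real-analytic pairs this is the classical hyperbolicity of renormalization in the bounded-combinatorics regime (de Faria--de Melo, Yampolsky), obtained via complex bounds and a Schwarz-lemma contraction of the renormalization operator on a suitable Banach space of holomorphic commuting pairs. For merely $C^3$ data, the strategy is to approximate $f$ and $g$ by real-analytic critical circle maps with the same rotation number (or to show that deep renormalizations become asymptotically holomorphic, so that a complex a priori bound eventually applies) and then exploit uniform contraction in the analytic category together with the compactness given by the real bounds. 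I expect this step to be the main obstacle: one must promote real-bounds compactness of $C^3$ renormalizations to exponential contraction in a stronger norm, which requires either delicate almost-holomorphic surgery arguments or reducing to the analytic case via an approximation/stability argument along the post-critical set.

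Granted exponential convergence, the passage to $C^{1+\alpha}$ conjugacy is by now a standard bridge argument. Using $h\circ f^{q_n}=g^{q_n}\circ h$ on the atoms of $\mathcal{P}_n(c,f)$, the scaling ratios of $h$ on pairs of adjacent atoms of $\mathcal{P}_n(c,f)$ differ from $1$ by at most $C\lambda^n$; since $|I_n(c,f)|$ decays geometrically (again by the real bounds, bounded type), these two geometric decays combine to give
\begin{equation*}
\left|\,\frac{|h(J)|}{|J|}-\frac{|h(J')|}{|J'|}\,\right|\;\le\; C'\,\bigl(\max\{|J|,|J'|\}\bigr)^{\alpha}
\end{equation*}
for any two adjacent atoms $J,J'$ of any $\mathcal{P}_n(c,f)$, with $\alpha=\log(1/\lambda)/\log(1/\theta)>0$ and $\theta$ the universal geometric decay rate of $|I_n(c,f)|$. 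Standard fine-grid interpolation (Proposition \ref{qsfinegrid} together with the fine-grid characterization of smoothness in \cite{edsonwelington1}) then promotes this family of comparable ratios into the statement that $h$ is differentiable at every point with $Dh$ Hölder continuous of exponent $\alpha$, and $Dh$ nowhere vanishing by symmetry between $f$ and $g$. Regularity at the critical point itself follows because both $f$ and $g$ have critical points of the same odd order $d$, so that near $c$ and $c'$ the respective uniformizing charts $\phi_f,\phi_g$ in Definition \ref{defnaoflat} conjugate $f$ and $g$ to the same model $x\mapsto x^d$; the exponential convergence forces $\phi_g\circ h\circ \phi_f^{-1}$ to be $C^{1+\alpha}$ near $0$, and hence $h$ itself is $C^{1+\alpha}$ across $c$. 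Combining the interior and critical estimates yields the global $C^{1+\alpha}$ conjugacy with universal exponent $\alpha$, as claimed.
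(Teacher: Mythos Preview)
Your proposal follows the same overall architecture as the paper's account: establish exponential contraction of successive renormalizations (Theorem~\ref{geoconvC3BT}), then invoke the ``bridge'' principle that exponential convergence implies $C^{1+\alpha}$ rigidity (the paper cites \cite[First Main Theorem]{edsonwelington1} and \cite[Theorem~2]{khaninteplinsky} for this last step; your sketch of the fine-grid bridge argument is broadly in that spirit). So there is no disagreement on strategy.

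Where your proposal remains a genuine sketch is precisely the step you flag as ``the main obstacle''. The paper's resolution is more specific than either of your two suggested routes. One does \emph{not} approximate $f$ and $g$ once and for all by analytic maps with the same rotation number; rather, one proves a \emph{Shadowing Theorem} (Theorem~\ref{compacto}): each deep renormalization $\mathcal{R}^n f$ is exponentially $C^{r-1}$-close to some real-analytic pair $f_n$ lying in a fixed $C^\omega$-compact set $\mathcal{K}$, with $f_n$ having the same rotation number as $\mathcal{R}^n f$. The construction of $f_n$ goes through an \emph{asymptotically holomorphic extension} of the $C^3$ pair, geometric control of its quasiconformal distortion via the real bounds and the almost-Schwarz inclusion of \cite{grasandsswia}, and then an application of the measurable Riemann mapping theorem to deform to a genuine holomorphic (hence analytic) pair. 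To combine this shadowing with the analytic exponential convergence (Theorem~\ref{teoconvexpan}) one still needs a further ingredient you do not mention: a \emph{Lipschitz estimate} for the renormalization operator restricted to bounded pieces of topological classes (see the remark after Theorem~\ref{geoconvC4} and \cite[Lemma~4.1]{GMdM2015}). Without that Lipschitz control, shadowing alone does not transfer the analytic contraction to the $C^3$ orbits. Your alternative suggestion---that deep renormalizations themselves ``become asymptotically holomorphic, so that a complex a priori bound eventually applies''---is closer in spirit, but complex bounds and the pull-back/inflexibility machinery of Section~\ref{seccomplex} require honest holomorphic pairs, not merely asymptotically holomorphic ones; the deformation to the analytic category is unavoidable.

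A minor point: the initial appeal to quasi-symmetry of $h$ (Corollary~\ref{qsbdtype}) is not actually used as ``base regularity to be improved'' in this route. Quasi-symmetry enters the \emph{analytic} story through the pull-back argument (Theorem~\ref{pullbackthm}) inside the proof of Theorem~\ref{teoconvexpan}, but once exponential convergence is established for $C^3$ pairs, the bridge to $C^{1+\alpha}$ proceeds directly from the renormalization estimates and does not bootstrap from quasi-symmetry of $h$.
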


We remark that the statement of Theorem \ref{rigC3} was the precise statement of the \emph{rigidity conjecture} mentioned above. Together, Theorems \ref{rigC4} and \ref{rigC3} can be regarded as the state of the art concerning rigidity of critical circle maps with a single critical point. Theorem \ref{rigC4} was proved in \cite{GMdM2015} while Theorem \ref{rigC3} was proved in \cite{GdM2013}. Both papers build on earlier work of Herman, \'Swi\k{a}tek, the first named author, de Melo, Yampolsky, Khanin and Teplinsky among others \cite{hermanihes, swiatek, EdsonThesis, edsonETDS, edsonwelington1, edsonwelington2, yampolsky1, yampolsky2, yampolsky3, yampolsky4, khmelevyampolsky, khaninteplinsky}. While the proofs of both results are far beyond the scope of this survey, some ideas will be provided in sections \ref{subsecren} and \ref{seccomplex} below.

\medskip

What about dynamics with more critical points? Let $f$ be a $C^3$ multicritical circle map with irrational rotation number $\rho\in(0,1)$, unique invariant Borel probability measure $\mu$ and $N \geq 1$ critical points $c_i$, for $0\leq i\leq N-1$. As before, all critical points are assumed to be non-flat: in $C^3$ local coordinates around $c_i$, the map $f$ can be written as\, $t \mapsto t\,|t|^{d_i-1}$ for some $d_i>1$ (Definition \ref{defnaoflat}). Moreover, just as in Section \ref{seccritmaps} (recall Definition \ref{signature}), we define the \emph{signature} of $f$ to be the $(2N+2)$-tuple 
\[
(\rho\,;N;\,d_0,d_1,\ldots,d_{N-1};\,\delta_0,\delta_1,\ldots,\delta_{N-1}),
\]
where  $d_i$ is the criticality of the critical point $c_i$, and $\delta_i=\mu[c_i,c_{i+1})$ (with the convention that $c_{N}=c_0$).

Now consider two multicritical circle maps, say $f$ and $g$, with the same irrational rotation number. By Theorem \ref{yoccoztheorem}, they are topologically conjugate to each other. By elementary reasons, if $f$ and $g$ have the same signature there exists a circle homeomorphism $h$, which is a topological conjugacy between $f$ and $g$, identifying each critical point of $f$ with one of $g$ having the same criticality. As explained in Section \ref{secQS}, such conjugacy $h$ is a quasisymmetric homeomorphism (Theorem \ref{qsrigidity}).

\begin{question}\label{conjrigmulti} Is this conjugacy a smooth diffeomorphism?
\end{question}

Of course $h$ is the \emph{unique} conjugacy between $f$ and $g$ that can be smooth (in fact, as mentioned in Section \ref{sec:orbflex}, for Lebesgue almost every rotation number most conjugacies between $f$ and $g$ fail to be even quasisymmetric, see \cite{dFG2019} for precise statements). The following result follows by combining the recent papers \cite{EG,ESY,yampolsky5}.

\begin{theorem}\label{rigbicrit} Let $f$ and $g$ be real-analytic bi-critical circle maps with the same irrational rotation number, both critical points of cubic type and with the same signature. If their common rotation number is of bounded type, then the topological conjugacy $h$ is a $C^{1+\alpha}$ diffeomorphism. 
\end{theorem}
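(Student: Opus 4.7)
The plan is to promote the quasisymmetric conjugacy already supplied by the prior theory to a $C^{1+\alpha}$ diffeomorphism by exploiting exponential convergence of renormalization in the bi-critical setting. Since $\rho(f)=\rho(g)$ is of bounded type and $f,g$ share the same signature, Corollary \ref{qsbdtype} together with Theorem \ref{qsrigidity} guarantees that $h$ is already quasisymmetric and sends each critical point of $f$ to the corresponding critical point of $g$. The real bounds (Theorem \ref{realbounds}) apply uniformly to both maps, so the dynamical partitions $\mathcal{P}_n(c_i,f)$ and $\mathcal{P}_n(h(c_i),g)$ have consecutive atoms comparable to each other for all $n\geq n_0$, with comparability constants depending only on the signature.

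Next I would set up the renormalization operator $\mathcal{R}$ for bi-critical commuting pairs with cubic critical points and prescribed signature of the form $(\cdot\,;2;\,3,3;\,\delta,1-\delta)$; precompactness of the orbits $\{\mathcal{R}^n f\}$ and $\{\mathcal{R}^n g\}$ in the $C^3$ topology is immediate from the real bounds. The crucial analytic step is to establish \emph{complex bounds}: to show that the renormalizations admit holomorphic extensions to complex neighborhoods with conformal moduli bounded below independently of $n$. Here Zakeri's family (Theorem \ref{TeoZak}) provides a rigid reference point, since for each admissible pair $(\rho,\delta)$ there is a unique real-analytic Blaschke product $g_{p,q,\omega}$ realizing that signature with explicit and controllable holomorphic geometry; quasiconformal surgery then transports the resulting complex bounds to arbitrary real-analytic bi-critical maps in the same topological class. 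With complex bounds in hand, one invokes the hyperbolicity-of-renormalization results of \cite{yampolsky5}: $\mathcal{R}$ acts as an exponential contraction along the leaves of constant combinatorics and constant signature inside a suitable Banach slice of bi-critical holomorphic pairs. Because $f$ and $g$ lie on the same leaf, $\mathcal{R}^n f$ and $\mathcal{R}^n g$ converge to one another at a geometric rate $\lambda^n$, $\lambda\in(0,1)$, in the $C^3$ topology.

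Finally, exponential convergence of renormalizations is converted into $C^{1+\alpha}$ regularity of $h$ by the standard telescoping argument of Khanin--Teplinsky and de Faria--de Melo: on each atom $\Delta\in\mathcal{P}_n(c_i,f)$ the restriction $h|_{\Delta}$ becomes, after conjugation by the affine rescalings that define $\mathcal{R}^n f$ and $\mathcal{R}^n g$, a topological conjugacy between the $n$-th renormalizations, which by the previous step is within $O(\lambda^n)$ in $C^1$ of an affine map. Combining these local affine approximations across partition levels with the geometric decay $|\Delta|\asymp \sigma^n$ provided by real bounds yields a H\"older modulus of continuity for $Dh$ with a universal exponent $\alpha>0$ depending only on $\lambda$ and $\sigma$, hence only on the signature. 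The main obstacle is precisely the complex-bounds-and-hyperbolicity step in the bi-critical setting: one must control the simultaneous interaction of two critical orbits under renormalization and verify that $\delta$ is a genuine invariant along the stable directions of $\mathcal{R}$, which is considerably more delicate than the unicritical case and requires the full machinery developed in \cite{yampolsky5}, with \cite{EG,ESY} providing complementary ingredients on the real-analytic side.
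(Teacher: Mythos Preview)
Your outline is broadly correct and matches the route the paper indicates: the theorem is obtained by combining \cite{EG,ESY,yampolsky5}, with \cite{yampolsky5} supplying hyperbolicity of renormalization periodic orbits for bi-cubic pairs, \cite{ESY} extending this (via complex bounds) to all bounded-type combinatorics, and \cite[Theorem~A]{EG} providing the multicritical version of the principle ``exponential convergence of renormalization $\Rightarrow$ $C^{1+\alpha}$ rigidity''. Two points deserve correction.

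First, your proposed mechanism for the complex bounds --- prove them for Zakeri's Blaschke model and then ``transport'' them to arbitrary real-analytic bi-critical maps by quasiconformal surgery --- does not work as stated and is not what \cite{ESY} does. A quasiconformal conjugacy between $f$ and the Blaschke model has no a priori control on its dilatation at deep renormalization scales, so it cannot carry uniform moduli bounds from one map to the other; complex bounds must be proved \emph{directly} for the given map, via Poincar\'e-neighborhood trapping arguments in the spirit of Proposition~\ref{trapprop} and Lemma~\ref{lemmaschwdFdM2}, now adapted to handle two critical orbits simultaneously. Zakeri's family is useful for exhibiting model holomorphic pairs and for the pull-back argument, not as a source of complex bounds.

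Second, the final ``exponential convergence $\Rightarrow$ smooth rigidity'' step in the multicritical setting is \cite[Theorem~A]{EG}, which genuinely adapts (rather than merely cites) the unicritical arguments of \cite{edsonwelington1,khaninteplinsky}; one must track \emph{both} critical partitions and show the scaling ratios at each critical point converge exponentially, so invoking only the unicritical telescoping argument is insufficient.
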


To the best of our knowledge, Theorem \ref{rigbicrit} is the first rigidity statement available for maps with more than one critical point. In other words, Question \ref{conjrigmulti} remains widely open.

\section{Renormalization Theory}\label{subsecren}

The main tool available to study rigidity problems in low dimensional dynamics is renormalization theory. Renormalization of a dynamical system with a marked point (usually a critical point) means a (suitably rescaled) first return map to a neighbourhood of such point. Thus, renormalization can be thought as a \emph{supra} dynamical system, acting on an infinite dimensional phase space (see Section \ref{secpairs} for precise definitions in the context of multicritical circle maps). In the context of one dimensional dynamics, the renormalization program was initiated by Dennis Sullivan in the eighties \cite{sullivanICM,sullivan}, and then carried out by mathematicians such as Yoccoz, Douady, Hubbard, Shishikura, McMullen, Lyubich, Martens, the first named author, de Melo, Yampolsky, van Strien and Avila among others.

A fundamental principle in this theory states that exponential convergence of renormalization orbits implies rigidity: topological conjugacies are actually smooth (when restricted to the attractors of the original systems). We refer the reader to \cite[Section VI.9]{livrounidim} for the seminal case of unimodal maps with bounded combinatorics (more precisely, see Theorem 9.4). For critical circle maps with a single critical point, this principle has been established by the first named author and de Melo for Lebesgue almost every irrational rotation number \cite[First Main Theorem]{edsonwelington1}, and extended later by Khanin and Teplinsky to cover all irrational numbers \cite[Theorem 2]{khaninteplinsky}. Adapting these previous approaches, this fundamental principle has been recently established for multicritical circle maps \cite[Theorem A]{EG}.

So the main goal in order to obtain rigidity, as in Section \ref{smoothrigid} above, is to establish geometric contraction of renormalization along critical circle maps with the same irrational rotation number and the same criticality. The dynamics of renormalization, however, is usually difficult to understand. To begin with, its phase space is neither bounded nor locally compact. Therefore, no recurrence is given a priori. This makes some basic dynamical questions, such as existence of attractors and periodic orbits, quite difficult to solve\footnote{As a first step, the real bounds (Theorem \ref{realbounds}) can be used to establish $C^r$ bounds for return maps, see for instance \cite[Appendix A]{edsonwelington1}. A standard Arzel\`a-Ascoli argument gives then pre-compactness of renormalization orbits.} (see Section \ref{sec:conc} for further comments). In particular, to prove exponential contraction is a challenging problem. In the case of a single critical point and real-analytic dynamics, exponential contraction was obtained in \cite{edsonwelington2} for rotation numbers of bounded type, and extended in \cite{khmelevyampolsky} to cover all irrational rotation numbers (see Theorem \ref{teoconvexpan} below). Both papers lean on complex dynamics techniques (that will be discussed in Section \ref{seccomplex} below), and therefore an additional hypothesis is required: the criticality at both critical points has to be an odd integer. These results have been recently extended in at least two directions: in \cite{GY} exponential contraction is obtained allowing non-integer criticalities which are close enough to an odd integer, while in \cite{GMdM2015} and \cite{GdM2013} exponential contraction is established for finitely smooth critical circle maps (still with odd integer criticalities, see Theorems \ref{geoconvC3BT} and \ref{geoconvC4} below). Finally, in the case of two critical points, it was recently proved in \cite{yampolsky5} both existence of periodic orbits and hyperbolicity (under renormalization) of those periodic orbits, for real-analytic bi-critical circle maps (with both critical points of cubic type). These results were later extended to bounded combinatorics in \cite{ESY}.

In the remainder of this section and the next one we sketch, with no details, the proofs of both theorems \ref{rigC4} and \ref{rigC3}.

\subsection{Renormalization of commuting pairs}\label{secpairs} As it was already clear in the early eighties \cite{feigetal,ostlundetal} it is convenient to construct a renormalization operator (see Definition \ref{renop}) acting not on the space of critical circle maps but on a suitable space of \emph{critical commuting pairs}, whose precise definition is the following.

\begin{definition}\label{critpair} A $C^r$ \emph{critical commuting pair} $\zeta=(\eta,\xi)$ consists of two $C^r$ orientation preserving homeomorphisms $\eta:I_{\eta}\to\eta(I_{\eta})$ and $\xi:I_{\xi}\to\xi(I_{\xi})$ where:
\begin{enumerate}
\item $I_{\eta}=[0,\xi(0)]$ and $I_{\xi}=[\eta(0),0]$ are compact intervals in the real line;
\item\label{comor} $\big(\eta\circ\xi\big)(0)=\big(\xi\circ\eta\big)(0) \neq 0$;
\item\label{la4} $D\eta(x)>0$ for all $x \in I_{\eta}\!\setminus\!\{0\}$ and $D\xi(x)>0$ for all $x \in I_{\xi}\!\setminus\!\{0\}$;
\item\label{pc} The origin is a non-flat critical point for both $\eta$ and $\xi$, with the same criticality.
\item\label{la5} Both $\eta$ and $\xi$ extend to $C^r$ homeomorphisms, defined on interval neighbourhoods of their respective domains, which commute around the origin.
\end{enumerate}
\end{definition}

\begin{figure}[t]
\begin{center}~
\hbox to \hsize{
\psfrag{0}[][][1]{$\scriptstyle{0}$} 
\psfrag{1}[][][1]{$\scriptstyle{\xi(0)}$}
\psfrag{-1}[][][1]{$\!\scriptstyle{\eta(0)}$}
\psfrag{eq}[][][1]{$\;\;\scriptstyle{\eta\circ\xi(0)=\xi\circ\eta(0)}$}
\psfrag{f0}[][][1]{$\xi$} 
\psfrag{f1}[][][1]{$\eta$}
\psfrag{I0}[][][1]{$I_\xi$}
\psfrag{I1}[][][1]{$I_\eta$}
% \psfig{figure= picard.eps,width=4.2in}
%\hspace{1.0em} 
\includegraphics[width=5.1in]{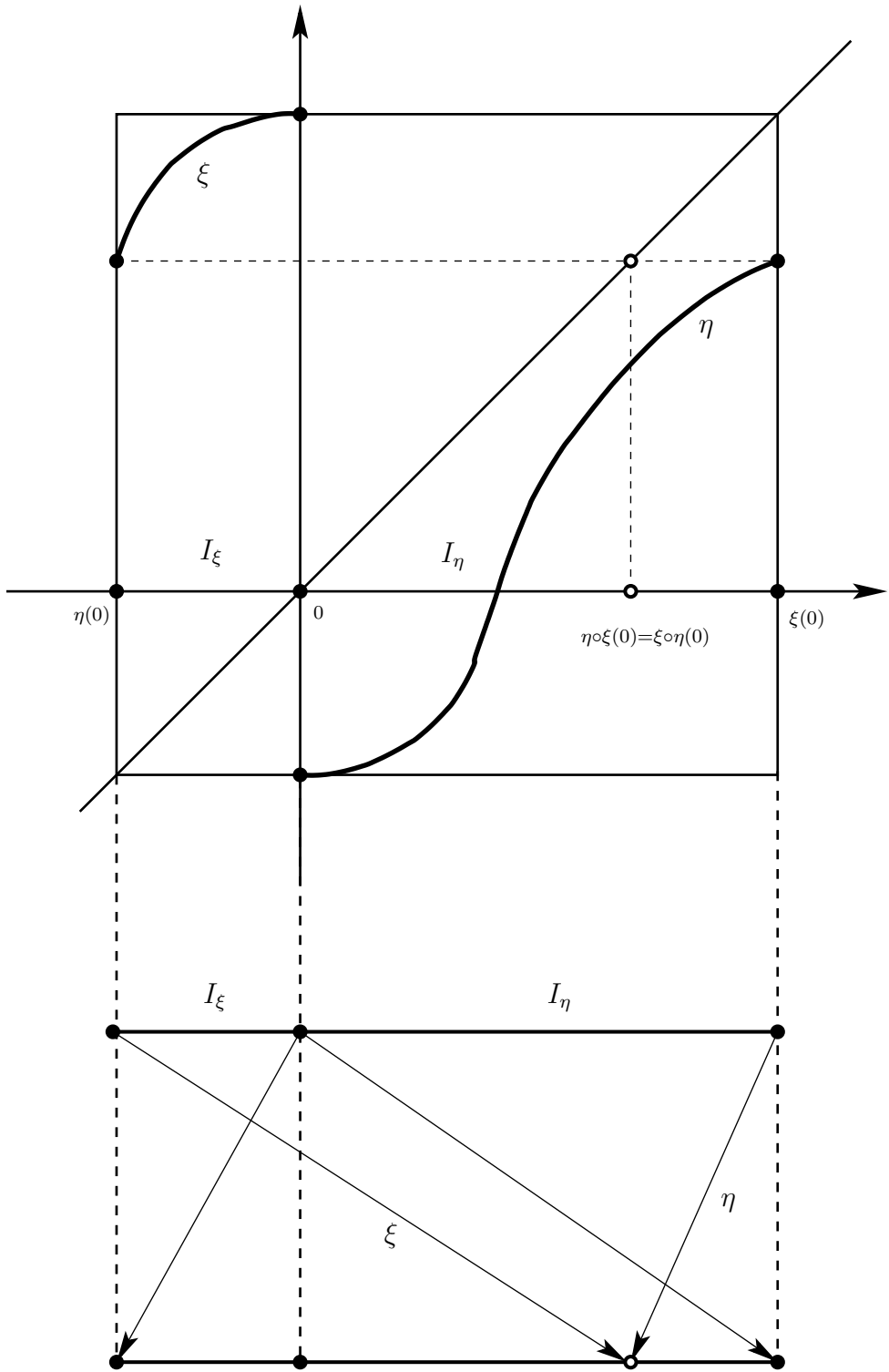}
   }
\end{center}
\caption[newpair]{\label{newpair} A critical commuting pair and its underlying interval exchange.}
\end{figure}

Any critical circle map $f$ with irrational rotation number $\rho$ induces a sequence of critical commuting pairs in a natural way: let $F$ be the lift of $f$ to the real line (for the canonical covering $t \mapsto e^{2\pi it}$) satisfying $DF(0)=0$ and $0<F(0)<1$. For each $n \geq 1$ let $\widehat{I}_n$ be the closed interval in the real line, adjacent to the origin, that projects under $t \mapsto e^{2\pi it}$ to $I_n$. Let $T:\R \to \R$ be the translation $x \mapsto x+1$, and define $\eta:\widehat{I}_n \to \R$ and $\xi:\widehat{I}_{n+1} \to \R$ as:$$\eta=T^{-p_{n+1}} \circ F^{q_{n+1}}\quad\mbox{and}\quad\xi=T^{-p_n} \circ F^{q_n}\,,$$where $\{p_n/q_n\}$ is the sequence of convergents defined in Section \ref{secdifeos}. It is not difficult to check that $(\eta|_{\widehat{I}_n}, \xi|_{\widehat{I}_{n+1}})$ is a critical commuting pair, usually denoted by $(f^{q_{n+1}}|_{I_n},f^{q_n}|_{I_{n+1}})$.

For a commuting pair $\zeta=(\eta,\xi)$ we denote by $\widetilde{\zeta}$ the pair $(\widetilde{\eta}|_{\widetilde{I_{\eta}}}, \widetilde{\xi}|_{\widetilde{I_{\xi}}})$, where tilde means \emph{rescaling} by the linear factor $1/|I_{\xi}|$. In other words, $|\widetilde{I_{\xi}}|=1$ and the length of $\widetilde{I_{\eta}}$ equals the ratio between those of $I_{\eta}$ and $I_{\xi}$.

Let $\zeta=(\eta,\xi)$ be a critical commuting pair according to Definition \ref{critpair}, and recall that $\big(\eta\circ\xi\big)(0)=\big(\xi\circ\eta\big)(0) \neq 0$. Let us suppose that $\big(\xi\circ\eta\big)(0) \in I_{\eta}$ (see Figure \ref{newpair}) and define the \emph{height} $\chi(\zeta)$ of $\zeta$ as $a\in\nt$ if$$\eta^{a+1}\big(\xi(0)\big) < 0 \leq \eta^a\big(\xi(0)\big)\,,$$and $\chi(\zeta)=\infty$ if no such $a$ exists. The height of the pair $(f^{q_{n+1}}|_{I_n},f^{q_n}|_{I_{n+1}})$ induced by a critical circle map $f$ is exactly $a_{n+1}$, where $\rho(f)=[a_0,a_1,...]$. For $\zeta=(\eta,\xi)$ with $\big(\xi\circ\eta\big)(0) \in I_{\eta}$ and $\chi(\zeta)=a<\infty$, the pair$$\big(\eta|_{[0,\eta^a(\xi(0))]}\,,\,\eta^a \circ \xi|_{I_\xi}\big)$$is again a commuting pair, and if $\zeta$ is induced by a critical circle map, \emph{i.e.},$$\zeta=\big(f^{q_{n+1}}|_{I_n}\,,\,f^{q_n}|_{I_{n+1}}\big)\,,$$then we have$$\big(\eta|_{[0,\eta^a(\xi(0))]}\,,\,\eta^a \circ \xi|_{I_\xi}\big)=\big(f^{q_{n+1}}|_{I_{n+2}}\,,\,f^{q_{n+2}}|_{I_{n+1}}\big)\,.$$This motivates the following definition.

\begin{definition}\label{renop} Let $\zeta=(\eta,\xi)$ be a critical commuting pair with $\big(\xi\circ\eta\big)(0) \in I_{\eta}$. We say that $\zeta$ is \emph{renormalizable} if $\chi(\zeta)=a<\infty$. In this case we define the \emph{renormalization} of $\zeta$ as$$\mathcal{R}(\zeta)=\left(\widetilde{\eta}|_{\widetilde{[0,\eta^a(\xi(0))]}}\,,\,\widetilde{\eta^a \circ \xi}|_{\widetilde{I_\xi}}\right).$$
\end{definition}

A critical commuting pair is a special case of a \emph{generalized interval exchange map} of two intervals, and the renormalization operator defined above is just the restriction of the \emph{Zorich accelerated version} of the \emph{Rauzy-Veech renormalization} for interval exchange maps (see for instance \cite{yoccoziem}). However we will keep in this survey the classical terminology for critical commuting pairs.

If $\chi(\mathcal{R}^j(\zeta))<\infty$ for $j \in \{0,1,...,n-1\}$ we say that $\zeta$ is \emph{n-times renormalizable}, and if $\chi(\mathcal{R}^j(\zeta))<\infty$ for all $j\in\nt$ we say that $\zeta$ is \emph{infinitely renormalizable}. The space of all infinitely renormalizable commuting pairs is the natural phase-space for renormalization. For such a pair, the irrational number whose continued fraction expansion equals$$\big[\chi\big(\zeta\big), \chi\big(\mathcal{R}(\zeta)\big),...,\chi\big(\mathcal{R}^n(\zeta)\big),\chi\big(\mathcal{R}^{n+1}(\zeta)\big),... \big]$$is, by definition, the \emph{rotation number} of the critical commuting pair $\zeta$ (note that if $\zeta$ is induced by a critical circle map with irrational rotation number, then it is infinitely renormalizable and both definitions of rotation number coincide).

For any real number $x$ denote by $\lfloor x\rfloor$ the \emph{integer part} of $x$, that is,
the greatest integer less than or equal to $x$. Also, denote by $\{x\}$ the \emph{fractional part} of $x$, that is, $\{x\}=x-\lfloor x\rfloor\in[0,1)$. Recall that the \emph{Gauss map} $G:[0,1]\to[0,1]$ is given by
\[
G(\rho)=\left\{\frac{1}{\rho}\right\}\mbox{ for $\rho\neq 0$\,, and $G(0)=0$.}
\]
Both $\Q\cap[0,1]$ and $[0,1]\setminus\Q$ are $G$-invariant. Under the action of $G$, all rational numbers in $[0,1]$ eventually land on the fixed point at the origin, while the irrationals remain in the union $\bigcup_{k \geq 1}\left(\frac{1}{k+1},\frac{1}{k}\right)$. Moreover, for any $\rho\in(0,1)\setminus\Q$ and any $j\in\nt$ we have that $G^j(\rho)\in\left(\frac{1}{k+1},\frac{1}{k}\right)$ if, and only if, $a_j=k$, where as before $a_j$ denotes the partial quotients of $\rho$. Indeed, if $\rho= [a_{0} , a_{1} , a_{2} , \cdots ]$ belongs to $\big(1/(k+1),1/k\big)$, then $1/\rho=a_0+[a_{1} , a_{2} , \cdots ]$ and then $a_0=\left\lfloor\frac{1}{\rho}\right\rfloor=k$ and $G(\rho)=[a_{1} , a_{2} , \cdots ]$. In particular, the Gauss map acts as a \emph{left shift} on the continued fraction expansion of $\rho$, and therefore the action of the renormalization operator on the rotation number is given by
\begin{equation}\label{rengauss}
\rho\big(\mathcal{R}(\zeta)\big)=G\big(\rho(\zeta)\big)=\sigma\big([a_0,a_1,\dots]\big)=[a_1,\dots]\,.
\end{equation}In particular, the renormalization operator $\mathcal{R}$ maps topological classes of (infinitely renormalizable) critical commuting pairs into topological classes.

\subsection{Exponential convergence of renormalizations}\label{secexpconv} For critical circle maps with a single critical point of some odd integer criticality, the renormalization operator $\mathcal{R}$ exhibits an infinite-dimensional \emph{horseshoe-like} attractor. More precisely, there exists a pre-compact $\mathcal{R}$-invariant set $\Lambda$, consisting of real-analytic critical commuting pairs with irrational rotation number, such that the action of $\mathcal{R}|_{\Lambda}$ is topologically conjugate to the two-sided shift $\sigma$ acting on $(\nt\cup\infty)^{\Z}$ (the action being taken over the continued fraction expansion of the rotation number, as in \eqref{rengauss} above). Moreover, any given real-analytic pair with irrational rotation number converges to $\Lambda$ geometrically. More precisely, the following holds.

\begin{theorem}\label{teoconvexpan} There exists a universal constant $\lambda$ in $(0,1)$ with the following property: given two real-analytic unicritical commuting pairs $\zeta_1$ and $\zeta_2$ with the same irrational rotation number and the same odd type at the critical point, there exists a constant $C>0$ such that$$d_{r}\big(\mathcal{R}^n(\zeta_1),\mathcal{R}^n(\zeta_2)\big) \leq C\lambda^n$$for all $n\in\nt$ and for any $0 \leq r < \infty$, where $d_r$ denotes the $C^r$ metric.
\end{theorem}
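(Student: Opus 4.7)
The plan is to promote the real geometric bounds of Section \ref{secrealbounds} to \emph{complex a priori bounds}, and then extract exponential contraction from the Schwarz lemma applied to strictly nested holomorphic domains. The setup that makes this strategy available is the following: because $\zeta_1$ and $\zeta_2$ are real-analytic with critical point of odd integer type $d$, each map in each pair factors locally as $z\mapsto z^d$ followed by a univalent holomorphic map, so deep renormalizations are expected to extend as \emph{holomorphic commuting pairs} (of Epstein type) on definite complex neighborhoods of the real trace. I would work in the space $\mathcal{E}$ of such holomorphic pairs modulo affine rescaling, equipped with the Carath\'eodory topology on their domains and the compact-open topology on the maps.

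The first substantive step is to verify compactness in the real category: the real bounds (Theorem \ref{realbounds}) and the Cross-Ratio Inequality (Theorem \ref{CRI}) already give that the orbits $\{\mathcal{R}^n(\zeta_i)\}_{n\geq 0}$ are pre-compact in $C^r$ for every $r$ along any bounded combinatorial block. The content then lies in proving \emph{complex bounds}: there exist universal $\mu>0$ and $n_0\in\nt$ such that for every $n\geq n_0$ the renormalization $\mathcal{R}^n(\zeta_i)$ extends to a holomorphic pair defined on a complex neighborhood of the real trace containing a topological annulus of modulus $\geq \mu$, with universally bounded geometric distortion. This is the technical heart; I would follow the strategy of building Yoccoz-type complex puzzles around the critical point and using Schwarzian/cross-ratio control to promote the real bounds to definite moduli.

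Once complex bounds are in hand, the set $\Lambda\subset \mathcal{E}$ of holomorphic pairs satisfying them is compact in the Carath\'eodory topology. Renormalization acts on $\Lambda$, and crucially each renormalization step is realized by a holomorphic map from a domain that sits \emph{strictly inside} the ambient domain of the previous level, with a definite Carath\'eodory gap coming from the complex bounds. By the Schwarz--Pick lemma this strict inclusion yields a uniform contraction in the hyperbolic metric of the ambient domain, and a standard normal-family argument upgrades this into a contraction factor $\lambda\in(0,1)$ in the $C^r$ topology (for every $r<\infty$) valid from one renormalization level to the next. Because $\zeta_1$ and $\zeta_2$ share the same rotation number, they sit on the same stable leaf of the renormalization dynamics, and iterating this uniform contraction yields the desired estimate $d_r(\mathcal{R}^n(\zeta_1),\mathcal{R}^n(\zeta_2))\leq C\lambda^n$.

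The main obstacle is the case of \emph{unbounded combinatorics}: when some partial quotient $a_{n+1}$ is very large, the return map to $I_n$ is nearly parabolic (Yoccoz's Lemma \ref{lemyoccoz}), and one cannot afford to iterate the naive Schwarz contraction through all $a_{n+1}$ steps of the almost-parabolic block without losing a definite fraction of the hyperbolic modulus. The remedy, due to Khmelev and Yampolsky, is to replace the naive renormalization operator on such blocks by a single \emph{almost-parabolic renormalization} and to prove fibered hyperbolicity over the Gauss shift on continued fractions: one compares each almost-parabolic block with its true parabolic limit and uses quasiconformal rigidity of parabolic germs (Inou--Shishikura type arguments) to recover a uniform amount of contraction per block, irrespective of how large $a_{n+1}$ is. This reduction to parabolic germ theory is the most delicate part of the argument and is where I expect virtually all of the real difficulty to lie.
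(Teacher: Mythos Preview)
Your identification of complex bounds as the technical heart is correct, and your instinct that parabolic phenomena are the obstruction in the unbounded case is also right. But the mechanism by which contraction is actually extracted in the paper is \emph{not} a direct Schwarz--Pick argument on nested domains, and this is where your proposal has a genuine gap.

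The difficulty with your step ``strict inclusion of domains $\Rightarrow$ Schwarz--Pick contraction in $C^r$'' is that the renormalization operator is \emph{not} a holomorphic self-map of a fixed domain in a complex Banach space; the paper warns explicitly (in a footnote to the discussion of Sullivan's program) that ``the renormalization `operator' is \emph{not} a complex-analytic operator.'' The domain and range of a holomorphic commuting pair vary with the pair itself, so there is no ambient hyperbolic metric on which the naive Schwarz lemma acts. Sullivan's original route tried to circumvent this via Teichm\"uller theory of Riemann surface laminations, but only obtained convergence without a rate. The exponential rate comes instead through McMullen's \emph{dynamic inflexibility theorem} (Theorem~\ref{inflexthm}): one uses the pull-back argument (Theorem~\ref{pullbackthm}) to produce a quasiconformal conjugacy $H$ between the holomorphic pairs extending $\zeta_1,\zeta_2$, proves that the critical point is a $\delta$-deep point of the limit set (Theorem~\ref{deeppoints}), and proves the ``small limit sets everywhere'' property (Theorem~\ref{smalllimitsets}) which yields uniform twisting. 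McMullen's theorem then forces $H$ to be $C^{1+\alpha}$-conformal at the critical point, and \emph{that} is what gives the exponential decay of $d_r(\mathcal{R}^n\zeta_1,\mathcal{R}^n\zeta_2)$.

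Your description of the unbounded-type extension is also off-target. Khmelev--Yampolsky do use parabolic bifurcation theory (Douady, Shishikura), but not to build an ``almost-parabolic renormalization'' block or to invoke Inou--Shishikura rigidity of parabolic germs. Rather, they use it to understand the geometry of holomorphic pairs with very small rotation number well enough to extend Theorem~\ref{smalllimitsets} to arbitrary irrational rotation numbers; once uniform twisting is established in that generality, the McMullen machinery runs exactly as in the bounded-type case.
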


This theorem was obtained by the first named author and de Melo \cite{edsonwelington2} for rotation numbers of bounded type, and extended by Khmelev and Yampolsky \cite{khmelevyampolsky} to cover all irrational rotation numbers. As already mentioned, its proof relies on holomorphic methods, to be discussed in Section \ref{seccomplex}.

The link between real-analytic critical circle maps and finitely smooth ones is given by the following result, first obtained in \cite[Theorem D]{GdM2013} for the $C^0$ metric, and then extended in \cite[Theorem 11.1]{GMdM2015} for the $C^{r-1}$ metric.

\begin{theorem}[Shadowing]\label{compacto} There exists a $C^{\omega}$-compact set $\mathcal{K}$ of real-analytic critical commuting pairs with the following property: for any $r \geq 3$ there exists a constant $\lambda=\lambda(r) \in (0,1)$ such that given a $C^r$ critical circle map $f$ with irrational rotation number there exist $C>0$ and a sequence $\{f_n\}_{n\in\nt}$ contained in $\mathcal{K}$ such that$$d_{r-1}\big(\mathcal{R}^n(f),f_n\big)\leq C\lambda^n\quad\mbox{for all $n\in\nt$,}$$and such that the pair $f_n$ has the same rotation number as the pair $\mathcal{R}^n(f)$ for all $n\in\nt$. Here $d_{r-1}$ denotes the $C^{r-1}$ distance in the space of $C^{r-1}$ critical commuting pairs.
\end{theorem}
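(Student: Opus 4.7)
The plan is to fix, once and for all, a real-analytic reference pair $g$ that mimics the combinatorics of $f$ and then set $f_n := \mathcal{R}^n(g)$. Concretely, I would first produce a real-analytic critical commuting pair $g$ with the same irrational rotation number and the same (odd-integer) criticality as $f$, e.g.\ by tuning parameters in the Blaschke family of Section \ref{secBlaschke} (the rotation number varies continuously and monotonically along such a one-parameter family, and one uses Theorem \ref{yoccoztheorem} to land on the prescribed irrational). With this choice, $f_n$ is real-analytic, and by \eqref{rengauss} it has the same rotation number $G^n(\rho(f))$ as $\mathcal{R}^n(f)$, as required. The compact set $\mathcal{K}$ can then be chosen to be a forward $\mathcal{R}$-invariant $C^\omega$-compact family of real-analytic pairs that absorbs all sufficiently deep real-analytic renormalization orbits; its existence follows from the \emph{complex a priori bounds} on real-analytic renormalizations underpinning Theorem \ref{teoconvexpan} (uniform conformal moduli of complex extensions, together with a normal families argument), and the bounds being independent of the rotation number allow $\mathcal{K}$ to be chosen universally.

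With this setup, the problem reduces to proving the exponential estimate
\[
d_{r-1}\bigl(\mathcal{R}^n(f),\mathcal{R}^n(g)\bigr)\;\leq\; C\,\lambda(r)^n,\qquad \lambda(r)\in(0,1).
\]
I would first use the real bounds (Theorem \ref{realbounds}) together with the cross-ratio inequality (Theorem \ref{CRIswiatek}) and the negative Schwarzian property around non-flat critical points (Lemma \ref{negsch}) to secure uniform $C^{r-1}$ estimates on $\mathcal{R}^n(f)$ and $\mathcal{R}^n(g)$ for all $n\geq n_0$, and uniform $C^0$-closeness $d_0(\mathcal{R}^n(f),\mathcal{R}^n(g))\to 0$. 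The exponential upgrade is the crux: it comes from chain-rule bookkeeping applied to the $n$-th renormalization, which is a rescaling of $f^{q_n}$. The $C^{r-1}$-difference between $f^{q_n}$ and $g^{q_n}$ on the relevant rescaled interval is controlled by propagating the initial $C^r$ discrepancy along the orbit and exploiting the geometric decay of the scales $|I_n|\to 0$ provided by the real bounds. The contraction factor $\lambda(r)$ is produced by balancing the derivative-loss at each composition against the geometric shrinking of the domains, and remains strictly less than $1$ under the assumption $r\geq 3$.

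The main obstacle is precisely this final exponential estimate. In contrast with Theorem \ref{teoconvexpan}, whose proof uses holomorphic motions and polynomial-like renormalization to extract complex bounds, here $f$ is merely $C^r$ and has no complex extension, so one is forced into a purely real-variable argument where every derivative has to be accounted for. This is what forces the loss of one derivative ($C^r\leadsto C^{r-1}$) in the conclusion, and it also explains why $\lambda(r)$ may degenerate as $r\downarrow 3$. Carrying out this bookkeeping rigorously, with constants uniform in the rotation number, is the technical backbone of the proofs in \cite{GdM2013, GMdM2015}.
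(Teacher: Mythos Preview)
Your approach differs fundamentally from the paper's, and it contains a real gap. You propose to fix a single real-analytic pair $g$ with the same rotation number as $f$, set $f_n:=\mathcal{R}^n(g)$, and then prove $d_{r-1}\big(\mathcal{R}^n(f),\mathcal{R}^n(g)\big)\leq C\lambda^n$. But this last estimate is precisely the content of Theorems~\ref{geoconvC3BT} and~\ref{geoconvC4}, which in the paper are \emph{deduced from} Theorem~\ref{compacto} (together with Theorem~\ref{teoconvexpan} and a separate Lipschitz estimate for~$\mathcal{R}$). So the argument is circular. Your sketch for the exponential contraction --- ``chain-rule bookkeeping'' that ``propagates the initial $C^r$ discrepancy along the orbit'' --- does not by itself produce contraction: the initial $C^r$ distance between $f$ and $g$ is of order one, the iterates involved have length $q_n$ growing at least geometrically, and the real bounds control the geometry of a \emph{single} map, not the divergence of two different ones under iteration. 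No purely real-variable mechanism of the kind you describe is known to give this contraction directly; obtaining it is exactly the difficulty the theory is designed to overcome.

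The paper's mechanism is entirely different, and the shadow $f_n$ is \emph{not} $\mathcal{R}^n$ of any fixed map. One first extends the $C^r$ commuting pair to a pair of \emph{asymptotically holomorphic} maps of order $r$ on a complex neighbourhood of the real intervals, meaning $\overline{\partial}F(z)/(\mathrm{Im}\,z)^{r-1}\to 0$ uniformly as $\mathrm{Im}\,z\to 0$. The renormalizations $\mathcal{R}^n(f)$ then inherit complex extensions whose Beltrami coefficients satisfy $\|\mu_n\|_\infty = O(|I_n|^{r-1})$, which decays geometrically by the real bounds; uniform control on their domains comes from the \emph{almost Schwarz inclusion} of \cite{grasandsswia} (one does \emph{not} iterate these extensions as a dynamical system, one only estimates their distortion). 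Each $f_n$ is then produced by \emph{straightening} this nearly-holomorphic extension via the Measurable Riemann Mapping Theorem: the proposition stated right after Theorem~\ref{compacto} yields biholomorphisms $H_n$ with $\|H_n-G_n\|_{C^0}\lesssim\|\mu_n\|_\infty$, and conjugating by these gives a genuinely holomorphic, hence real-analytic, pair $f_n$. The compact set $\mathcal{K}$ comes from the uniform bounds on the straightened pairs, and the rate $\lambda(r)$ from the geometric decay of $|I_n|^{r-1}$ --- which also explains naturally why one derivative is lost.
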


An important tool used in \cite{GdM2013} in order to construct the compact set $\mathcal{K}$ is the following notion. Let $I$ be a compact interval in the real line, and let $f:I\to\R$ be a $C^r$ map. Then there exists a $C^r$ map $F:\Omega\to\C$, where $\Omega$ is a neighbourhood of $I$ in the complex plane, with $F|_I=f$, $\frac{\partial}{\partial\bar{z}}F(z)=0$ when $\mathrm{Im}\,z=0$ and moreover$$\frac{\frac{\partial}{\partial\bar{z}}F(z)}{(\mathrm{Im}\,z)^{r-1}} \to 0$$uniformly as $\mathrm{Im}\,z$ goes to zero (see \cite[Lemma 2.1, p. 623]{grasandsswia} for one possible way to construct such extension $F$). In this case, we say that $F$ is \emph{asymptotically holomorphic} of order $r$ in $I$. To prove Theorem \ref{compacto}, one first extends the initial critical commuting pair given by $f$ to a pair of asymptotically holomorphic maps defined in an open complex neighbourhood of each original interval. To the best of our knowledge, asymptotically holomorphic maps were first used in one-dimensional dynamics by Lyubich in the early nineties \cite{lyubich}, and later by Graczyk, Sands and \'Swi\k{a}tek in \cite{grasandsswia}. One of its fundamental properties, an \emph{almost Schwarz inclusion}, controls the action of asymptotically holomorphic maps on some complex domains symmetric to the real line (behaving, at small scale, almost as isometries of a suitable hyperbolic metric, see \cite[Proposition 2, p. 629]{grasandsswia} and also Lemma \ref{lemmaschwdFdM2} below). When combined with Theorem \ref{realbounds} (the real bounds), such control provides geometric bounds on the quasiconformal distortion of the renormalizations of the previously mentioned extensions (one does not study the dynamics of these extensions, just their geometric behaviour). With these bounds at hand, the deformations from $\mathcal{R}^n(f)$ to $f_n$ (in order to prove Theorem \ref{compacto}) are based on the following corollary of the classical \emph{Measurable Riemann Mapping Theorem} (also known as Ahlfors-Bers theorem, see \cite[Section V.C]{ahlfors}), whose proof can be found in \cite[Proposition 5.5]{GdM2013}.

\begin{prop} For any bounded domain $U$ in the complex plane there exists a number $C(U)>0$, with $C(U) \leq C(W)$ if $U \subseteq W$, such that the following holds: let $\big\{G_n:U \to G_n(U)\big\}_{n\in\nt}$ be a sequence of quasiconformal homeomorphisms\footnote{Let $\Omega\subset\mathbb{C}$ be a domain and let $K \geq 1$. An orientation-preserving homeomorphism $f:\Omega \to f(\Omega)$ is \emph{$K$-quasiconformal} if it is absolutely continuous on lines and satisfies$$\left|\overline{\partial}f(z)\right|\leq\left(\frac{K-1}{K+1}\right)\big|\partial f(z)\big|\quad\mbox{for Lebesgue a.e. $z\in\Omega$\,.}$$The \emph{Beltrami coefficient} of such homeomorphism $f$ is the measurable function $\mu_f:\Omega\to\mathbb{D}$ given by$$\mu_f(z)=\frac{\overline{\partial}f(z)}{\partial f(z)}\quad\mbox{for Lebesgue a.e. $z\in\Omega$\,.}$$} such that
\begin{itemize}
\item The domains $G_n(U)$ are uniformly bounded: there exists $R>0$ such that $G_n(U) \subset B(0,R)$ for all $n\in\nt$.
\item $\mu_n \to 0$ in the unit ball of $L^{\infty}$, where $\mu_n$ is the Beltrami coefficient of $G_n$ in $U$.
\end{itemize}
Then given any domain $V$ such that $\overline{V} \subset U$ there exist $n_0\in\nt$ and a sequence $\big\{H_n:V \to H_n(V)\big\}_{n \geq n_0}$ of biholomorphisms such that:$$\|H_n-G_n\|_{C^0(V)} \leq C(U)\left(\frac{R}{d\big(\partial V,\partial U\big)}\right)\|\mu_n\|_{\infty}\quad\mbox{for all $n \geq n_0$,}$$where $d\big(\partial V,\partial U\big)$ denote the Euclidean distance between the boundaries of $U$ and $V$.
\end{prop}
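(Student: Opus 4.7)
\emph{Proof plan.} The plan is to use the Measurable Riemann Mapping Theorem to straighten the Beltrami coefficient of $G_n$, factoring $G_n$ as a biholomorphism composed with a quasiconformal homeomorphism that is $C^0$-close to the identity, and then to invoke Cauchy's estimate to convert closeness in position into closeness in value. Concretely, extend $\mu_n$ by zero outside $U$ to obtain a Beltrami coefficient $\widetilde\mu_n\in L^\infty(\C)$ compactly supported in $\overline{U}$ with $\|\widetilde\mu_n\|_\infty=\|\mu_n\|_\infty\to 0$. By the Ahlfors--Bers theorem there exists a unique quasiconformal homeomorphism $\psi_n:\C\to\C$ solving $\overline\partial\psi_n=\widetilde\mu_n\,\partial\psi_n$ and normalized by $\psi_n(z)=z+O(1/z)$ at infinity; equivalently, $\psi_n-\mathrm{id}$ equals the Cauchy transform of $\widetilde\mu_n\,\partial\psi_n$. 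Because $\widetilde\mu_n$ vanishes off $U$, the map $\psi_n$ is holomorphic on $\C\setminus\overline{U}$.

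The key analytic input is the quantitative stability clause of Ahlfors--Bers: there is a constant $C_1(U)>0$ such that $\|\psi_n-\mathrm{id}\|_{C^0(\C)}\le C_1(U)\,\|\mu_n\|_\infty$ for all sufficiently large $n$. This follows from the fixed-point representation $\psi_n-\mathrm{id}=T[\widetilde\mu_n\,\partial\psi_n]$, the Calder\'on--Zygmund $L^p$-bounds for the Beurling transform (which give a uniform bound on $\|\partial\psi_n\|_{L^p(U)}$ for some $p>2$ once $\|\mu_n\|_\infty$ is small), and the fact that the Cauchy transform of an $L^p$-function compactly supported in $\overline U$ is uniformly bounded by a constant depending only on $U$. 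The stated monotonicity $C(U)\le C(W)$ for $U\subseteq W$ then reflects the fact that any Beltrami coefficient supported in $U$ is a fortiori supported in $W$, so enlarging the allowed support can only enlarge the stability constant.

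Once $\psi_n$ is in hand, set $\phi_n:=G_n\circ\psi_n^{-1}$. Because $G_n$ and $\psi_n$ have the same Beltrami coefficient on $U$, the map $\phi_n$ is a biholomorphism from $\psi_n(U)$ onto $G_n(U)\subset B(0,R)$. Uniform convergence $\psi_n\to\mathrm{id}$ on $\C$ implies that, for all $n\ge n_0$, the domain $\psi_n(U)$ contains the open $\tfrac{1}{2}d(\partial V,\partial U)$-neighborhood of $\overline V$; then $H_n:=\phi_n|_V$ is a biholomorphism of $V$ onto its image. For each $z\in V$, the disk of radius $\tfrac{1}{3}d(\partial V,\partial U)$ about any point $w$ on the segment from $z$ to $\psi_n(z)$ lies in $\psi_n(U)$, so Cauchy's estimate yields $|\phi_n'(w)|\le 3R/d(\partial V,\partial U)$. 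Consequently
\[
|H_n(z)-G_n(z)|\;=\;|\phi_n(z)-\phi_n(\psi_n(z))|\;\le\;\frac{3R}{d(\partial V,\partial U)}\,|z-\psi_n(z)|,
\]
and combining this with the stability estimate applied to $z\in V$ yields the desired bound with $C(U):=3\,C_1(U)$.

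The main obstacle is the quantitative stability estimate for $\psi_n$, whose proof rests on singular integral estimates for the Beurling transform (and the associated $L^p$-theory near $p=2$) that are not elementary; once granted, the rest of the argument is a clean interplay between the factorization $G_n=\phi_n\circ\psi_n$ and Cauchy's integral formula. A secondary technical point, essentially automatic from uniform convergence $\psi_n\to\mathrm{id}$ on $\overline U$ together with the open mapping theorem for homeomorphisms, is to verify that $\psi_n(U)$ really does contain a definite neighborhood of $\overline V$ for all $n\ge n_0$, so that $H_n$ is well-defined and the Cauchy estimate can be applied at a definite scale comparable to $d(\partial V,\partial U)$.
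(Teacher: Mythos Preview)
Your proposal is correct and follows exactly the route the paper signals: the survey does not prove this proposition in-text but refers to it as ``a corollary of the classical Measurable Riemann Mapping Theorem (also known as Ahlfors--Bers theorem)'' and points to \cite[Proposition~5.5]{GdM2013} for the proof. Your factorization $G_n=\phi_n\circ\psi_n$ via the normalized Ahlfors--Bers solution $\psi_n$ of the extended Beltrami equation, combined with the quantitative stability estimate $\|\psi_n-\mathrm{id}\|_{C^0(\mathbb{C})}\lesssim_U\|\mu_n\|_\infty$ and a Cauchy bound on $\phi_n'$, is precisely the standard argument behind that reference, and your identification of the Calder\'on--Zygmund/$L^p$ bounds for the Beurling transform as the non-elementary input is accurate.
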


This Proposition is applicable to many other situations -- see for example \cite[Section 5.5]{CT2020}.

Those were some of the main ideas in order to prove Theorem \ref{compacto}. With theorems \ref{teoconvexpan} and \ref{compacto} at hand, the following two results, which are \cite[Theorem C]{GdM2013} and \cite[Theorem B]{GMdM2015} respectively, can be obtained.

\begin{theorem}\label{geoconvC3BT} There exists a universal constant $\lambda \in (0,1)$ such that given two $C^3$ critical circle maps $f$ and $g$ with the same irrational rotation number of bounded type and the same odd integer criticality, there exists $C=C(f,g)>0$ such that for all $n\in\nt$ we have$$d_0\big(\mathcal{R}^n(f),\mathcal{R}^n(g)\big) \leq C\lambda^n\,.$$
\end{theorem}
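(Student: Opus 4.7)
The plan is to combine the shadowing Theorem~\ref{compacto} (with $r=3$) with the real-analytic exponential convergence Theorem~\ref{teoconvexpan} via a triangle inequality. First I would apply Theorem~\ref{compacto} to $f$ and $g$ separately, obtaining shadow sequences $\{f_n\},\{g_n\}\subset\mathcal{K}$ of real-analytic critical commuting pairs satisfying $\rho(f_n)=\rho(g_n)=G^n(\rho)$ together with $d_2(\mathcal{R}^n(f),f_n)\leq C_1\mu^n$ and $d_2(\mathcal{R}^n(g),g_n)\leq C_2\mu^n$ for some universal $\mu\in(0,1)$. The standard triangle inequality
\[
 d_0(\mathcal{R}^n(f),\mathcal{R}^n(g)) \leq d_0(\mathcal{R}^n(f),f_n) + d_0(f_n,g_n) + d_0(g_n,\mathcal{R}^n(g))
\]
controls the outer terms at once; the entire problem is thus reduced to showing $d_0(f_n,g_n)\leq C\lambda^n$ for some universal $\lambda\in(0,1)$.

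To bound the middle term, notice that both $f_n$ and $g_n$ are real-analytic elements of the $C^{\omega}$-compact family $\mathcal{K}$, and they share the rotation number $G^n(\rho)$, which remains of bounded type with the \emph{same} constant $A$ as $\rho$, since bounded type is preserved by the Gauss map. Theorem~\ref{teoconvexpan} applied to the pair $(f_n,g_n)$ yields $d_0(\mathcal{R}^k(f_n),\mathcal{R}^k(g_n))\leq C(f_n,g_n)\,\lambda^k$; using the $C^{\omega}$-compactness of $\mathcal{K}$ and the compactness of the set of bounded-type-$A$ numbers inside $(0,1)\setminus\mathbb{Q}$, a standard continuity argument for the constants appearing in Theorem~\ref{teoconvexpan} would upgrade this into a \emph{uniform} estimate $d_0(\mathcal{R}^k(\phi),\mathcal{R}^k(\psi))\leq C_{*}\lambda^k$ valid for every pair $\phi,\psi\in\mathcal{K}$ sharing a rotation number of bounded type bounded by $A$. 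Applying this uniform bound to $\phi=f_0$ and $\psi=g_0$ gives $d_0(\mathcal{R}^n(f_0),\mathcal{R}^n(g_0))\leq C_{*}\lambda^n$.

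The remaining task — and, in my opinion, the main difficulty — is to identify $f_n$ with $\mathcal{R}^n(f_0)$ (and similarly $g_n$ with $\mathcal{R}^n(g_0)$). Theorem~\ref{compacto} merely asserts the existence of some shadow at each level and does not guarantee that these shadows arise from a genuine renormalization orbit. The bounded-type hypothesis is precisely what enables this identification: it forces the Gauss orbit $\{G^n(\rho)\}_{n\geq 0}$ to stay in a compact subset of the irrationals, so that $\mathcal{K}$ can be taken essentially forward-invariant under $\mathcal{R}$ along orbits with bounded-type rotation number, and this permits the choice $f_n:=\mathcal{R}^n(f_0)$ to be made compatibly with the shadowing bounds. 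Granting this, $d_0(f_n,g_n)\leq C_{*}\lambda^n$, and combined with the outer estimates the triangle inequality gives $d_0(\mathcal{R}^n(f),\mathcal{R}^n(g)) \leq C\,\widetilde{\lambda}^n$ with $\widetilde{\lambda}:=\max(\mu,\lambda)\in(0,1)$ universal, as required. Outside the bounded-type regime this compactness-based uniformization breaks down, which is exactly the reason the unrestricted result (Theorem~\ref{rigC4}) demands the substantially finer analysis developed in \cite{GMdM2015}.
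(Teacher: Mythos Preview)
Your high-level plan (shadowing via Theorem~\ref{compacto} combined with real-analytic convergence via Theorem~\ref{teoconvexpan}, glued together by a triangle inequality) is indeed the route the paper indicates. However, the step you yourself flag as the ``main difficulty'' is a genuine gap, and the justification you offer for it does not work.

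The identification $f_n=\mathcal{R}^n(f_0)$ cannot be arranged. The shadow $f_n$ is produced by Theorem~\ref{compacto} through a level-by-level quasiconformal deformation of $\mathcal{R}^n(f)$; nothing in that construction yields a genuine $\mathcal{R}$-orbit. Your suggested fix (``$\mathcal{K}$ can be taken essentially forward-invariant, so choose $f_n:=\mathcal{R}^n(f_0)$'') does not help: forward-invariance only tells you that $\mathcal{R}^n(f_0)\in\mathcal{K}$, not that it shadows $\mathcal{R}^n(f)$. Verifying $d_{0}\big(\mathcal{R}^n(f),\mathcal{R}^n(f_0)\big)\leq C\mu^n$ is precisely the theorem you are trying to prove (applied to the $C^3$ map $f$ and the real-analytic map $f_0$), so the argument is circular.

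What the paper actually invokes to close this gap is the ingredient it singles out explicitly: a \emph{Lipschitz estimate} for $\mathcal{R}$ on bounded pieces of topological conjugacy classes (see the remark following Theorem~\ref{geoconvC4} and the reference to \cite[Lemma 4.1]{GMdM2015}). With such an estimate, say $d_0(\mathcal{R}\zeta_1,\mathcal{R}\zeta_2)\leq L\,d_0(\zeta_1,\zeta_2)$ for pairs $\zeta_1,\zeta_2$ in the same topological class and in a suitable bounded region, one runs a telescoping argument rather than a single triangle inequality: for $0<m<n$ write
\[
d_0\big(\mathcal{R}^n(f),\mathcal{R}^n(g)\big)\leq d_0\big(\mathcal{R}^m(\mathcal{R}^{n-m}f),\mathcal{R}^m f_{n-m}\big)+d_0\big(\mathcal{R}^m f_{n-m},\mathcal{R}^m g_{n-m}\big)+d_0\big(\mathcal{R}^m g_{n-m},\mathcal{R}^n(g)\big).
\]
The outer terms are bounded by $L^{m}\cdot C\mu^{\,n-m}$ via iterated Lipschitz plus shadowing (note $\mathcal{R}^{n-m}(f)$ and $f_{n-m}$ share the same rotation number, so the Lipschitz estimate applies). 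The middle term is bounded by $C_*\lambda^{m}$ by your own uniform version of Theorem~\ref{teoconvexpan} over the compact $\mathcal{K}$. Choosing $m$ proportional to $n$ so that $L^{m}\mu^{\,n-m}$ and $\lambda^{m}$ both decay exponentially yields the result. The bounded-type hypothesis enters exactly where you said (compactness of the set of rotation numbers and hence uniformity of the analytic contraction), but it does \emph{not} let you replace the Lipschitz step by a naive choice of shadow orbit.
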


\begin{theorem}\label{geoconvC4} There exists a universal constant $\lambda \in (0,1)$ such that given two $C^4$ critical circle maps $f$ and $g$ with the same irrational rotation number and the same odd integer criticality, there exists $C=C(f,g)>0$ such that for all $n\in\nt$ we have$$d_2\big(\mathcal{R}^n(f),\mathcal{R}^n(g)\big) \leq C\lambda^n\,.$$
\end{theorem}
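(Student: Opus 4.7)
The approach combines the Shadowing Theorem \ref{compacto} with the exponential convergence of renormalizations of real-analytic pairs (Theorem \ref{teoconvexpan}) via a splitting-and-interpolation argument. Applying Theorem \ref{compacto} with $r=4$ to both $f$ and $g$, I obtain sequences of real-analytic pairs $\{f_n\}, \{g_n\}\subset \mathcal{K}$ that shadow the corresponding renormalizations exponentially fast in the $C^3$ metric:
\[
d_3\big(\mathcal{R}^n(f), f_n\big) \leq C_0\,\lambda_1^n, \qquad d_3\big(\mathcal{R}^n(g), g_n\big) \leq C_0\,\lambda_1^n,
\]
where $\lambda_1 = \lambda(4) \in (0,1)$ is universal. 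Since $\rho(f) = \rho(g)$, both $f_n$ and $g_n$ have rotation number $[a_n, a_{n+1}, \ldots]$ and, by hypothesis, the same odd criticality. Hence Theorem \ref{teoconvexpan} supplies a universal $\lambda_0 \in (0,1)$ and a constant $D$, taken uniform over $\mathcal{K}$ (a point requiring care, but expected to follow from compactness together with the uniformity built into the proof of Theorem \ref{teoconvexpan}), such that
\[
d_2\big(\mathcal{R}^k(f_n), \mathcal{R}^k(g_n)\big) \leq D\,\lambda_0^k \qquad \text{for all } k\in\nt.
\]

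For each $N$, I split $N = n + k$ and estimate via the triangle inequality
\[
d_2\big(\mathcal{R}^N(f), \mathcal{R}^N(g)\big) \leq d_2\big(\mathcal{R}^k(\mathcal{R}^n(f)), \mathcal{R}^k(f_n)\big) + d_2\big(\mathcal{R}^k(f_n), \mathcal{R}^k(g_n)\big) + d_2\big(\mathcal{R}^k(g_n), \mathcal{R}^k(\mathcal{R}^n(g))\big).
\]
The middle term is bounded by $D\,\lambda_0^k$. For the two outer terms I invoke Lipschitz control of the renormalization operator on a compact set: once $n$ is large enough, the real bounds (Theorem \ref{realbounds}) place $\mathcal{R}^n(f)$ and $f_n$ in a uniform pre-compact set $\mathcal{K}_0$ on which $\mathcal{R}$ is Lipschitz from $d_3$ into $d_2$ with some universal constant $L$. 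Iterating,
\[
d_2\big(\mathcal{R}^k(\mathcal{R}^n(f)), \mathcal{R}^k(f_n)\big) \leq L^k \cdot d_3\big(\mathcal{R}^n(f), f_n\big) \leq L^k C_0\,\lambda_1^n,
\]
and likewise for $g$. Combining, $d_2(\mathcal{R}^N(f), \mathcal{R}^N(g)) \leq 2 L^k C_0 \lambda_1^n + D\,\lambda_0^k$.

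To optimize, I set $k = \lfloor \alpha N\rfloor$ for a small fixed $\alpha\in(0,1)$ chosen so that $L^\alpha \lambda_1^{1-\alpha} < 1$; this is always possible because $\lambda_1 < 1$. The resulting universal contraction rate is $\lambda = \max\{L^\alpha \lambda_1^{1-\alpha},\, \lambda_0^\alpha\} < 1$, yielding the desired bound $d_2(\mathcal{R}^N(f), \mathcal{R}^N(g)) \leq C \lambda^N$. The principal difficulty I foresee is justifying the uniform Lipschitz bound $L$: a single iteration of $\mathcal{R}$ involves the composition $\eta^a\circ\xi$, whose Lipschitz modulus on $C^3$ could a priori degrade with the height $a$. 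Overcoming this should hinge on the real bounds (to normalize the geometry after each step) together with the asymptotically holomorphic machinery underlying Theorem \ref{compacto}, so as to control the $C^3$-distortion of the iterates of $\eta$ uniformly in $a$. Once this uniform control is secured, the interpolation above produces the theorem with universal $\lambda$.
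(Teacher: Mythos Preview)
Your outline matches the paper's own sketch: derive Theorem~\ref{geoconvC4} from Theorems~\ref{teoconvexpan} and~\ref{compacto} via a uniform Lipschitz estimate for $\mathcal{R}$ restricted to bounded pieces of topological conjugacy classes---which the paper singles out as the hard analytic input, citing \cite[Lemma~4.1]{GMdM2015}---followed by exactly the split $N=n+k$ and optimization you describe. One small technical correction: to obtain the factor $L^{k}$ you need $\mathcal{R}$ to be Lipschitz in a \emph{single} metric (either $d_2\to d_2$ or $d_3\to d_3$) on the relevant pre-compact set, not merely ``from $d_3$ into $d_2$'' as written, since the latter estimate cannot be iterated.
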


By the already mentioned \cite[First Main Theorem]{edsonwelington1} and \cite[Theorem 2]{khaninteplinsky}, theorems \ref{geoconvC3BT} and \ref{geoconvC4} imply theorems \ref{rigC3} and \ref{rigC4} respectively. We remark that to obtain theorems \ref{geoconvC3BT} and \ref{geoconvC4} from theorems \ref{teoconvexpan} and \ref{compacto} some hard analysis is involved, concerning a Lipschitz estimate for the renormalization operator (when restricted to suitable bounded pieces of topological conjugacy classes). We refer the reader to \cite[Lemma 4.1]{GMdM2015} for more details.

Following the terminology from hyperbolic dynamics, the rotation number of a critical circle map can be thought as an \emph{unstable} coordinate for the renormalization operator (recall from \eqref{rengauss} that $\mathcal{R}$ acts as the Gauss map on the rotation number), while the criticality of its single critical point (being preserved under renormalization) is a \emph{central} one. By Theorem \ref{teoconvexpan}, \emph{stable} manifolds for renormalization are obtained by fixing these two coordinates (see \cite[Section 2.3]{yampolsky4} for a nice description of this hyperbolic picture).

For multicritical circle maps with $N \geq 2$ critical points, a richer picture emerges: the criticalities $d_i$ are again central directions, whereas the parameters $\delta_i$ (the distance between consecutive critical points view under the unique invariant measure, see Definition \ref{signature}) are \emph{new unstable directions} (such expanding behaviour comes from the associated rescaling process explained in Section \ref{secpairs}). This suggests that, inside topological classes of multicritical commuting pairs, one should be able to see many different horseshoe-like attractors for renormalization, each one containing infinitely many periodic orbits. This hyperbolic picture has not been established yet, which is why Question \ref{conjrigmulti} remains open in its general form.

\section{Holomorphic methods}\label{seccomplex} 

In this section we will survey some of the complex-analytic ideas that play a decisive role in the theory of (multi)critical circle maps. Since these ideas are quite deep, the narrative to follow is by necessity very sketchy. For the general theory of complex dynamics we refer the reader to the books \cite{CGbook,dFdMbook,mclivro1,Milnorbook}.

The use of holomorphic methods in the study of renormalization and rigidity of one-dimensional dynamical systems was started by Sullivan in the mid-eighties (see \cite{sullivan}). Since the theory for circle maps follows in parallel with the corresponding theory for unimodal maps, and borrows substantially from it, we need to talk a bit about the latter first. 

\subsection{Sullivan's program} We have already mentioned the general ansatz relating renormalization convergence and rigidity. If we are given two topologically conjugate one-dimensional maps $f$ and $g$ which are infinitely renormalizable (say with some restrictions on their combinatorics), and if we know that the $C^0$ distances between their successive renormalizations contract to zero \emph{at an exponential rate}, then the conjugacy between $f$ and $g$ should actually be smooth. 
Hence the goal becomes to establish exponential contraction of renormalizations. 
The strategy laid down by Sullivan in \cite{sullivan} (and explained in greater detail in \cite[Ch.~VI]{livrounidim})
to achieve this goal can be roughly described as follows.
\begin{enumerate}
\item First get geometric bounds on the orbits of the critical points of the (real) one-dimensional systems.  These so-called \emph{real a-priori bounds} should be robust enough that, even if we start with maps which have only a mild, finite degree of smoothness, their successive renormalizations will converge $C^0$ exponentially fast to the subspace consisting of real-analytic maps.  
\item Use such real a-priori bounds to show that the topological conjugacy between
the two systems has slightly more geometric regularity than being merely continuous: it is actually \emph{quasi-symmetric} (at least when restricited to the post-critical sets of both systems).
\item Complexify the given real dynamical systems (when they are real-analytic), in other words, find suitable complex-analytic extensions of these systems. 
\item Using the real bounds in
(1) and the mild geometric control in (2), get \emph{complex a priori bounds}
for the complexified systems. These bounds are usually bounds on the moduli of
certain annuli (typically fundamental domains for the complexified systems). Such bounds yield a strong form of compactness.
\item Extend the renormalization operator to the complexified dynamical systems. This operator will, in a suitable domain, be a compact operator due to step (4). 
\item Use the bounds and compactness in (3) and some suitable infinite-dimensional
version of Schwarz’s lemma to establish the desired contraction property
of the underlying renormalization operator.
\end{enumerate}

In the context of (real-analytic) unimodal maps of the interval, Sullivan realized that the relevant complex-analytic dynamical systems are \emph{quadratic-like maps} (or more generally polynomial-like maps), and was therefore able to use the theory developed by Douady and Hubbard \cite{DH} for such maps. Recall that a quadratic-like map is a proper, degree two holomorphic branched covering map $F:U\to V$ between two topological disks $U,V\subset \mathbb{C}$ with $U$ compactly contained in $V$, branched at a unique critical point $c\in U$. 
The \emph{modulus} of $F$ is by definition the conformal modulus of the annulus $V\setminus U$. The set $\mathcal{K}_F=\bigcap_{n\geq 0} F^{-n}(V)$ is called the \emph{filled-in Julia set} of $F$. It is a totally invariant set under the dynamics, and it is compact due to the fact that $F$ is proper. Every point in $U\setminus\mathcal{K}_F$ has a finite orbit that eventually lands in the outer annulus $V\setminus U$. This annulus threfore works as a \emph{fundamental domain} for the dynamics outside the filled-in Julia set. A central fact about quadratic-like maps is the \emph{straightening theorem} of Douady and Hubbard \cite{DH}: every quadratic like map is quasiconformally conjugate to an actual quadratic polynomial map.

A quadratic-like map $F:U\to V$ is said to be \emph{renormalizable} if one can find a sub-disk $D\subset U$ compactly contained in $U$ and containing $c$ and an integer $p\geq 2$ such that $F^p|_{D}: D\to F^p(D)\subset V$ is well-defined, and again a quadratic-like map. This new map, with $p$ smallest possible and suitably rescaled (via a complex affine map), is called the \emph{first renormalization} of $F$, and denoted $\mathcal{R}F$.
The number $p$ is called the \emph{renormalization period} of $F$, denoted $p(F)$. 
If all successive renormalizations $\mathcal{R}^2F=
\mathcal{R}(\mathcal{R}F),\ldots, \mathcal{R}^{n}F=\mathcal{R}(\mathcal{R}^{n-1}F), \ldots$  are well-defined, then we say that $F$ is \emph{infinitely renormalizable}. If in addition all periods $p_n=p(\mathcal{R}^nF)$ form a bounded sequence, we say that $F$ infinitely renormalizable of \emph{bounded type}. The \emph{complex bounds} proved by Sullivan guarantee that if one starts with a real-analytic, infinitely renormalizable quadratic unimodal map $f$ of bounded type on the real line, then after a finite number $N$ of iterations, the renormalized unimodal  maps $\mathcal{R}^nf$ will be restrictions of quadratic-like maps $F_n$ with $F_{n+1}=\mathcal{R}F_n$ for all $n\geq N$, and moreover the moduli $\mathrm{mod}(\mathcal{R}^nF)$ ($n\geq N$) will be bounded from below. In particular, the sequence $(\mathcal{R}^{n}F)_{n\geq N}$ will be a pre-compact family (in the topology of uniform convergence on compacta), and every limit of such renormalization sequence will be a quadratic-like map. Here and throughout, all holomorphic maps considered commute with complex conjugation, \emph{i.e.,} are symmetric about the real axis. 

The crucial feature of quadratic-like maps in this theory, very closely related to the straightening theorem, is that they are amenable to what Sullivan calls a \emph{pull-back} argument. If $F_i:U_i\to V_i$, $i=0,1$, are two symmetric, topologically conjugate quadratic-like maps, and if $h$ is a quasisymmetric homeomorphism of the real line which sends the post-critical set of $F_0$ to the post-critical set of $F_1$, then $F_0$ and $F_1$ are quasiconformally conjugate. More precisely, there exists a quasiconformal homeomorphism $H:V_0\to V_1$ such that $H\circ F_0=F_1\circ H$; in addition, the quasiconformal dilatation of $H$ depends only on the conformal moduli $\mathrm{mod}(V_i\setminus U_i)$ ($i=0,1$) and on the quasisymmetric distortion of $h$.

The existence of such a conjugacy already allows us to speak of the \emph{quasiconformal} or \emph{Teichm\"uller} distance between $F_0$ and $F_1$, defined as 
\begin{equation}\label{juliateich}
 d_T(F_0,F_1) \;=\; \inf_{\phi} \log{\frac{1+\|\mu_\phi\|_{\infty}}{1-\|\mu_\phi\|_{\infty}}}\ ,
\end{equation}
the infimum being taken over all quasi-conformal conjugacies $\phi$ between $F_0$ and $F_1$. 
This is in fact a pseudo-distance: its value will be zero whenever the two maps are \emph{conformally} conjugate. It turns out that the Julia set of an (symmetric) infinitely renormalizable quadratic-like map carries no \emph{invariant line fields} (equivalently, no non-zero invariant Beltrami differentials). This is another consequence of the straightening theorem. Thus, for every quasiconformal conjugacy $\phi$ as above we have that $\mu_\phi$ vanishes a.e. on the (filled-in) Julia set of $F_0$. In particular, when calculating $\|\mu_\phi\|_\infty$ in the right-hand side of \eqref{juliateich}, we only need to look at the values of $\mu_\phi(z)$ for $z\in V_0$. 

It is immediate from the definition that the Teichm\"uller distance is weakly contracted under renormalization: any conjugacy between $F_0$ and $F_1$ restricts to a conjugacy between $\mathcal{R}(F_0)$ and 
$\mathcal{R}(F_1)$.

Now, let $H$ be a quasiconformal conjugacy between $F_0$ and $F_1$, say the one constructed via the pull-back argument. Its Beltrami differential $\mu_H=\overline{\partial} H/\partial H$ is invariant under $F_0$, and therefore it can be used to generate a \emph{path} of (pairwise qc-conjugate) quadratic-like maps joining $F_0$ to $F_1$. To see this, define $\mu_t=t\mu_H$ for all $t\in \mathbb{C}$ such that $|t|< \|\mu_H\|_{\infty}^{-1}$ then integrate each $\mu_t$ using the measurable Riemann mapping theorem to  get a (normalized) quasiconformal homeomorphism $H_t$, and then define $F_t=H_t\circ F_0\circ H_t^{-1}$. Such a path is called a \emph{Beltrami path} joining $F_0$ to $F_1$. 

As one can see from the definitions given so far, renormalization maps Beltrami paths to Beltrami paths. Some Beltrami paths are more efficient than others, in the sense that they are close to being ``geodesics'' in the Teichm\"uller metric. It will usually be the case that a very efficient Beltrami path joining $F_0$ to $F_1$ will be mapped to an inefficient Beltrami path joining $\mathcal{R}(F_0)$ to $\mathcal{R}(F_1)$: the image path ``coils''. It turns out that one can put this coiling property into more quantitative terms, and the result is a form of Schwarz's lemma in infinite dimensions.{\footnote{However, we warn the reader that the renormalization ``operator'' is \emph{not} a complex-analytic operator.}}

There are some difficulties with carrying out the details of this approach. One is the fact that the domain and range of a quadratic-like map vary with the map itself, so it is hard to set up the renormalization procedure as an actual operator on a space of maps defined over a fixed domain. Another difficulty is the fact that, if we are given two quadratic-like maps and they both restrict to the same quadratic unimodal map on the line, then they should be regarded as essentially the same dynamical system; however, their Teichm\"uller distance, according to the definition given above, will not be zero! Sullivan soon realized that a way to circumvent these difficulties is to take an inverse limit of the dynamics off the filled-in Julia set. To wit, if $F:U\to V$ is the given quadratic-like map, one considers the inverse system
\[
 \cdots \to F^{-(n+1)}(V\setminus \mathcal{K}_F)\to F^{-n}(V\setminus \mathcal{K}_F)\to \cdots F^{-1}(V\setminus \mathcal{K}_F)\to V\setminus \mathcal{K}_F\ ,
\]
where each map, being a restriction of $F$, is an unbranched $2$-to-$1$ holomorphic covering. 
The inverse limit of this system, denoted $\mathcal{L}(F)$, is a \emph{Riemann surface lamination} in a natural way. This object is locally homeomorphic to the product of a disk by a Cantor set, and the chart transitions are holomorphic on the leaves. The construction is  canonical in the sense that, if $F$ varies (but stays in the same topological conjugacy class), then topologically $\mathcal{L}(F)$ does not change at all. Only its conformal structure changes.
Moreover, a quasiconformal conjugacy between two such maps induces a homeomorphism between the two corresponding laminations which is quasiconformal on each leaf. Hence, one can speak of the (\emph{moduli space} or) \emph{Teichm\"uller space} of such lamination. It then follows that renormalization induces an operator on such Teichm\"uller space.

Using these ideas, Sullivan was able to carry out the strategy outlined in steps (1)-(6) above almost completely in the bounded-type case. 
We say ``almost'' because in step (6) he was forced to settle for something less than exponential contraction. Sullivan made an ingenious use of the theory of Riemann surface laminations, and used the Teichm\"{u}ller theory of such objects (which he largely developed on the fly) to prove a (non-uniform) version of Schwarz's lemma in this context, which in turn allowed him to prove renormalization convergence without a rate. 
The exponential convergence of renormalizations for bounded type infinitely renormalizable maps was finally achieved by McMullen \cite{mclivro1,mclivro2} by a different route, using his theory of \emph{rigidity of towers}. 

\begin{remark}
 The theory of Riemann surface laminations is a beautiful subject in its own right. See \cite{ghys} for a nice exposition. 
\end{remark}

\subsection{Holomorphic commuting pairs}
In his PhD thesis \cite{EdsonThesis}, the first named author took up the task of carrying out as much as possible of Sullivan's program in the context of critical circle maps with a single critical point of cubic type. Steps (1) and (2) of Sullivan's strategy were already in place due to the works of Herman and Swiatek (Theorem \ref{realbounds}) and Yoccoz (Theorem \ref{qsrigidity} in the unicritical case). 

The key to the remaining steps is an analogue of the quadratic-like maps of Douady and Hubbard, a holomorphic dynamical system that somehow extends the real commuting pairs arising as successive renormalizations of a critical circle map. This is the central contribution of \cite{EdsonThesis} and of the subsequent paper \cite{edsonETDS}. Here are the relevant  definitions, taken almost verbatim from \cite[p.~346]{edsonwelington2}.

\begin{definition}
 By a {\it bowtie} we mean $4$-tuple $({\mathcal O}_\xi,{\mathcal O}_\eta,{\mathcal O}_\nu, {\mathcal V})$ of simply-connected domains
 in the complex
plane such that:
\begin{enumerate}
\item[($a$)] Each ${\mathcal O}_\gamma$ is a Jordan domain whose
closure is contained in ${\mathcal V}$; 
\item[($b$)] We have $\overline{{\mathcal O}}_\xi\cap \overline{{\mathcal
O}}_\eta= \{0\}\subseteq {\mathcal O}_\nu$;
\item[($c$)] The sets ${\mathcal O}_\xi\setminus {\mathcal O}_\nu$, ${\mathcal
O}_\eta\setminus {\mathcal O}_\nu$, ${\mathcal O}_\nu\setminus {\mathcal O}_\xi$
and ${\mathcal O}_\nu\setminus {\mathcal O}_\eta$ are non-empty and connected.
\end{enumerate}
\end{definition}

\begin{figure}[t]
\begin{center}~
\hbox to \hsize{
\psfrag{0}[][][1]{$\scriptstyle{\!\!0}$} 
\psfrag{a}[][][1]{$\scriptstyle{a}$}
\psfrag{b}[][][1]{$\scriptstyle{b}$}
\psfrag{n0}[][][1]{$\scriptstyle{\!\eta(0)}$}
\psfrag{e0}[][][1]{$\scriptstyle{\xi(0)}$}
\psfrag{n}[][][1]{$\eta$}
\psfrag{nu}[][][1]{$\nu$}
\psfrag{e}[][][1]{$\xi$}
\psfrag{O1}[][][1]{$\mathcal{O}_\xi$}
\psfrag{O2}[][][1]{$\mathcal{O}_\eta$}
\psfrag{O3}[][][1]{$\mathcal{O}_\nu$}
\psfrag{V}[][][1]{$\mathcal{V}$}
\psfrag{r}[][][1]{$\scriptstyle{\mathrm{Re}(z)}$}
% \psfig{figure= picard.eps,width=4.2in}
%\hspace{1.0em} 
\includegraphics[width=5.6in]{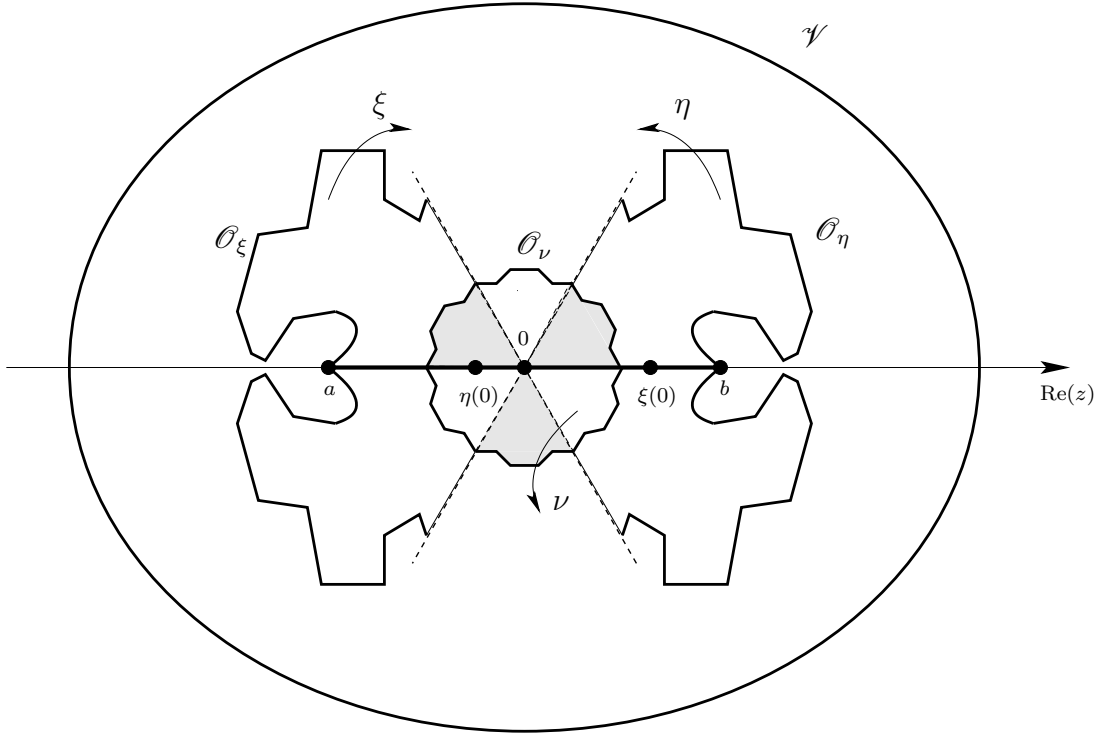}
   }
\end{center}
\caption[holopair]{\label{holopair} A holomorphic commuting pair.}
\end{figure}

\begin{definition}
Let $({\mathcal O}_\xi,{\mathcal O}_\eta,{\mathcal O}_\nu, {\mathcal V})$ be a bowtie. 
 A holomorphic pair $\Gamma$ with domain ${\mathcal U}={\mathcal
O}_\xi\cup {\mathcal O}_\eta\cup {\mathcal O}_\nu$ and co-domain ${\mathcal V}$ is the
dynamical system generated by three holomorphic maps $\xi:{\mathcal O}_\xi\to
{\mathbb C}$, $\eta:{\mathcal O}_\eta\to {\mathbb C}$ and $\nu:{\mathcal O}_\nu\to {\mathbb C}$
satisfying the following conditions (see Figure \ref{holopair}).
\begin{enumerate}
\item[H$_1$] Both $\xi$ and $\eta$ are univalent onto
${\mathcal V}\cap {\mathbb C}(\xi(J_\xi))$ and ${\mathcal V}\cap {\mathbb
C}(\eta(J_\eta))$ respectively, where $J_\xi={\mathcal O}_\xi\cap {\mathbb R}$
and $J_\eta={\mathcal O}_\eta\cap {\mathbb R}$. (Notation: ${\mathbb C}(I)=({\mathbb
C}\setminus {\mathbb R})\cup I$.)

\item[H$_2$] The map $\nu$ is a $3$-fold branched cover
onto ${\mathcal V}\cap {\mathbb C}(\nu(J_\nu))$, where $J_\nu={\mathcal
O}_\nu\cap {\mathbb R}$, with a unique critical point at $0$.

\item[H$_3$] We have ${\mathcal O}_\xi\ni \eta(0)<0<\xi(0)\in
{\mathcal O}_\eta$, and the restrictions $\xi|[\eta(0),0]$ and
$\eta|[0,\xi(0)]$ constitute a critical commuting pair.

\item[H$_4$] Both $\xi$ and $\eta$ extend holomorphically
to a neighborhood of zero, and we have
$\xi\circ\eta(z)=\eta\circ\xi(z)=\nu(z)$ for all $z$ in that
neighborhood. 

\item[H$_5$] There exists an integer $m\ge 1$, called the
{\it height} of $\Gamma$, such that $\xi^m(a)=\eta(0)$, where $a$ is
the left endpoint of $J_\xi$; moreover, $\eta(b)=\xi(0)$, where $b$ is
the right endpoint of $J_\eta$.
\end{enumerate}

\end{definition}

The relevant dynamical system here, which we will still denote by $\Gamma$, is the pseudo-semigroup generated by the three maps $\xi,\eta, \nu$.  
The interval $J=[a,b]$ is called the {\it long dynamical interval} of
$\Gamma$, whereas $\Delta=[\eta(0),\xi(0)]$ is the {\it short dynamical
interval} of $\Gamma$. They are both forward invariant under the
dynamics, as the reader can easily check. The {\it rotation number} of $\Gamma$ is by definition the
rotation number of the critical commuting pair of $\Gamma$ obtained by restriction to the real line (condition
H$_3$). We say that the holomorphic commuting pair $\Gamma$ has \emph{geometric boundaries} if $\partial\mathcal{U}$ and $\partial\mathcal{V}$ are \emph{quasi-circles}{\footnote{A quasi-circle, we recall, is the image of a round disk under a quasiconformal homeomorphism of the plane.}}.

\begin{remark}
Examples of holomorphic commuting pairs with arbitrary rotation number and arbitrary heights can be constructed directly from the Arnold family. This is carefully done in \cite[\S 4]{edsonETDS}. We should also point out that there is nothing special about cubic critical points. Holomorphic commuting pairs can be defined so as to have a critical point with any odd-power criticality whatever. To see how this is done, the reader should consult Arlane Vieira's thesis \cite{arlane} (see also \cite{yampolsky6}).
\end{remark} 

It turns out that the 
holomorphic pair $\Gamma$ can be renormalized:  the first
renormalization of the critical commuting pair of $\Gamma$ extends in
a natural way to a holomorphic pair ${\mathcal R}(\Gamma)$ with the same
co-domain ${\mathcal V}$. See Prop.~2.3 in \cite{edsonETDS} for the detailed construction of ${\mathcal R}(\Gamma)$. Renormalization is defined in such a way that the restriction of the renormalized holomorphic pair $\mathcal{R}(\Gamma)$ to the real line is the critical commuting pair that represents the renormalization of the critical commuting pair $(\xi|_{[\eta(0),0]}\,,\,\eta|_{[0,\xi(0)]})
$. 

\subsection{Pull-back argument}
The first main result in \cite{EdsonThesis} (or \cite{edsonETDS}) is the following analogue of Sullivan's \emph{pull-back argument}.

\begin{theorem}[Pull-back Argument]\label{pullbackthm}
 Let $\Gamma$ and $\Gamma'$ be holomorphic
pairs with geometric boundaries and let $h:J\to J'$ be a
quasisymmetric conjugacy between the restrictions of $\Gamma$ and $\Gamma'$ to their respective long dynamical intervals $J$ and $J'$. Then there exists
a quasiconformal conjugacy $H:{\mathcal V}\to {\mathcal V}'$ between $\Gamma$ 
and $\Gamma'$ which is an extension of $h$.
\end{theorem}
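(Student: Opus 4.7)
The plan is to adapt the Douady–Hubbard pull-back argument from the quadratic-like setting to the bowtie geometry of holomorphic pairs, proceeding in three stages: a boundary extension, an initial quasiconformal homeomorphism equivariant on $\partial\mathcal{U}$, and a pull-back iteration converging to the desired conjugacy.

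First I would extend $h\colon J\to J'$ to a quasisymmetric homeomorphism $\tilde h\colon\partial\mathcal{V}\to\partial\mathcal{V}'$. Since $\partial\mathcal{V}$ and $\partial\mathcal{V}'$ are quasicircles (the geometric boundary hypothesis), this reduces, after uniformizing each co-domain by a Riemann map, to the classical fact that a quasisymmetric map between arcs of the unit circle extends quasisymmetrically to the whole circle. Next, construct an initial quasiconformal $H_0\colon\mathcal{V}\to\mathcal{V}'$ restricting to $\tilde h$ on $\partial\mathcal{V}$, to $h$ on $J$, and \emph{equivariant} on $\partial\mathcal{U}$: for $w\in\partial\mathcal{O}_\xi\setminus\partial\mathcal{V}$ one prescribes $H_0(w)=(\xi')^{-1}(\tilde h(\xi(w)))$, with analogous prescriptions on the pieces of $\partial\mathcal{U}$ contained in $\partial\mathcal{O}_\eta$ and $\partial\mathcal{O}_\nu$ using $\eta'$ and the correct branch of $\nu'$. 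The relation $\xi\circ\eta=\eta\circ\xi=\nu$ of condition $\mathrm{H}_4$ guarantees that these three prescriptions coalesce at the origin where the three domains meet. Once the resulting boundary correspondence is seen to be quasisymmetric, the Ahlfors–Beurling (or Douady–Earle) extension delivers $H_0$ on the annular region $\overline{\mathcal{V}\setminus\mathcal{U}}$; any quasiconformal extension to the interior of $\mathcal{U}$ is acceptable, as it will be overwritten in the iteration below.

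Second, iteratively pull back by the dynamics. Keep $H_{n+1}=H_0$ on $\overline{\mathcal{V}\setminus\mathcal{U}}$, and on each component of $\mathcal{U}$ set
\[
H_{n+1}|_{\mathcal{O}_\xi}=(\xi')^{-1}\!\circ H_n\circ\xi,\quad H_{n+1}|_{\mathcal{O}_\eta}=(\eta')^{-1}\!\circ H_n\circ\eta,
\]
with the analogous formula on $\mathcal{O}_\nu$, selecting the unique branch of $\nu'$ whose restriction to $J_\nu$ agrees with $h$. The boundary compatibility built into $H_0$ makes $H_{n+1}$ continuous across $\partial\mathcal{U}$, and because the lifts are holomorphic the Beltrami coefficient of $H_{n+1}$ is merely a pull-back of that of $H_n$, so $\|\mu_{H_n}\|_\infty\le\|\mu_{H_0}\|_\infty$ for every $n$. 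The family $\{H_n\}$ is therefore $K$-quasiconformal for a uniform $K$, and $H_{n+1}$ coincides with $H_n$ off the $n$-th dynamical preimage of $\mathcal{U}$, an ascending exhaustion of the escape locus; hence $H_n\to H$ locally uniformly on this locus, with $H$ being $K$-quasiconformal and continuously extended across the non-escaping set by equicontinuity of $K$-quasiconformal families. By construction $H$ intertwines $\xi,\eta,\nu$ with $\xi',\eta',\nu'$ on their respective domains and restricts to $h$ on $J$, so $H$ is the desired quasiconformal conjugacy.

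The principal obstacle is the second stage, namely verifying that the boundary correspondence prescribed on $\partial\mathcal{U}$ via three distinct holomorphic lifts of $\tilde h$ is quasisymmetric across the apex of the bowtie at the origin, where $\nu$ has its critical point and all three domains meet. This demands controlled distortion estimates on the boundary behaviour of $\xi,\eta,\nu$ near $0$, using both the quasicircle nature of $\partial\mathcal{U}$ and the commuting relation $\mathrm{H}_4$ to ensure that the three branches match into a single quasisymmetric homeomorphism at the critical junction.
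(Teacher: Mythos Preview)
Your outline follows the standard Douady--Hubbard pull-back strategy, and the first two stages are essentially correct in spirit. However, there is a genuine gap in the third stage, and it is precisely the difficulty that the paper singles out as distinguishing this setting from the quadratic-like case.

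You write that $H_n\to H$ on the escape locus and that $H$ is ``continuously extended across the non-escaping set by equicontinuity of $K$-quasiconformal families.'' But equicontinuity only yields subsequential limits; it does not by itself guarantee that the full sequence converges on $\mathcal{K}_\Gamma$, nor that a limit is a homeomorphism there, nor that it conjugates the dynamics on the non-escaping set. In the quadratic-like case this step is rescued by the Douady--Hubbard straightening theorem combined with Sullivan's no-wandering-domains theorem: the filled Julia set carries no wandering Fatou components, so the pull-backs stabilise outside a set on which the argument can be closed. For holomorphic commuting pairs no straightening theorem is available, and wandering domains cannot be excluded \emph{a priori}. The paper points out that to handle their putative existence one must invoke a quasiconformal surgery device, the \emph{qc-sewing lemma} of Bers, which allows one to glue the pulled-back maps across possible wandering components and still obtain a global quasiconformal conjugacy. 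Wandering domains are then ruled out only \emph{a posteriori}, by combining the completed pull-back theorem with the fact that holomorphic pairs arising from the Arnold family have none.

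The obstacle you identify---quasisymmetry of the boundary correspondence at the bowtie apex---is real but comparatively technical; the decisive ingredient your proposal is missing is the treatment of putative wandering domains via Bers's qc-sewing lemma.
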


The proof is more involved than that of the original pull-back argument, for the following reason. In the quadratic-like case, we know by the straightening theorem of Douady-Hubbard that every quadratic-like map is quasiconformally conjugate to a quadratic polynomial, and the latter does not have wandering domains (due to Sullivan's no-wandering-domains theorem \cite{sullivan0}). Hence quadratic-like maps do not have wandering domains. By contrast, holomorphic pairs could in principle have wandering domains. To deal with their putative existence, one needs to use a form of quasiconformal surgery (something called the \emph{qc-sewing lemma} of  L.~Bers, see \cite[Lem.~3.2]{edsonETDS}). Wandering domains are only ruled out \emph{a posteriori}, combining Theorem \ref{pullbackthm} with the fact that holomorphic pairs constructed from the Arnold family do not carry such domains (see \cite[Th.~4.2]{edsonETDS}).  

\subsection{Complex bounds}
Another important fact about holomorphic commuting pairs is that the class of such objects contains all limits of successive renormalizations of a critical circle map (or critical commuting pair). Moreover, we have \emph{complex bounds} for renormalization, in the following sense.

\begin{theorem}[Complex Bounds]\label{complexbounds}
 Let $f: S^1\to S^1$ be
a real-analytic critical circle map with arbitrary irrational rotation
number. Then there exists $n_0=n_0(f)$ such that for all $n\ge n_0$ the 
$n$-th renormalization of $f$ extends to a holomorphic pair with geometric
boundaries whose fundamental annulus has conformal
modulus bounded from below by a universal constant.  
\end{theorem}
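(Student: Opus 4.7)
The plan is to build the holomorphic extension of $\mathcal{R}^n(f)$ by pulling back a definite Poincar\'e lens neighbourhood of the real long dynamical interval through inverse branches of iterates of a complex extension of $f$. The real bounds (Theorem~\ref{realbounds}) will provide the uniform geometric control on the real line; Koebe distortion (together with the negative Schwarzian property of $f$ on domains bounded away from the critical set, via Lemma~\ref{negsch}) will control the complex pull-back away from the critical point; and Yoccoz's almost-parabolic estimate (Lemma~\ref{lemyoccoz}) will be needed when the partial quotients of the rotation number are large.

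First I would extend $f$ to a holomorphic map $F$ on an open complex neighbourhood of $S^1$ in $\C$, and fix the critical point $c$. For $n$ sufficiently large, represent $\mathcal{R}^n(f)$ by the pair $\zeta_n=(\eta_n,\xi_n)=(F^{q_{n+1}}|_{I_n},F^{q_n}|_{I_{n+1}})$ (suitably rescaled), whose real long interval is $J_n=I_n\cup I_{n+1}\ni c$. I would then take $\mathcal{V}_n$ to be the Poincar\'e lens $D_\theta(J_n)$ for a fixed universal angle $\theta\in(0,\pi/2)$. The domains $\mathcal{O}_{\eta_n}$ and $\mathcal{O}_{\xi_n}$ are defined as the univalent pull-backs of $\mathcal{V}_n$ through $\eta_n$ and $\xi_n$ respectively (these are indeed univalent on neighbourhoods of $I_n$ and $I_{n+1}$ because the real orbits $f^j(I_n)$ for $0\leq j<q_{n+1}$ and $f^j(I_{n+1})$ for $0\leq j<q_n$ are pairwise disjoint); the domain $\mathcal{O}_{\nu_n}$, which must contain $c$, is defined as the $3$-fold branched pull-back of $\mathcal{V}_n$ through $\nu_n=\eta_n\circ\xi_n=\xi_n\circ\eta_n$, using the non-flatness normal form from Definition~\ref{defnaoflat} to pass around the branch point.

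The key quantitative input is that, when iterating $F$, each step along an orbit lying in the complement of a uniform neighbourhood of the critical set pulls a Poincar\'e lens of given aperture back to a Poincar\'e lens of only slightly smaller aperture. This is the Koebe distortion theorem applied to the corresponding univalent branch of $F^{-1}$, whose Koebe space on the real line is guaranteed by the real bounds. At every step in which the orbit passes near $c$, the power-law description $f(x)=f(c)+\phi(x)\,|\phi(x)|^{d-1}$ from Definition~\ref{defnaoflat} (with $d=3$) reduces the pull-back to that of a lens under a cubic map, which loses only a definite amount of aperture. Combining these two local estimates and checking the combinatorial axioms H$_1$--H$_5$ then produces the required holomorphic commuting pair; its boundaries are quasi-circles because they are pull-backs of the smooth boundary of $\mathcal{V}_n$ under holomorphic maps that are, by the real bounds and Theorem~\ref{qsrigidity}, uniformly quasi-symmetric on the real line.

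The main obstacle is controlling the cumulative aperture loss across the $q_{n+1}$ iterates making up $\eta_n$, especially when the partial quotients $a_k$ are unbounded. In that regime the dynamics shadows an almost-parabolic map along long stretches of fundamental domains, and iterating the naive Koebe estimate would force the aperture to decay exponentially in $a_k$. The remedy is to exploit Yoccoz's Lemma~\ref{lemyoccoz}, which shows that the relative lengths of fundamental domains along a long almost-parabolic chain decay only quadratically; combined with the fact that the iterate of $f$ on such a chain has negative Schwarzian (immediate from the chain rule and Lemma~\ref{negsch}, since the orbit avoids the critical set along the chain) and with a Schwarz-type contraction in the hyperbolic metric of a slightly larger lens, this yields summable rather than geometric aperture losses along almost-parabolic portions. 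Piecing together Koebe control on ``short'' pieces and Yoccoz/Schwarz control on ``long'' pieces produces, for all $n\geq n_0(f)$, the claimed lower bound on the modulus of $\mathcal{V}_n\setminus\mathcal{U}_n$ by a constant depending only on the real-bounds constant and the initial aperture $\theta$, hence universal.
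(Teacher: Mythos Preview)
Your overall architecture---pull back a Poincar\'e lens through the inverse branches of $F^{q_{n+1}}$ and $F^{q_n}$ to produce the domains $\mathcal{O}_\eta,\mathcal{O}_\xi,\mathcal{O}_\nu$---matches the strategy the paper follows. However, the mechanism you propose for controlling the cumulative aperture loss does not work, and is not the one used.

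The paper's proof hinges on Proposition~\ref{trapprop}, a \emph{distance-trapping} estimate: the inverse branch $f^{-q_{n+1}+1}$ over a large disk around $I_n$ has image with bounded normalized distance from $f(I_n)$. To prove this in the non-Epstein case one does iterate a Poincar\'e-neighbourhood inclusion, but the key lemma is \emph{not} Koebe distortion: it is Lemma~\ref{lemmaschwdFdM2}, the ``relaxed Schwarz'' inclusion, which says that a univalent symmetric map of the unit disk fixing $0$ and $a$ sends $\mathbb{P}_\theta((0,a))$ into $\mathbb{P}_{(1-a^{1+\delta})\theta}((0,a))$. The aperture loss at each step is thus $O(a^{1+\delta})$, and what makes the total loss summable is that $a$---the normalized size of the interval being pulled back---is \emph{small} at deep levels, with the relevant sums controlled by the real bounds. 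This is the replacement for the exact Schwarz lemma available in the Epstein case, where inverse branches are genuine self-maps of the upper half-plane and there is no aperture loss at all.

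Your proposed substitute---Koebe distortion combined with Yoccoz's Lemma~\ref{lemyoccoz}---does not yield this. Koebe controls conformal distortion but does not map Poincar\'e lenses into Poincar\'e lenses; iterating it $q_{n+1}$ times gives an estimate exponential in $q_{n+1}$. Yoccoz's lemma is a purely real statement about lengths of fundamental domains; it tells you adjacent domains are comparable (so the \emph{real} Koebe space is bounded), but that only reproduces the bounded-per-step loss you are trying to avoid. Your appeal to ``Schwarz-type contraction in the hyperbolic metric of a slightly larger lens'' would require the inverse branch to be a holomorphic self-map of that lens, which is exactly the Epstein property you have not assumed. In the paper, Yoccoz's lemma plays no role in the complex bounds; it is used in Section~\ref{secQS} for the real quasisymmetric rigidity. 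The missing ingredient in your proposal is precisely Lemma~\ref{lemmaschwdFdM2}.
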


We think of the unit circle $S^1=\mathbb{R}/\mathbb{Z}$ as embedded in the infinite cylinder $\mathbb{C}/\mathbb{Z}$, and we use on latter the conformal metric induced from the standard Euclidean metric $|dz|$ of the complex plane via the exponential map $\exp(z)= e^{2\pi iz}$. Note that $\mathrm{Im}\,z$ is well-defined for every $z\in \mathbb{C}/\mathbb{Z}$ (it is simply the imaginary part of any one of its pre-images under the exponential). Note also that every real-analytic circle map $f$ as above has a holomorphic extension to an annular neighborhood of the unit circle inside $\mathbb{C}/\mathbb{Z}$. 

The main step in the proof of Theorem \ref{complexbounds} is to establish a geometric estimate showing that, for all 
sufficiently large $n$, the appropriate inverse branch of $f^{q_{n+1}}$
maps a sufficiently large disk around the $n$-th renormalization
domain $I_n\cup I_{n+1}$ well within itself. Here, ``sufficiently large'' means large with respect to the size of $I_n\cup I_{n+1}$. 
For each $m\geq 1$, let $D_m\subset \mathbb{C}/\mathbb{Z}$ denote the disk having as one of its diameters the interval $[f^{q_{m+1}}(c), f^{q_m-q_{m+1}}(c)]\subset S^1$ containing the critical point $c${\footnote{It is easy to see that $[f^{q_{m+1}}(c), f^{q_m-q_{m+1}}(c)]\supset I_m\cup I_{m+1}$. }}. 
The geometric estimate is the following (once again, the statement is taken almost verbatim from \cite[Prop.~3.2]{edsonwelington2}).

\begin{prop}\label{trapprop}
There exist universal constants $B_0$ and
$B_1$ and for each $N\ge 1$ there exists $n(N)$  such that
for all $n\ge n(N)$ the inverse branch $f^{-q_{n+1}+1}$ taking
$f^{q_{n+1}}(I_n)$ back to $f(I_n)$ is a well-defined univalent map over
$\Omega_{n,N}=(D_{n-N}\setminus S^1)\cup f^{q_{n+1}}(I_n)$, and for
all $z\in \Omega_{n,N}$ we have   
\[
\frac{\mathrm{dist}\,\left(f^{-q_{n+1}+1}(z), f(I_n)\right)}{|f(I_n)|}
\;\le\; B_0\left(\frac{\mathrm{dist}\,(z,I_n)}{|I_n|}\right)+B_1
\ .
\]
\end{prop}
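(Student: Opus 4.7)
The plan is to establish the proposition in two steps: first, the univalence of $\phi := f^{-(q_{n+1}-1)}$ over $\Omega_{n,N}$ via an inductive pullback argument, and second, the Euclidean distortion estimate by combining Koebe's distortion theorem with the real bounds and the Cross-Ratio Inequality. Throughout, the natural ambient space is $(\C/\Z) \setminus S^1$, equipped with its hyperbolic metric, on which holomorphic self-maps act as non-expansions.

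For the univalence, observe that the forward iterates $\{f^j(I_n) : 1 \leq j \leq q_{n+1}-1\}$ are pairwise disjoint long atoms of the dynamical partition $\mathcal{P}_n(c)$, and none of their interiors contains the critical point $c$. Hence $f^{q_{n+1}-1}$, restricted to a real neighborhood of $f(I_n)$, is a real-analytic diffeomorphism and admits a local holomorphic extension. To obtain the required complex domain of univalence, I would perform an inductive pullback: starting from $\Omega_{n,N}$, I pull back one inverse branch of $f$ at a time. At each step, the relevant branch of $f^{-1}$ is univalent on the slit disk under consideration (analytic continuation avoids critical values), and is a strict hyperbolic contraction on $(\C/\Z)\setminus S^1$ by the Schwarz--Pick lemma. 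The hypothesis $n \geq n(N)$, together with the real bounds (Theorem~\ref{realbounds}), guarantees that $D_{n-N}$ has real diameter on the order of $|I_{n-N}|$, which is much larger than $|I_n|$ for large $n$; this is precisely the hyperbolic room needed to ensure the iterated pullback remains inside admissible slit disks at every stage.

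For the distortion estimate, a mild enlargement of the above argument shows that $\phi$ extends univalently to a slit disk $\widetilde{D}\setminus S^1$ with $D_{n-N} \Subset \widetilde{D}$ and conformal modulus $\mathrm{mod}(\widetilde{D}\setminus\overline{D_{n-N}}) \geq \kappa$ for some $\kappa = \kappa(N) > 0$. Koebe's distortion theorem then provides universal control on the ratio of Euclidean derivatives of $\phi$ across $D_{n-N}$. To convert this into the scale-invariant Euclidean estimate in the statement, one compares $|\phi'|$ at a suitable reference point with the geometric scales $|I_n|$ and $|f(I_n)|$ via the real bounds and the Cross-Ratio Inequality (Theorem~\ref{CRI}): both results together guarantee that the cumulative cross-ratio distortion of $f^{q_{n+1}-1}$ along the chain $\{f^j(I_n)\}_{j=1}^{q_{n+1}-1}$ is universally bounded, which allows one to extract the linear growth bound with universal $B_0, B_1$ after normalizing the source and target distances by $|I_n|$ and $|f(I_n)|$ respectively. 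The main obstacle will be precisely this universality: since $\phi$ is a composition of $q_{n+1}-1$ inverse branches of $f$, with $q_{n+1}\to\infty$ as $n\to\infty$, naive concatenation of distortion estimates would degrade catastrophically. Universality hinges on the fact that the family $\{f^j(I_n)\}_j$ has bounded multiplicity of intersection on $S^1$, so that Theorem~\ref{CRI} applies and yields a uniform ceiling on the accumulated distortion regardless of how large $q_{n+1}$ becomes.
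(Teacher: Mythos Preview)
Your outline captures the inductive-pullback shape of the argument, but it leans on two tools that do not apply here and would not deliver \emph{universal} constants.

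The Schwarz--Pick contraction on $(\C/\Z)\setminus S^1$ requires each inverse branch of $f$ to carry the half-cylinder into itself; this is exactly the \emph{Epstein} hypothesis, which Proposition~\ref{trapprop} does not assume. The paper addresses this point explicitly: when $f$ is not Epstein, global Poincar\'e-neighborhood trapping is unavailable and must be replaced by the relaxed Schwarz inclusion of Lemma~\ref{lemmaschwdFdM2}. The actual pullback is run through Poincar\'e neighborhoods $\mathbb{P}_\theta$ based on the successive intervals $f^k(I_n)$, $1\le k\le q_{n+1}$, with the angle $\theta$ suffering a multiplicative loss $(1-a_k^{1+\delta})$ at step $k$, where $a_k$ is the length of the relevant interval relative to the local disk of univalence of $f^{-1}$. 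The real bounds make $\sum_k a_k^{1+\delta}$ uniformly bounded, so the cumulative angle stays above a universal floor, independently of $n$ and $N$; this simultaneously gives univalence (the pulled-back domain never escapes) and the distortion control.

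Your Koebe argument separately fails on the universality of $B_0,B_1$. You extend $\phi$ univalently to $\widetilde D$ with $\mathrm{mod}(\widetilde D\setminus\overline{D_{n-N}})\ge\kappa(N)$; Koebe then yields distortion constants depending on $\kappa(N)$, hence on $N$. In the Poincar\'e-neighborhood approach the linear bound with universal slope comes for free: a point in $\mathbb{P}_\theta(J)$ sits at Euclidean distance $O(|J|/\theta)$ from $J$, and $\theta$ never drops below a universal constant along the entire pullback. Finally, the Cross-Ratio Inequality (Theorem~\ref{CRI}) is a real-line tool; it does not directly bound complex distortion of the long composition, and its role in this proof is already subsumed in the real bounds that control the $a_k$.
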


As stated, Theorem \ref{complexbounds} was proved in \cite[\S 3]{edsonwelington2}. But the story behind it is a bit more involved. The first version of the complex bounds in the present context was proved in \cite{EdsonThesis} (also \cite{edsonETDS}) under two further assumptions on $f$, namely 
\begin{enumerate}
\item[(i)] the rotation number of $f$ is of bounded type; 
\item[(ii)] $f$ is an \emph{Epstein map}. 
\end{enumerate}
We say that a real analytic circle map is \emph{Epstein} if its lift to the real line has a holomorphic extension $F$ to a neighborhood of the real axis in the complex plane in such a way that $F$ has inverse branches which are globally defined in the upper (or lower) half-plane. The main examples of Epstein circle maps are the maps in the Arnold family introduced earlier (see \S \ref{secArnold}). The proof presented in \cite{EdsonThesis,edsonETDS} makes use of the so-called \emph{sector theorem} of Sullivan (see \cite{sullivan}; the version used in the circle case is in fact the one proved in \cite{edson1998}). However, the sector theorem can only be used under the bounded type assumption (i). 

That assumption was removed by Yampolsky in \cite{yampolsky1}, using a special case of Proposition \ref{trapprop}.  Assuming that the map $f$ is Epstein, he exploits in full the idea of \emph{Poincar\'e neighborhood trapping}, which we briefly explain. Let $J\subseteq \mathbb{R}$ be a bounded open interval, and write $\mathbb{C}(J)=\mathbb{C}\setminus (\mathbb{R}\setminus J)$. If $\phi: \mathbb{C}(J)\to \mathbb{C}(\phi(J))$ is a symmetric holomorphic map, then $\phi$ maps each Poincar\'e neighborhood $\mathbb{P}_\theta(J)=\{z:\,\mathrm{angle}(z,J)\geq \theta\}$ into a corresponding Poincar\'e neighborhood $\mathbb{P}_\theta(\phi(J))$ with the same angle $\theta$. Here, $0<\theta<\pi$ and $\mathrm{angle}(z,J)$ denotes the angle at $z$ under which $z$ views the interval $J$. This simple but fundamental fact is easily seen to be a consequence of Schwarz's lemma. 

The Poincar\'e neighborhood trapping idea used in Yampolsky's approach works because he is assuming that $f$ is Epstein. But if we abandon the  latter hypothesis, then this tool is no longer directly applicable. In order to prove Theorem \ref{complexbounds}, one needs the following ``relaxed'' version of Poincar\'e neighborhood trapping  (whose statement is taken verbatim from \cite[Lem.~3.3]{edsonwelington2}).  

\begin{lemma}\label{lemmaschwdFdM2} For every small $a>0$, there exists
$\theta(a)>0$ satisfying $\theta(a)\to 0$ and $a/\theta(a)\to 0$ as
$a\to 0$, such that the following holds. Let $F:\mathbb{D}\to \mathbb{C}$
be univalent and symmetric about the real axis, and assume $F(0)=0$,
$F(a)=a$. Then for all $\theta\geq \theta(a)$ we have 
$F\left(\mathbb{P}_\theta ((0,a))\right)\subseteq
\mathbb{P}_{(1-a^{1+\delta})\theta}((0,a))$, where $0<\delta<1$ is an absolute constant. 
\end{lemma}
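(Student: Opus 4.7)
The plan is to compare $F$ with the identity on a shrinking neighborhood of $[0,a]$ using Koebe's distortion theorem, then exploit the symmetry of $F$ to gain an extra factor in the change of Poincar\'e angle.

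\emph{Koebe normalization and factorization.} Set $\lambda=F'(0)$ and $G(z)=F(z)/\lambda$, so $G$ is univalent on $\mathbb{D}$ with $G(0)=0$, $G'(0)=1$. Koebe's distortion theorem gives a universal $C_{0}$ with $|G(z)-z|\le C_{0}|z|^{2}$ on $|z|\le 1/2$. Substituting $z=a$ and using $G(a)=a/\lambda$ yields $|\lambda-1|\le C_{0}a$, hence the function $E(z):=F(z)-z$ vanishes at $0$ and at $a$ and satisfies $|E(z)|\le C_{1}(a|z|+|z|^{2})$ on $|z|\le 1/2$. Factoring, one writes $E(z)=z(z-a)H(z)$ with $H$ holomorphic on $\mathbb{D}$. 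Since $|z(z-a)|\ge 1/8$ on $\{|z|=1/2\}$ when $a\le 1/4$, the bound on $E$ forces $|H|\le M$ on that circle (with $M$ a universal constant), and then on all of $\{|z|\le 1/2\}$ by the maximum principle; a Cauchy estimate gives $|H'|\le 4M$ on $|z|\le 1/4$.

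\emph{Symmetry and angle-change identity.} Because $F$ commutes with complex conjugation, so does $H$, so $\mathrm{Im}\,H$ vanishes on $\mathbb{R}\cap\mathbb{D}$. Integrating $H'$ along the vertical segment from $\bar z$ to $z$ yields
\[
|\mathrm{Im}\,H(z)|\le 4M\,|\mathrm{Im}\,z|\qquad\text{for}\ |z|\le 1/4.
\]
The crucial algebraic identity is
\[
\frac{F(z)}{F(z)-a}=\frac{z}{z-a}\cdot\frac{1+(z-a)H(z)}{1+zH(z)}.
\]
Taking arguments, the angle at which $F(z)$ views $(0,a)$ differs from the corresponding angle at $z$ by
\[
\Delta(z)=-a\,\mathrm{Im}\,H(z)+O\!\big((|z|+a)^{2}\big),
\]
where the expansion is valid whenever $M(|z|+a)$ is bounded away from $1$.

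\emph{Calibration and conclusion.} Elementary geometry of Poincar\'e neighborhoods gives $|\mathrm{Im}\,z|\le h_{\max}(\theta):=(a/2)\cot(\theta/2)$ and $|z|\le 2h_{\max}(\theta)+a/2$ for $z\in\mathbb{P}_{\theta}((0,a))$; in the relevant regime of small $\theta$, both are $\asymp a/\theta$, whereas for $\theta\ge\pi/2$ the conclusion is immediate. Combining this with the symmetry estimate and the error bound gives
\[
|\Delta(z)|\le C_{3}\,\frac{a^{2}}{\theta}+C_{4}\,\frac{a^{2}}{\theta^{2}}.
\]
Fix $\delta\in(0,1)$ (take $\delta=1/2$ for definiteness) and set $\theta(a)=c\,a^{(1-\delta)/3}$ with $c$ to be chosen. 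For $\theta\ge\theta(a)$, both summands above are at most $\tfrac12 a^{1+\delta}\theta$ provided $c$ is large enough, so the angle viewing $(0,a)$ from $F(z)$ is at least $\theta-a^{1+\delta}\theta=(1-a^{1+\delta})\theta$, as required. The conditions $\theta(a)\to 0$ and $a/\theta(a)\to 0$ follow from $0<(1-\delta)/3<1$.

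\emph{Main obstacle.} The decisive ingredient is the symmetry bound $|\mathrm{Im}\,H(z)|\lesssim|\mathrm{Im}\,z|$: without it, $\mathrm{Im}\,H$ would only be bounded, and the main contribution to $\Delta(z)$ would be of order $a$, which would overwhelm the target $a^{1+\delta}\theta$ once $\theta$ tends to zero with $a$. A secondary difficulty is balancing the two different contributions to $|\Delta(z)|$ (one scaling as $a^{2}/\theta$, the other as $a^{2}/\theta^{2}$); both must lie below $a^{1+\delta}\theta$ simultaneously, which forces the exponent in $\theta(a)$ to be no larger than $(1-\delta)/3$ and is what determines the threshold appearing in the statement.
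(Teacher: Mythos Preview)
The survey does not include a proof of this lemma; it is quoted verbatim from \cite[Lemma~3.3]{edsonwelington2} and the reader is referred there for the argument, so there is no in-text proof to compare against.

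Your approach is correct and is in the same spirit as the original. The factorization $F(z)-z=z(z-a)H(z)$ with $H$ uniformly bounded via Koebe, together with the symmetry estimate $|\mathrm{Im}\,H(z)|\lesssim|\mathrm{Im}\,z|$, is precisely the mechanism that produces the extra factor of $a$ needed to beat the target $a^{1+\delta}\theta$. One minor inefficiency: your residual $O((|z|+a)^2)$ does not exploit symmetry, which is what forces the cruder threshold $\theta(a)\asymp a^{(1-\delta)/3}$. If instead you note that the full logarithm
\[
\Phi(z)=\log\frac{1+zH(z)}{1+(z-a)H(z)}
\]
is itself real on $(0,a)$ and compute
\[
\Phi'(z)=\frac{a\big(H'(z)-H(z)^2\big)}{\big(1+zH(z)\big)\big(1+(z-a)H(z)\big)},
\]
you get $|\Phi'|\lesssim a$ on a fixed disk, hence $|\Delta(z)|=|\mathrm{Im}\,\Phi(z)|\lesssim a\,|\mathrm{Im}\,z|\lesssim a^2/\theta$ with no second term; then already $\theta(a)\asymp a^{(1-\delta)/2}$ suffices. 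Either route yields the statement.
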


This lemma is applicable to other situations -- see for example \cite{CvST2017}. It is a precursor to the more general \emph{almost Schwarz inclusion} lemma for asymptotically holomorphic maps already mentioned in Section \ref{secexpconv}, see \cite{grasandsswia}. 

\subsection{McMullen's dynamic inflexibility theorem} Let $f$ and $g$ be two real-analytic critical circle maps and let $h$ be a quasisymmetric conjugacy between $f$ and $g$, mapping the critical point $c_f$ of $f$ to the critical point $c_g$ of $g$. Suppose $h$ is $C^{1+ \epsilon}$ at the critical point $c_f$, for some $\epsilon>0$. Then, it is not difficult to prove (using the real bounds) that the $C^0$ distance between $\mathcal{R}^nf$ and $\mathcal{R}^ng$ converges to zero exponentially fast as $n\to \infty$ (and this, as we have already seen, leads to $C^1$ rigidity). Now, one way to guarantee that $h$ is $C^{1+\epsilon}$ at the critical point is if we know that $h$ extends to a quasiconformal homeomorphism $H$ (conjugating, say, the holomorphic extensions of $f$ and $g$ on a small neighborhood of their critical points) which happens to be \emph{$C^{1+\alpha}$-conformal} at the critical point $c_f$, in the following sense.

\begin{definition}
 We say that a map $\phi:\widehat{\mathbb{C}}\to \widehat{\mathbb{C}}$ is {\it $C^{1+\alpha}$-conformal} at $p\in\widehat{\mathbb{C}}$ (for
some $\alpha>0$) if the complex derivative $\phi'(p)$ exists and we have 
\[
\phi(z)=\phi(p)+\phi'(p)(z-p)+O(|z-p|^{1+\alpha})
\]
for all $z$ near $p$.
\end{definition}

In \cite{mclivro2}, McMullen developed a powerful theory that yields in particular a criterion for a conjugacy between two holomorphic dynamical systems to be $C^{1+\alpha}$-conformal at a point. His definition of holomorphic dynamical system is very broad, encompassing rational or trancendental maps, Kleinian groups, etc, as well as all possible geometric limits of such systems. 

In order to state McMullen's criterion, we need some preparatory definitions. Our exposition here is borrowed from \cite[\S 7]{edsonwelington2}. 

Let us denote by $\mathcal{V}(\widehat{\mathbb{C}}\times \widehat{\mathbb{C}})$ the
set of all analytic hypersurfaces of $\widehat{\mathbb{C}}\times \widehat{\mathbb{
C}}$. We topologize $\mathcal{V}(\widehat{\mathbb{C}}\times \widehat{\mathbb{C}})$ as follows. If $F\subseteq
\widehat{\mathbb{C}}\times \widehat{\mathbb{C}}$ is a hypersurface, its boundary
$\partial F={\overline{F}}\setminus F$ is closed in $\widehat{\mathbb{C}}\times
\widehat{\mathbb{C}}$. Hence, given $F\in \mathcal{V}(\widehat{\mathbb{C}}\times
\widehat{\mathbb{C}})$ and a sequence $F_i\in \mathcal{V}(\widehat{\mathbb{C}}\times
\widehat{\mathbb{C}})$, declare $F_i\to F$ if 

\begin{enumerate}
\item[($a$)] $\partial F_i\to \partial F$ in the Hausdorff metric on 
closed subsets of $\widehat{\mathbb{C}}\times \widehat{\mathbb{C}}$;

\item[($b$)] For each open set $U\subseteq \widehat{\mathbb{C}}\times
\widehat{\mathbb{C}}$ there exist $f,f_i:U\to \mathbb{C}$ such that $U\cap F=f^{-1}(0)$, $U\cap F_i=f_i^{-1}(0)$, each
$f,f_i$ vanishes to order one on $F,F_i$ respectively, and the sequence
$f_i$ converges uniformly to $f$ on compact subsets of $U$.
\end{enumerate}

Define a set to be closed in $\mathcal{V}(\widehat{\mathbb{C}}\times\widehat{\mathbb{C}})$  if it contains the limits of all its convergent sequences. As McMullen shows in \cite[Ch.~9]{mclivro2}, the space
$\mathcal{V}(\widehat{\mathbb{C}}\times \widehat{\mathbb{C}})$ with this topology is separable and metrizable.  

\begin{definition}
A \emph{holomorphic dynamical system} is a subset $\mathcal{F}\subseteq
\mathcal{V}(\widehat{\mathbb{C}}\times \widehat{\mathbb{C}})$. The elements of $\mathcal{F}$ are its \emph{holomorphic relations}. 
\end{definition}

One is primarily interested in \emph{closed} holomorphic dynamical systems, in other words, those which are closed subsets of $\mathcal{V}(\widehat{\mathbb{C}}\times \widehat{\mathbb{C}})$. 
The {\it geometric topology} on the space of all closed holomorphic dynamical
systems is by definition the Hausdorff topology on the space of closed
subsets of $\mathcal{V}(\widehat{\mathbb{C}}\times \widehat{\mathbb{C}})$. As McMullen shows in \cite[Ch.~9]{mclivro2}, the
geometric topology is typically non-Hausdorff (hence non-metrizable), but it is always sequentially compact. 

We also need the following notions introduced by McMullen.

\begin{enumerate}

\item \emph{Deep point} Given a compact set  $\Lambda\subseteq \mathbb{C}$ and a positive number $\delta$, we say that
a point $p\in \Lambda$ is a {\it $\delta$-deep point} 
of $\Lambda$ if for every $r>0$ the largest
disk contained in $D(p,r)$ which does not intersect $\Lambda$ has
radius $\le r^{1+\delta}$.

\item \emph{Saturation}
Given a holomorphic dynamical system $\mathcal{F}$, we define its saturation $\mathcal{F}^{\text{sat}}$ to be the closure in $\mathcal{V}(\widehat{\mathbb{C}}\times\widehat{\mathbb{C}})$ of the set whose elements are the intersections $F\cap U$, where $F\in \mathcal{F}$  and $U\subseteq \widehat{\mathbb{C}}\times
\widehat{\mathbb{C}}$ is open. 

\item \emph{Non-linearity}
A holomorphic dynamical system $\mathcal{F}\subseteq \mathcal{V}(\widehat{\mathbb{C}}\times \widehat{\mathbb{C}})$ is said to be non-linear if it does not leave
invariant a parabolic line field in $\widehat{\mathbb{C}}$.

\item \emph{Twisting}
A (closed) holomorphic dynamical system $\mathcal{F}\subseteq \mathcal{V}(\widehat{\mathbb{C}}\times \widehat{\mathbb{C}})$ is said to be twisting if
every holomorphic dynamical system quasi-conformally conjugate to $\mathcal{F}$
is non-linear.

\item \emph{Uniform twisting}
A family $\{\mathcal{F}_\alpha\}$ of holomorphic dynamical systems is said to be
uniformly twisting if every geometric limit of the family of saturations
$\{\mathcal{F}_\alpha^{\text{sat}}\}$ is a twisting dynamical system.

\item \emph{The family $(\mathcal{F},\Lambda)$}
Given $\mathcal{F}\subseteq \mathcal{V}(\widehat{\mathbb{C}}\times \widehat{\mathbb{C}})$ and a compact set $\Lambda$ in the Riemann sphere, we define a family
$(\mathcal{F},\Lambda)$ of holomorphic dynamical systems in the following way. For each baseframe $\omega$ in the convex-hull $\text{ch}(\Lambda)$ of
$\Lambda$ in hyperbolic 3-space, let $T_\omega$ be the fractional linear
transformation that sends $\omega$ onto the standard baseframe $\omega_0$ at
$(0,1)\in \mathbb{C}\times \mathbb{R}_+\equiv \mathbb{H}^3$. Define $(\mathcal{F},\omega)$ to be the dynamical system $T_\omega^*(\mathcal{F})$, the pull-back
of $\mathcal{F}$ by $T_\omega$. Then let $(\mathcal{F},\Lambda)$ be the family of
all $(\mathcal{F},\omega)$ as $\omega$ ranges through the baseframes in
$\text{ch}(\Lambda)$. 
\end{enumerate}

Now we have everything we need to state McMullen's dynamic inflexibility theorem. The proof is given in  \cite[p.~166]{mclivro2}.  

\begin{theorem}[Dynamic Inflexibility]\label{inflexthm}
Let $\mathcal{F}\subseteq\mathcal{V}(\widehat{\mathbb{C}}\times\widehat{\mathbb{C}})$ be a holomorphic dynamical system and let $\Lambda\subseteq \widehat{\mathbb{C}}$ be a compact set. If $(\mathcal{F},\Lambda)$ is uniformly twisting and $\phi: \widehat{\mathbb{C}}\to
\widehat{\mathbb{C}}$ is a K-quasiconformal conjugacy between $\mathcal{F}$ and
another holomorphic dynamical system $\mathcal{F}'$, then for each $\delta$-deep
point $p\in \Lambda$ the map $\phi$ is $C^{1+\alpha}$ conformal at $p$, for
some $\alpha>0$. \qed
\end{theorem}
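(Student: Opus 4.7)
The plan is to execute McMullen's rescaling argument: convert the deep-point hypothesis into a Hölder rate of convergence of $\phi$ to its derivative at $p$ via sequential compactness in the geometric topology, combined with the quasiconformal rigidity encoded by ``twisting.'' Fix a $\delta$-deep point $p\in \Lambda$. For each $r\in(0,1)$, choose a baseframe $\omega_r\in\mathrm{ch}(\Lambda)$ on the vertical geodesic above $p$ at hyperbolic height $\log(1/r)$, and form $\phi_r = A_r\circ \phi\circ T_{\omega_r}^{-1}$, where $T_{\omega_r}$ is the Möbius map of the definition (behaving like an expansion by $1/r$ about $p$) and $A_r$ is the corresponding normalization on the image side. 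Each $\phi_r$ is then a $K$-quasiconformal conjugacy between $(\mathcal{F},\omega_r)$ and $(\mathcal{F}',\phi(\omega_r))$.

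First I would establish a qualitative rigidity statement along subsequences. By sequential compactness of the geometric topology and Mori's normal-family bound for $K$-quasiconformal maps, every sequence $r_n\to 0$ has a subsequence along which $(\mathcal{F},\omega_{r_n})\to \mathcal{G}$, $(\mathcal{F}',\phi(\omega_{r_n}))\to \mathcal{G}'$, and $\phi_{r_n}\to \Phi$, with $\Phi$ a $K$-quasiconformal conjugacy between $\mathcal{G}$ and $\mathcal{G}'$. The uniform twisting hypothesis says precisely that $\mathcal{G}^{\mathrm{sat}}$ is twisting, so no quasiconformal deformation of $\mathcal{G}$ can preserve a non-trivial invariant line field; the Beltrami differential $\mu_\Phi$, being $\mathcal{G}$-invariant, must vanish on the support of the action. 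Because $p$ is $\delta$-deep, $T_{\omega_{r_n}}(\Lambda)$ is $r_n^{\delta}$-dense in every compact subset of $\C$, so in the limit $\mathcal{G}$ acts on a set of full planar measure; hence $\mu_\Phi\equiv 0$ and $\Phi$ is Möbius. Absorbing this Möbius into the normalizations $A_r$ shows that $\phi_r\to \mathrm{Id}$ subsequentially in any such limit.

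The second step is to promote subsequential $C^{0}$-convergence to a quantitative $C^{1+\alpha}$-conformality estimate. Here I would combine two ingredients. One: by $\delta$-deepness, the complement of $\Lambda$ inside $D(p,r)$ is a union of open disks of radii $\le r^{1+\delta}$; the Beltrami coefficient $\mu_\phi$ is $\mathcal{F}$-invariant, and under the twisting produced at small scales one can run the Ahlfors--Bers integral representation for the $\overline{\partial}$-equation on the rescaled disk, using $L^{p}$-boundedness of the Beurling transform for $p>2$ close to $2$, to bound $\|\phi_r - M_r\|_{C^0(\mathbb{D})}\le C\,r^{\alpha}$ for some Möbius $M_r$, with $\alpha=\alpha(K,\delta)>0$. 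Two: since uniform twisting holds at \emph{every} scale and in \emph{every} geometric limit, the approximants $M_r$ must converge to a single $M_0$ (again by the subsequential rigidity of the first step); unrescaling then yields $\phi(z)=\phi(p)+\phi'(p)(z-p)+O(|z-p|^{1+\alpha})$, which is the definition of $C^{1+\alpha}$-conformality at $p$.

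The hard part is exactly this quantitative passage from the soft compactness argument to a uniform Hölder rate. Qualitative rigidity at each geometric limit gives only that $\phi$ is differentiable at $p$ with non-zero derivative; obtaining a positive exponent $\alpha=\alpha(K,\delta)$ requires both fine dependence of the solution of the Beltrami equation on the measure of the support of its coefficient, and the full strength of uniform twisting, so that no degeneration occurs along some pathological sequence $r_n\to 0$ along which the rescaled dynamics becomes linear. This is precisely where one must use uniform twisting of the \emph{family} $(\mathcal{F},\Lambda)$, rather than merely twisting of $\mathcal{F}$ itself.
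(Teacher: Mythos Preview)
The paper itself does not prove this theorem: the statement carries a \qed\ and the authors simply refer the reader to McMullen's book \cite[p.~166]{mclivro2}. So there is nothing in the paper to compare your argument against; what you have written is an attempt to sketch McMullen's own proof, which goes well beyond what this survey undertakes.

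As a sketch of McMullen's argument, your outline is broadly on target: the rescaling by baseframes $\omega_r$ over $p$, sequential compactness in the geometric topology, and the use of uniform twisting to force geometric limits of the conjugacy to be M\"obius are the correct structural ingredients, and you are right that the passage from qualitative rigidity to a uniform H\"older exponent $\alpha(K,\delta)$ is where the work lies. One point is muddled, however. You write that $\delta$-deepness makes $T_{\omega_{r_n}}(\Lambda)$ become $r_n^{\delta}$-dense and conclude that ``in the limit $\mathcal{G}$ acts on a set of full planar measure; hence $\mu_\Phi\equiv 0$.'' Deepness gives density, not full measure --- $\Lambda$ can be (and typically is) a measure-zero Cantor set, and so are its rescaled limits. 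The vanishing of the limiting Beltrami differential does not come from a support-of-measure argument; it comes directly from the twisting hypothesis, which (unpacked) says precisely that no geometric limit of $(\mathcal{F},\Lambda)$ carries a nontrivial invariant line field. McMullen organizes this via the boundedness of the \emph{visual distortion} of the extended conjugacy over $\mathrm{ch}(\Lambda)$: uniform twisting gives a uniform bound on that distortion, and $\delta$-deepness then converts the bound into the $O(|z-p|^{1+\alpha})$ estimate. Your Beurling-transform remark is in the right spirit for the analytic side, but the geometric input you need is this visual-distortion bound, not a full-measure claim.
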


\subsection{Proof of exponential convergence of renormalizations} With McMullen's dynamic inflexiblity theorem at hand, one can prove Theorem \ref{teoconvexpan}. By the complex bounds, every sufficiently high renormalization of a real-analytic critical commuting pair extends to a holomorphic commuting pair with good geometric control. Moreover, a quasisymmetric conjugacy between two such renormalized critical commuting pairs (mapping critical point to critical point) extends to a quasiconformal conjugacy between the corresponding renormalized holomorphic commuting pairs, by the pull-back argument. All one has to do, then, is to prove two things:  (a) that the critical point of a holomorphic commuting pair is $\delta$-deep for some $\delta>0$; and (b) that the full holomorphic dynamical system generated by a holomorphic commuting pair is uniformly twisting in its limit set.  The precise statements -- modulo the notion of \emph{good geometric control}, which we do not define here -- are as follows.

\begin{theorem}[Deep Critical Point] \label{deeppoints}
Let $\Gamma$ be a holomorphic pair with arbitrary rotation number and limit set $\mathcal{K}_\Gamma$. Then there exists
$\delta>0$ such that the critical point of $\Gamma$ is a $\delta$-deep point
of $\mathcal{K}_\Gamma$. 
\end{theorem}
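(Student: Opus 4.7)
The plan is to establish the deep point property by combining the self-similarity of $\mathcal{K}_\Gamma$ under renormalization with the complex bounds (Theorem \ref{complexbounds}), and by exploiting the cubic (odd-type) branching of the map $\nu=\xi\circ\eta$ at the critical point $c$.

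First, I would use the complex bounds to reduce the problem to a compact family. Namely, let $\tilde{\Gamma}_n$ denote the $n$-th renormalization of $\Gamma$, rescaled about $c$ so that, say, the long dynamical interval has unit length. By Theorem \ref{complexbounds}, the family $\mathcal{F}=\{\tilde{\Gamma}_n\}_{n\geq n_0}$ is pre-compact in the topology of uniform convergence on compacta, and every element of its closure $\overline{\mathcal{F}}$ is again a holomorphic commuting pair with geometric boundaries and moduli bounded below by a universal constant. Moreover, by the real bounds (Theorem \ref{realbounds}), the scaling factors $\lambda_n=|I_n(c)|$ form a well-controlled sequence governing the self-similarity of $\mathcal{K}_\Gamma$ about $c$, in the sense that $\lambda_n^{-1}(\mathcal{K}_\Gamma-c)$ restricted to a fixed bounded neighborhood of the origin approximates $\mathcal{K}_{\tilde{\Gamma}_n}$ up to affine change of coordinates.

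Second, I would prove a \emph{uniform non-degeneracy} property for $\overline{\mathcal{F}}$: there exist constants $c_0\in(0,1)$ and $r_0>0$, depending only on the family $\mathcal{F}$, such that for every $\tilde{\Gamma}\in\overline{\mathcal{F}}$ with critical point $\tilde{c}$ and limit set $\mathcal{K}_{\tilde{\Gamma}}$, and every disk $D(q,s)\subset D(\tilde{c},r_0)$ disjoint from $\mathcal{K}_{\tilde{\Gamma}}$, one has $s\leq c_0\,r_0$. This follows because each $\tilde{\Gamma}\in\overline{\mathcal{F}}$ possesses a real invariant interval $J_{\tilde{\Gamma}}\subset\mathcal{K}_{\tilde{\Gamma}}$ containing $\tilde{c}$ in its interior, of length uniformly bounded below by the real bounds; hence any disk avoiding $\mathcal{K}_{\tilde{\Gamma}}$ inside $D(\tilde{c},r_0)$ must miss $J_{\tilde{\Gamma}}$, which restricts both its imaginary extent and its diameter. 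Compactness of $\overline{\mathcal{F}}$ then yields a uniform constant $c_0$.

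Third, I would iterate via the critical branching. Since $c$ is a critical point of $\nu$ of odd order three, the local normal form $\nu(z)\approx \nu(c)+A(z-c)^3$ forces the preimages $\nu^{-n}(J_\Gamma)$ to consist of $3^n$ arcs emanating from $c$ with angular separation of order $2\pi/3^n$; these arcs lie in $\mathcal{K}_\Gamma$ by forward invariance. Equivalently, rescaling $D(c,r)$ by $\lambda_n^{-1}$ for the unique $n$ with $\lambda_n\asymp r$ places a hypothetical hole $D(q,s)\subset D(c,r)\setminus\mathcal{K}_\Gamma$ inside a renormalized holomorphic pair in $\overline{\mathcal{F}}$, and the uniform non-degeneracy of step two forces the hole's relative size to shrink; the crucial point is that this shrinkage occurs at \emph{every} scale $\lambda_n$, and combining the estimates obtained at successive scales via a geometric series argument should yield $s\leq C r^{1+\delta}$ for some universal $\delta>0$.

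The main obstacle, in my estimation, will be step three: quantifying the passage from a single-scale porosity estimate to the geometric decay $r^{1+\delta}$ in a manner uniform across all renormalization levels. In the bounded-type case, the scales $\lambda_n$ decrease at a definite geometric rate, and the iteration is relatively transparent. For arbitrary irrational rotation number, one must handle the ``almost parabolic'' returns controlled by Yoccoz's inequality (Lemma \ref{lemyoccoz}), where $\lambda_{n+1}/\lambda_n$ may be arbitrarily close to $1$; here, despite the near-parabolicity, the content of Lemma \ref{lemyoccoz} together with the cubic branching of $\nu$ at $c$ and the almost-Schwarz inclusion of Lemma \ref{lemmaschwdFdM2} should provide enough geometric contraction over the full almost-parabolic window to extract a uniform $\delta>0$.
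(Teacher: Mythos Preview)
This survey does not itself prove the theorem; it cites \cite{edsonwelington2}. Your overall framework---complex bounds, compactness of the rescaled renormalizations, and the role of the cubic branching at $c$---matches the original. But there is a real gap in the passage from single-scale porosity to $\delta$-deepness. Your step~2, justified only via the real dynamical interval $J_{\tilde\Gamma}\subset\mathcal{K}_{\tilde\Gamma}$, gives at best $c_0=\tfrac12$: a disk in $D(\tilde c,r_0)$ avoiding the real axis can still have radius $r_0/2$. Such a bound, applied at the unique level $n$ with $\lambda_n\asymp r$, yields only $s\le c_0 r$; coarser levels give weaker constraints, and at finer levels the hole need not lie in $D(c,\lambda_m)$ at all. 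Only one scale contributes, so you obtain porosity, not deepness (the real line alone is a set in which the origin is \emph{not} a deep point). Your step~3 remedy---``$\nu^{-n}$ yields $3^n$ arcs''---is ill-posed: $\nu:\mathcal{O}_\nu\to\mathcal{V}$ does not self-map and cannot be iterated. Each renormalized critical map $\nu_n$ does produce three arcs through $c$, but since every $\nu_n$ is real-symmetric with an odd-order critical point, those arcs are tangent at $c$ to the \emph{same} three directions $0,\,2\pi/3,\,4\pi/3$ at every level. Collecting all levels still hits only three angles near $c$, far from the angular $r^\delta$-density that deepness demands.

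What the argument in \cite{edsonwelington2} actually exploits is that $\mathcal{K}_\Gamma$ near $c$ contains far more than these radial arcs: the non-real arcs of $\nu^{-1}(J)$, pulled back by long \emph{univalent} compositions in $\xi^{-1},\eta^{-1}$ (with distortion controlled by Koebe and the complex bounds), deposit pieces of the limit set at definite height off $\mathbb{R}$ and at many horizontal positions, at every intermediate scale between consecutive $\lambda_n$'s. It is this propagation by the full pseudo-semigroup---in effect a local version of Theorem~\ref{smalllimitsets} at the critical point---that fills the sectors between the three radial directions and forces $s\le Cr^{1+\delta}$. You should replace the ill-defined $\nu^{-n}$ step with this univalent-pullback mechanism; once that is in place, your diagnosis of the almost-parabolic regime (via Lemma~\ref{lemyoccoz}) as the delicate point for unbounded type is on target.
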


\begin{theorem}[Small Limit Sets Everywhere] \label{smalllimitsets}
Let $\Gamma$ be a
holomorphic pair with good geometric control and arbitrary irrational rotation number, and let $\mathcal{K}_\Gamma$ be its limit set. Then for each $z_0\in \mathcal{K}_\Gamma$ and each $r>0$ there exists a pointed
domain $(U,y)$ with $|z_0-y|\asymp r$ and $\mathrm{diam}(U)\asymp r$, and there exist some iterate of $\Gamma$ mapping $(U,y)$ onto a pointed domain
$(V,0)$ univalently with bounded distortion. In particular, $U$
contains a conformal copy of some renormalization of $\Gamma$ whose limit set has size commensurable with $r$. 
\end{theorem}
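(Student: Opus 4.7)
The plan is to construct $(U, y)$ as the pullback of a complex neighborhood of the critical point along an appropriate branch of the dynamics, with scale matching coming from the real bounds (Theorem \ref{realbounds}) and univalence/bounded distortion coming from the complex bounds (Theorem \ref{complexbounds}) combined with the Poincar\'e neighborhood trapping of Proposition \ref{trapprop} and the Koebe distortion theorem. Given $z_0\in\mathcal{K}_\Gamma$ and $r>0$, I would first choose the dynamical scale $n=n(r)$ so that $|I_n|\asymp r$. This is possible because the real bounds force the adjacent atoms $I_n$ and $I_{n+1}$ of $\mathcal{P}_n(c)$ to be comparable, so successive levels shrink only by a bounded factor and every target scale is attained up to universal constants. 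By the complex bounds, for all large enough $n$ the renormalization $\mathcal{R}^n\Gamma$ extends to a holomorphic commuting pair supported on a complex neighborhood $V$ of $I_n\cup I_{n+1}$, with $\mathrm{diam}(V)\asymp r$ and whose fundamental annulus has universally bounded modulus.

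Next, the vertices of the $n$-th dynamical partition $\mathcal{P}_n(c)$ are forward iterates of the critical point and, by the real bounds again, are spaced $\asymp r$ apart along the long dynamical interval $J\subset\mathcal{K}_\Gamma$. Consequently there is a vertex $y=f^j(c)$ with $0\leq j<q_{n+1}$ at distance $\asymp r$ from $z_0$. Let $g$ denote the inverse branch $f^{-j}$ along this orbit, viewed as an element of the pseudo-semigroup generated by $\Gamma$; then $g(y)=0$. The core of the proof is to show that $g$ extends univalently and with bounded distortion from a disk $U$ around $y$ of diameter $\asymp r$ onto $V$.

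This extension step is where Proposition \ref{trapprop} enters, together with its routine generalization to arbitrary partial iterates $j\leq q_{n+1}$ rather than only $j=q_{n+1}-1$. The proposition provides a Poincar\'e neighborhood of $f^j(I_n)$, of size comparable to the atom itself, on which $f^{-j}$ is univalent; the Koebe distortion theorem then yields bounded distortion on a slightly smaller disk of the same order of magnitude. Because $g=f^{-j}$ takes the atom $f^j(I_n)$ (of diameter $\asymp r$) back to $I_n$ (also of diameter $\asymp r$), the mean derivative of $g$ on this scale is $\asymp 1$, and bounded distortion propagates this bound to all of $V$; hence $U:=g(V)\ni y$ has $\mathrm{diam}(U)\asymp r$, and the pullback of $\mathcal{R}^n\Gamma$ by $g$ gives a conformal copy inside $U$ whose limit set has diameter $\asymp r$.

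I expect the main obstacle to be the above generalization of Proposition \ref{trapprop} to general $j\leq q_{n+1}$: when the partial quotient $a_{n+1}$ is large, the orbit performs a long almost-parabolic excursion and the univalent extension of the inverse branches must combine Poincar\'e neighborhood trapping with Yoccoz's inequality (Lemma \ref{lemyoccoz}) for the geometry of almost-parabolic maps. Once this geometric control is in place, the remainder of the argument is a straightforward bookkeeping with dynamical partitions.
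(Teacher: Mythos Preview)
The paper is a survey and does not give a detailed proof of Theorem~\ref{smalllimitsets}; it attributes the bounded-type case to \cite{edsonwelington2} and the extension to all irrationals to \cite{khmelevyampolsky}, sketching only the ideas. Your outline matches the skeleton of the bounded-type argument in \cite{edsonwelington2}: pick the renormalization level $n$ by scale-matching via the real bounds, locate a nearby point $y$ in the critical orbit, and pull back the complex domain of $\mathcal{R}^n\Gamma$ along a univalent inverse branch controlled by Proposition~\ref{trapprop} and Koebe.

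One smaller point: you find $y$ as a vertex of the \emph{real} partition $\mathcal{P}_n(c)$ at distance $\asymp r$ from $z_0$ because the vertices are $r$-spaced along $J$. But $z_0\in\mathcal{K}_\Gamma$ need not be real --- indeed Theorem~\ref{deeppoints} forces $\mathcal{K}_\Gamma$ to be thick in the plane near $c$, so it cannot be just $J$. What you actually need is the \emph{complex} tiling: the pullbacks of the renormalization domain $V$ under all the relevant inverse branches cover $\mathcal{K}_\Gamma$, and each tile has diameter $\asymp r$ by the complex bounds; this places some pre-image of $0$ within $\asymp r$ of any $z_0$.

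The substantive gap is in the unbounded-type case. You correctly isolate the obstacle --- extending the control of Proposition~\ref{trapprop} from the full return $f^{-q_{n+1}+1}$ to arbitrary intermediate $f^{-j}$ when $a_{n+1}$ is large --- but propose to resolve it by combining Poincar\'e-neighborhood trapping with Yoccoz's inequality (Lemma~\ref{lemyoccoz}). Lemma~\ref{lemyoccoz} is a purely \emph{real} estimate on the sizes of fundamental domains; it does not by itself yield univalent complex extensions of inverse branches through the long almost-parabolic passage. As the paper explicitly says in the paragraph following Theorem~\ref{smalllimitsets}, removing the bounded-type assumption required Khmelev and Yampolsky to invoke the Douady--Shishikura theory of parabolic bifurcations (perturbed Fatou coordinates) for holomorphic commuting pairs, precisely in order to understand the complex geometry of the domains when the rotation number is very small. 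Your ``routine generalization'' of Proposition~\ref{trapprop} is exactly where this much heavier machinery enters; it is not routine.
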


These results are exact analogues of results obtained by McMullen in the context of (bounded-type, infinitely renormalizable) quadratic-like maps. Used in combination with Theorem \ref{inflexthm}, they yield the exponential convergence of renormalizations of Theorem~\ref{teoconvexpan} in the bounded type case. Theorem \ref{deeppoints} was proved in \cite{edsonwelington2} as stated here, without any assumption on the rotation number (other than being irrational). In that same paper, Theorem \ref{smalllimitsets} is stated and proved under the assumption that the rotation number is an irrational of bounded combinatorial type. This assumption 
was removed by Khmelev and Yampolsky in \cite{khmelevyampolsky}. When the sequence of partial quotients of the continued-fraction development of the rotation number is \emph{unbounded}, renormalization orbits may accumulate on commuting pairs having a fixed point (being in particular non-renormalizable). Such fixed point is necessarily \emph{parabolic} (with multiplier one), since the limiting pair is accumulated by pairs with no fixed points. Roughly speaking, the idea developed by Khmelev and Yampolsky was to apply the theory of \emph{parabolic bifurcations} (see Douady \cite{Do2}, Shishikura \cite{shishi1,shishi2} and references therein) to holomorphic commuting pairs, in order to understand the geometry of the domain of definition of pairs with arbitrarily small rotation number. With this at hand, the authors were able in the end to adapt, to the unbounded type case, the proof of Theorem \ref{teoconvexpan}  for the bounded type case explained above, see \cite[sections 6 and 7]{khmelevyampolsky}.

\subsection{Hyperbolicity of renormalization}\label{sechypren} In the previous subsection we have finally established Theorem \ref{teoconvexpan}, which assures exponential convergence of renormalization of \emph{real-analytic} critical commuting pairs with the same irrational rotation number and the same odd type at the critical point. As explained in Section \ref{secexpconv}, this dynamical picture can be promoted to critical commuting pairs with a \emph{finite degree of smoothness}, as in Theorem \ref{geoconvC3BT} and Theorem \ref{geoconvC4}. These two results can be regarded as the state of the art concerning exponential convergence of renormalization of critical circle maps (with a single critical point). As explained in Section \ref{smoothrigid}, they imply the rigidity theorems \ref{rigC4} and \ref{rigC3}, which are the main goal of this survey.

At this point, one would like to discuss the \emph{hyperbolicity} of renormalization (in the sense of Smale, \emph{i.e.}, uniform contraction/expansion on the tangent bundle). To give a meaning to this problem, one first needs to endow the phase-space of the renormalization operator with a smooth structure (a Banach manifold structure) on which $\mathcal{R}$ is (Fr\'echet) differentiable. As it turns out, this is a difficult problem that obstructs the hyperbolicity discussion directly in the space of critical commuting pairs. To overcome this problem, at least for real-analytic pairs, a crucial idea in this area was developed by Yampolsky in \cite{yampolsky3,yampolsky4}. Roughly speaking, his idea was to replace the renormalization operator $\mathcal{R}$, acting on the space of commuting pairs, with an analytic operator, the \emph{cylinder} renormalization operator, defined on a complex-analytic Banach manifold. This operator was constructed in \cite[Section 7]{yampolsky3}, while hyperbolicity of periodic orbits and the construction of the corresponding stable manifolds were given in \cite[sections 8 and 9]{yampolsky3}. Finally, hyperbolicity of the whole horseshoe-like attractor for the cylinder renormalization operator was obtained in \cite{yampolsky4} and re-obtained in \cite[Section 8]{khmelevyampolsky}.

\section{Concluding remarks}\label{sec:conc}

We end this survey with some remarks, conjectures and open questions on multicritical circle maps. 

\medskip

\begin{enumerate}

\item Recall that in Section \ref{seccritmaps} we introduced the notion of \emph{signature} of a multicritical circle map (see Def.~\ref{signature}). We may re-state Question \ref{conjrigmulti} as follows. Let $f,g:S^1\to S^1$ be two $C^3$ multicritical circle maps with the same signature, and let $h:S^1\to S^1$ be a conjugacy between $f$ and $g$ such that $h$ maps each critical point of $f$ to a corresponding critical point of $g$. Is $h$ a $C^1$ diffeomorphism? Are there conditions on the rotation number that make $h$ better than $C^1$? To the best of our knowledge, no rigidity results are available for maps with $N \geq 3$ critical points. As mentioned in Section \ref{secBlaschke} (see Remark \ref{remzak}), a construction similar to the one developed by Zakeri, in order to prove Theorem \ref{TeoZak}, should be useful as a starting point.

\vspace{.3cm}

\item What about (multi)critical circle maps with \emph{non-integer} criticalities? Not even the existence of periodic orbits (for renormalization) in the unicritical case has been established yet (but see \cite{GY}). For unimodal maps this problem has been solved by Martens in \cite{marco98}, but it is not clear whether his methods can be adapted to the circle case. 

\vspace{.3cm}

\item The full Lebesgue measure set of rotation numbers mentioned in the statement of Theorem \ref{rigC4}(3) -- call it $\mathbb{A}$ -- for which $C^{1+\alpha}$ rigidity holds, was originally defined in \cite[\S 4.4]{edsonwelington1} by means of three conditions on the sequence $(a_n)$ partial quotients, to wit:
\begin{itemize}
 \item $ \displaystyle{
\lim\sup_{n\to\infty}\frac{1}{n}\sum_{j=1}^{n}\log{a_j}\;<\; \infty
\ ;}
$
\item 
$
\displaystyle{
\lim_{n\to\infty}\frac{1}{n}\log{a_n}\;=\;0 \ ;}
$
\item 
$
\displaystyle{
\frac{1}{n}\sum_{j=k+1}^{k+n}\log{a_j}\;\leq\; \omega\left(\frac{n}{k}\right)
\ ,\ \ \ \forall\; 0<n\leq k\ .}
$
\end{itemize}
In this last condition, $\omega(t)$ is a positive function (that depends on
the rotation number) defined for $t>0$ such that $t\omega(t)\to 0$ as $t\to 0$. A natural question is: Are these conditions optimal? In other words, is $\mathbb{A}$ the largest set of rotation numbers for which statement (3) in Theorem \ref{rigC4} is true? 

\vspace{.3cm}

\item Another way to approach the question formulated in (3) is to look at the complement of the set $\mathbb{A}$. In \cite[\S 5]{edsonwelington1}, a saddle-node surgery technique was used to build $C^\infty$ counterexamples to $C^{1+\alpha}$ rigidity for each rotation number $\rho =[a_0,a_1,a_2,\ldots]$ satisfying $a_n\geq 2$ for all $n$ and 
\[
 \limsup_{n\to\infty} \frac{1}{n}\log{a_n} = \infty\ .
\]
Can such counterexamples be built for every rotation number not in $\mathbb{A}$? An analogous question can be asked in the analytic category. In \cite{avila}, using parabolic surgery, Avila constructed real-analytic counterexamples to $C^{1+\alpha}$ rigidity for each rotation number in another set of rotation numbers (still properly contained in the complement of $\mathbb{A}$). What is the optimal class of rotation numbers in this case? Is it still the whole complement of $\mathbb{A}$? 

\vspace{.3cm}

\item The problem of global hyperbolicity of the renormalization operator for $C^r$ \emph{unimodal maps} was solved in \cite{dFdMP}, through a combination of the deep holomorphic results obtained by Lyubich in \cite{lyubichAnnals} (later improved by Avila and Lyubich in \cite{AL}) with certain techniques of non-linear funcional analysis borrowed from the work of Davie \cite{davie}. Can these ideas be adapted to the study of the renormalization of $C^r$ (multi)critical circle maps? There are several difficulties to overcome here, such as to provide a suitable definition of a manifold structure in the space of real analytic critical commuting pairs (recall Section \ref{sechypren}). If this space can be endowed with a Banach manifold structure under which the renormalization operator is hyperbolic, then it is not too difficult to push such hyperbolicity to the space of $C^r$ critical commuting pairs (see \cite{voutaz}).

\vspace{.3cm}

\item Is it possible to prove Theorem \ref{geoconvC3BT} and Theorem \ref{geoconvC4} without appeal to holomorphic methods? Although quite powerful, the use of holomorphic methods limits the discussion to maps all of whose critical points have \emph{integral criticalities}.
See problem (2) above.

\vspace{.3cm}

\item Is Theorem \ref{rigC4} still true if the maps $f$ and $g$ are only $C^3$?

\vspace{.3cm}

\item Rigidity in the space of $C^2$ maps is most likely false. Can one construct explicit examples? What about rigidity in the space of $C^{2+\alpha}$ maps?

\vspace{.3cm}

\item\label{itemCvi} Finally, one topic that we did not touch at all in this survey is what is commonly referred to by physicists as \emph{mode locking universality}. In a typical (monotone) one-parameter family of (uni)critical circle maps, such as the Arnol'd family, the set of parameters for which the rotation number is irrational constitutes a Cantor set (see figure \ref{devilstair}), called the \emph{mode-locking Cantor set}. It is conjectured that the Hausdorff dimension of this Cantor set is a \emph{universal number} -- which has been numerically computed to be approximately $0.870...$, see \cite{CGV1990}. It has been shown by Grackzyk and \'Swi\k{a}tek \cite{graswia} that this dimension indeed lies strictly between zero and one. In particular, the Cantor set in question has zero Lebesgue measure. This is in sharp contrast with what happens in typical one-parameter families of circle \emph{diffeomorphisms}: in such cases, Herman \cite{H} had already shown in the seventies that the corresponding Cantor set has positive measure. Universality of the Hausdorff dimension in the critical case would follow from a careful study of the holonomy of the lamination determined by the stable manifolds of the renormalization operator (acting on a suitable space of critical commuting pairs), presumably in a similar manner as in the corresponding study of holonomy carried out for $C^r$ unimodal maps in \cite{dFdMP}. For more on the empirical study of the scaling geometry of the mode-locking Cantor set, see the work by Cvitanovi\'{c}, Shraiman and S\"{o}derberg~\cite{CvShSo85}.
\end{enumerate}


\begin{thebibliography}{[ABD]}

\bibitem{ahlfors} L.~V.~Ahlfors, Lectures on quasi-conformal mappings, $2^{nd}$ ed., \emph{University Lecture Series} {\bf{38}}, American Mathematical Society, 2006.

\bibitem{arnold} V.I.~Arnol'd, Small denominators I. Mappings of the circle onto itself, {\em Izv. Akad. Nauk. Math. Series} {\bf 25} (1961), 21--86. [{\em Translations of the Amer. Math. Soc. (series 2)} {\bf 46} (1965), 213--284].

\bibitem{avilaICM} A.~Avila, Dynamics of renormalization operators, {\em Proceedings of the International Congress of Mathematicians (Hyderabad, India, 2010)}.

\bibitem{avila} A.~Avila, On rigidity of critical circle maps, {\em Bull. Braz. Math. Soc.} {\bf 44} (2013), 611--619.

\bibitem{AK} A.~Avila and A.~Kocsard, Cohomological equations and invariant distributions for minimal circle diffeomorphisms, {\em Duke Math. J.} {\bf 158} (2011), 501--536.

\bibitem{AL} A.~Avila and M.~Lyubich, The full renormalization horseshoe for unimodal maps of higher degree: exponential contraction along hybrid classes, {\em Publ. Math. IHES} {\bf 114} (2011), 171--223.

\bibitem{Bohl} P.~Bohl, Uber die hinsichtlich der unabh\"angigen variabeln periodische, {\em Acta  Math.} {\bf 40} (1916), 321--336.

\bibitem{BIP} L.~Bordignon, J.~Iglesias and A.~Portela, About $C^1$-minimality of the hyperbolic Cantor sets, {\em Bull. Braz. Math. Soc.} {\bf 45} (2014), 525--542.

\bibitem{boyland} P.~Boyland, Bifurcations of circle maps: Arnol'd tongues, bistability and rotation intervals, {\em Commun. Math. Phys.} {\bf 106} (1986), 353--381.

\bibitem{CGbook} L.~Carleson and T.W.~Gamelin, {\em Complex dynamics}, Springer-Verlag, 1993.

\bibitem{charles} A.~Chenciner, J.-M.~Gambaudo and C.~Tresser, Une remarque sur la structure des endomorphismes de degr\'e $1$ du cercle, {\em C. R. Acad. Sci. Paris}, S\'erie I \textbf{299}, 145-148, (1984).

\bibitem{CT2020} T.~Clark and S.~Trejo, The boundary of chaos for interval mappings, {\em Proc. London Math. Soc.} {\bf 121} (2020), 1427--1467. 

\bibitem{CvST2017} 
\newblock T.~Clark, S.~van Strien and S.~Trejo,
\newblock Complex bounds for real maps, 
\newblock {\em Comm. Math. Phys}. {\bf 355} (2017),  1001--1119. 

\bibitem{CGP19} S.~Crovisier, P.~Guarino and L.~Palmisano, Ergodic properties of bimodal circle maps, {\em Ergod. Th. \& Dynam. Sys.} {\bf 39} (2019), 1462--1500.

\bibitem{CGV1990} P.~Cvitanovi\'{c}, G.H.~Gunaratne and M.J.~Vinson, On the mode-locking universality for critical circle maps, \emph{Nonlinearity} {\bf{3}} (1990), 873--885.

\bibitem{CvShSo85} P.~Cvitanovi\'{c}, B. Shraiman and B.~S\"{o}derberg, Scaling laws for mode locking in circle maps, \emph{Physica Scrypta} {\bf{32}} (1985), 263--270.

\bibitem{davie} A.~Davie, Period doubling for $C^{2+\varepsilon}$ mappings, \emph{Commun. Math. Phys.} {\bf{176}} (1996), 261--272.
 
\bibitem{denjoy} A.~Denjoy, Sur les courbes d\'efinies par les \'equations diff\'erentielles \`a la surface du tore, {\em J. Math. Pure et Appl.} {\bf 11} (1932), 333--375.

\bibitem{dgk} T.W.~Dixon, T.~Gherghetta, B.G.~Kenny, Universality in the quasiperiodic route to chaos, {\em Chaos} {\bf 6} (1996), 32-42.

\bibitem{Do} A.~Douady, Disques de Siegel et anneaux de Herman, \emph{Sém. Bourbaki 1986/87, Astérisque,
1986/87} (1987), 151--172.

\bibitem{Do2} A.~Douady, Does a Julia set depend continuously on the polynomial? Complex dynamical
systems, Proc. Sympos. Appl. Math. {\bf 49} (1994), 91--138.

\bibitem{DH} A. Douady and J. Hubbard, On the dynamics of polynomial-like mappings, \emph{Ann. Sci. Ec. Norm. Sup.} {\bf{18}} (1985), 287--343.

\bibitem{surveyYoc} H.~Eliasson, B.~Fayad and R.~Krikorian, Jean-Christophe Yoccoz and the theory of circle diffeomorphisms, available at {\tt{arXiv:1810.07107\/}}.

\bibitem{epskeentres} A.~Epstein, L.~Keen and C.~Tresser, The set of maps $F_{a,b}:x \mapsto x+a+\frac{b}{2\pi}\sin(2\pi x)$ with any given rotation interval is contractible, {\em Commun. Math. Phys.} {\bf 173} (1995), 313--333.

\bibitem{EdF} G.~Estevez and E.~de Faria, Real bounds and quasisymmetric rigidity of multicritical circle maps, {\em Trans. Amer. Math. Soc.} {\bf{370}} (2018), 5583--5616.

\bibitem{EdFG} G.~Estevez, E.~de Faria and P.~Guarino, Beau bounds for multicritical circle maps, {\em Indagationes Mathematic\ae\/} {\bf{29}} (2018), 842--859.

\bibitem{EG} G.~Estevez and P.~Guarino, Renormalization of multicritical circle maps. Submitted.

\bibitem{ESY} G.~Estevez, D.~Smania and M.~Yampolsky, Complex bounds for multicritical circle maps with bounded type rotation number. Available at {\tt{arXiv:2005.02377\/}}.

\bibitem{EdsonThesis} E.~de Faria, Proof of universality for critical circle mappings, {\em Ph.D. Thesis}, CUNY, 1992.

\bibitem{edson1998} E.~de Faria, On conformal distortion and Sullivan's sector theorem,
{\em Proceedings of the American Mathematical Society\/} {\bf{126}} (1998), 67--74.

\bibitem{edsonETDS} E.~de Faria, Asymptotic rigidity of scaling ratios for critical circle mappings, {\em Ergod. Th. \& Dynam. Sys.} {\bf 19} (1999), 995-1035.

\bibitem{dFG2016} E.~de Faria and P.~Guarino, Real bounds and Lyapunov exponents, {\em Disc. and Cont. Dyn. Sys. A} {\bf 36} (2016), 1957--1982.

\bibitem{dFG2019} E.~de Faria and P.~Guarino, Quasisymmetric orbit-flexibility of multicritical circle maps. Submitted, available at {\tt{arXiv:1911.04375\/}}.

\bibitem{dFG2020} E.~de Faria and P.~Guarino,
There are no $\sigma$-finite absolutely continuous invariant measures for multicritical circle maps. Submitted, available at {\tt{arXiv:2007.10444\/}}. 

\bibitem{edsonwelington1} E.~de Faria and W.~de Melo, Rigidity of critical circle mappings I, {\em J. Eur. Math. Soc.} {\bf 1} (1999), 339--392.

\bibitem{edsonwelington2} E.~de Faria and W.~de Melo, Rigidity of critical circle mappings II, {\em J. Amer. Math. Soc.} {\bf 13} (2000), 343-370.

\bibitem{dFdMbook} E.~de Faria and W.~de Melo, {\em Mathematical tools for one-dimensional dynamics}, Cambridge University Press, 2008.

\bibitem{dFdMP} E.~de Faria, W.~de Melo and A.~Pinto, Global hyperbolicity of renormalization for $C^r$ unimodal mappings, {\em Annals of Math.} {\bf 164} (2006), 731--824.

\bibitem{feigetal} M.~Feigenbaum, L.~Kadanoff and S.~ Shenker, Quasi-periodicity in dissipative systems. A renormalization group analysis, {\em Physica} {\bf 5D} (1982), 370--386.

\bibitem{ghys} E.~Ghys, Laminations par surfaces de Riemann, {\em Dynamique et géométrie complexes} (Lyon, 1997), ix, xi, 49-95, Panor. Synthèses, 8, Soc. Math. France, Paris, 1999.

\bibitem{GY} I.~Gorbovickis and M.~Yampolsky, Rigidity, universality, and hyperbolicity of renormalization for critical circle maps with non-integer exponents, {\em Ergod. Th. \& Dynam. Sys.} {\bf 40} (2020), 1282-1334.

\bibitem{grasandsswia} J.~Graczyk, D.~Sands and G.~\'Swi\k{a}tek, Decay of geometry for unimodal maps: negative Schwarzian case, {\em Annals of Math.} {\bf 161} (2005), 613--677.

\bibitem{GS93} J.~Graczyk and G.~\'Swi\k{a}tek, Singular measures in circle dynamics, {\em Commun. Math. Phys.} {\bf 157} (1993), 213--230.

\bibitem{graswia} J.~Graczyk and G.~\'Swi\k{a}tek, Critical circle maps near bifurcation, {\em Commun. Math. Phys.} {\bf 176} (1996), 227--260.

\bibitem{PabloThesis} P.~Guarino, Rigidity conjecture for $C^3$ critical circle maps, {\em Ph.D. Thesis}, IMPA, 2012.

\bibitem{GMdM2015} P.~Guarino, M.~Martens and W.~de Melo, Rigidity of critical circle maps, \emph{Duke Math. J.} {\bf{167}} (2018), 2125--2188.

\bibitem{GdM2013} P.~Guarino and W.~de Melo, Rigidity of smooth critical circle maps, {\em J. Eur. Math. Soc.\/} {\bf{19}} (2017), 1729--1783.

\bibitem{hall} G.R.~Hall, A $C^\infty$ Denjoy counterexample, {\em Ergod. Th. \& Dynam. Sys.} {\bf 1} (1981), 261--272.

\bibitem{hermanihes} M.~Herman, Sur la conjugaison diff\'erentiable des diff\'eomorphismes du cercle \`a des rotations, {\em Publ. Math. IHES} {\bf 49} (1979), 5--234.

\bibitem{HermanICM} M.~Herman, R{\'{e}}sultats r{\'{e}}cents sur la conjugaison diff{\'{e}}rentiable, {\em Proceedings of the International Congress of Mathematicians (Helsinki, 1978)} 811--820, (1978).

\bibitem{H} M.~Herman, Conjugaison quasi-sim\'etrique des hom\'eomorphismes du cercle \`a des rotations (manuscript), (1988). (see also the translation by A.~Ch\'eritat, {\it Quasisymmetric conjugacy of analytic circle homeomorphisms to rotations\/},
{\tt{www.math.univ-toulouse.fr/{$\sim$}cheritat/Herman/e\_\,herman.html}}).

\bibitem{husul} J.~Hu and D.~Sullivan, Topological conjugacy of circle diffeomorphisms, {\em Ergod. Th. \& Dynam. Sys.} {\bf 17} (1997), 173--186.

\bibitem{ksh} L.~Kadanoff and S.~Shenker, Critical behavior of a KAM surface. I. Empirical results, {\em J. Statist. Phys.} {\bf 27} (1982), 631--656.

\bibitem{katz1} Y.~Katznelson, Sigma-finite invariant measures for smooth mappings of the circle, \emph{J. d'Analyse Math.} {\bf{31}} (1977), 1--18. 

\bibitem{ko1} Y.~Katznelson and D.~Ornstein, The differentiability of the conjugation of certain diffeomorphisms of the circle, {\em Ergod. Th. \& Dynam. Sys.} {\bf{9}} (1989), 643--680. 

\bibitem{ko2} Y.~Katznelson and D.~Ornstein, The absolute continuity of the conjugation of certain diffeomorphisms of the circle, {\em Ergod. Th. \& Dynam. Sys.}, {\bf 9} (1989), 681--690.

\bibitem{khanin91} K.~Khanin, Universal estimates for critical circle mappings, {\em Chaos} {\bf 1} (1991), 181--186.

\bibitem{khaninICM} K.~Khanin, Renormalization and Rigidity, {\em Proceedings of the International Congress of Mathematicians (Rio de Janeiro, Brazil, 2018)}.

\bibitem{khaninteplinsky} K.~Khanin and A.~Teplinsky, Robust rigidity for circle diffeomorphisms with singularities, {\em Invent. Math.} {\bf 169} (2007), 193--218.

\bibitem{khaninteplinsky2} K.~Khanin and A.~Teplinsky, Herman's theory revisited, {\em Invent. Math.} {\bf 178} (2009), 333--344.

\bibitem{khin} A. Ya.~Khinchin, {\em Continued fractions}, (reprint of the 1964 translation), Dover Publications, Inc. (1997).

\bibitem{khmelevyampolsky} D.~Khmelev and M.~Yampolsky, The rigidity problem for analytic critical circle maps, {\em Mosc. Math. J.} {\bf 6} (2006), 317--351.

\bibitem{KLM} S.~Klein, X.-C.~Liu and A.~Melo, Uniform convergence rate for Birkhoff means of certain uniquely ergodic toral maps. To appear in \emph{Ergodic Theory and Dynamical Systems}.

\bibitem{lanford1} O. E.~Lanford, Renormalization group methods for critical circle mappings with general rotation number, {\em VIIIth Int. Congress on Mathematical Physics (Marseille, 1986)}, World Scientific, Singapore (1987), 532--536.

\bibitem{lanford2} O. E.~Lanford, Renormalization group methods for circle mappings, {\em Nonlinear Evolution and Chaotic Phenomena (NATO Adv. Sci. Inst. Ser. B: Phys., 176)}, Plenum, New York (1988), 25--36.

\bibitem{lyubichAnnals} M.~Lyubich, Feigenbaum-Coullet-Tresser universality and Milnor's hairiness conjecture, {\em Annals of Math.} {\bf 149} (1999), 319--420.

\bibitem{lyubich} M.~Lyubich, Teichm\"uller space of Fibonacci maps, {\em New Trends in One-Dimensional Dynamics}, Springer Proceedings in Mathematics \& Statistics {\bf 285} (2019) (eds M~Pacifico and P~Guarino), 221--237.

\bibitem{mckay} R.S.~MacKay, A renormalisation approach to invariant circles in area-preserving maps, {\em Physica} {\bf 7D} (1983), 283--300.

\bibitem{mackay} R.S.~MacKay, {\em Renormalisation in Area-Preserving Maps}, World-Scientific, 1993.

\bibitem{marco98} M.~Martens, The periodic points of renormalization, {\em Annals of Math.} {\bf 147} (1998), 543--584.

\bibitem{MdMvS} M.~Martens, W.~de Melo and S.~van Strien, Julia-Fatou-Sullivan theory for real one-dimensional dynamics, {\em Acta Math.} {\bf 168} (1992), 273--318.

\bibitem{MPW} M.~Martens, L.~Palmisano and B.~Winckler, The Rigidity Conjecture, {\em Indagationes Mathematic\ae\/} {\bf{29}} (2018), 825--830.

\bibitem{duff} D.~Mc Duff, $C^{1}$-minimal subsets of the circle, {\em Annales de l'Institut Fourier} {\bf{31}} (1981), 177--193.

\bibitem{mclivro1} C.T.~McMullen, {\em Complex Dynamics and Renormalization}, Annals of Math. Studies {\bf 135}, Princeton University Press, 1994.

\bibitem{mclivro2} C.T.~McMullen, {\em Renormalization and 3-manifolds which fiber over the circle}, Annals of Math. Studies {\bf 142}, Princeton University Press, 1996.

\bibitem{mcicm} C.T.~McMullen, Rigidity and inflexibility in conformal dynamics, {\em Proceedings of the International Congress of Mathematicians (Berlin, 1998)} Doc. Math. Vol. II, 841--855, (1998).

\bibitem{dmicm} W.~de Melo, Rigidity and renormalization in one-dimensional dynamics, {\em Proceedings of the International Congress of Mathematicians (Berlin, 1998)} Doc. Math. Vol. II, 765--778, (1998).

\bibitem{dMvS89} W.~de Melo and S.~van Strien, A structure theorem in one-dimensional dynamics, {\em Annals of Math.} {\bf 129} (1989), 519--546.

\bibitem{livrounidim} W.~de Melo and S.~van Strien, {\em One-dimensional dynamics}, Springer-Verlag, 1993.

\bibitem{Milnorbook} J.~Milnor, {\em Dynamics in One Complex Variable}, Princeton University Press, 2006.

\bibitem{mis} M.~Misiurewicz, Rotation intervals for a class of maps of the real line into itself, {\em Ergod. Th. \& Dynam. Sys.} {\bf 6} (1986), 117--132.

\bibitem{ostlundetal} S.~Ostlund, D.~Rand, J.~Sethna, E.~Siggia, Universal properties of the transition from quasi-periodicity to chaos in dissipative systems, {\em Physica} {\bf 8D} (1983), 303-342,.

\bibitem{pal} L.~Palmisano, A Denjoy counterexample for circle maps with an half-critical point, \emph{Math. Z.} {\bf{280}} (2015), 749--758.

\bibitem{PZ} C.~Petersen and S.~Zakeri, On the Julia set of a typical quadratic polynomial with a
Siegel disk, \emph{Ann. of Math}. (2) {\bf{159}} (2004), 
1--52.

\bibitem{rand1} D.~Rand, Universality and renormalisation in dynamical systems, {\em New Directions in Dynamical Systems}, Eds. T. Bedford and J.W. Swift. Cambridge University Press, Cambridge, (1987).

\bibitem{rand2} D.~Rand, Global phase space universality, smooth conjugacies and renormalisation: I. The $C^{1+\alpha}$ case, {\em Nonlinearity} {\bf 1} (1988), 181--202.

\bibitem{rand3} D.~Rand, Existence, non-existence and universal breakdown of dissipative golden invariant tori: I. Golden critical circle maps, {\em Nonlinearity} {\bf 5} (1992), 639--662.

\bibitem{sh} S.~Shenker, Scaling behaviour in a map of a circle onto itself: empirical results, {\em Physica} {\bf 5D} (1982), 405--411.

\bibitem{shishi1} M.~Shishikura, The Hausdorff dimension of the boundary of the Mandelbrot set and Julia sets, {\em Annals of Math.} {\bf 147} (1998), 225--267.

\bibitem{shishi2} M.~Shishikura, Bifurcation of parabolic fixed points. In T. Lei (Ed.), The Mandelbrot Set, Theme and Variations (London Mathematical Society Lecture Note Series, pp. 325-364). Cambridge University Press (2000).

\bibitem{Stark} J.~Stark, Smooth conjugacy and renormalisation for diffeomorphisms of the circle, {\em Nonlinearity} {\bf 1} (1988), 541--575.

\bibitem{sullivan0} D.~Sullivan, Quasiconformal homeomorphisms and 
Dynamics I. Fatou-Julia Problem on Wandering Domains, {\em Ann. of
Math.} {\bf{122}} (1985), 401--418.

\bibitem{sullivanICM} D.~Sullivan, Quasiconformal homeomorphisms in dynamics, topology, and geometry, {\em Proceedings of the International Congress of Mathematicians (Berkeley, 1986)}, 1216--1228 (1986).

\bibitem{sullivan} D.~Sullivan, Bounds, quadratic differentials, and renormalization conjectures, {\em AMS Centennial Publications, Vol. II}, 417--466, (1992).

\bibitem{swiatek} G.~\'Swi\k{a}tek, Rational rotation numbers for maps of the circle, {\em Commun. Math. Phys.} {\bf 119} (1988), 109--128.

\bibitem{trujillo} F.~Trujillo, Hausdorff dimension of invariant measures of multicritical circle maps, {\em Ann. Henri Poincar\'e} {\bf 21} (2020), 2861--2875.

\bibitem{arlane} A.M.S~Vieira, Pares holomorfos e a fam\'{\i}lia de Arnol’d generalizada (Ph.D. Thesis), IME-USP, 2015. 

\bibitem{voutaz} E.~Voutaz, Hyperbolicity of the renormalization operator for critical $C^r$ circle mappings, {\em Ergod. Th. \& Dynam. Sys.} {\bf 26} (2006), 585--618.

\bibitem{yampolsky1} M.~Yampolsky, Complex bounds for renormalization of critical circle maps, {\em Ergod. Th. \& Dynam. Sys.} {\bf 19} (1999), 227--257.

\bibitem{yampolsky2} M.~Yampolsky, The attractor of renormalization and rigidity of towers of critical circle maps, {\em Commun. Math. Phys.} {\bf 218} (2001), 537--568.

\bibitem{yampolsky3} M.~Yampolsky, Hyperbolicity of renormalization of critical circle maps, {\em Publ. Math. IHES} {\bf 96} (2002), 1--41.

\bibitem{yampolsky4} M.~Yampolsky, Renormalization horseshoe for critical circle maps, {\em Commun. Math. Phys.} {\bf 240} (2003), 75--96.

\bibitem{yampolsky6} M.~Yampolsky, Renormalization of unicritical analytic circle maps, 
\emph{C.R. Math. Rep. Acad. Sci. Canada} {\bf{39}} (2017), 77--89.

\bibitem{yampolsky5} M.~Yampolsky, Renormalization of bi-cubic circle maps, {\em C. R. Math. Rep. Acad. Sci. Canada} {\bf 41} (2019), 57--83.

\bibitem{yoccoz1} J.-C.~Yoccoz, Il n'y a pas de contre-exemple de Denjoy analytique, {\em C.R. Acad. Sc. Paris} {\bf 298} (1984), 141--144.

\bibitem{yoccoz2} J.-C.~Yoccoz, Conjugaison diff\'erentiable des diff\'eomorphismes du cercle dont le nombre de rotation v\'erifie une condition diophantienne, {\em Ann. Scient. \'Ec. Norm. Sup.}, {\bf 17}, 333-359, (1984).

\bibitem{yoccoziem} J.-C.~Yoccoz, Continued fraction algorithms for interval exchange maps: an introduction, In {\em Frontiers in Number Theory, Physics and Geometry}, volume 1: On random matrices, zeta functions and dynamical systems, Springer-Verlag, 2006.

\bibitem{zak} S.~Zakeri, Dynamics of cubic Siegel polynomials, \emph{Communications in Mathematical Physics} {\bf{206}} (1999), 185--233.

\end{thebibliography}
\end{document}